\documentclass[11pt, a4paper]{article}
\usepackage{t1enc}
\usepackage[latin1]{inputenc}
\usepackage[english]{babel}
\usepackage{amsmath,amsthm}
\numberwithin{equation}{section}
\usepackage{amsfonts}
\usepackage{latexsym}
\usepackage{graphicx}
\usepackage{float}
\usepackage[natural]{xcolor}
\usepackage{algorithm}
\usepackage{algorithmic}
\usepackage{enumerate,enumitem}
\usepackage{multirow}
\usepackage[colorlinks,linkcolor=blue]{hyperref}
\usepackage{lineno}
\usepackage{setspace}
\usepackage[title]{appendix}
\usepackage{todonotes}
\usepackage{amssymb}
\usepackage{listings}

\usepackage[
  a4paper,
  left=2.2cm,
  right=2.2cm,
  top=1.5cm,
  bottom=1.9cm,
  headheight=12pt,
  headsep=7pt,
  includehead
]{geometry}
\usepackage{titlesec}
\usepackage{fancyhdr}
\usepackage{verbatim}

\counterwithin{figure}{section}
\newcounter{shared}
\setlist{
  itemsep=0pt,
  topsep=3pt,
  parsep=0pt,
  partopsep=0pt
}

\titleformat{\section}
  {\centering\normalfont\scshape}
  {\thesection.}{0.5em}{}

\titleformat{\subsection}[runin]
  {\normalfont\bfseries}
  {\thesubsection.}{0.5em}{}[.]

\titleformat{\subsubsection}[runin]
  {\normalfont\bfseries}
  {\thesubsubsection.}{0.5em}{}[.]

\newcommand{\compactsectionspacing}{%
  \titlespacing*{\section}
    {0pt}{11pt plus 2pt minus 2pt}{5pt}%
  \titlespacing*{\subsection}
    {0pt}{8pt plus 2pt minus 1pt}{0.6em}%
  \titlespacing*{\subsubsection}
    {0pt}{6pt plus 1pt minus 1pt}{0.6em}%
}
\compactsectionspacing

\newtheorem{conjecture}[shared]{Conjecture}

\newtheorem{theorem}{Theorem}[section]
\newtheorem*{thm-non}{Theorem}

\newtheorem{proposition}[theorem]{Proposition}
\newtheorem{lemma}[theorem]{Lemma}
\newtheorem{corollary}[theorem]{Corollary}

\newtheorem{claim}[theorem]{Claim}

\newtheorem{problem}[theorem]{Problem}
\theoremstyle{definition}
\newtheorem{definition}[theorem]{Definition}
\newtheorem*{defn-non}{Definition}

\newcounter{propcounter}
\newenvironment{poc}
  {\begin{proof}[Proof of claim]}
  {\end{proof}}

\newcommand{\I}[1]{\mathbb{#1}}

\newcommand{\eps}{\varepsilon}

\title{\LARGE The Erd\H{o}s--Gallai bound for consecutive even cycle lengths}

\hypersetup{pdftitle={The Erdos--Gallai bound for consecutive even cycle lengths},
  pdfauthor={Yaobin Chen, Hong Liu, Xia Wang, Xin Wei, and Fan Yang},
  pdfsubject={Extremal graph theory},
  pdfkeywords={consecutive even cycle lengths, sublinear expanders, clique subdivisions, adjusters}
}

\author{
Yaobin Chen \quad Hong Liu \quad Xia Wang \quad Xin Wei \quad Fan Yang
\thanks{All authors are supported by the Institute for Basic Science
(IBS-R029-C4). Xia Wang is also affiliated with the School of Mathematics,
    Shandong University, Jinan, China, and supported by the China Scholarship Council. 
Fan Yang is also supported by the National Natural Science Foundation
of China (12301447), the Natural Science Foundation of Shandong Province
(ZR2024QA056), and the China Scholarship Council. Emails: \texttt{\{ybchen,hongliu,weixinma\}@ibs.re.kr}, \texttt{xiawang@mail.sdu.edu.cn},
\texttt{fyang@sdu.edu.cn}}\\[0.6em]
\small Extremal Combinatorics and Probability Group (ECOPRO),\\[-0.1em]
\small Institute for Basic Science (IBS), Daejeon, South Korea\\[0.4em]}
\date{}

\begin{document}
\setlength{\abovedisplayskip}{7pt plus 2pt minus 3pt}
\setlength{\belowdisplayskip}{7pt plus 2pt minus 3pt}
\setlength{\abovedisplayshortskip}{4pt plus 2pt}
\setlength{\belowdisplayshortskip}{4pt plus 2pt minus 2pt}
\maketitle

\begin{abstract}
Erd\H{o}s and Gallai in 1959 proved the seminal result that every $n$-vertex
graph with no cycle of length at least $2t+2$ has at most
$\tfrac{2t+1}{2}(n-1)$ edges. We prove the extension that, for every sufficiently
large $t$, the same quantity is also the sharp extremal bound for
graphs with no $t$ consecutive even cycle lengths, resolving a
conjecture of Verstra\"ete. Thus, at the Erd\H{o}s--Gallai threshold,
forcing an entire interval of even cycle lengths costs no more than
forcing its longest member. More precisely, every $n$-vertex graph
$G$ with
  $e(G)\ge \tfrac{(2t+1)(n-1)}2$
\begin{itemize}
    \item either contains $t$ consecutive even cycle lengths,
    \item or equality holds
and $G$ is connected with every block isomorphic to $K_{2t+1}$.
\end{itemize}
 
As consequences, for every sufficiently large even $k$ we
determine the sharp edge thresholds forcing a cycle of length
$0\pmod k$ or $2\pmod k$, answering questions of Bai, Grzesik, Li, and Prorok and of Gao, Li, Ma and Xie, respectively, for sufficiently large even $k$. The proof develops a stability-enhanced sublinear expander method. Its
main new ingredient is a dense-case decomposition that recovers the
lengths lost in the expander extraction by combining a flexible dense
core with rooted cycle families in the vertices outside the core.
\end{abstract}

\section{Introduction}\label{sec:intro}

The classical Erd\H{o}s--Gallai theorem~\cite{Erdos-cycle}, proved in 1959,
provides a sharp bound on the number of edges guaranteeing a long cycle. It
states that every \(n\)-vertex graph with more than
$\tfrac{2t+1}{2}(n-1)$ edges contains a cycle of length at least $2t+2$. The set of cycle lengths of a graph records substantially more information
than the existence of one long cycle. A central theme in extremal graph
theory is to understand which additive or arithmetic patterns must occur in
this set: nearby lengths, long arithmetic progressions, prescribed residue
classes, or sparse multiplicative sequences. This viewpoint connects
classical extremal problems for long cycles with questions of a markedly
number-theoretic character; see, for example,~\cite{Fan2002,GaoMa2020,GyarfasKomlosSzemeredi1984,LiuMa2018,Ma2016,SudakovVerstraete2008,
Verstraete2016Survey}.

The problem studied here lies exactly at this interface. We say that a graph
contains $t$ \emph{consecutive even cycle lengths} if, for some integer
$a\ge2$, it contains cycles of lengths
\[
  2a,2a+2,\ldots,2a+2(t-1).
\]
The striking assertion of a conjecture of Verstra\"ete is
that, at precisely the same threshold, one can force not merely one long cycle but an entire interval of $t$ even cycle lengths. Thus the cost of
forcing this highly structured family should be no greater than the cost of
forcing its longest member.

The study of clustered cycle lengths goes back to a question of Erd\H{o}s~\cite{Erdos-conj}
asking whether every graph with minimum degree at least three contains two cycles whose lengths differ
by one or two. Bondy and Vince~\cite{BondyVince1998} proved a
stronger form of this assertion. H\"aggkvist and Scott~\cite{HaggkvistScott1998}
then initiated the systematic study of arithmetic progressions of cycle
lengths, and Verstra\"ete~\cite{Verstraete2000} showed that every bipartite
graph with average degree at least $4k$ and girth $g$ contains
$(g/2-1)k$ consecutive even cycle lengths. In general graphs, a theorem
recorded in~\cite{Verstraete2016Survey} gives $t$ consecutive even cycle
lengths from an edge bound of order $3tn$. The aforementioned conjecture asks for the
sharp edge bound together with the exact equality
structure.

The use of an average-degree hypothesis is essential and is also the main
source of difficulty. Under a minimum-degree condition, admissible-path
methods provide strong local control and can be rooted at prescribed
vertices; see~\cite{ChibaOtaYamashita2023,GaoHuoLiuMa2022}. At the present
edge threshold, however, the average degree is only about $2t+1$, and the
standard deletion argument produces a subgraph with minimum degree only about
$t+1$. This factor-two loss is fatal: at that scale, previous  minimum-degree
theorems yield only about half of the required interval. Moreover, average
degree permits the edges to be distributed very unevenly among dense blocks,
sparse separators, and vertices outside the dense core. A proof of the exact
bound must therefore exploit density that cannot be localized in one
high-minimum-degree subgraph.

Consecutive even cycle lengths also provide a natural mechanism for modular cycle
problems: an interval of sufficiently many even integers contains every even
residue class modulo an even modulus. This perspective was developed by
Sudakov and Verstra\"ete~\cite{SudakovVerstraete2017}. It is complementary to
the multiplicative cycle-spectrum problem of Erd\H{o}s, solved by Liu and
Montgomery~\cite{Liu-Mon-JAMS}, concerning cycles whose lengths are powers of
two. Recent work has resolved several divisibility problems, including the
odd-modulus extremal problem and Dean's conjecture~\cite{BaiGrzesikLiProrok2025,LuoMaZhao2026}, while exact results are known
for several small moduli~\cite{ChenSaito1994,DeanKanekoOtaToft1991,DeanLesniakSaito1993,GyoriEtAl2026,Saito1992}.

\subsection{Main result}

Let $G$ be connected and suppose that every block of $G$ is a copy of
$K_{2t+1}$. Every cycle is contained in one block, so the even cycle lengths
in $G$ are precisely $4,6,\ldots,2t$; in particular, $G$ has no $t$
consecutive even cycle lengths. Moreover, $e(G)=\tfrac{(2t+1)(|G|-1)}2$.
This is also the extremal construction attaining equality in the
Erd\H{o}s--Gallai theorem.
Motivated by it, Verstra\"ete proposed the following conjecture.

\begin{conjecture}[\cite{Verstraete2016Survey}]\label{conj:verstraete}
Let $t$ be a positive integer. If an $n$-vertex graph $G$ does not contain
$t$ consecutive even cycle lengths, then
\[
  e(G)\le \tfrac{(2t+1)(n-1)}2.
\]
Equality is attained by connected graphs whose blocks are copies of
$K_{2t+1}$.
\end{conjecture}

The case $t=1$ is the classical extremal theorem for graphs without even
cycles; see~\cite{DeanLesniakSaito1993}. Gao, Li, Ma and
Xie~\cite{GaoLiMaXie2024} proved the case $t=2$, and Li, Pan and
Shi~\cite{LiPanShi2025} subsequently determined the extremal graphs for all
congruence classes of $n$ in the stronger problem. No result covered
all orders $n$ for an unbounded family of values of $t$.

Our main theorem proves Conjecture~\ref{conj:verstraete} for sufficiently large $t$ and,
in fact, gives a slightly stronger interval statement. To formulate it, we
formally regard an edge as a cycle of length two.

\begin{theorem}\label{thm:main}
For every sufficiently large positive integer $t$, every $n$-vertex graph
$G$ with
\[
  e(G)\ge \tfrac{(2t+1)(n-1)}2
\]
satisfies one of the following:
\begin{enumerate}[label=\rm(\arabic*)]
  \item\label{maintheorem1} $G$ contains $t+1$ consecutive even cycle lengths when an edge is
  regarded as a cycle of length two;
  \item\label{maintheorem2} equality holds, $G$ is connected, and every block of $G$ is
  isomorphic to $K_{2t+1}$.
\end{enumerate}
\end{theorem}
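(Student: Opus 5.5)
We argue by a minimal counterexample and a block reduction. Let $G$ be a counterexample with $n$ minimal. If $G$ is disconnected, or has a cut vertex, split it into pieces $G_1,\dots,G_m$ sharing at most cut vertices, with $\sum (|G_i|-1)\le n-1$ (with equality iff connected). Minimality gives $e(G_i)\le \tfrac{(2t+1)(|G_i|-1)}2$ for each piece with equality only for the block-$K_{2t+1}$ structure. Summing forces $G$ to be connected with every block $K_{2t+1}$, so we may assume $G$ is $2$-connected. The same induction lets us delete a vertex of degree at most $t$: the remaining graph has at least $\tfrac{(2t+1)(n-2)}2+\tfrac12$ edges, which is strictly above the bound, so induction yields outcome~(1). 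Hence we may also assume $\delta(G)\ge t+1$ and $G\neq K_{2t+1}$. Since an edge counts as a $2$-cycle, outcome~(1) means finding cycles of all lengths $2,4,\dots,2t+2$ or any other $t+1$ consecutive even values. In particular a long interval of even cycle lengths starting anywhere suffices.

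We then split into a sparse/expanding case and a dense case, following the sublinear expander paradigm (Koml\'os--Szemer\'edi, as refined by Liu--Montgomery). Pass to a subgraph $H$ that is a robust sublinear expander with average degree $d\ge 2t+1-o(t)$. If $|H|\ge Ct$ for a large constant $C$ (the sparse case), we build adjusters. These are pairs of short paths of lengths differing by $2$, arranged in a chain of about $t+1$ units and connected by expansion. Routing a connecting path between the two ends of the chain, avoiding the used vertices, gives a cycle whose length can be shifted by $2$ independently at each unit. This yields $t+1$ consecutive even lengths, as in Liu--Montgomery's cycle-length results. The slack from the multiplicative $1\pm o(1)$ loss is absorbed because an expander of average degree $(2-o(1))t$ yields a linear-in-$d$ number of adjusters with room to spare. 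Bipartiteness issues are handled by working in a bipartite subgraph retaining half the edges, where the expander still has degree about $t$, which suffices for even-length control.

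The main obstacle is the dense case: every expander piece has order $O(t)$, so edges concentrate in dense cores of size about $2t+1$, and the expander extraction loses exactly the few lengths near $2t+2$ that we need. Here I would prove a stability statement. A $2$-connected graph with $\delta\ge t+1$ that is locally dense and not $K_{2t+1}$ contains a dense core $D$ of order $(2+o(1))t$ that is highly connected, in fact Hamilton-connected with paths of every length between any two vertices. Separately, vertices outside $D$ contribute edge-surplus. Using $2$-connectivity, some vertex or path outside $D$ attaches to $D$ at two points $x,y$ via a path $P$ of length $\ell\ge 2$. Pancyclic-type path flexibility inside $D$, namely $x$--$y$ paths of all lengths from $2$ to $|D|-1$, then gives cycles of lengths $\ell+2,\dots,\ell+|D|-1$, covering more than $t+1$ consecutive even values once $|D|\ge 2t+1$. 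If $|D|\le 2t$, the edge count together with $\delta\ge t+1$ forces many outside vertices with rooted cycle families (vertices with many neighbours in $D$). Combining these rooted families with the core recovers the missing top lengths. I expect the delicate part to be the exact accounting showing that whenever no such configuration exists, the edge count drops strictly below $\tfrac{(2t+1)(n-1)}2$ unless $G=K_{2t+1}$. This is done by a discharging argument charging each non-core vertex at most $t+\tfrac12$ edges.
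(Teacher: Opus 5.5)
Your reduction to a $2$-connected minimal counterexample with $\delta(G)\ge t+1$ matches the paper. Both of your main cases, however, have genuine gaps.

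\textbf{Sparse case.} Liu--Montgomery adjusters can only be built when the host contains no clique $1$-subdivision whose number of branch vertices is comparable to its minimum degree. That hypothesis is what keeps the average degree linear after deleting the sets already used. You never say what happens when such a subdivision is present, and a $TK^{(1)}_s$ only supplies even cycle lengths up to $2s$.
\begin{itemize}
\item In $H$ itself the obstruction is a $TK^{(1)}_s$ with $s=(1-o(1))t$, which falls just short of $t+1$ lengths.
\item After your passage to a bipartite subgraph, the minimum degree is only about $t/2$. The obstruction then becomes a $TK^{(1)}_s$ with $s\approx t/2$, leaving a deficit of order $t$.
\end{itemize}
This is exactly why the paper does not pass to a bipartite subgraph. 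Instead it uses an odd adjuster built from a shortest odd cycle (Lemma~\ref{lem:cycleadjuster}), together with Lemma~\ref{lem:new}. That lemma says that either linearly many paths of length $3$ or $4$ between distinct branch vertices lengthen the subdivision's cycles to a full interval, or $H-\mathsf{Br}(T)$ contains a subgraph of average degree $8\eps d$. In the latter case a second expander is extracted from it and linked to the subdivision with matching parities. Separately, in the range $\tfrac13\log^{1000}|H|\le d\le |H|/K$ the paper needs a different construction, using hubs or webs along a long path.

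\textbf{Dense case.} Your stability claim is false as stated. $K_{t+1,3t}$ is $2$-connected, dense, has $\delta=t+1$ and is not $K_{2t+1}$. Yet it has no Hamilton-connected subgraph on three or more vertices, and no pair of vertices joined by paths of both parities. A core close to $K_{t+1,\Theta(t)}$ is indistinguishable from the clique-block configuration at the $o(tn)$ scale, and nothing in your argument (or the paper's) excludes it in a counterexample. So the core only provides paths of a parity dictated by a labelling $\chi$, and the outside cycle families may close only into odd cycles.

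You are also looking in the wrong place. In the paper, the vertices with many neighbours on the core form the set $A$. Once no augmentation of Types I--IV is possible, $G[A]$ has at most one edge and each $N_C(a)$ spans at most one edge, so these vertices cannot lengthen the cycle further. The missing lengths come instead from $W$, the vertices with few neighbours in $A\cup V(C)$. An edge count shows $W\neq\varnothing$, and $\delta(G)\ge d/2$ gives $\delta(G[W])\ge d/7$.

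Parity is then controlled in three steps:
\begin{itemize}
\item one shows $W\cap V(H)=\varnothing$;
\item a non-bipartite reduced cycle is handled directly;
\item in the bipartite case, the edge deficit $e(G[U])\le\tfrac{d-2}{2}|U|-\tfrac{d^2}{5}$ forces a linear-size $W$--$U$ matching whose $U$-ends can be taken in one $\chi$-class. This is combined with the rooted cycle lemma (Lemma~\ref{lem:rooted-even-one-exception}) applied through two fixed edges at an auxiliary root.
\end{itemize}
Your ``discharging'' sketch supplies neither the high-minimum-degree outside structure nor this parity control, and these are the heart of the dense case.
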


Consequently, every graph with no $t$ consecutive even cycle lengths has at
most $\tfrac{(2t+1)(n-1)}2$ edges, with equality precisely for the graphs in
\ref{maintheorem2}. The strengthening in \ref{maintheorem1} by one term is useful in the modular applications below.

\subsection{Applications to cycles modulo an even integer}

We next apply the strengthened form of Theorem~\ref{thm:main} to modular
cycle problems. For odd moduli, Bai, Grzesik, Li and
Prorok~\cite{BaiGrzesikLiProrok2025} determined the sharp extremal bound and
showed that the extremal behavior is bipartite. They conjectured a different
picture for even moduli, with clique-block extremal graphs.

\begin{conjecture}[\cite{BaiGrzesikLiProrok2025}]\label{conj:k-div}
Let $k\ge6$ be even and $n\ge k-1$. If
\[
  e(G)>\tfrac{k-1}{2}(n-1),
\]
then $G$ contains a cycle whose length is divisible by $k$. Moreover, when
$k-2$ divides $n-1$, the connected graphs whose blocks are copies of
$K_{k-1}$ are the unique extremal graphs.
\end{conjecture}

Applying Theorem~\ref{thm:main} with $t=(k-2)/2$, we obtain $k/2$
consecutive even cycle lengths in the extended convention. If the interval starts
with two, it contains the actual cycle length $k$; otherwise all its terms
are cycle lengths and they represent every even residue modulo $k$. 

\begin{corollary}\label{cor:c0even}
Conjecture~\ref{conj:k-div} is true for sufficiently large $k$.
\end{corollary}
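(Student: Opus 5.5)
We derive Corollary~\ref{cor:c0even} directly from Theorem~\ref{thm:main} with $t=(k-2)/2$. Take $k$ even and large enough that this $t$ is in the range where Theorem~\ref{thm:main} holds. Then $2t+1=k-1$, so the hypothesis $e(G)>\tfrac{k-1}{2}(n-1)$ is exactly $e(G)>\tfrac{(2t+1)(n-1)}{2}$. Since the inequality is strict, alternative~\ref{maintheorem2} of Theorem~\ref{thm:main} is excluded, because it requires equality. So alternative~\ref{maintheorem1} holds: $G$ contains $t+1=k/2$ consecutive even cycle lengths $2a,2a+2,\dots,2a+k-2$ with $a\ge1$, where the value $2$ is realised by an edge.

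We split into two cases according to $a$. If $a=1$, the interval is $2,4,\dots,k$, so $G$ contains an honest cycle of length $k$, which is divisible by $k$. If $a\ge2$, every term of the interval is an actual cycle length. The interval consists of $k/2$ consecutive even integers, so its terms run over all $k/2$ even residues modulo $k$. In particular one of them is $\equiv0\pmod k$, and that cycle has length divisible by $k$. The condition $n\ge k-1$ plays no role in this direction.

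For the extremal statement, suppose $k-2\mid n-1$. A connected graph whose blocks are copies of $K_{k-1}$ then exists, and it has exactly $\tfrac{k-1}{2}(n-1)$ edges. Each of its cycles lies inside one block and so has length at most $k-1<k$; hence no cycle length is divisible by $k$, and the bound is attained. For uniqueness, let $G$ have $e(G)=\tfrac{k-1}{2}(n-1)$ and no cycle of length divisible by $k$. Apply Theorem~\ref{thm:main} with equality allowed. If alternative~\ref{maintheorem1} held, the argument of the previous paragraph would produce a cycle of length divisible by $k$, a contradiction. Hence alternative~\ref{maintheorem2} holds: $G$ is connected and every block is $K_{2t+1}=K_{k-1}$. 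This is exactly the claimed family of extremal graphs.

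No step here is a genuine obstacle once Theorem~\ref{thm:main} is available. The only point needing care is the convention that an edge counts as a cycle of length two. That convention is harmless because the $a=1$ case directly supplies the real cycle of length $k$, and it is precisely what the strengthening to $t+1$ terms in Theorem~\ref{thm:main} is for.
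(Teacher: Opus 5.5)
Your proposal is correct and follows the paper's own argument. You apply Theorem~\ref{thm:main} with $t=(k-2)/2$, use the strict inequality to exclude alternative~\ref{maintheorem2}, and then split by whether the $k/2$-term interval starts at $2$ (giving an honest $k$-cycle) or not (so its terms cover every even residue modulo $k$). Your handling of the extremal uniqueness via the equality case of Theorem~\ref{thm:main} spells out what the paper leaves implicit.
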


The same argument (but with $t=k/2$) gives a sharp bound for the residue class $2$ and
answers a question of Gao, Li, Ma and Xie~\cite{GaoLiMaXie2024} for sufficiently large even moduli.

\begin{corollary}\label{cor:c2even}
For sufficiently large even integer $k$, every $n$-vertex graph $G$
with
\[
  e(G)>\tfrac{k+1}{2}(n-1)
\]
contains a cycle whose length is congruent to $2\pmod k$. Moreover, when
$k$ divides $n-1$, the connected graphs whose blocks are copies of
$K_{k+1}$ are the unique extremal graphs.
\end{corollary}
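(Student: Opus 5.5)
We deduce Corollary~\ref{cor:c2even} from Theorem~\ref{thm:main} with $t=k/2$, which is large once $k$ is. The threshold $\tfrac{(2t+1)(n-1)}2$ is then exactly $\tfrac{k+1}{2}(n-1)$.

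\emph{Existence.} Suppose $e(G)>\tfrac{k+1}{2}(n-1)$. Since the inequality is strict, alternative~\ref{maintheorem2} of Theorem~\ref{thm:main} cannot occur. So alternative~\ref{maintheorem1} holds: there is an $a\ge1$ such that $2a,2a+2,\ldots,2a+2t$ are all cycle lengths, where length two means an edge. This is a run of $t+1=k/2+1$ consecutive even integers. We split on whether the run starts at two.

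\begin{itemize}
\item \emph{Case $a=1$.} The run is $2,4,\ldots,k+2$, so $G$ has a genuine cycle of length $k+2\equiv2\pmod k$.
\item \emph{Case $a\ge2$.} Every term is a genuine cycle length. Any $k/2$ consecutive even integers cover all even residues modulo $k$, so some term is $\equiv2\pmod k$ and is the length of an actual cycle.
\end{itemize}

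The extra term in~\ref{maintheorem1} is what makes the first case work: the run then reaches $k+2$ rather than stopping at $k$.

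\emph{Sharpness and uniqueness.} Assume $k\mid n-1$. Let $G$ be connected with every block isomorphic to $K_{k+1}$; such graphs exist since $n-1$ is a multiple of $k$. Every cycle lies in a single block, so the cycle lengths lie in $\{3,\ldots,k+1\}$. None of these is $\equiv2\pmod k$: the only candidate would be $2$ or $k+2$, both outside this range. Moreover $e(G)=\tfrac{k+1}{2}(n-1)$, so the bound is attained.

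Conversely, let $G$ have $e(G)=\tfrac{k+1}{2}(n-1)$ and no cycle of length $\equiv2\pmod k$. Apply Theorem~\ref{thm:main} with $t=k/2$. Alternative~\ref{maintheorem1} is excluded by the existence argument above, which only used~\ref{maintheorem1}. Hence alternative~\ref{maintheorem2} holds, so $G$ is connected with every block isomorphic to $K_{k+1}$. This gives uniqueness.

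The only step needing care is the $a=1$ case, where the run contains the fake length two. The extra term of~\ref{maintheorem1} resolves it. Everything else follows directly from Theorem~\ref{thm:main}.
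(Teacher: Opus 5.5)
Your proposal is correct and is essentially the paper's argument, which applies Theorem~\ref{thm:main} with $t=k/2$ in the same way as for Corollary~\ref{cor:c0even}. A run starting at the fake length two reaches $k+2\equiv 2\pmod k$ thanks to the extra term; otherwise the $k/2+1$ genuine even lengths cover every even residue; and sharpness and uniqueness come from the clique-block construction and alternative~\ref{maintheorem2}, a uniqueness step you spell out more explicitly than the paper does.
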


\subsection{Stability-enhanced sublinear expanders}
Roughly speaking, writing $d=2t+2$, our goal is to find cycles of $d/2$ consecutive even lengths in a graph with average degree $\Omega(d)$.
Our proof uses sublinear expanders, introduced by Koml\'os and
Szemer\'edi~\cite{KS1,KS2}. A key feature of this method is that one can
extract a sublinear expander from any graph while retaining almost the same average degree. This makes the method particularly useful in embedding  problems of specified structures, like minors,  subdivisions, and cycles of prescribed lengths. See, for example,
\cite{Liu-balanced-sub,Yang1,LiuC4,Liu-Mon-JAMS,balanced1,
Montgomery2015,Wang,Yang2}. We highlight here the main new ingredients that we develop
beyond the existing sublinear expander theory, and refer the reader to
Section~\ref{subsec:proof-overview} for a more detailed proof overview.

At the sharp Erd\H{o}s--Gallai bound, we first extract a sublinear expander $H$ with average degree $(1-o(1))d$ from the original graph. Note that $|H|$ may range from $O(d)$ to values much larger than $d$, and the different regimes for $|H|$ relative to $d$ pose rather different challenges.

The main challenges, and the bulk of the proof, arise in the dense expander regime~$|H|=O(d)$ (see Section~\ref{sec:dense-expander}). The first obstruction is that we have an $o(d)$ loss in degree when passing to $H$. It could be the case that $H$ is a clique-like
graph on $(1-o(1))d$ vertices,  providing almost all of the desired interval but still leaves $o(d)$ missing lengths, 
which can only be recovered by using vertices not in the expander. 
The second obstruction comes from  a competing bipartite-type structure. Apart from the clique-block extremal configuration, the expander may be close to a complete bipartite graph with one part of order $d/2$. Since the two structures are indistinguishable at the $o(dn)$ scale 
and can not be separated by the expansion alone, 
a stability analysis is therefore needed essentially.

Throughout the argument, we work within a minimal counterexample. With the aid of the regularity lemma, we develop a cycle-augmenting process and prove a rooted cycle lemma to overcome the difficulties illustrated above. At the beginning, the regularity lemma gives a \emph{dense core} in the expander which already provides all but $o(d)$ of the required cycle lengths. Our novelty lies in showing that, by running the augmenting process to increase the length of cycles, either the desired cycles have already been found, or some vertices outside the expander $H$ can induce a subgraph with minimum degree $\Omega(d)$. The other ingredient, the rooted cycle lemma, may be of independent interest:  every vertex in a $2$-connected graph with minimum degree at least $d$ lies in $d/2-O(1)$ consecutive even cycles. By contracting the dense core and resolving the parity issue, it is possible to join the cycle family arising from the vertices outside the expander to the flexible path family in the core, thereby obtaining the whole interval of $d/2$ consecutive even cycle lengths.

When the expander is sparse $|H|\gg d$, we can find much more than $d/2$ consecutive even cycle lengths by using several tools, such as adjusters, units, webs, and clique subdivisions. 
Adjusters, introduced by the second author and Montgomery~\cite{Liu-Mon-JAMS}, provide paths with a controlled range of lengths under two requirements: graphs should be bipartite and free of large clique $1$-subdivisions. If we directly use the previous approaches by passing to a bipartite subgraph, the expander may lose half of the average degree, while for the second requirement, the clique $1$-subdivision can not provide enough controlled path lengths. Instead, we develop an \emph{odd adjuster} to balance the parity of the path lengths, which can change path lengths with pace $1$ when the graph is non-bipartite.  On the other hand, when adjusters fail to work, there must exist a large clique $1$-subdivision. Observe that such a  clique $1$-subdivision itself already gives most of the required lengths. For the remaining missing lengths, inspired by our argument in the dense expander case, we show that there exists a dense subgraph (with average degree
$\Omega(d)$) after deleting the branch vertices of the clique $1$-subdivision. We then extract another expander from this subgraph to supply the
missing lengths (see
Section~\ref{section-spar}).

\medskip
\noindent
\textbf{Organization.}
Section~\ref{sec:pre} introduces the notation and preliminary tools.
Section~\ref{subsec:proof-overview} reduces the main theorem to the dense,
intermediate, and sparse expander lemmas and outlines their proofs. These
three regimes are treated in Sections~\ref{sec:dense-expander},
\ref{section-med}, and~\ref{section-spar}, respectively. We conclude in
Section~\ref{sec:con}.
\section{Preliminaries}\label{sec:pre}
\subsection{Notation}
Given a graph $G=(V,E)$, write $|G|:=|V(G)|$, and denote by $\Delta(G),\delta(G),d(G)$ the maximum degree, minimum degree, and average degree of $G$, respectively. For $x\in \mathbb{N}$, an \emph{$x$-star} is a star with $x$ leaves. Let $S$ be a star. We call its center the \emph{core} of $S$, write $\mathsf{Int}(S)$ for the singleton consisting of its core, and write $\mathsf{Ext}(S)$ for its set of leaves. Given a set $W\subseteq V(G)$, denote its \emph{external neighborhood} by $N_G(W):=\{u\notin W: uv\in E(G) \ \text{for} \ \text{some} \ v\in W\}$. Furthermore, set $N_G^{0}(W):=W$ and $N_G^1(W):=N_G(W)$ and, for each $i\geq 1$, define $N_G^{i+1}(W):=N_G(N_G^{i}(W))\setminus N_G^{i-1}(W)$. Denote by $B_G^r(W)$ the ball of radius $r$ around $W$, that is, $B_G^r(W)=\bigcup_{0\leq i\leq r}N_G^{i}(W)$. For simplicity, write $B_G^r(v)$ for $B_G^r(\{v\})$. Moreover, the subgraph of $G$ induced by $W$, denoted by $G[W]$, is the graph with vertex set $W$ and edge set $\{xy\in E(G):x,y\in W\}$; we write $G-W=G[V(G)\backslash W]$. If $F\subseteq G$ is a subgraph, then $G\setminus F$ denotes the spanning subgraph obtained by deleting the edges of $F$. For $U,W\subseteq V(G)$, set $N_U(W)=N_G(W)\cap U$, and simply write $N_U(w)=N_U(\{w\})$ and $d_U(w)=|N_U(w)|$ for $w\in V(G)$. If $U$ and $W$ are disjoint, let $e_G(U,W)$ denote the number of edges between $U$ and $W$.

For a path $P$, its length is the number of edges in $P$, denoted by $\ell(P)$ or $|P|$, and $\mathsf{Int}(P)$ denotes its set of internal vertices. For a family $\mathcal P$ of paths, write $V(\mathcal P):=\bigcup_{P\in\mathcal P}V(P)$ and $\mathsf{Int}(\mathcal P):=\bigcup_{P\in\mathcal P}\mathsf{Int}(P)$. We say that $P$ is an $x,y$-path if $x$ and $y$ are the endvertices of $P$, and $x$ and $y$ are called the \emph{starting vertex} and the \emph{terminal vertex} of $P$, respectively. For two vertex sets $A$ and $B$, we say that $P$ is a path from $A$ to $B$ if the starting vertex of $P$ lies in $A$, the terminal vertex of $P$ lies in $B$, and the internal vertices of $P$ are disjoint from $A\cup B$. We say that two paths \(P_1,P_2\) are vertex-disjoint if and only if \(V(P_1)\cap V(P_2)=\varnothing\), which means that all their vertices, including their endvertices, are distinct. If only internal vertex-disjointness is required, we state this explicitly.
For a graph $H$, a \emph{subdivision} of $H$, denoted by $TH$, is a graph obtained by replacing edges of $H$ by internally vertex-disjoint paths. The vertices corresponding to $V(H)$ are called the \emph{branch vertices} of $TH$, and the other vertices of $TH$ are its \emph{subdividing vertices}. For simplicity, we write $\mathsf{Br}(TH)$ for the set of branch vertices of $TH$ and set $\mathsf{Sub}(TH):=V(TH)\backslash \mathsf{Br}(TH)$. A subdivision of $H$ is \emph{balanced} if every edge of $H$ is subdivided the same number of times. For $\ell\in \mathbb{N}$, denote by $TH^{(\ell)}$ a balanced subdivision of $H$ in which every edge of $H$ is subdivided $\ell$ times. Furthermore, if $H$ is a clique, we call $TH^{(\ell)}$ a \emph{clique $\ell$-subdivision}. In particular, $TK_r^{(1)\ast q}$ consists of $r$ common branch vertices and, for every pair of branch vertices, $q$ internally vertex-disjoint paths of length two, with all subdividing vertices distinct. Subdivisions are a key tool for constructing cycles or paths with well-controlled lengths.

Throughout the paper, we use standard hierarchy notation: we write $a\ll b$ to mean that, given $b$, one can choose $a_0$ such that the subsequent arguments hold for all $0<a\le a_0$. For any positive integer $r$, we write $[r]$ for the set $\{1, \ldots, r\}$. We follow Bondy and Murty~\cite{Bon} for terminology and notation not defined here.

\subsection{Sublinear expander}
Koml\'{o}s and Szemer\'{e}di~\cite{KS1,KS2} introduced the notion of a sublinear expander, in which every vertex set of reasonable size expands by a sublinear factor. We shall use a robust version due to Haslegrave, Kim, and Liu~\cite{robustref}, which retains the expansion property even after a relatively small set of edges is removed.
For $\varepsilon_1>0$ and $k>0$, define the function $\rho(x)$ by
\begin{align}\label{eq:rho}
  \begin{split}
    \rho(x)=\rho(x,\varepsilon_1,k):=\left\{
      \begin{array}{ll}
        0    & \text{if} \ x<k/5, \\
        \tfrac{\varepsilon_1}{\log^2(\tfrac{15x}{k})}   & \text{if} \ x\geq k/5.
      \end{array}
      \right.
    \end{split}
  \end{align}

  \noindent
  For simplicity, we write $\rho(x)$ for $\rho(x,\varepsilon_1,k)$ when $\varepsilon_1$ and $k$ are clear from context. Note that the function $\rho(x)$ decreases for $x\ge k/5$, whereas $x\rho(x)$ increases for $x\ge k/2$.

  \begin{definition}[\cite{robustref}]
    Let $\varepsilon_1>0$ and $k>0$. A graph $G$ is an \emph{$(\varepsilon_1,k)$-robust-expander} if, for every $X\subseteq V(G)$ with $k/2\leq |X|\leq |V(G)|/2$ and every subgraph $F\subseteq G$ with $e(F)\leq d(G)\cdot\rho(|X|)\cdot|X|$, we have
 $|N_{G\setminus F}(X)|\geq \rho(|X|)\cdot|X|$.
  \end{definition}

Throughout the paper, we refer to a robust expander simply as an expander. In arguments that require only vertex expansion, we take $F=\varnothing$.
 %\footnote{Only  Lemma \ref{lem:lar-unit} use robust version in the whole paper.}

  \begin{lemma}[\cite{robustref}]\label{lem:sublinear}
    Let $C>30$, $\eps_1\leq \tfrac{1}{10C}$, $c'<\tfrac{1}{2}$, $d>0$ and $\rho(x)=\rho(x,\eps_1,c'd)$. Then every graph $G$ with $d(G)=d$ has a subgraph $H$ such that
    \begin{itemize}
      \item[$(1)$] $H$ is an $(\eps_1,c'd)$-expander,
      \item[$(2)$] $d(H)\geq (1-\eps_0)d$, where $\eps_0:=\tfrac{C\eps_1}{\log 3}<1$,
      \item[$(3)$] $\delta(H)\geq \tfrac{d(H)}{2}$, and
      \item[$(4)$] $H$ is $\nu d$-connected, where $\nu:=\tfrac{\eps_1}{6\log^2(5/c')}$.
    \end{itemize}
  \end{lemma}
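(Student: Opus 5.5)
This statement is the robust-expander extraction lemma of Haslegrave, Kim and Liu, and I would follow the standard Koml\'os--Szemer\'edi density-increment scheme adapted to the robust definition. The plan is to choose a subgraph $H\subseteq G$ maximising a potential of the form
\[
  \Phi(H):=d(H)\prod_{j}\bigl(1+\tfrac{C'}{\log^2 x_j}\bigr)^{-1},
\]
or, more cleanly, a subgraph that is minimal with $d(H)\ge(1-\eps(|H|))d$ for a slowly varying correction $\eps(x)=\sum$ of the terms $\rho$ at dyadic scales. Concretely, I would define $\eps(x)$ so that $\eps(x)\le \eps_0$ for all $x$, with $\eps(x)-\eps(x/2)\gtrsim\rho(x)$. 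Then I take $H$ minimal (by inclusion) among subgraphs with $d(H)\ge (1-\eps(|H|))d$ and $|H|\ge c'd$ (roughly). Properties (2) and (3) then come for free. Property (2) holds because $\eps(|H|)\le\eps_0$. For (3), deleting a vertex of degree below $d(H)/2$ does not decrease the average degree, which contradicts minimality; the small-size threshold has to be handled using $\rho=0$ below $c'd/5$.

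For the robust expansion (1), I suppose there are $X$ with $c'd/2\le|X|\le|H|/2$ and $F$ with $e(F)\le d(H)\rho(|X|)|X|$ such that $|N_{H\setminus F}(X)|<\rho(|X|)|X|$. I would consider the two pieces $H_1=H[X\cup N_{H\setminus F}(X)]$ and $H_2=H-X$. Every edge of $H$ lies in $H_1$ or $H_2$, or in $F$, or has an endpoint in $N(X)$ counted in both. This gives
\[
  e(H_1)+e(H_2)\ge e(H)-e(F).
\]
Hence one of $H_1,H_2$ has average degree at least $d(H)(1-O(\rho(|X|)))$ relative to its size, while its order is at most a constant fraction of $|H|$ (or $|H|-|X|$). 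The monotonicity of $\eps$ then absorbs the loss, giving a proper subgraph satisfying the defining inequality and contradicting minimality. The main obstacle is the bookkeeping here: the choice of $\eps(x)$ must make $\sum_x \rho$ over geometric scales converge to something at most $\eps_0=C\eps_1/\log 3$. This is where the $\log^2(15x/k)$ denominator and $C>30$, $\eps_1\le 1/(10C)$ are used. One must also ensure that the smaller piece still has order at least $c'd$, which uses $\delta(H)\ge d(H)/2$.

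Finally, for connectivity (4), I would argue that a separator $S$ with $|S|<\nu d$ splitting $H$ into parts $A,B$ contradicts expansion. Let $A$ be the smaller part. Since $\delta(H)\ge d/3$, $A$ has size at least about $d/3\ge c'd/2$. Then $N(A)\subseteq S$, so $|N(A)|<\nu d\le\rho(|A|)|A|$ once $|A|$ is of order $d$, because $\rho(|A|)|A|\ge \rho(c'd)\,c'd/2$ is increasing in $|A|$. The value $\nu=\eps_1/(6\log^2(5/c'))$ is exactly calibrated to this lower bound at $|A|\approx d/3$.
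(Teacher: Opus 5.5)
The paper quotes this lemma from Haslegrave--Kim--Liu without proof. Your scheme is the standard argument behind it: take a minimal subgraph above a size-dependent degree threshold, and split it along a non-expanding set. The bookkeeping you flag as the main obstacle, however, fails in two concrete ways.

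First, your slack runs in the wrong direction. Your $\eps(x)$ is a cumulative dyadic sum with $\eps(x)-\eps(x/2)\gtrsim\rho(x)$, so it increases with $x$. Hence the condition $d(H)\ge(1-\eps(|H|))d$ becomes \emph{stricter} for smaller subgraphs, and neither minimality step works. For (3) you would need $d(H-v)\ge(1-\eps(|H|-1))d>(1-\eps(|H|))d$, and $d(H-v)>d(H)$ does not give this when $H$ sits exactly at its threshold. In (1), a smaller piece that has lost degree would have to clear a higher bar, not a lower one.

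The slack must decrease with the order. Take $\eta(x)=C\eps_1/\log(15x/k)$ for $x\ge k/5$ and $\eta(x)=\eps_0$ below; up to constants this is the tail sum $\sum_{j\ge0}\rho(2^jx)$. Then $0\le\eta\le\eta(k/5)=C\eps_1/\log 3=\eps_0$, which is where $\eps_0$ comes from. Moreover, $C>30$ gives $\eta((1+\rho(y))y)-\eta(2y)\ge 3\rho(y)$ for all $y\ge k/2$. With $H$ minimal subject to $d(H)\ge(1-\eta(|H|))d$, properties (2) and (3) are immediate. No separate size condition is needed, since $|H|>d(H)\ge(1-\eps_0)d$.

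Second, the dichotomy ``one of $H_1,H_2$ has average degree at least $d(H)(1-O(\rho(|X|)))$'' is too weak. Suppose the good piece is $H_2=H-X$ with $|X|\ll|H|$. The slack gained is $\eta(|H|-|X|)-\eta(|H|)$, which is only of order $\eps_1|X|/(|H|\log^2(15|H|/k))$, far below $\rho(|X|)$. So the whole loss must be charged at scale $|X|$, via a sharper dichotomy. Write $\rho=\rho(|X|)$. Either $d(H_1)\ge(1-3\rho)d(H)$, or else $2e(H_1)<(1-3\rho)(1+\rho)d(H)|X|\le(1-2\rho)d(H)|X|$. In the latter case $2e(H_2)\ge d(H)(|H|-2\rho|X|)-2e(H_1)>d(H)(|H|-|X|)$, so $d(H_2)>d(H)$ with no loss at all.

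In the first case $H_1$ qualifies: $|H_1|\le(1+\rho)|X|$ and $|H|\ge 2|X|$ give $\eta(|H_1|)-\eta(|H|)\ge 3\rho$. In the second case $H_2$ qualifies by the monotonicity of $\eta$. Either way you contradict minimality.

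Your argument for (4) is fine once you use $|A|\ge\delta(H)-|S|\ge(1-\eps_0)d/2-\nu d\ge d/3$. Then $\rho(|A|)|A|\ge\rho(d/3)\,d/3=2\nu d>|S|$ by the monotonicity of $x\rho(x)$.
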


\begin{lemma}\label{lem:delete-few-vertices-expander}
Let $G$ be an $(\eps,k)$-expander, and $S\subseteq V(G)$ with $|S|\le |G|/2$. If $\rho(k/2,\eps,k)\tfrac{k}{2}\ge 2|S|$, then $G-S$ is an $(\eps/2,k)$-expander.
\end{lemma}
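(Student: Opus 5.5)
We argue directly from the definition of an expander. Set $G':=G-S$ and $\rho'(x):=\rho(x,\eps/2,k)=\rho(x)/2$. Fix $X\subseteq V(G')$ with $k/2\le |X|\le |G'|/2$ and a subgraph $F'\subseteq G'$ with $e(F')\le d(G')\rho'(|X|)|X|$. We must show that $|N_{G'\setminus F'}(X)|\ge \rho'(|X|)|X|$. The plan is to apply the expansion property of $G$ to the same $X$ with a slightly enlarged edge set, and then discard the at most $|S|$ neighbours that lie in $S$.

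There are two routine bookkeeping issues. The first is that $|X|\le |G'|/2\le |G|/2$, so $X$ is an admissible set in $G$. The second is that $F'$ must be an admissible deleted subgraph in $G$, that is, $e(F')\le d(G)\rho(|X|)|X|$. For this it suffices that $d(G')\le 2d(G)$. This holds in the regime where the lemma is used, since deleting $S$ removes at most $|S|\Delta$ edges. To be safe, I would reduce instead to the vertex-expansion case $F'=\varnothing$, since only vertex expansion is invoked later. If a robust version is needed, I would bound $d(G')$ by $d(G)|G|/(|G|-|S|)\le 2d(G)$ after checking that deleting $S$ does not increase the average degree by more than a factor $|G|/|G'|\le 2$. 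This factor is compensated by the halving $\rho'=\rho/2$, giving $e(F')\le d(G)\rho(|X|)|X|$.

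Taking $F:=F'$ in $G$, we get $|N_{G\setminus F}(X)|\ge \rho(|X|)|X|$. Every vertex of $N_{G\setminus F}(X)$ outside $S$ lies in $N_{G'\setminus F'}(X)$, so
\[
|N_{G'\setminus F'}(X)|\ge \rho(|X|)|X|-|S|.
\]
It remains to show that $|S|\le \tfrac12\rho(|X|)|X|$. Here we use the remark after \eqref{eq:rho} that $x\rho(x)$ is increasing for $x\ge k/2$. Hence $\rho(|X|)|X|\ge \rho(k/2)\tfrac k2\ge 2|S|$, and so $|N_{G'\setminus F'}(X)|\ge \tfrac12\rho(|X|)|X|=\rho'(|X|)|X|$, as required.

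The main obstacle is the robust part, namely comparing $d(G')$ with $d(G)$, which is not automatic when $S$ has high degree. In the intended application only vertex expansion is needed ($F=\varnothing$), and there the argument above is complete.
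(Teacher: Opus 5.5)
Your argument is the paper's argument. You apply the expansion of $G$ to the same $X$ and the same deleted subgraph using $\rho(x,\eps/2,k)=\tfrac12\rho(x,\eps,k)$ and $d(G-S)\le 2d(G)$, discard the at most $|S|$ neighbours lying in $S$, and use that $x\rho(x)$ is increasing for $x\ge k/2$ to get $|S|\le\tfrac12\rho(|X|)|X|$.

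The step you flag as the main obstacle is not one, so no fallback to $F=\varnothing$ is needed. That fallback would not prove the lemma as stated anyway, since the paper's expanders are the robust notion. Deleting vertices never adds edges, so $d(G-S)=2e(G-S)/|G-S|\le 2e(G)/(|G|/2)=2d(G)$ holds unconditionally. High-degree vertices in $S$ only make $e(G-S)$ smaller, and this one line is exactly what the paper uses.
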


\begin{proof}
  Set $H:=G-S$. Let $X\subseteq V(H)$ with $\tfrac{k}{2}\le |X|\le\tfrac{|H|}{2}$, and let $F\subseteq H$ with $e(F)\le d(H)\rho(|X|,\eps/2,k)|X|$. Since $|H|\ge |G|/2$, we have $d(H)\le2d(G)$. Moreover, $\rho(x,\eps/2,k)=\tfrac12\rho(x,\eps,k)$.
  Consequently, $e(F)\le d(G)\rho(|X|,\eps,k)|X|$. As $G$ is an
  $(\eps,k)$-expander, $|N_{G\setminus F}(X)|
    \ge \rho(|X|,\eps,k)|X|$. Since $x\rho(x,\eps,k)$ is increasing for $x\ge k/2$, it follows that
  \[
    \rho(|X|,\eps,k)|X|
    \ge \rho(k/2,\eps,k)\tfrac{k}{2}
    \ge2|S|.
  \]
  Therefore,
    $|N_{H\setminus F}(X)|
    \ge |N_{G\setminus F}(X)|-|S|
    \ge \tfrac12\rho(|X|,\eps,k)|X|
    =\rho(|X|,\eps/2,k)|X|$.
\end{proof}

  A key property of expanders is that two vertex sets can be connected by a short path while avoiding a moderately sized set of vertices.

  \begin{lemma}[Small-diameter lemma~\cite{KS2}]\label{distance}
    Let $\varepsilon_1, k>0$. If $G$ is an $n$-vertex $(\varepsilon_1,k)$-expander, then for any two vertex sets $X_1, X_2$ each of size at least $x\geq k/2$ and any vertex set $W$ of size at most $\rho(x)x/4$, there exists a path in $G-W$ between $X_1$ and $X_2$ of length at most $(2/\varepsilon_1)\log^3\left(15n/k\right)$.
  \end{lemma}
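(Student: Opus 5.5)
The plan is the standard breadth-first ball-growing argument. Write $\rho(x)=\rho(x,\varepsilon_1,k)$ and $m:=\lceil(2/\varepsilon_1)\log^3(15n/k)\rceil$. We grow balls around $X_1$ and $X_2$ in $G-W$ until each contains more than $n/2$ vertices. Two such balls must intersect, and concatenating the two shortest paths to a common vertex gives a path between $X_1$ and $X_2$ of length at most twice the number of steps. Reading off this path from its last vertex in $X_1$ to its first vertex in $X_2$ gives an honest path from $X_1$ to $X_2$. So it suffices to show that each ball exceeds $n/2$ vertices within $m/2$ steps. If that step count is off by a constant, $\varepsilon_1$ can absorb it.

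Fix $i\in\{1,2\}$ and set $B_j:=B^j_{G-W}(X_i)$, so $|B_0|=|X_i|\ge x\ge k/2$. While $|B_j|\le n/2$, apply the expansion property to $X=B_j$ with $F=\varnothing$. This gives $|N_G(B_j)|\ge\rho(|B_j|)|B_j|$. Since $x\rho(x)$ is increasing for $x\ge k/2$ and $|B_j|\ge x$, we get $|W|\le\rho(x)x/4\le\rho(|B_j|)|B_j|/4$. Hence
\[
|B_{j+1}|\ge|B_j|+|N_G(B_j)|-|W|\ge \bigl(1+\tfrac34\rho(|B_j|)\bigr)|B_j|.
\]
The key point is that $W$ is removed only once, as a fixed set, so the loss is absorbed at every step by the monotonicity of $x\rho(x)$.

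To count the steps, split them into dyadic phases according to $|B_j|\in[2^s k/2,\,2^{s+1}k/2)$. Within phase $s$, $\rho$ is decreasing, so $\rho(|B_j|)\ge\rho(2^{s+1}k/2)=\varepsilon_1/\log^2(15\cdot 2^s)$. Doubling therefore takes at most about $\tfrac{4}{3}\log^2(15\cdot2^{s})/\varepsilon_1+1$ steps, using $(1+y)^r\ge e^{ry/2}$ for small $y$. The number of phases is at most $\log_2(n/k)+1$. Summing,
\[
\sum_{s\le \log_2(2n/k)} O\!\bigl(\log^2(15\cdot 2^s)/\varepsilon_1\bigr)=O\!\bigl(\log^3(15n/k)/\varepsilon_1\bigr).
\]

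The main obstacle is pinning the constant to $(2/\varepsilon_1)$ rather than an unspecified $O(\cdot)$. I would handle this with the continuous bound $\log^2(15x/k)\cdot\tfrac{dx}{x}$ integrated from $k/2$ to $n/2$, which gives $\tfrac13\log^3(15n/(2k))$. With the factor $4/3$ from the $W$-loss and the factor $2$ from using two balls, this yields roughly $\tfrac{8}{9\varepsilon_1}\log^3(15n/k)+O(\log(n/k))$. That is below $(2/\varepsilon_1)\log^3(15n/k)$ once the additive rounding terms are absorbed, which needs $\varepsilon_1$ small (as it is in every application) and $n\ge k$. The case $n<k$ is handled separately: there the sets have size more than $n/2$ and must intersect outside $W$, since $|W|=0$ whenever $\rho(x)=0$, or directly from the sizes otherwise.
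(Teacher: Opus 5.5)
Your argument is the standard Koml\'os--Szemer\'edi ball-growing proof. The paper gives no proof of its own here; it simply cites that result. The core is right. While $|B_j|\le n/2$, monotonicity of $y\rho(y)$ on $[k/2,\infty)$ gives $|W|\le\rho(|B_j|)|B_j|/4$, hence $|B_{j+1}|\ge(1+\tfrac34\rho(|B_j|))|B_j|$. Comparing each step with $\int dy/(y\rho(y))$ shows that, say for $\varepsilon_1\le1$, each ball exceeds $n/2$ after at most $1+\tfrac{1}{2\varepsilon_1}\log^3(15n/k)$ steps. The total length is therefore at most $2+\varepsilon_1^{-1}\log^3(15n/k)$, which is below the claimed bound once $n\ge k$.

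The genuine soft spot is an unstated hypothesis. Both $|B_0|=|X_i|$ and the final claim that $X_1,X_2$ ``intersect outside $W$'' presuppose $W\cap(X_1\cup X_2)=\varnothing$, which the statement does not grant. Without it, your treatment of $n<k$ fails for two reasons. First, $x\ge k/2>k/5$ forces $\rho(x)>0$, so the clause about $\rho(x)=0$ never applies. Second, the sizes only give $|X_1\cap X_2|\ge 2x-n$, which can be $1$. In fact the statement as written is false there. Take $k$ even with $k\ge 40/\varepsilon_1$ and $n=k-1$. The edgeless graph is then vacuously an $(\varepsilon_1,k)$-expander, since no $X$ has $k/2\le|X|\le n/2$. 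Choose $|X_1|=|X_2|=k/2$ with $X_1\cap X_2=\{w\}$ and $W=\{w\}$; this is allowed because $\rho(k/2)k/8\ge1$, yet $G-W$ has no $X_1$--$X_2$ path.

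A related problem arises even when $n\ge k$. If $W$ meets $X_i$ and $x$ is close to $k/2$, then $X_i\setminus W$ may have fewer than $k/2$ vertices, and your first expansion step is unavailable.

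The fix is to assume $|X_i\setminus W|\ge k/2$ (for instance $W\cap(X_1\cup X_2)=\varnothing$, or $x\ge k$) and grow the balls from $X_i\setminus W$. For $x'=|X_i\setminus W|$ one has $\rho(x')x'\ge(1-\rho(x)/4)\rho(x)x$. So the loss from $W$ is still about a quarter of the boundary, and your computation goes through unchanged. This covers every use of the lemma in the paper, where $|W|$ is negligible compared with $|X_i|$.
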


  The following lemma implies that any non-bipartite sublinear expander satisfying a suitable minimum-degree condition contains a short odd cycle.
  \begin{lemma}\label{lem:oddcyclelengthinexpander}
    Let $0<\tfrac{1}{d}\ll \eps_1,\eps_2<1$, with, in addition, $\eps_2<\tfrac{1}{20}$. If $G$ is an $n$-vertex $(\eps_1,\eps_2d)$-expander with $\delta(G)\geq d/10$ and $G$ contains an odd cycle, then there is an odd cycle of length at most $2m+5$ in $G$, where $m=\tfrac{2}{\eps_1}\log^3(\tfrac{15n}{\eps_2 d})$.
  \end{lemma}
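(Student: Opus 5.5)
Take a shortest odd cycle $C$ in $G$, say of length $2\ell+1$, and suppose $2\ell+1>2m+5$, i.e.\ $\ell\ge m+3$. Since $C$ is a shortest odd cycle, it is \emph{geodesic}: for any two vertices $u,v\in V(C)$ we have $\mathrm{dist}_G(u,v)=\mathrm{dist}_C(u,v)$. Otherwise a shorter $u,v$-path, combined with one of the two arcs of $C$, would close a shorter odd closed walk, and hence a shorter odd cycle. The plan is to pick two vertices $x,y$ that are far apart on $C$ and join small neighbourhoods of them by a short path from Lemma~\ref{distance}. This gives a short path between $x$ and $y$, contradicting the geodesic property, because $\mathrm{dist}_C(x,y)$ can be made about $\ell\ge m+3$.

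Concretely, fix $x,y\in V(C)$ with $\mathrm{dist}_C(x,y)=\ell$. Put $X_1=\{x\}\cup N_G(x)$ and $X_2=\{y\}\cup N_G(y)$. Because $\delta(G)\ge d/10$ and $\eps_2<1/20$, we get $|X_i|\ge d/10\ge \eps_2 d/2$, so Lemma~\ref{distance} applies with $k=\eps_2 d$ and $x:=\eps_2 d/2$. Taking $W=\varnothing$, it gives an $X_1$--$X_2$ path $P$ of length at most $m$. Note that $X_i$ may exceed $|G|/2$, but Lemma~\ref{distance} only needs lower bounds on the sizes; if $X_1\cap X_2\neq\varnothing$, then simply $\mathrm{dist}(x,y)\le 2$. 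Extending $P$ by one edge at each end gives an $x,y$-walk of length at most $m+2<\ell$. Hence $\mathrm{dist}_G(x,y)\le m+2<\ell=\mathrm{dist}_C(x,y)$, contradicting the geodesic property.

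It remains to justify the geodesic claim carefully. Let $Q$ be a shortest $u,v$-path in $G$ with $|Q|<\mathrm{dist}_C(u,v)$, and let $A_1,A_2$ be the two $u,v$-arcs of $C$, with $|A_1|+|A_2|=2\ell+1$. One of $Q\cup A_1$ and $Q\cup A_2$ is a closed walk of odd length. Its length is less than $|A_1|+|A_2|$, because $|Q|<\min(|A_1|,|A_2|)$. Every odd closed walk contains an odd cycle of at most its length, so this contradicts the minimality of $C$.

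The main (minor) obstacle is checking the hypotheses of Lemma~\ref{distance}: the size threshold $x\ge k/2$ and the bound $|W|\le\rho(x)x/4$, which is trivially satisfied for $W=\varnothing$. I also need $m\ge 1$, which holds as $1/d\ll\eps_1$ forces $m\ge 2/\eps_1$ large (or $n$ bounded below by $\delta$). The slack in the stated bound $2m+5$ absorbs the two extra edges and the parity rounding; the argument actually yields length at most $2m+5$ directly since $\ell\le m+2$.
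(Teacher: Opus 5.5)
Your proof is correct and takes essentially the paper's route: the small-diameter lemma gives a path of length at most $m+2$ between two antipodal vertices of a shortest odd cycle, and combining it with the arc of suitable parity yields a shorter odd cycle. Your ``shortest odd cycle is geodesic'' step is just a repackaging of the paper's inequality $|\Gamma|\le |P|+|Q|\le m+2+\tfrac{|\Gamma|+1}{2}$. One small fix: since $m$ need not be an integer, assume $\ell>m+2$ rather than $\ell\ge m+3$ for the contradiction.
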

  \begin{proof}
    Let $\Gamma$ be a shortest odd cycle and write $|\Gamma|=2\ell+1$. Choose antipodal vertices $u,v\in V(\Gamma)$. Since $\delta(G)\ge d/10$, there are disjoint sets $N'(u)\subseteq N(u)$ and $N'(v)\subseteq N(v)$, each of size $d/20\ge\eps_2d$. Lemma~\ref{distance} gives a path of length at most $m$ between $N'(u)$ and $N'(v)$. Extending it by one edge at each end and removing loops if necessary gives a $u,v$-path $P$ of length at most $m+2$. One of the two $u,v$-arcs $Q$ of $\Gamma$ has parity opposite to $P$ and length at most $\ell+1$. Thus $P\cup Q$ contains an odd cycle, and the minimality of $\Gamma$ gives
 $      |\Gamma|\le |P|+|Q|
      \le m+2+\tfrac{|\Gamma|+1}{2}$.
    Hence $|\Gamma|\le2m+5$.
  \end{proof}
  \begin{proposition}\label{fact:bipartition}
    Every graph $G$ has a partition $V(G)=V_1\mathbin{\dot\cup}V_2$
    such that $d_{V_{3-i}}(v)\ge d_G(v)/2$ for every $i\in[2]$ and every $v\in V_i$.
  \end{proposition}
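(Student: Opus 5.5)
The plan is the standard max-cut argument: take a partition of $V(G)$ into two parts that maximizes the number of crossing edges, and check that no vertex can have most of its neighbours on its own side. Since $G$ is finite, there are only finitely many partitions $V(G)=V_1\mathbin{\dot\cup}V_2$, so we may fix one maximizing $e_G(V_1,V_2)$. Empty parts are allowed, so such a partition exists.

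Suppose, for contradiction, that some $v\in V_i$ has $d_{V_{3-i}}(v)<d_G(v)/2$. Every neighbour of $v$ lies in $V_i\setminus\{v\}$ or in $V_{3-i}$. Hence $d_{V_i}(v)=d_G(v)-d_{V_{3-i}}(v)>d_G(v)/2>d_{V_{3-i}}(v)$.

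Now move $v$ to the other side, setting $V_i'=V_i\setminus\{v\}$ and $V_{3-i}'=V_{3-i}\cup\{v\}$. Edges not incident to $v$ keep their crossing status. The edges from $v$ to $V_{3-i}$ stop crossing, and the edges from $v$ to $V_i\setminus\{v\}$ start crossing. So the number of crossing edges changes by $d_{V_i}(v)-d_{V_{3-i}}(v)>0$, which contradicts maximality. Therefore every $v\in V_i$ satisfies $d_{V_{3-i}}(v)\ge d_G(v)/2$, for each $i\in[2]$.

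There is no real obstacle here. The only points to check are the exact bookkeeping of the change in crossing edges when $v$ moves, and that the inequality is non-strict, so vertices with equal degree to each side, including isolated vertices, are harmless.
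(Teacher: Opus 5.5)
Your proof is correct and follows the paper's argument: fix a bipartition maximizing the number of crossing edges, and note that moving a vertex with more neighbours on its own side would increase that number. Your explicit computation of the change $d_{V_i}(v)-d_{V_{3-i}}(v)>0$ just writes out the step the paper states in one line.
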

 % \begin{proof}
%    Choose a partition \(V(G)=V_1\mathbin{\dot\cup}V_2\) for which the number of edges between \(V_1\) and \(V_2\) is maximum. We claim that this partition has the desired property. Indeed, fix \(i\in[2]\) and \(v\in V_i\). If $d_{V_{3-i}}(v)<d_{V_i}(v)$, then moving \(v\) from \(V_i\) to \(V_{3-i}\) would increase the number of edges between the two parts by $d_{V_i}(v)-d_{V_{3-i}}(v)>0$, contradicting the maximality of the chosen partition. Hence $d_{V_{3-i}}(v)\ge d_{V_i}(v)$. Since $d_G(v)=d_{V_i}(v)+d_{V_{3-i}}(v)$, it follows that $d_{V_{3-i}}(v)\ge \tfrac{d_G(v)}{2}$. This holds for every \(i\in[2]\) and every \(v\in V_i\), as desired.
 % \end{proof}
  \begin{proof}
     Choose a bipartition maximizing the number of crossing edges. If some
     vertex had more neighbors on its own side than on the other, moving it
     would increase that number.
   \end{proof}

  \subsection{Regularity tools}
  For two disjoint vertex sets $A$ and $B$ in a graph $G$, we define the \emph{density} between $A$ and $B$ to be $d_G(A,B):=\tfrac{e_G(A,B)}{|A||B|}$.

  \begin{definition}[$\varepsilon$-regular pair]
    Let $G$ be a graph and let $X,Y\subseteq V(G)$ be disjoint. We call $(X,Y)$ an \emph{$\varepsilon$-regular pair} (in $G$) if for all $A\subseteq X$ and $B\subseteq Y$ with $|A|\geq \varepsilon|X|$ and $|B|\geq \varepsilon|Y|$, one has
    \begin{equation*}
      |d_G(A,B)-d_G(X,Y)|\leq \varepsilon.
    \end{equation*}
    We say that $(X,Y)$ is \emph{$(\varepsilon,\tau^+)$-regular} if it is $\varepsilon$-regular and $d_G(X,Y)\geq\tau$.
  \end{definition}

  \begin{lemma}[Degree form of Szemer\'{e}di's regularity lemma~\cite{komlos1996szemeredi}]\label{regularity}
    Let $0<1/n\ll1/r_1\ll\xi\ll\tau\ll\beta<1$, and let $G$ be an $n$-vertex
    graph with $d(G)\ge\beta n$. Then there exist a partition
    $V(G)=V_0\mathbin{\dot\cup}V_1\mathbin{\dot\cup}\cdots\mathbin{\dot\cup}V_r$,
    a spanning subgraph $G^\circ\subseteq G$, and a graph $R$ on $[r]$ such that the following holds.
    \begin{enumerate}[label=\rm(\arabic*)]
      \item $1/\xi\le r\le r_1$, $|V_0|\le\xi n$, and $|V_1|=\cdots=|V_r|=:m$;
      \item $d_{G^\circ}(v)\ge d_G(v)-(\tau+2\xi)n$ for every $v\in V(G)$;
      \item $e(G^\circ[V_i])=0$ for every $i\in[r]$;
      \item for $i\ne j$, either $e(G^\circ[V_i,V_j])=0$, or $(V_i,V_j)$ is an $(\xi,\tau^+)$-regular pair in $G^\circ$; and
      \item $ij\in E(R)$ precisely when $e(G^\circ[V_i,V_j])\neq 0$, and $d(R)\ge(\beta-3\tau)r$.
    \end{enumerate}
  \end{lemma}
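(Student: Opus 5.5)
The plan is to derive Lemma~\ref{regularity} from the standard degree form of Szemer\'edi's regularity lemma as stated by Koml\'os and Simonovits~\cite{komlos1996szemeredi}. The only new content is the bookkeeping of constants and the lower bound on $d(R)$. Recall the standard form: for every $\varepsilon>0$ and integer $\ell$ there is $M=M(\varepsilon,\ell)$ such that, for every graph $G$ on $n\ge M$ vertices and every $\tau\in[0,1]$, there exist $\ell\le r\le M$, a partition $V(G)=V_0\mathbin{\dot\cup}V_1\mathbin{\dot\cup}\cdots\mathbin{\dot\cup}V_r$ and a spanning subgraph $G^\circ\subseteq G$ with the following properties:
\begin{itemize}
\item $|V_0|\le\varepsilon n$ and $|V_1|=\cdots=|V_r|=m$;
\item $d_{G^\circ}(v)>d_G(v)-(\tau+\varepsilon)n$ for every $v$;
\item each $V_i$ ($i\ge1$) is independent in $G^\circ$;
\item every pair $(V_i,V_j)$ with $1\le i<j\le r$ is $\varepsilon$-regular in $G^\circ$ with density either $0$ or greater than $\tau$.
\end{itemize}

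I will apply this with $\varepsilon=\xi$ and $\ell=\lceil 1/\xi\rceil$, and let $r_1:=M(\xi,\ell)$; the hierarchy $1/n\ll 1/r_1\ll\xi$ permits this choice. Properties (1)--(4) of the lemma then follow immediately. The degree loss $(\tau+\xi)n$ is at most $(\tau+2\xi)n$. A pair of nonzero density in $G^\circ$ is $\xi$-regular with density at least $\tau$, so it is $(\xi,\tau^+)$-regular; a pair of density $0$ has no edges. I then define $R$ on $[r]$ by $ij\in E(R)$ if and only if $e(G^\circ[V_i,V_j])\neq0$, which is exactly the first half of (5).

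The main (though routine) step is the bound $d(R)\ge(\beta-3\tau)r$, which I will obtain by double counting edges of $G^\circ$.

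\emph{Lower bound on $e(G^\circ)$.} Summing the degree condition over all vertices gives
\[
  e(G^\circ)\ge e(G)-\tfrac{(\tau+\xi)n^2}{2}\ge \tfrac{(\beta-\tau-\xi)n^2}{2}.
\]

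\emph{Upper bound on $e(G^\circ)$.} Each edge of $G^\circ$ either meets $V_0$ or joins two distinct clusters, since the clusters are independent in $G^\circ$. At most $|V_0|n\le\xi n^2$ edges meet $V_0$. Each edge of $R$ accounts for at most $m^2\le (n/r)^2$ edges of $G^\circ$. Hence
\[
  e(G^\circ)\le \xi n^2+e(R)\tfrac{n^2}{r^2}.
\]

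Combining the two bounds gives $e(R)\ge \tfrac{(\beta-\tau-3\xi)r^2}{2}$, so $d(R)=2e(R)/r\ge(\beta-\tau-3\xi)r\ge(\beta-3\tau)r$, using $\xi\ll\tau$.

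The only point needing care is that the hierarchy is compatible with these choices. The constant $r_1$ must depend only on $\xi$, which is why $1/r_1\ll\xi$ is placed as it is. The thresholds must satisfy $\xi\ll\tau$ so that the error terms $3\xi$ are absorbed into $2\tau$. Finally, $n$ must be large enough, namely $n\ge r_1$, for the regularity lemma to apply; the assumption $d(G)\ge\beta n$ is used only in the lower bound on $e(G^\circ)$. I expect no genuine obstacle beyond this constant bookkeeping.
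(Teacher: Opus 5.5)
Your derivation is correct and matches the paper's treatment. The paper gives no proof and simply quotes this lemma from the Koml\'os--Simonovits degree form, with the minimum number of clusters set to $\lceil 1/\xi\rceil$ to get $r\ge 1/\xi$; the only content beyond the citation is your double count $\xi n^2+e(R)\,n^2/r^2\ge e(G^\circ)\ge(\beta-\tau-\xi)n^2/2$, which gives $d(R)\ge(\beta-\tau-3\xi)r\ge(\beta-3\tau)r$.
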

  The spanning subgraph $G^\circ$ is called the \emph{pure graph}, and $R$ is the \emph{reduced graph} of $G$. In particular, $V_0$ is called the exceptional cluster. Throughout what follows, all regular pairs and all neighborhoods between non-exceptional clusters are taken in $G^\circ$; hence the resulting paths also lie in $G$, since $G^\circ\subseteq G$. Next, we introduce the embedding and slicing lemmas, which are essential for constructing cycles of the desired lengths.

  \begin{lemma}[Embedding lemma,~\cite{komlos1996szemeredi}]\label{embedding}
    Suppose that $0<1/n\ll\eps\ll\tau,1/\Delta$ and $1/n\ll1/r$, where
    $r,n\in\mathbb N$. Let $H$ and $G$ have vertex partitions
    $X_1\dot\cup\cdots\dot\cup X_r$ and $V_1\dot\cup\cdots\dot\cup V_r$, respectively, and let $R$ be a
    graph on $[r]$. Suppose that $\Delta(H),\Delta(R)\le\Delta$ and that:
    \begin{enumerate}[label=\rm(\arabic*)]
      \item for each $i\in [r]$, $X_i$ is an independent set in $H$ and $|X_i|\leq (1-\eps^{1/2})|V_i|$;
      \item for each $ij\in E(R)$, the graph $G[V_i,V_j]$ is $(\eps,\tau^+)$-regular; and
      \item for each $\{i,j\}\in \binom{[r]}{2}$ with $ij\notin E(R)$, the graph $H$ contains no edges from $X_i$ to $X_j$.
    \end{enumerate}
    Then there exists an embedding $\phi:V(H)\to V(G)$ such that $\phi(X_i)\subseteq V_i$ for every $i\in [r]$.
  \end{lemma}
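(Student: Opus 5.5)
The plan is to obtain the statement from the Blow-up Lemma of Koml\'os, S\'ark\"ozy and Szemer\'edi, after cleaning each regular pair to a super-regular one. A naive greedy embedding with candidate sets would only handle $|X_i|\le \eps_0 m$. The reason is that once a constant fraction of $V_i$ is used, the shrinking candidate sets, of size about $(\tau-\eps)^{\Delta}m$, may be entirely covered by used vertices. Since here $|X_i|$ may be as large as $(1-\eps^{1/2})|V_i|$, I would not rely on the greedy argument alone.

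\textbf{Step 1 (equalising and cleaning).} Choose a common size: shrink each $V_i$ to a subset of size $m'$, where $m'$ is the minimum of the $|V_i|$ over non-isolated $i$, keeping the slack $|X_i|\le(1-\eps^{1/2}/2)m'$. By the slicing property, sub-pairs of linear size remain $(2\eps,(\tau-\eps)^+)$-regular. Next, for each $i$ and each of the at most $\Delta$ indices $j$ with $ij\in E(R)$, regularity shows that fewer than $\eps|V_i|$ vertices of $V_i$ have fewer than $(\tau-\eps)|V_j|$ neighbours in $V_j$. Delete all such vertices, at most $\Delta\eps m'$ per cluster, and trim all clusters back to a common size $m''\ge(1-2\Delta\eps)m'$. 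Each $ij\in E(R)$ then gives a $(4\eps,\tau/2)$-super-regular pair: the minimum-degree condition survives because only $O(\Delta\eps)m'$ vertices were removed from each side, and regularity is inherited by the slicing argument. Since $\eps\ll 1/\Delta$, we have $2\Delta\eps\ll\eps^{1/2}$, so $|X_i|\le m''$ still holds.

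\textbf{Step 2 (padding $H$).} Add isolated vertices to each $X_i$ so that it has exactly $m''$ vertices. Call the result $H'$; it still satisfies $\Delta(H')\le\Delta$, each $X_i$ is independent, and edges of $H'$ run only along edges of $R$. Then $H'$ is a spanning subgraph of the blow-up $R(m'')$, that is, of the graph obtained from $R$ by replacing each vertex by an independent set of size $m''$ and each edge by a complete bipartite graph.

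\textbf{Step 3 (applying the Blow-up Lemma).} Apply the Blow-up Lemma with parameters $\delta=\tau/2$, maximum degree $\Delta$, and $r$ clusters; $\Delta(R)\le\Delta$ keeps the lemma's dependencies bounded. Replacing every pair of $R(m'')$ by the corresponding super-regular pair of $G$ preserves all bounded-degree spanning subgraphs, so $H'$, and hence $H$, embeds with $\phi(X_i)\subseteq V_i$. The hierarchy $1/n\ll\eps\ll\tau,1/\Delta$ supplies the smallness needed.

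The main obstacle is Step 1. Deletions must not cascade: removing vertices from $V_j$ lowers degrees into $V_j$ of vertices of its neighbouring clusters. I would handle this by deleting, in a single round, the vertices that are atypical with respect to the original clusters, and then using the margin $(\tau-\eps)|V_j|-2\Delta\eps m'\ge\tau|V_j|/2$. This margin is where $\Delta(R)\le\Delta$ and $\eps\ll\tau,1/\Delta$ are used. If one wants to avoid the Blow-up Lemma in the application, where $H$ consists of paths and cycles with $\Delta=2$, an alternative is to embed $H$ in pieces. One would split each $X_i$ into $O(\eps^{-1/2})$ batches of size at most $\eps_0 m$, and embed each batch greedily into the still-unused parts of the clusters, which remain of size at least $\eps^{1/2}m/2$ and regular by slicing. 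Paths crossing batch boundaries would be joined through a short connecting segment.
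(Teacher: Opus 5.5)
Your strategy is the right kind of argument: clean the regular pairs to super-regular ones, pad $H$, and invoke the Blow-up Lemma. The paper gives no proof and only cites Koml\'os--Simonovits, whose Key Lemma covers only $|X_i|\le\varepsilon_0|V_i|$, exactly as you observe. When all clusters have the same size, your Steps~1--3 go through. You must, however, cite the form of the Blow-up Lemma in which $\varepsilon$ depends on $\delta,\Delta$ and $\Delta(R)$ but not on $r$. The original statement has $\varepsilon=\varepsilon(\delta,\Delta,r)$, which is useless here because the paper's hierarchy has $1/r_1\ll\xi$.

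\textbf{The gap: equalising in Step~1.} The hypothesis bounds each $|X_i|$ only by $(1-\eps^{1/2})|V_i|$, at that cluster's own scale, and nothing in the statement makes the $|V_i|$ comparable. Shrinking every cluster to $m'=\min_i|V_i|$ can make $|X_i|>m'$. Your claimed $2\eps$-regularity of the shrunken pairs also needs $m'\ge|V_i|/2$ in the slicing lemma.

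This is not a technicality. In the proof of Lemma~\ref{lem:verydense-path}, the end clusters $W^\pm$ have size $\sqrt{\xi}m/10$, while each $\widetilde V_k$ must host up to $(1-\alpha)m$ path vertices.

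\textbf{Why the obvious fix is not enough.} You could clean each cluster at its own scale and pad $X_i$ to $|V_i'|$; this fixes the counting. But then you need a Blow-up Lemma for parts of different sizes with $\varepsilon$ independent of $r$. The usual such forms require the part sizes to lie within a bounded ratio $\kappa$, with $\varepsilon=\varepsilon(\kappa,\dots)$. In the application the ratio is $10/\sqrt{\xi}=200/\xi'$, where $\xi'=20\sqrt{\xi}$ is the regularity parameter used there. That ratio is of order $1/\eps$, so the hierarchy $\eps\ll 1/\kappa$ fails.

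The unbalanced case therefore needs an extra step. For instance, first embed the few $H$-vertices lying in a tiny cluster onto typical vertices. Then pass the resulting constraints on their neighbours as image restrictions to the image-restricted Blow-up Lemma. In the application each $W^\pm$ carries a single vertex, so this is easy there.

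\textbf{The batch fallback.} Your batch-by-batch alternative does not close the gap. A vertex whose $H$-neighbour was placed in an earlier batch must go into that neighbour's neighbourhood intersected with the unused $\eps^{1/2}$-fraction of its cluster, and this intersection may be empty. Rerouting through connecting segments changes $H$ and its cluster assignment, so it no longer proves the lemma as stated.
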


  \begin{lemma}[Slicing lemma,~\cite{komlos1996szemeredi}]\label{lem:slicing}
    Let $(A,B)$ be $\varepsilon$-regular. Set $d:=d_G(A,B)$. If
    $\alpha>\varepsilon$, $A'\subseteq A$,
    $B'\subseteq B$, $|A'|\ge\alpha|A|$, and $|B'|\ge\alpha|B|$, then
    $(A',B')$ is $\varepsilon'$-regular for
    $\varepsilon'=\max\{\varepsilon/\alpha,2\varepsilon\}$. Moreover,
    $|d_G(A',B')-d|\le\varepsilon$.
  \end{lemma}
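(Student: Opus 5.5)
The statement just above is the Slicing lemma (Lemma~\ref{lem:slicing}), and I would prove it directly from the definition of an $\varepsilon$-regular pair. The plan is to establish the density estimate for the pair $(A',B')$ first. Then I would show that any test pair of subsets inside $(A',B')$ is also a legitimate test pair for $(A,B)$, which reduces $\varepsilon'$-regularity of $(A',B')$ to $\varepsilon$-regularity of $(A,B)$. No earlier result of the paper is needed beyond the definition.

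\emph{Density estimate.} Since $\alpha>\varepsilon$, we have $|A'|\ge\alpha|A|\ge\varepsilon|A|$ and $|B'|\ge\alpha|B|\ge\varepsilon|B|$. So $A'\subseteq A$ and $B'\subseteq B$ satisfy the size hypotheses in the definition of $\varepsilon$-regularity. Applying that definition to the pair $(A',B')$ gives $|d_G(A',B')-d|\le\varepsilon$ immediately, which is the ``moreover'' part of the statement.

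\emph{Regularity.} Fix $X\subseteq A'$ and $Y\subseteq B'$ with $|X|\ge\varepsilon'|A'|$ and $|Y|\ge\varepsilon'|B'|$. Since $\varepsilon'\ge\varepsilon/\alpha$, we get $|X|\ge(\varepsilon/\alpha)\cdot\alpha|A|=\varepsilon|A|$, and likewise $|Y|\ge\varepsilon|B|$. Hence $(X,Y)$ is a valid test pair for the regular pair $(A,B)$, and the definition gives $|d_G(X,Y)-d|\le\varepsilon$. Combining this with the density estimate via the triangle inequality yields
\[
|d_G(X,Y)-d_G(A',B')|\le|d_G(X,Y)-d|+|d-d_G(A',B')|\le2\varepsilon\le\varepsilon'.
\]
This is exactly the requirement for $(A',B')$ to be $\varepsilon'$-regular, which completes the argument.

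There is no real obstacle here; the argument is a short check. The one point needing care is why $\varepsilon'$ is defined as the maximum of two quantities. The term $\varepsilon/\alpha$ is needed so that the size thresholds transfer from $(A',B')$ back to $(A,B)$. The term $2\varepsilon$ is needed to absorb the two error terms that appear in the triangle inequality.
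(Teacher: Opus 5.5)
Your proof is correct and is the standard argument. The paper gives no proof of its own and simply cites the lemma from Koml\'os--Simonovits, where it follows directly from the definition exactly as you do: test pairs in $(A',B')$ are test pairs in $(A,B)$ because $\varepsilon'\ge\varepsilon/\alpha$, and the triangle inequality then needs $\varepsilon'\ge 2\varepsilon$.
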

  \begin{lemma}[\cite{Erdos-cycle}]\label{lem:Erdos-cycle}
    Every graph with average degree at least $r\ge2$ contains a cycle of length at least $r$.
  \end{lemma}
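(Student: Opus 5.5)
The statement is the classical Erd\H{o}s--Gallai lemma: a graph with average degree at least $r\ge2$ contains a cycle of length at least $r$. I would first reduce to a graph of large minimum degree and then use a longest-path argument.

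\emph{Reduction.} Let $G$ have $e(G)\ge rn/2$ with $n=|G|$, and repeatedly delete a vertex of degree less than $r/2$. Each deletion removes fewer than $r/2$ edges. So if $n'$ vertices remain, at least $e(G)-(n-n')r/2\ge rn'/2>0$ edges survive; hence $n'\ge1$ and the process ends at a nonempty subgraph $G'$ with $\delta(G')\ge r/2$ and $d(G')\ge r$.

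\emph{Easy case: $r\le 2$.} Then $d(G')\ge 2$ gives $e(G')\ge|G'|$, so $G'$ is not a forest and contains a cycle, which has length at least $3\ge r$.

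\emph{Main case: $r>2$.} This is the obstacle, because a minimum degree of $r/2$ alone only yields, by the standard longest-path argument (the endpoint $x_0$ of a longest path $x_0x_1\cdots x_\ell$ has all neighbours on the path, so the neighbour $x_i$ with largest index closes a cycle of length $i+1\ge\delta+1$), a cycle of length about $r/2$. To gain the factor of two I would use P\'osa rotations.
\begin{itemize}
\item Pass to a $2$-connected block: a standard block-counting argument shows that some block $B$ of $G'$ satisfies $e(B)\ge \tfrac r2(|B|-1)$.
\item By Dirac's theorem in its $2$-connected form, a $2$-connected graph with minimum degree $\delta$ has a cycle of length at least $\min\{|B|,2\delta\}$. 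Applying this within $B$ with $\delta\approx r/2$ gives a cycle of length at least $r$, unless the cycle is Hamiltonian in $B$ with $|B|<r$. That case is impossible, since $e(B)\le\binom{|B|}{2}$ forces $|B|\ge r+1$.
\end{itemize}

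The delicate point is that minimum degree need not survive passing to a block. To handle this I would use Kopylov's induction on $n$ directly:
\begin{itemize}
\item \emph{Low-degree vertex.} If some vertex $v$ has $d(v)\le r/2$, delete it. Then $e(G-v)\ge \tfrac r2(n-1)$ and induction applies.
\item \emph{Not $2$-connected.} If $G$ has a cut vertex, then $G=G_1\cup G_2$ with $|G_1\cap G_2|=1$. Since $(n_1-1)+(n_2-1)=n-1$, one of the pieces satisfies $e(G_i)\ge\tfrac r2(n_i-1)$, and induction applies to it.
\item \emph{Remaining case.} Now $G$ is $2$-connected with $\delta(G)>r/2$, and the $2$-connected Dirac bound $\min\{n,2\delta\}$ yields a cycle of length at least $r$.
\end{itemize}
The conclusion needed for the lemma is then immediate, since $d(G)\ge r$ means $e(G)\ge \tfrac r2 n\ge\tfrac r2(n-1)$.

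The hardest step to verify is the $2$-connected Dirac/Erd\H{o}s--Gallai bound. I would prove it by taking a longest path $P$ and using $2$-connectivity together with crossing-neighbour (Bondy-type) arguments: the neighbourhoods of the two endpoints of $P$ lie on $P$, and either they yield a cycle containing both endpoints of length at least $2\delta$, or they cross and close a Hamiltonian cycle on $V(P)$. In the latter case, maximality of $P$ together with connectivity forces $V(P)=V(G)$, so the cycle has length $n$.
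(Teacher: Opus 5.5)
The paper gives no proof of this lemma; it is quoted from Erd\H{o}s--Gallai, so there is no internal argument to compare against. Your Kopylov-style route is a standard correct proof. You induct under the weaker hypothesis $e(G)\ge\frac r2(|G|-1)$, delete a vertex of degree at most $r/2$, and split at a cut vertex. In the remaining $2$-connected case with $\delta(G)>r/2$ you invoke Dirac's theorem that such a graph has a cycle of length at least $\min\{|G|,2\delta(G)\}$. This is the same reduction the paper uses in Proposition~\ref{prop:minicoeg} for a minimal counterexample. It proves slightly more than the lemma, since only $e(G)\ge\frac r2(|G|-1)$ is needed. Once you run it, the initial pruning to $\delta(G')\ge r/2$ and the block-based bullet are superfluous, apart from your separate treatment of $r\le 2$.

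Some details need repair.
\begin{itemize}
\item As stated, the inductive claim is false for a one-vertex graph, and for a single edge when $r=2$. Restrict it to $|G|\ge 2$ and $r>2$. Then $\frac r2(|G|-1)\le\binom{|G|}{2}$ forces $|G|\ge r>2$, so deleting a vertex never produces a one-vertex graph, and both pieces at a cut vertex have at least two vertices.
\item The low-degree step gives $e(G-v)\ge\frac r2(n-2)=\frac r2(|G-v|-1)$, which is what the induction needs. The bound $\frac r2(n-1)$ as you wrote it does not follow.
\item You must also treat disconnected $G$. The same arithmetic works, because $(n_1-1)+(n_2-1)=n-2$.
\item In the final step, say explicitly that $|G|\ge r$, so that $\min\{|G|,2\delta\}\ge r$.
\end{itemize}

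Your outline of Dirac's theorem does not actually prove it, so cite Dirac (1952) instead. The easy case is when some neighbour $x_j$ of $x_\ell$ precedes some neighbour $x_i$ of $x_0$ on the longest path $x_0x_1\cdots x_\ell$. Choosing such a pair with $i-j$ minimal gives the cycle $x_0\cdots x_j\, x_\ell x_{\ell-1}\cdots x_i\, x_0$. The absence of a crossing pair shows this cycle has at least $d(x_0)+d(x_\ell)$ vertices, and $2$-connectivity is not used. The real difficulty is when every neighbour of $x_0$ precedes every neighbour of $x_\ell$. Two copies of $K_{\delta+1}$ sharing a vertex show that the endpoint neighbourhoods alone give nothing there. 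In that case you must use $2$-connectivity to route bridges across the middle of the path, and your sketch does not say how.
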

  \subsection{Properties of minimal counterexamples to Theorem~\ref{thm:main}}\label{minicoeg}
  Let $d:=2t+2$ be an even integer. In what follows, let $G$ be a counterexample to Theorem~\ref{thm:main} with the minimum possible number $n$ of vertices; that is, $G$ satisfies
  \stepcounter{propcounter}
    \begin{enumerate}[label = ({\bfseries \Alph{propcounter}\arabic{enumi}})]
        \rm
      \item\label{couneg1} $e(G)\ge \tfrac{(n-1)(d-1)}{2}$,
      \rm
    \item\label{couneg2} $G$ is not a graph all of whose blocks are isomorphic to $K_{d-1}$, and
    \rm
    \item\label{couneg3} $G$ contains no $d/2$ consecutive even cycle lengths, where we regard $K_2$
      as a $2$-cycle.
  \end{enumerate}
  We first record two auxiliary results and some elementary properties of such a minimal counterexample.
\begin{lemma}[\cite{Chvatal1972}]\label{chvatal}
Let $G$ be an $n$-vertex graph with degree sequence
$d_1\le d_2\le\cdots\le d_n$. If, for every integer $i<n/2$,
\(
  d_i\le i\) implies \(d_{n-i}\ge n-i,
\)
then $G$ is Hamiltonian.
\end{lemma}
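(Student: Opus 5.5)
The plan is the classical edge-maximal counterexample argument combined with the Bondy--Chv\'atal path-crossing trick. Assume $n\ge3$; for smaller $n$ the statement degenerates and is read with the usual convention. Suppose $G$ satisfies the degree condition but is not Hamiltonian.

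\textbf{Step 1: pass to an edge-maximal counterexample.} Add edges to $G$ one at a time while it remains non-Hamiltonian, ending at an edge-maximal non-Hamiltonian graph $G'$ on the same vertex set. The hypothesis is monotone under adding edges: each $d_k$ of the sorted degree sequence can only increase. Indeed, if $d_k\le i$ holds in $G'$ then it held in $G$, so $d_{n-i}\ge n-i$ held in $G$ and still holds in $G'$. Hence we may assume $G$ itself is edge-maximal non-Hamiltonian. Then $G$ is not complete, since $K_n$ is Hamiltonian for $n\ge3$.

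\textbf{Step 2: the extremal non-adjacent pair.} Among all non-adjacent pairs, choose $u,v$ with $d(u)+d(v)$ maximum and $d(u)\le d(v)$. By maximality, $G+uv$ has a Hamilton cycle, and it must use $uv$. Deleting $uv$ gives a Hamilton path $u=x_1,x_2,\dots,x_n=v$. Standard crossing: if $x_{j+1}\in N(u)$ and $x_j\in N(v)$, then $x_1\cdots x_j x_n x_{n-1}\cdots x_{j+1}x_1$ is a Hamilton cycle. So the index sets $S=\{j: x_{j+1}\in N(u)\}$ and $T=\{j: x_j\in N(v)\}$ are disjoint subsets of $[n-1]$, giving $d(u)+d(v)\le n-1$. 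Put $i:=d(u)$; then $i\le (n-1)/2<n/2$.

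\textbf{Step 3: count low- and high-degree vertices.} For each $j\in S$, the vertex $x_j$ is non-adjacent to $v$, so by the choice of the pair, $d(x_j)\le d(u)=i$. This gives $|S|=i$ vertices of degree at most $i$, so $d_i\le i$. The hypothesis then yields $d_{n-i}\ge n-i$, i.e. at least $i+1$ vertices of degree at least $n-i$.

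Now look at the $n-1-i$ vertices $w\neq u$ not adjacent to $u$. Maximality of the pair gives $d(w)\le d(v)\le n-1-i<n-i$. Also $d(u)=i<n-i$. Hence every vertex of degree at least $n-i$ lies in $N(u)$, which has only $i$ elements. This contradicts the existence of $i+1$ such vertices, so $G$ is Hamiltonian.

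The main point requiring care is Step 3: choosing the pair to maximise $d(u)+d(v)$ is exactly what bounds the degrees of the vertices $x_j$ ($j\in S$) by $i$, and of the non-neighbours of $u$ by $d(v)$. The rest is routine bookkeeping.
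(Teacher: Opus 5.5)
Your proof is correct. It is the classical argument for Chvátal's theorem: an edge-maximal counterexample, a non-adjacent pair maximising $d(u)+d(v)$, and the crossing-path count. The paper itself gives no proof of this lemma and simply cites \cite{Chvatal1972}.

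The details check out. The hypothesis is applied at a legitimate index, since $1\le i=d(u)\le (n-1)/2$; here $i\ge 1$ because $x_2\in N(u)$. Your restriction to $n\ge 3$ is genuinely needed, as the statement as quoted fails for $n=2$.
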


\begin{lemma}[\cite{Bondy1971}]\label{bondy}
Let $G$ be a Hamiltonian graph on $n$ vertices with
$e(G)\ge n^2/4$. Then $G$ is pancyclic unless $n$ is even and
$G\cong K_{n/2,n/2}$.
\end{lemma}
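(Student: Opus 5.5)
The plan is to induct on $n$, with the main step being the existence of a cycle of length $n-1$. Let $C=v_1v_2\cdots v_nv_1$ be a Hamilton cycle of $G$, with indices taken modulo $n$. Small $n$ can be checked directly; since $e(G)\ge n^2/4$, the graph $G$ contains a triangle unless it is $K_{n/2,n/2}$ (Mantel).

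\emph{Step 1: a cycle of length $n-1$.} Suppose $G$ has no $C_{n-1}$. Fix $i$ and $j$ with $j\notin\{i,i+1\}$. If both $v_iv_j$ and $v_{i+1}v_{j+2}$ were edges, then
\[
v_i\,v_j\,v_{j-1}\cdots v_{i+1}\,v_{j+2}\,v_{j+3}\cdots v_i
\]
would be a cycle using every vertex except $v_{j+1}$, which is a $C_{n-1}$. So the map $v_j\mapsto v_{j+2}$ sends $N(v_i)$ into the non-neighbourhood of $v_{i+1}$, apart from $O(1)$ boundary positions. After checking these boundary cases, this gives
\[
d(v_i)+d(v_{i+1})\le n\qquad\text{for every } i.
\]
Summing over $i$ gives $4e(G)\le n^2$. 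Together with $e(G)\ge n^2/4$, every inequality is tight, so $d(v_i)+d(v_{i+1})=n$ for all $i$ and the injection is a bijection for every $i$. Comparing the resulting neighbourhood descriptions for $i$ and $i+1$ should show that $N(v_i)$ consists exactly of the vertices $v_j$ with $j-i$ odd. This forces $n$ even and $G\cong K_{n/2,n/2}$. Hence, outside the exceptional case, $G$ contains a $C_{n-1}$.

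\emph{Step 2: passing to $n-1$ vertices.} Let $D$ be a $C_{n-1}$ missing the vertex $v$, and set $G'=G-v$, which is Hamiltonian. If $d(v)\le n/2$, then
\[
e(G')\ge \frac{n^2}{4}-\frac n2\ge \frac{(n-1)^2}{4}-\frac14,
\]
and integrality gives $e(G')\ge\lceil (n-1)^2/4\rceil$ in the relevant parity cases. By induction, $G'$ is pancyclic, which supplies all lengths $3,\dots,n-1$. Together with $C$ itself, $G$ is pancyclic. Otherwise $G'$ is $K_{(n-1)/2,(n-1)/2}$. In that case $v$ has more than $n/2$ neighbours in a balanced complete bipartite graph, so $v$ has neighbours on both sides, and cycles of every odd length (and every even length) can be built through $v$ directly.

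\emph{Step 3 and main obstacle.} The difficulty is the case $d(v)>n/2$ for every choice of $C_{n-1}$. My first attempt would be to use the many $(n-1)$-cycles produced by the exchange argument of Step 1, applied more carefully, to choose the missing vertex $v$ of small degree; a vertex of degree at most $n/2$ exists since the average degree is about $n/2$. If that fails, I would instead show directly that a vertex $v$ with $d(v)>n/2$ on a Hamilton cycle creates chords $vv_j, vv_{j+\ell-1}$ closing cycles of each length $\ell$. This follows from a pigeonhole argument on $N(v)$ along $C$: for each $\ell$, some pair of neighbours of $v$ lies at distance $\ell-2$ along $C$ in the appropriate arc. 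I expect the case analysis in this last step, namely controlling the degree of the omitted vertex, to be the main obstacle.
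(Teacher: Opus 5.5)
The paper does not prove this lemma; it only cites Bondy (1971). I have therefore judged your plan on its own terms.

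\textbf{Step~1 is sound.} The boundary terms vanish because a chord $v_av_{a+2}$ already gives a $C_{n-1}$, so $d(v_i)+d(v_{i+1})\le n$ does hold. In the equality case, the relations ``exactly one of $v_iv_{i+s}$, $v_{i+1}v_{i+s+2}$ is an edge'' ($2\le s\le n-3$) can be propagated from the forced non-edges $v_av_{a+2}$. This gives $v_i\sim v_{i+s}$ iff $s$ is odd, i.e.\ $G\cong K_{n/2,n/2}$. You should write this out, but it is routine.

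\textbf{The genuine gap is in Steps~2--3.}
\begin{enumerate}
\item Your case split is off by one. For $n$ even and $d(v)=n/2$ you only get
\[
e(G-v)\ge n^2/4-n/2=((n-1)^2-1)/4<\lceil (n-1)^2/4\rceil,
\]
so the induction hypothesis is unavailable and integrality does not rescue this case. Also, your branch ``otherwise $G'\cong K_{(n-1)/2,(n-1)/2}$, so $v$ has more than $n/2$ neighbours'' contradicts the branch assumption $d(v)\le n/2$.
\item Step~3 is left open, and neither suggested repair works. Nothing forces a low-degree vertex to be the one omitted by some $C_{n-1}$. The pigeonhole claim along the Hamilton path $C-v$ is false: for $n=101$ one can choose $60>n/2$ positions in $\{1,\dots,100\}$, including $1$ and $100$, with no two differing by $30$ (take two non-consecutive members of each residue class mod $30$). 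So $v$ need not lie on a $32$-cycle of the form you describe.
\end{enumerate}

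\textbf{The missing idea} is to use the $(n-1)$-cycle $D=u_0u_1\cdots u_{n-2}u_0$ instead of $C$, and to split at $d(v)\ge n/2$ versus $d(v)\le (n-1)/2$. All neighbours of $v$ lie on $D$.

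\emph{Case $d(v)\ge n/2$.} Then $d(v)>(n-1)/2$. Let $S\subseteq\mathbb{Z}_{n-1}$ be the index set of $N(v)$. For each $k\in\{1,\dots,n-2\}$, the graph on $\mathbb{Z}_{n-1}$ with edges $\{x,x+k\}$ is a disjoint union of cycles (or a perfect matching when $2k=n-1$). Its independence number is therefore at most $(n-1)/2<|S|$, so $s,s+k\in S$ for some $s$. Then $v\,u_su_{s+1}\cdots u_{s+k}\,v$ is a cycle of length $k+2$. This gives every length from $3$ to $n$ at once, with no further case analysis.

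\emph{Case $d(v)\le (n-1)/2$.} Then
\[
e(G-v)\ge n^2/4-(n-1)/2=((n-1)^2+1)/4>(n-1)^2/4.
\]
So $G-v$, which is Hamiltonian via $D$, is pancyclic by induction, and the bipartite exception is excluded automatically. Together with $C$ this gives all lengths.

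With this split the induction closes after your Step~1, and the ``main obstacle'' you anticipated does not arise.
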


  \begin{proposition}\label{prop:minicoeg}
    Let $G$ be a minimal counterexample with $n$ vertices. Then $G$ satisfies~\ref{couneg1}--\ref{couneg3}, is $2$-connected, has $\delta(G)\ge d/2$, and satisfies $n\ge d+2$.
  \end{proposition}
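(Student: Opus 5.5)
We verify the four properties directly from minimality: (i) $\delta(G)\ge d/2$, (ii) $n\ge d+2$, (iii) connectivity, and (iv) $2$-connectivity. Each time, a removal or splitting operation leaves smaller graphs, and the edge count forces one of them to be a counterexample. Properties \ref{couneg1}--\ref{couneg3} hold by definition. Throughout we use that deleting vertices or edges cannot create consecutive even cycle lengths, so any subgraph of $G$ also satisfies \ref{couneg3}.

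\emph{Minimum degree.} Suppose some $v$ has $d_G(v)\le d/2-1$. Then
\[
e(G-v)\ge \tfrac{(n-1)(d-1)}2-\tfrac d2+1>\tfrac{(n-2)(d-1)}2 .
\]
Since the inequality is strict, $G-v$ is not an equality case, so outcome \ref{maintheorem2} fails for $G-v$. Outcome \ref{maintheorem1} also fails, since $G-v\subseteq G$. Hence $G-v$ is a smaller counterexample, contradicting minimality. For the base of the minimality we need $n-1\ge1$; the tiny cases $n\le 2$ are checked directly, since for $n=1$ the bound is $0$ and $G=K_1$ is vacuously fine.

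\emph{Order.} Let $n\le d+1$. Since $d=2t+2$, the hypothesis is $e(G)\ge \tfrac{(2t+1)(n-1)}2$.
\begin{itemize}
\item If $n\le d-1=2t+1$, then $e(G)\le\binom n2=\tfrac{n(n-1)}2\le \tfrac{(2t+1)(n-1)}2$, with equality only when $n=2t+1$ and $G=K_{2t+1}$. That is outcome \ref{maintheorem2}, or else $n\le 1$ is trivial.
\item If $n\in\{d,d+1\}$, then $e(G)\ge \tfrac{(d-1)(n-1)}2\ge n^2/4$ for large $t$, and $\delta\ge d/2\ge n/2-1/2$. We aim to apply Lemma~\ref{chvatal}. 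When $\delta\ge n/2$ Dirac/Chv\'atal gives Hamiltonicity directly. In the borderline case $n=d+1$, $\delta=d/2$, we check Chv\'atal's condition using that $G$ misses at most about $n/2$ edges from $K_n$: few vertices can have degree $\le i$, so the condition holds. Then Lemma~\ref{bondy} gives that $G$ is pancyclic, since $K_{n/2,n/2}$ has too few edges ($n^2/4<(d-1)(n-1)/2$).
\end{itemize}
A pancyclic graph on $n\ge d$ vertices has the even cycles $4,\dots,d$, and together with the edge (the ``2-cycle'') this gives the $d/2$ consecutive even lengths $2,4,\dots,d$. This contradicts \ref{couneg3}.

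\emph{Connectivity.} Suppose $G$ has a cut vertex $v$, or is disconnected. Write $G=G_1\cup G_2$ with $|V(G_1)\cap V(G_2)|\le1$, $n_i=|G_i|<n$, and $n_1+n_2\le n+1$. If both $e(G_i)\le \tfrac{(d-1)(n_i-1)}2$, then summing gives
\[
e(G)\le \tfrac{(d-1)(n_1+n_2-2)}2\le \tfrac{(d-1)(n-1)}2 .
\]
Equality forces $n_1+n_2=n+1$ (so $G$ is connected with a cut vertex) and equality for both $G_i$. By minimality each $G_i$ is then connected with all blocks $K_{d-1}$, hence so is $G$, contradicting \ref{couneg2}. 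Otherwise some $G_i$ exceeds the bound strictly, and is therefore a smaller counterexample by the same reasoning as in the minimum-degree step. Contradiction.

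\emph{Main obstacle.} The step that needs the most care is the order bound, specifically verifying Chv\'atal's condition in the borderline case $n=d+1$. If that check becomes messy, the fallback is to show, using only the edge count, that $G$ misses at most $n/2$ edges and argue Hamiltonicity via a near-complete Ore-type condition. The remaining steps are bookkeeping.
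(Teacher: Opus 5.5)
Your proof is correct and follows essentially the paper's route. You delete a low-degree vertex to get $\delta(G)\ge d/2$, split along a cut vertex for $2$-connectivity (the paper sums over all blocks, which is the same bookkeeping), use $e(G)\le\binom n2$ for $n\le d-1$, and use Chv\'atal plus Bondy for $n\in\{d,d+1\}$; the only variation is that you check Chv\'atal's condition via $\delta(G)\ge d/2$ and Dirac rather than the paper's $2$-connectivity-based degree-sum bound.

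One numerical slip: for $n=d+1$ the graph may miss up to $n-1=d$ edges of $K_n$, not about $n/2$. This is harmless, since violating Chv\'atal's condition at $i=(n-1)/2$ would require $(n+1)/2$ vertices of degree $(n-1)/2$, and hence at least $(n^2-1)/8>n-1$ missing edges.
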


  \begin{proof}
    Observe that $G$ is connected by the minimality of $G$. 
    Let $B_1,\ldots,B_s$ be the blocks of $G$. Note that
    \[
      e(G)=\sum_{i=1}^s e(B_i)
      \qquad\text{and}\qquad
      n-1=\sum_{i=1}^s\bigl(|B_i|-1\bigr).
    \]
    Suppose that $s\ge2$. If
    $e(B_i)>\tfrac{(|B_i|-1)(d-1)}{2}$ for some $i$, then $B_i$ is not isomorphic
    to $K_{d-1}$ and is therefore a smaller counterexample, contradicting the minimality of $G$.
    Consequently, $e(B_i)=\tfrac{(|B_i|-1)(d-1)}{2}$ for every $i\in[s]$. If some $B_j$ is not isomorphic to $K_{d-1}$, then $B_j$ is a smaller
    counterexample, a contradiction. Hence every block of $G$ is isomorphic to $K_{d-1}$, contradicting~\ref{couneg2}. It follows that $s=1$,
    and hence $G$ is $2$-connected.

    We next prove $\delta(G)\geq d/2$. Suppose that there is some vertex $v\in V(G)$ with $d_G(v)< d/2$. Then
    \(
        e(G-v)
        =e(G)-d_G(v)
        >\tfrac{(n-2)(d-1)}{2}.
    \)
    Note that the strict inequality shows that not every block of $G-v$ can be
    isomorphic to $K_{d-1}$. Thus, $G-v$ is a smaller counterexample, contradicting the minimality of $G$.

We finally prove that $n\ge d+2$. If $n\le d-2$, then
\(
  e(G)\ge \tfrac{(n-1)(d-1)}2>\binom n2,
\)
a contradiction. If $n=d-1$, equality must hold throughout, so $G=K_{d-1}$, contrary to~\ref{couneg2}. It remains to rule out $n\in\{d,d+1\}$.
Let $d_1\le\cdots\le d_n$ be the degree sequence of $G$. If $G$ were not Hamiltonian, then Lemma~\ref{chvatal} would give some $k<n/2$ such that $d_k\le k$ and $d_{n-k}\le n-k-1$. Since $G$ is $2$-connected, $k\ge2$, and hence
\[
  2e(G)\le k^2+(n-2k)(n-k-1)+k(n-1)<(n-1)(d-1),
\]
where the strict inequality follows from $n\in\{d,d+1\}$ and sufficiently large $d$. This contradicts~\ref{couneg1}, so $G$ is Hamiltonian. Moreover,
\(
  e(G)\ge \tfrac{(n-1)(d-1)}2>\tfrac{n^2}{4}
\)
for both $n=d$ and $n=d+1$. Lemma~\ref{bondy} therefore implies that $G$ is pancyclic. Thus $G$ contains a cycle of every even length from $4$ to $d$; together with any edge of $G$, regarded as a $2$-cycle, this gives the $d/2$ consecutive even cycle lengths $2,4,\ldots,d$, a contradiction.
  \end{proof}

  \section{Main lemmas and overviews}\label{subsec:proof-overview}
  %\subsection{Proof of Theorem \ref{thm:main}}
  Recall that $d:=2t+2$. It suffices to find $d/2$ consecutive even cycle lengths. In particular, we just need to find $d/2-1=t$ cycles if the consecutive even cycle lengths start with $2$. Let $G$ be an $n$-vertex minimal counterexample to Theorem~\ref{thm:main}. By Proposition~\ref{prop:minicoeg}, $G$ is
  $2$-connected and $\delta(G)\ge d/2$.
  %Unless $G=K_{d-1}$, we show that $e(G)\ge \tfrac{(d-1)(|G|-1)}2$ already forces the required interval.

  Induction on proper induced subgraphs gives a hereditary density bound. Lemma~\ref{lem:sublinear} then provides an
  $h$-vertex expander $H\subseteq G$ such that
$    d(H)\ge(1-\eps)d$,
$    \delta(H)\ge(1-\eps)d/2$,
  and $H$ has connectivity linear in $d$. We divide the proof according to the relative sizes of $d$ and $h$.

  \begin{lemma}[Dense case]\label{lem:dense-expander}
    Let
    \(
      0<1/n,1/h,1/d\ll1/K\ll\nu\ll\eps\ll1/5
    \)
    with $d\in2\mathbb N$ and $h/K\leq d\leq n$. Let $G$ be a minimal counterexample with $n$ vertices. Suppose that  $G$ contains an $h$-vertex subgraph $H$ with
    \(
      \delta(H)\ge(1-\eps)d/2,
      d(H)\ge(1-\eps)d
    \)
    and $H$ is $\nu d$-connected. 
    Then $G$ contains $d/2$ consecutive even cycle lengths, where $K_2$ is
    regarded as a cycle of length $2$.
  \end{lemma}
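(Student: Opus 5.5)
We argue by contradiction: assume $G$ has no $d/2$ consecutive even cycle lengths. Since $h\le Kd$, the graph $H$ is dense ($d(H)\ge (1-\eps)d\ge (1-\eps)h/K$), so we apply the degree form of the regularity lemma (Lemma~\ref{regularity}) to $H$ with $\beta\approx 1/K$. This produces a reduced graph $R$ with $d(R)\ge(\beta-3\tau)r$. By Lemma~\ref{lem:Erdos-cycle}, $R$ contains a long cycle, and we would like to upgrade this to a large connected non-bipartite structure. Using the embedding lemma (Lemma~\ref{embedding}) and the slicing lemma (Lemma~\ref{lem:slicing}), we build a \emph{dense core} $D\subseteq H$. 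In $D$, any two suitable vertices are joined by paths of every length (or every length of one parity) in an interval $[L_0,(1-o(1))d]$ with $L_0=O(1)$. The $\nu d$-connectivity of $H$ lets us attach arbitrary vertices of $H$ to $D$ by boundedly many short disjoint paths. This gives cycles of all even lengths from some constant up to roughly $\ell^\ast:=(1-\eta)d$, where $\eta\to 0$ as $\eps\to0$. We record the largest such $\ell^\ast$.

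Next comes the augmenting step. We take a longest cycle family through the core and try to extend it using vertices of $H$, and then of $G-V(H)$, which are adjacent to many core vertices. We use $\delta(H)\ge(1-\eps)d/2$ together with stability. If $H$ is close to a clique on about $d$ vertices, we follow the Chvátal/Bondy style argument of Proposition~\ref{prop:minicoeg}, giving Hamiltonicity and pancyclicity on a large subset. If $H$ is close to $K_{d/2,m}$, the long side has many vertices of degree about $d/2$, and the bipartite paths already yield every even length up to about $d$. In either structure, failure to reach length $d$ should force the missing $o(d)$ lengths to come from outside $H$. Here we use the minimality of $G$. Since $e(G)\ge (d-1)(n-1)/2$ while $e(G[V(H)])$ is at most roughly $(1-\eps)dh/2$, counting edges shows that $G' := G-(V(H)\setminus W)$, for a small attaching set $W$, carries average degree close to $d$. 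Deleting low-degree vertices from $G'$ iteratively, as in the proof of $\delta(G)\ge d/2$, leaves a subgraph $F$ outside the core with $\delta(F)=\Omega(d)$, and $F$ is joined to $D$ through many disjoint edges.

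Finally, we pass to a 2-connected block of $F$ and prove a rooted cycle lemma there. Every vertex of a $2$-connected graph with minimum degree $\delta$ lies on cycles of $\delta/2-O(1)$ consecutive even lengths, which we prove by an admissible-path/longest-path argument. We contract $D$ to a single root $x$ attached to $F$ and take these rooted cycles through $x$. Each rooted cycle becomes a path in $F$ whose two ends go into $D$. We then close it up through $D$ using the flexible path family of lengths $[L_0,\ell^\ast]$, adjusting parity with a short odd cycle in $D$ when $D$ is non-bipartite (Lemma~\ref{lem:oddcyclelengthinexpander}), or by choosing endpoints on the correct sides when it is bipartite. The resulting sums cover every even length up to $d$ and beyond, which gives the $d/2$ consecutive even lengths and a contradiction. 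I expect the main obstacle to be the augmentation step: showing that the $o(d)$ missing lengths genuinely force a minimum-degree-$\Omega(d)$ subgraph outside the core. This requires an exact edge count against the Erdős–Gallai threshold, handled separately in the clique-like and bipartite-like stability cases.
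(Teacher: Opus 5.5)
There is a genuine gap. Your outline has the right shape: a regularity core with flexible path lengths, an outside subgraph of linear minimum degree, a rooted cycle lemma through the contracted core, and a parity correction. But the step you flag as the main obstacle is the heart of the proof, and the route you sketch for it fails.

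\textbf{The dense outside subgraph.} Edge counting alone cannot produce a dense subgraph outside $H$. The bound $d(H)\ge(1-\eps)d$ is only a lower bound on $e(H)$. Even the correct upper bound from minimality, $e(G[V(H)])\le\tfrac{(d-1)(h-1)}{2}$, says nothing about $G-V(H)$, because the remaining edges may all run between $V(H)$ and the rest. For instance, every outside vertex having about $d/2$ neighbors in $H$ and almost none outside is consistent with the count. So the premise of your iterated deletion is unavailable.

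Your clique versus $K_{d/2,m}$ dichotomy does not rescue this. It is not justified ($H$ may be random-like on $Kd$ vertices), and it is not useful, since pancyclicity on $(1-o(1))d$ vertices never supplies the last $o(d)$ lengths.

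The missing idea is precise control of the outside vertices that are heavily attached to the core. The paper starts from an even cycle $C_0\subseteq\mathcal F$ of length $(1-2\eps)d$. It extends the cycle by exactly two at a time through four local configurations built from two sets:
\begin{itemize}
\item $A_r$: vertices with at least $(\tfrac13+20\eps)d-2r$ neighbors on $C_r$;
\item $B_r$: vertices with at least $20\eps d$ neighbors in $A_r$.
\end{itemize}
If this process stalls below length $d$, the absence of the configurations forces $B=\varnothing$, $e(G[A])\le1$, and $d_C(a)\le|C|/2$ for all but one $a\in A$. Hence $e(G[U])\le1+|U|\,|C|/2$ for $U=A\cup V(C)$. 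This makes $W:=V(G)\setminus U$ nonempty by the threshold. Then $\delta(G[W])\ge d/7$ follows directly from $\delta(G)\ge d/2$, since each $w\in W$ has fewer than $(\tfrac13+40\eps)d$ neighbors in $U$.

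\textbf{The closing step.} This also has unsupported claims.
\begin{itemize}
\item In the bipartite case, ``choosing endpoints on the correct sides'' needs $\Omega(d)$ disjoint $W$--$U$ edges whose $U$-ends lie in one parity class. The $2$-connectivity of $G$ gives only two attachments, and their parity may be wrong. The paper obtains such a matching in three steps. It first shows $W\cap V(H)=\varnothing$, so $H\subseteq G[U]$. It then proves the deficiency $e(G[U])\le\tfrac{d-2}{2}|U|-\tfrac{d^2}{5}$ when $R_\Gamma$ is bipartite. Finally it applies minimality to $G[W\cup M_U]$.
\item ``Passing to a $2$-connected block of $F$'' discards attachments that land in other blocks. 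The paper needs the rooted lemma with one exceptional vertex, in which all cycles use the same two root edges (Lemma~\ref{lem:rooted-even-one-exception}, Corollary~\ref{cor:avoid-root-edge}). It also needs separate arguments when $G[W]$ is disconnected or has several end-blocks.
\end{itemize}
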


  \begin{lemma}[Intermediate case]\label{lem:medium-expander}
    Let
    \(
      0<1/n,1/h,1/d\ll1/K\ll\nu \ll \eps_1,\eps_2\ll\eps\ll 1/5
    \)
    with $d\in2\mathbb N$, $\tfrac13\log^{1000}h\le d\le h/K$. Let $G$ be a minimal counterexample with $n$ vertices. Suppose that  $G$ contains an $h$-vertex $(\eps_1,\eps_2d)$-expander $H$ with
    \(
      \delta(H)\ge(1-\eps)d/2,
      d(H)\ge(1-\eps)d,
    \)
    and $H$ is $\nu d$-connected. 
    Then $G$ contains $d/2$ consecutive even cycle lengths, where $K_2$ is
    regarded as a cycle of length $2$.
  \end{lemma}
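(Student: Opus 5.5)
The plan is to build the interval of $d/2$ consecutive even cycle lengths from two sources inside $H$. First, a short odd cycle (or parity-switching structure) controls parity. Second, an "adjuster" family of paths whose lengths can be varied over a long range supplies the lengths. In the intermediate regime, $d\ge\tfrac13\log^{1000}h$, so the diameter bound of Lemma~\ref{distance}, namely $m=O(\eps_1^{-1}\log^3(15h/\eps_2 d))$, is polylogarithmic in $h$ and hence $o(d)$. Paths of length $O(m)$ are therefore cheap compared to $d$. Moreover $h\ge Kd$, so $H$ is sparse relative to its order, and expansion is available at all scales from $\eps_2 d$ up to $h/2$. The target is to find a cycle $C$ containing a segment whose length can be adjusted in steps of $2$ over at least $d/2$ consecutive values. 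We aim for many more than $d/2$ values, since the sparse side gives slack.

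The key steps, in order, are as follows.

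\textbf{Step 1.} Take a bipartition as in Proposition~\ref{fact:bipartition} only for local purposes. In $H$ itself, use Lemma~\ref{lem:oddcyclelengthinexpander} (if $H$ is non-bipartite) to obtain an odd cycle of length at most $2m+5$. This serves as a parity switch, i.e.\ an ``odd adjuster'' that changes path length by exactly one.

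\textbf{Step 2.} Dichotomy on clique $1$-subdivisions.
\begin{itemize}
\item If $H$ contains no $TK_r^{(1)\ast q}$ with $r\approx d/2$, we build Liu--Montgomery-type adjusters. These consist of many disjoint ``units'': a core vertex joined by short paths to a star of size $\Omega(d)$, obtained from the minimum degree $(1-\eps)d/2$. Expansion (Lemma~\ref{distance}, applied after deleting used vertices via Lemma~\ref{lem:delete-few-vertices-expander}) links these units in series. Each unit offers two routes whose lengths differ by $2$, so a chain of $\ell$ units realises $\ell+1$ consecutive path lengths of equal parity. Taking $\ell\ge d/2$ and closing the chain into a cycle through an expansion path gives the interval. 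The absence of a large clique $1$-subdivision ensures that the neighbourhoods expand enough for the units to be found disjointly.
\item If such a subdivision exists, then its branch vertices alone give cycles of every even length from $4$ up to roughly $2r$, since a cycle through $j$ branch vertices via length-$2$ paths has length $2j$. This is most of the interval but misses $O(\eps d)$ values.
\end{itemize}

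\textbf{Step 3.} Recover the missing lengths in the second case. Imitating the dense case, we show that deleting the branch vertices leaves a subgraph of average degree $\Omega(d)$. This uses the hereditary density from minimality: $G-S$ has at most $\tfrac{(d-1)(|G-S|-1)}2$ edges, so the subdivision would otherwise create too many edges. We then extract a fresh expander via Lemma~\ref{lem:sublinear} and use it as an extension gadget that lengthens a cycle through the subdivision by controlled even amounts. Linking is done through the $\nu d$-connectivity of $H$.

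The main obstacle is Step 3, i.e.\ the second branch of the dichotomy. The clique $1$-subdivision gives nearly but not all of the $d/2$ lengths, and showing that the complement still carries average degree $\Omega(d)$ requires a counting argument against the minimal counterexample bound \ref{couneg1}. That argument must be tight up to $o(d)$. I would first try charging each edge incident to the branch set and using $e(G)\ge\tfrac{(n-1)(d-1)}2$ together with the induction bound on $G$ minus the branch set.
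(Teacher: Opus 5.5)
There is a genuine gap. In this regime the subdivision-free branch of your Step~2 must carry essentially the whole proof, and neither the dichotomy nor the tools you invoke work there.

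\begin{itemize}
\item \emph{The dichotomy is vacuous for large $d$.} The regime allows $d$ up to $h/K$. Any clique $1$-subdivision on about $d/2$ branch vertices has at least $\binom{d/2}{2}\approx d^2/8$ vertices, so for $d>\sqrt{8h}$ your second branch never occurs, and being subdivision-free carries no information.
\item \emph{The subdivision-free tools do not extend.} In the Liu--Montgomery machinery, subdivision-freeness is used only through Lemma~\ref{lem:densebipartite}: fewer than $q|W|^2$ vertices send $(1-\eps_0)d$ edges into a deleted set $W$. Chaining $d/2$ two-route gadgets forces you to avoid a used set with $|W|=\Theta(dm)$. With $q=d^8$ the bound is $\Theta(d^{10}m^2)$, which exceeds $h$ once $d\ge h^{1/10}$. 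This is why the paper's subdivision-free adjuster lemmas (Lemmas~\ref{vesp:radj} and~\ref{lem:spa-max-deg}) are stated only for $d\le\log^{1000}n$.
\item \emph{The real obstruction is vertices of very large degree.} Nothing in the hypotheses seems to prevent about $d/2$ vertices from being adjacent to almost everything, while the rest spans only an expander of degree $O(\eps d)$. For $h$ near $Kd$ such an $H$ has no relevant subdivision. Yet every star with $\Omega(d)$ leaves must use those few vertices, so there are only $O(d)$ disjoint ones, and they vanish once those vertices are used. A unit-based chain of $d/2$ gadgets therefore cannot be built robustly.
\item \emph{Units are not adjusters.} A unit does not itself offer two routes differing by $2$; that is a simple adjuster. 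Building $d/2$ of them disjointly and linking them in series is exactly the step you leave unproved.
\end{itemize}

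The missing idea, which is the paper's route, is to split on $L_G$, the set of vertices of degree at least $dm^{31}$, where $m\approx\log^4(h/d)$ and $d\ge m^{200}$. No subdivision dichotomy and no minimality are needed: Lemma~\ref{lem:dense} holds for every expander in this range and gives $Cd/2$ lengths.
\begin{itemize}
\item If $|L_G|\ge 11d/50$, these vertices, with their huge stars, serve as hubs.
\item Otherwise, a degree count (Lemma~\ref{claim:star}) shows that $\Omega(d)$-stars survive deletion of $dm^{150}$ vertices. These stars are assembled into units and webs whose exteriors, of size about $dm^{31}$, serve as hubs.
\end{itemize}
One then builds a single adjuster with range only $m^2$, plus an odd simple adjuster from a short odd cycle to fix parity. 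One also finds paths between hubs of every length up to $\Theta(Cd)$, within an error $O(Cm)$. In the high-degree case these are subpaths of one long path through hubs spaced at most $49Cm$ apart. Closed through the adjusters, they give intervals of width $2m^2$ whose endpoints shift by $O(Cm)<2m^2$, so the intervals overlap and cover the range.

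Your Step~3 is also mis-founded. The hereditary bound $e(G-S)\le\frac{(d-1)(|G-S|-1)}{2}$ is an upper bound and cannot give $H-\mathsf{Br}(T)$ average degree $\Omega(d)$. In the analogous Lemma~\ref{lem:new}, density instead comes from $d(H)\ge(1-\eps)d$ set against $|\mathsf{Br}(T)|\le(1-2\eps)d/2$, together with short detours through vertices having many neighbours in $\mathsf{Br}(T)$. Moreover, the paper's sparse case itself applies Lemma~\ref{lem:dense} to the extracted expander $H'$. Borrowing that scheme here is circular unless your subdivision-free branch works independently in every regime.
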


  \begin{lemma}[Sparse case]\label{lem:den-exp-reduced}
    Let
    \(
      0<1/n,1/h,1/d\ll\nu \ll
     \eps_1, \eps_2\ll \eps\ll 1/5
    \)
    with $d\in2\mathbb N$ and $d\le \tfrac{1}{2}\log^{1000}h$. Let $G$ be a minimal counterexample with $n$ vertices. Suppose that  $G$ contains an $h$-vertex $(\eps_1,\eps_2d)$-expander $H$ with
    \(
      \delta(H)\ge(1-\eps)d/2,
      d(H)\ge(1-\eps)d,
    \)
    and $H$ is $\nu d$-connected. 
    Then $G$ contains $d/2$ consecutive even cycle lengths, where $K_2$ is
    regarded as a cycle of length $2$.
  \end{lemma}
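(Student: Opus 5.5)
The plan is to follow the overview in the introduction for the sparse regime $d\le\tfrac12\log^{1000}h$, where $H$ is much larger than any power of $d$. Here we aim for far more than $d/2$ consecutive even lengths. The strategy is to build a long path family with flexibly adjustable lengths and close it into cycles. Write $m=\tfrac{2}{\eps_1}\log^3(15h/(\eps_2 d))$ for the diameter scale from Lemma~\ref{distance}; in this regime $m=\mathrm{polylog}(h)$, and $m$ can be much larger than $d$.

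\textbf{Step 1 (parity).} Decide whether $H$ is bipartite. If it is not, Lemma~\ref{lem:oddcyclelengthinexpander} (using $\delta(H)\ge(1-\eps)d/2\ge d/10$) gives an odd cycle of length at most $2m+5$. This short odd cycle is the seed of an \emph{odd adjuster}: a structure containing two paths between fixed endpoints whose lengths differ by exactly $1$. With it we can correct parity without passing to a bipartite subgraph, which would cost half the degree.

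\textbf{Step 2 (adjusters).} Assume first that $H$ contains no large $TK_r^{(1)}$ with $r\approx \eps d$. Following Liu--Montgomery, we grow units and webs around vertices, using $\delta(H)$ and robust expansion. Since there is no dense 1-subdivision, these expand to size polynomial in $d$, and Lemma~\ref{lem:delete-few-vertices-expander} lets us keep connecting them while avoiding previously used vertices. We then chain $\ell\gg d$ simple adjusters, each able to change a path length by $2$ (or $1$ with the odd adjuster), and join them via Lemma~\ref{distance} into one cycle. Choosing states independently gives cycles whose lengths form an interval of $\ell$ consecutive even values, starting at some length $L=O(\ell m)$. This yields far more than $d/2$ consecutive even cycle lengths, contradicting \ref{couneg3}.

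\textbf{Step 3 (clique subdivision case).} Otherwise $H$ contains $T=TK_r^{(1)\ast q}$ with $r\ge(1-\eps')d$ branch vertices. Using paths of length two through branch vertices, $T$ contains cycles of all even lengths $4,6,\dots,2r'$ for $r'$ close to $r$. Together with an edge as a $2$-cycle, this covers all but $O(\eps d)$ of the lengths $2,\dots,d$. To get the missing top lengths we argue as in the dense case. Delete $\mathsf{Br}(T)$ ($O(d)$ vertices) and compare edge counts with \ref{couneg1} and the minimality of $G$ (hereditary bound: every proper subgraph $G'$ has $e(G')\le\tfrac{(d-1)(|G'|-1)}2$). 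This shows that some part of $G-\mathsf{Br}(T)$ has average degree $\Omega(d)$. Extract a second expander $H'$ there by Lemma~\ref{lem:sublinear}, and find in it long paths with an interval of lengths. Link those paths to two branch vertices of $T$ through subdividing vertices, using the connectivity $\nu d$ of $H$. Concatenating a path of $T$ of adjustable length $2j$ with a fixed-length excursion through $H'$ shifts the interval upward past $2r$.

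\textbf{Main obstacle.} I expect the hardest part to be Step 3. Two things must be controlled there. First, the lengths of the excursion must be matched with the $2j$-family so that the union is genuinely consecutive with no gap at the junction; this needs a parity fix, supplied by an odd cycle in $H'$ or an odd adjuster. Second, we need the density guarantee after removing branch vertices. The first thing I would try is to run the hereditary counting against the maximal such subdivision: if $G-\mathsf{Br}(T)$ were sparse everywhere, the branch vertices would carry almost all degree, which forces a $K_{d-1}$-like or bipartite $K_{d/2,\cdot}$ structure. That structure is then handled directly by Lemmas~\ref{chvatal} and~\ref{bondy}.
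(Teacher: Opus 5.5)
There is a genuine gap in Step~3, at exactly the ``density guarantee'' you flag. (Your Steps~1--2 essentially match the paper's subdivision-free branch, Lemma~\ref{lem:sparse}.)

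Counting against \ref{couneg1} and the hereditary bound only gives density equal to the slack left by the branch set $R=\mathsf{Br}(T)$. Every vertex has at most $t=\lfloor(1-2\eps)d/2\rfloor$ neighbours in $R$, so $e(H-R)\gtrsim(1-\eps)dh/2-th\approx\eps dh/2$. That is average degree about $\eps d$, or about $2\eps d$ if you count in $G$ instead. The subdivision misses about $\eps d$ of the even lengths up to $d$. In the sub-cases where a second expander $H'$ does not already contain the whole interval by itself (dense $H'$, or $H'$ with its own large subdivision), $H'$ contributes only about half its average degree in consecutive lengths, less the fixed proportional losses of expander extraction and of the subdivision or regularity thresholds. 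That is roughly $\eps d/2$ inside $H$, and a constant fraction short of $\eps d$ inside $G$. So ``average degree $\Omega(d)$'' is not enough: the constant is the whole point, and pure counting falls short.

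Your fallback also fails. Lemmas~\ref{chvatal} and~\ref{bondy} only help for graphs on about $d$ vertices (Bondy needs $e\ge n^2/4$), whereas here $h$ is superpolynomial in $d$ and a near-$K_{d/2,N}$ configuration is sparse.

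Separately, your thresholds do not match: Step~2 excludes $TK^{(1)}_r$ with $r\approx\eps d$, while Step~3 assumes $r\ge(1-\eps')d$. The dichotomy must sit just below $\delta(H)$, at $r\approx(1-2\eps)d/2$. The adjuster machinery needs freeness below the minimum degree, so that after deleting a small set most vertices keep $\Omega(d)$ neighbours. A subdivision with only $\eps d$ branch vertices yields lengths only up to $2\eps d$.

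The missing idea is Lemma~\ref{lem:new}, which uses the expansion of $H$ rather than counting, with the much higher threshold $8\eps d$. Suppose $H-R$ has no subgraph of average degree $8\eps d$. Then:
\begin{itemize}
\item Almost every vertex lies in $A=\{v:d_R(v)\ge16\eps d\}$.
\item The set $B_1$ of remaining vertices with $16\eps d$ neighbours in $A$ has fewer than $3\eps d$ elements. Otherwise, length-four paths $x\,a\,v\,b\,y$ with distinct ends in $R$, spliced into a Hamilton cycle of the branch clique of a $TK_t^{(1)}\subseteq T$, push the even spectrum past $d$.
\item All other vertices would span a subgraph of minimum degree $d/3$, so $V(H-R)=A\cup B_1$.
\item A maximal matching $M$ of $H[A]$ has size at least $8\eps d$. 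Otherwise an independent set $A_0\subseteq A\setminus V(M)$ of size $h/5$ has $N(A_0)\subseteq R\cup B_1\cup V(M)$, a set of size at most $d$, contradicting expansion.
\end{itemize}
Length-three paths $x_ia_ib_iy_i$ through $5\eps d$ of these matching edges then give every even length up to $2t+5\eps d>d$.

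Only when the denser piece exists does the paper extract $H'\subseteq H-R$. It is essential that $H'$ lies inside the $\nu d$-connected $H$. The junction parity is settled by taking $18$ disjoint Menger paths between $T_1$ and $T_1'$ (or $H'$) and pigeonholing on length parity and endpoint type (branch or not, or the class $X_0/X_1$ from Lemma~\ref{lem:verydense-path}); no odd adjuster is needed. A dense part of $G-R$ found outside $H$, as in your plan, could only be reached through the $2$-connectivity of $G$, which gives no such control.
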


  We now prove Theorem~\ref{thm:main} using the preceding three lemmas.
  \begin{proof}[Proof of Theorem~\ref{thm:main}]
    Let $d=2t+2$. Choose constants such that $0<1/n,1/d,1/h\ll 1/K\ll \eps_1,\eps_2,\eps\ll 1/5$. As $e(G)\geq \tfrac{(n-1)(d-1)}{2}$, we have 
        $d(G)\geq \tfrac{(n-1)(d-1)}{n}\geq (1-\eps/3)d$.
    Let $c':=\tfrac{\eps_2d}{d(G)}<1/2$. Choose $C_0>30$ and $\eps_1\leq \tfrac{1}{10C_0}$ such that $\eps_0=\tfrac{C_0\eps_1}{\log3}\leq \eps/3$. Applying Lemma~\ref{lem:sublinear} to $G$, we obtain a subgraph $H$ that is an \(h\)-vertex \((\eps_1,c'd(G)=\eps_2d)\)-expander with \(d(H)\geq (1-\eps_0)d(G)\geq (1-\eps)d\) and \(\delta(H)\geq (1-\eps)d/2\). Furthermore, $H$ is $\nu d$-connected for some constant $\nu=\nu(\eps_1,\eps_2,\eps)$.
    Theorem~\ref{thm:main} now follows from the dense, intermediate, and sparse cases---Lemmas~\ref{lem:dense-expander},~\ref{lem:medium-expander}, and~\ref{lem:den-exp-reduced}, respectively---in the ranges
    \[
      d\ge \tfrac{h}{K},\qquad
      \tfrac{1}{3}\log^{1000}h\le d\le \tfrac{h}{K},\qquad\text{and}\qquad
      d\le \tfrac{1}{2}\log^{1000}h. \qedhere
    \]
  \end{proof}

\noindent
\textbf{Proof overview of Lemma~\ref{lem:dense-expander}.}
The proof is based on a stability argument. Recall that the main extremal example is a block tree whose blocks are copies of $K_{d-1}$. However, when we extract a sublinear expander, we may lose a small part of both the average degree and the minimum degree. Thus, even when the original graph is close to the extremal example, the resulting expander may only look like a clique on $(1-o(1))d$ vertices. Therefore, the expander alone might not accommodate all cycle lengths that we look for, and we need to make use of the vertices outside of the expander. Another difficulty arises when the dense part is close to a complete bipartite graph with one part of size $d/2$. The difference in the number of edges between these two types of examples is only about $n/2$, which is negligible compared to the error term $o(dn)$. We shall use a structural analysis to exclude this bipartite near-extremal configuration.

We first apply the regularity lemma to the dense expander $H$. The reduced graph contains a long cycle, and the clusters on this cycle give a large vertex set $\mathcal F\subseteq V(H)$. Roughly speaking, for most suitable pairs $x,y\in\mathcal F$, there is an integer $s$ such that $\mathcal F$ contains $x,y$-paths of lengths
\(
s,s+2,\ldots,s+(1-o(1))d.
\)
We also obtain a partition of the vertices which controls parity: vertices in the same part are joined by paths of even length, while vertices in different parts are joined by paths of odd length. Moreover, $H$ contains a cycle of every even length from $4$ to about $(1-2\eps)d$. We choose an even cycle $C_0$ of length $(1-2\eps)d$.

Starting from $C_0$, 
we repeatedly apply a local augmentation that replaces several short segments of the current cycle by paths through vertices outside the cycle. Each augmentation produces a new even cycle whose length is exactly two more than that of the preceding one. We continue until either the cycle reaches length $d$ or no further augmentation is possible. In the former case, the cycles inside $H$ of lengths
$2,4,\ldots,|C_0|$
together with the successive augmented cycles of lengths
$|C_0|+2,|C_0|+4,\ldots,d$
give a cycle of every even length up to $d$, completing the proof. 
Hence, we may assume that the process stops at an even cycle $C$ with $|C|\le d-2$.
At the end of the process, let $A$ be the set of vertices outside $C$ having at least $(1/3+20\eps)d$ neighbors on $C$, and let
\(
W=V(G)\setminus(A\cup V(C)).
\)
The fact that the cycle cannot be extended further implies that every vertex of $W$ has fewer than $20\eps d$ neighbors in $A$. Indeed, otherwise we can choose two neighbors $x,z\in A$ whose neighborhoods on $C$ contain two vertices at distance two, and then replace a two-edge segment of $C$ by a path of the form
\(
u_i x y z u_{i+2}.
\)
This would increase the cycle length by two.

It follows that every vertex of $W$ has fewer than $(1/3+40\eps)d$ neighbors in $A\cup V(C)$. Since $\delta(G)\ge d/2$, we obtain
\(
\delta(G[W])\ge d/7.
\)
An edge count also shows that $W$ is non-empty, since otherwise the graph induced by $A\cup V(C)$ would not contain enough edges to reach the extremal bound.
To explain the main idea, for simplicity, we suppose that $G[W]$ is $2$-connected. A result of Chiba, Ota, and Yamashita gives about $d/14$ paths between two suitable vertices of $W$ whose lengths form an arithmetic progression with difference two. On the other hand, the set $\mathcal F$ gives about $(1-o(1))d/2$ possible path lengths. Joining these two path families would give more than $d/2$ consecutive cycle lengths. However, the main problem is parity: the cycles obtained in this way may all be odd.

There are two easier cases. The first is when $W\cap V(H)\ne\varnothing$. In this case, the graph induced by $W\cap V(H)$ inside $H$ still has minimum degree at least $d/7$. Since $H$ is highly connected, we can find many disjoint paths from this graph to $\mathcal F$. By choosing two paths with suitable parities and suitable endpoints in $\mathcal F$, we can combine the path families in the two parts to obtain more than $d/2$ consecutive even cycle lengths. Thus, in a counterexample, we must have
\(
W\cap V(H)=\varnothing.
\)
The second easier case is when the reduced graph on the clusters forming $\mathcal F$ is non-bipartite. In this case, the paths inside $\mathcal F$ can be chosen with either parity. We may therefore choose the parity which makes the resulting cycles even, and again the path families in $\mathcal F$ and $W$ together give more than $d/2$ consecutive even cycle lengths.

It remains to consider the case in which $W\cap V(H)=\varnothing$ and the reduced graph supporting $\mathcal F$ is bipartite. Denote
\(
U=A\cup V(C).
\)
Then $H\subseteq G[U]$. The bipartite structure gives a loss of order $d^2$ in the number of edges of $G[U]$; more precisely,
\(
e(G[U])\le (d-2)/2\cdot(|U|-(d-2)/2).
\)
This loss forces a matching of size $\Omega(d)$ between $U$ and $W$. By the pigeonhole principle, a linear number of the endpoints of this matching in $U$ lie in the same parity class. Any two such endpoints can then be joined inside $U$ by paths of about $(1-o(1))d/2$ consecutive even lengths.

The other main ingredient is a rooted cycle lemma; see Lemma~\ref{lem:rooted-even-one-exception}. Roughly speaking, it says that if a $2$-connected graph has minimum degree at least $p$, apart from at most one exceptional vertex, then a fixed root lies on about $p/2$ consecutive even cycles, all containing the same two edges incident with the root. We add an auxiliary root joined to the matching endpoints in $W$. The rooted cycle lemma gives many consecutive even cycles through two fixed edges at this new root. We then replace these two edges by the corresponding matching edges and join their endpoints in $U$ using the path family described above. This gives at least $d/2$ consecutive even cycle lengths.

\vspace{0.25cm}
\noindent
\textbf{Proof overview of Lemma~\ref{lem:medium-expander}.} 
Let $m$ be the smallest even integer larger than $\log^4(n/d)$, and let $L_G$ be the set of vertices of degree at least $dm^{31}$. The two cases of the proof follow the same scheme. We first construct a family of connecting gadgets: if $|L_G|$ is large, these are vertices of $L_G$ together with large stars centred at them; otherwise, robust expansion provides many pairwise internally vertex-disjoint webs, whose cores and
large exteriors play the same role.

We then reserve a small adjuster part. In the bipartite case this consists of one adjuster, while in the non-bipartite case we also use an odd simple adjuster to control parity. Outside this part, we construct long paths with endpoints at the connecting gadgets by linking them one by one using the connecting property ensured by large exteriors. In the large-degree case, we find a path of length in $[Cd,5Cdm]$ on which consecutive vertices of $L_G$ are at distance at most $49Cm$ for some absolute constant $C$. In the web case, we find a path of length in $[200d, Cdm^{30}]$ on which consecutive core vertices are at distance at most $200m$.
Every subpath between two consecutive connecting gadgets can be combined with the adjuster part to form a cycle.
Connecting these paths to the adjuster part gives
overlapping ranges of cycle lengths, since consecutive target lengths
differ by $O(Cm)$, whereas the adjuster varies the length over a range
of order $m^2$. Their union therefore contains the required $Cd/2$
consecutive even cycle lengths.

\vspace{0.25cm}
\noindent
\textbf{Proof overview of Lemma~\ref{lem:den-exp-reduced}.}
We first consider the case in which the sublinear expander contains no large clique $1$-subdivision. Our argument is inspired by the adjuster method of Liu and Montgomery. When the expander $H$ is bipartite, we reserve a short adjuster and then apply the robust adjuster lemma (see Lemma~\ref{lem:spa-max-deg}) to obtain paths whose lengths cover a long interval. Closing these paths through the reserved adjuster gives the required cycle lengths and parity. In the non-bipartite case, an odd adjuster removes the parity obstruction and yields every length in the interval; the bipartite case is similar and gives every even length.

Next, we consider the case in which the sublinear expander contains a large clique $1$-subdivision. Indeed, such a subdivision already yields a long initial interval of attainable lengths. Lemma~\ref{lem:new} shows that either this interval can be extended to the required range, or, after deleting the branch vertices of the large clique $1$-subdivision, the remaining graph contains a subgraph with average degree $\Omega(d)$. In the latter case, we extract from this subgraph a further sublinear expander $H'$ as dense as the remaining graph and apply to $H'$ the appropriate argument from the dense, intermediate, or sparse cases. If $H'$ also contains a large clique $1$-subdivision, then two disjoint links between these subdivisions allow their respective path-length ranges to be combined, producing an interval whose total span exceeds $d$.

\section{Dense expander case}\label{sec:dense-expander}

  A \emph{rooted graph} $(G,x,y)$ consists of a graph $G$ together with two distinct specified vertices $x,y\in V(G)$. We call $(G,x,y)$ \emph{$2$-connected} if $G+xy$ is $2$-connected, and write
  \[
    \delta(G,x,y):=\min\{d_G(w):w\in V(G)\setminus\{x,y\}\}.
  \]
  A family of paths is \emph{admissible} if their lengths form an arithmetic progression with common difference one or two.
  If $B$ is an end-block with cut-vertex $c$, we call $V(B)\setminus\{c\}$ the interior of~$B$. We will use the following auxiliary results.

  \begin{lemma}[\cite{ChibaOtaYamashita2023}]\label{lem:exceptional-admissible-path}
    Let $k$ be a positive integer, let $(G,x,y)$ be a $2$-connected rooted graph, and let $z$ be a vertex, possibly outside $V(G)$. Suppose that $V(G)\setminus\{x,y,z\}\ne\varnothing$ and that $d_G(w)\ge k+1$ for every $w\in V(G)\setminus\{x,y,z\}$. Then $G$ contains $k$ admissible $x,y$-paths.
  \end{lemma}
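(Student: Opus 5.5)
We would argue by taking a counterexample $(G,x,y,z,k)$ that minimises $|V(G)|+|E(G)|$. We then reduce it along its connectivity structure, following the admissible-path arguments of Bondy--Vince and Fan on which the result of Chiba, Ota and Yamashita builds.

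\textbf{Small cases and edge removal.} First dispose of small $k$ and small graphs. For $k=1$, $2$-connectivity of $G+xy$ already gives one $x,y$-path. If $V(G)\setminus\{x,y,z\}$ is a single vertex $w$, then $d_G(w)\ge k+1$ forces $k\le2$, and we check directly. Next, by minimality we may assume that deleting any edge not incident with $x,y,z$ either destroys $2$-connectivity of $G+xy$ or drops some degree below $k+1$. In particular every edge between two \emph{ordinary} vertices (those outside $\{x,y,z\}$) has an endpoint of degree exactly $k+1$. We may also assume $xy\notin E(G)$, since that edge is useless for paths of length at least two.

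\textbf{Reduction along $2$-separations.} Suppose $G+xy$ has a $2$-separation $\{a,b\}$ with one side $D$ avoiding $x$ and $y$. Consider the rooted graph $(G[D\cup\{a,b\}],a,b)$. It is $2$-connected as a rooted graph, and its ordinary vertices keep all their degrees. The only possible exception is $z$, which is permitted.
\begin{itemize}
\item Induction gives $k$ admissible $a,b$-paths inside $D$.
\item Replace $D$ by a single edge $ab$, or by a path of length two through a new vertex if that is needed to keep the graph simple and $2$-connected. Every ordinary vertex outside $D$ keeps degree at least $k+1$, apart from $a$ and $b$. The vertices $a,b$ are then no longer ordinary, so the degree condition on them must be recovered separately.
\item Any $x,y$-path in the reduced graph that uses the edge $ab$ can be rerouted through each of the $k$ admissible $a,b$-paths. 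This yields $k$ admissible $x,y$-paths, because shifting an arithmetic progression keeps it admissible.
\end{itemize}
So we may assume that every $2$-separation of $G+xy$ separates $x$ from $y$. Then $G+xy$ has a chain-like structure of $3$-connected pieces strung between $x$ and $y$, and the same gluing handles the case that is not $3$-connected.

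\textbf{The $3$-connected core.} Here we delete $y$ and look at the block decomposition of $G-y$. Choose an end block $B$ of $G-y$ whose interior avoids $x$, with cut vertex $c$; if $G-y$ is itself $2$-connected, treat $B=G-y$ and let $c=x$. Let $y'$ be a neighbour of $y$ in the interior of $B$; such a neighbour exists because $G+xy$ is $2$-connected. Apply induction to the rooted graph $(B,c,y')$, and append to each resulting path the edge $y'y$ and a fixed $x,c$-path outside $B$.

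The main obstacle is the degree loss in this step. Interior vertices of $B$ adjacent to $y$ have lost one from their degree, so induction naively gives only $k-1$ admissible paths. We would recover the missing path in one of two ways. The first is to choose $y'$ and $B$ so that at most one interior vertex of $B$ is adjacent to $y$, declaring it the new exceptional vertex $z$; this is precisely why the lemma allows an exceptional vertex $z$. Otherwise $y$ has two neighbours $y',y''$ in $B$, and we would build the $k$-th path by exchanging $y'$ for $y''$. This exchange lengthens or shortens the whole progression by one or two, which preserves admissibility (common difference one or two). The delicate part is to handle the original exceptional $z$ and the newly created exceptional vertex simultaneously. We would resolve this by choosing $B$ so that its interior avoids $z$ whenever possible, and otherwise by letting $z$ play the role of the exceptional vertex in $B$.
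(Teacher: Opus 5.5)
\textbf{Verdict: genuine gap.} The paper does not prove this lemma; it imports it from Chiba, Ota and Yamashita. Your outline is not a proof, because the one case that carries all the difficulty is left open.

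\textbf{Your reductions are fine, but they only isolate the hard case.} Your $2$-separation step actually closes its own case without any induction on a reduced graph. Menger's theorem in $G+xy$ gives two disjoint paths from $\{a,b\}$ to $\{x,y\}$ that avoid $D$. Concatenating them with the $k$ admissible $a,b$-paths in $G[D\cup\{a,b\}]$ yields $k$ admissible $x,y$-paths. So the degrees of $a,b$ in a reduced graph never matter. You do still have to treat $D=\{z\}$, where the smaller rooted graph has no ordinary vertex. Also, $x$ and $y$ are adjacent in $G+xy$, so every $2$-cut of $G+xy$ leaves a component avoiding both of them. Hence after these reductions $G+xy$ is $3$-connected (up to the degree-two $z$ issue), not a chain of $3$-connected pieces. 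Everything therefore rests on your last step, with $B=G-y$ and $c=x$.

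\textbf{The $3$-connected step fails.} Since $G+xy$ is $3$-connected and $xy\notin E(G)$, we have $d_G(y)\ge 2$, and all neighbours of $y$ lie in $B$. Rooting at one neighbour $y'$, every other neighbour of $y$ is left with degree possibly only $k$ in $B$. Apart from the special situation $d_G(y)=2$ with $z\in\{x,y\}\cup N_G(y)$ or $z\notin V(G)$, this gives at least two non-root vertices of degree below $k+1$. The statement you are inducting on tolerates only one. So you cannot ``choose $y'$ and $B$'' to avoid this, and induction only gives $k-1$ admissible $x,y'$-paths, by applying it with parameter $k-1$.

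\textbf{The exchange repair has no content.} The $x,y''$-paths in $B$ form an unrelated family, in which $y'$ is now the deficient vertex. Their lengths bear no fixed relation to those of the $x,y'$-paths $P_i$. Even a uniform shift of a whole progression would add no new term. To obtain, say, lengths $|P_i|+2$ via $P_i\,y'y''y$, you would need $y'y''\in E(G)$ and $y''\notin V(P_i)$ for every $i$, and nothing guarantees either. Producing the missing $k$-th term, or an interleaving second progression, in the $3$-connected case is exactly where the substance of the cited proofs (Fan's, and Chiba--Ota--Yamashita's extension with an exceptional vertex) lies. Your proposal supplies no mechanism for it.
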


We also use the following consequence of Mader's theorem.

  \begin{lemma}[\cite{Mader1972}]\label{lem:Mader-connected}
    Every graph of average degree at least $4k$ contains a $k$-connected subgraph.
  \end{lemma}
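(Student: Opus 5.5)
This is a classical consequence of Mader's theorem, and I would prove it by the standard minimal-counterexample argument. The claim is slightly stronger: every graph $G$ with $|G|\ge 2k-1$ and $e(G)\ge (2k-3)(|G|-k+1)+1$ contains a $k$-connected subgraph. If $d(G)\ge 4k$, then $e(G)\ge 2k|G|$, which exceeds the threshold $(2k-3)(|G|-k+1)+1$. Moreover $|G|>4k$ because the average degree is at most $|G|-1$, so the stronger claim applies. We may assume $k\ge 2$, since for $k=1$ any edge is a $1$-connected subgraph.

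To prove the stronger claim, take a counterexample $G$ minimizing $|G|$, and among those with $e(G)$ minimal.

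\textbf{Step 1: reduction to the minimal-degree regime.} If $|G|=2k-1$, then the edge bound gives $e(G)\ge (2k-3)k+1>\binom{2k-1}{2}$, which is impossible. So $|G|\ge 2k$.

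Suppose some vertex $v$ has $d(v)\le 2k-3$. Then $G-v$ has at least $(2k-3)(|G-v|-k+1)+1$ edges and at least $2k-1$ vertices. By minimality it contains a $k$-connected subgraph, a contradiction. Hence $\delta(G)\ge 2k-2$.

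By minimality of $e(G)$, deleting any edge drops us below the threshold, so $e(G)=(2k-3)(|G|-k+1)+1$ exactly.

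\textbf{Step 2: use a small separator.} Since $G$ is not $k$-connected and $|G|>k$, there is a separator $S$ with $|S|\le k-1$. Split $V(G)\setminus S$ into two nonempty parts $A$ and $B$ with no edges between them. Let $G_1=G[A\cup S]$ and $G_2=G[B\cup S]$.

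Every vertex of $A$ has all its at least $2k-2$ neighbours in $A\cup S$, so $|G_1|\ge 2k-1$. Likewise $|G_2|\ge 2k-1$.

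Neither $G_1$ nor $G_2$ contains a $k$-connected subgraph. By minimality of $|G|$, each satisfies $e(G_i)\le (2k-3)(|G_i|-k+1)$.

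\textbf{Step 3: add up the edges.} We have $e(G)\le e(G_1)+e(G_2)$ and $|G_1|+|G_2|=|G|+|S|\le |G|+k-1$. Therefore
\[
e(G)\le (2k-3)\bigl(|G|+k-1-2k+2\bigr)=(2k-3)(|G|-k+1),
\]
which contradicts the exact edge count from Step 1.

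The only delicate point is guaranteeing $|G_i|\ge 2k-1$ so that induction applies. This is exactly what the minimum-degree bound from the vertex-deletion step provides. Everything else is arithmetic.
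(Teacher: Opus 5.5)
The paper gives no proof of this lemma; it simply cites Mader. Your argument is the standard textbook induction (as in Diestel's \emph{Graph Theory}) and is correct apart from one arithmetic slip in Step~1.

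When $|G|=2k-1$, you have
\[
(2k-3)k+1=2k^2-3k+1=(2k-1)(k-1)=\binom{2k-1}{2},
\]
so the edge bound is attained with equality rather than exceeded. The case is therefore not impossible. It is still harmless: equality forces $G\cong K_{2k-1}$, which is $k$-connected because $2k-1\ge k+1$ for $k\ge 2$. Hence no counterexample has order $2k-1$, and you still get $|G|\ge 2k$, which is exactly what the vertex-deletion step needs. With that one-line fix the proof is complete.

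As a side remark, you do not need the exact edge count at the end of Step~1. Step~3 already gives $e(G)\le(2k-3)(|G|-k+1)$, which contradicts the hypothesis $e(G)\ge(2k-3)(|G|-k+1)+1$ directly.
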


  For a partition $V(G)=X_0\mathbin{\dot\cup}X_1$ and vertices $u,v\in V(G)$, define
  \[
    \pi(u,v):=
    \begin{cases}
      0,&\text{if $u$ and $v$ lie in the same part of the partition},\\
      1,&\text{otherwise}.
    \end{cases}
  \]

  The following lemma gives almost $d/2$ consecutive path lengths. The proof idea is simple: we construct a walk along a long cycle in the reduced graph.
  
  \begin{lemma}[Paths of consecutive lengths in the dense case]\label{lem:verydense-path}
    Suppose that $K,r_1\in\mathbb N$ and
    \(
      0<1/n\ll1/r_1\ll\xi\ll\tau\ll \alpha,1/K,\nu\le 1.
    \)
    Let $G$ be an $n$-vertex $\nu d$-connected graph with $d\in2\mathbb N$ and $d>n/K$, and suppose that $d(G)\ge d$ and $\delta(G)\ge d/2$. Then there exist a partition $V(G)=X_0\mathbin{\dot\cup}X_1$ and a constant $M=M(K,\nu,r_1)\ge 100$ such that the following holds. For every pair of distinct vertices $u,v\in V(G)$ and every set $F\subseteq V(G)\setminus\{u,v\}$ with $|F|\le M$, there is an integer $t=t(u,v,F)$ satisfying $1\le t\le M/10$ and $t\equiv\pi(u,v)\pmod2$ such that, for every integer
    \(
      0\le i\le\left\lfloor (1-2\alpha)d/2\right\rfloor,
    \)
    the graph $G-F$ contains a $u,v$-path of length $t+2i$.
  \end{lemma}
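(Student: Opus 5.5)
We apply the degree form of the regularity lemma (Lemma~\ref{regularity}) with $\beta:=1/K$. This is possible since $d(G)\ge d>n/K$. It yields clusters $V_1,\dots,V_r$ of size $m$, a pure graph $G^\circ$ and a reduced graph $R$ with $d(R)\ge(1/K-3\tau)r$. The plan is to read off the partition and the path family from a long cycle in $R$.

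\textbf{Long structure in $R$.} We need walks in $R$ of length about $(1-2\alpha)d/m$ that use each cluster only a bounded number of times, so that the embedding lemma applies.
\begin{itemize}
\item The expected approach is the Erd\H{o}s--Gallai bound, Lemma~\ref{lem:Erdos-cycle}, applied after controlling degrees in $R$.
\item Since $\delta(G)\ge d/2$, every vertex keeps degree at least $d/2-(\tau+2\xi)n$ in $G^\circ$, so most clusters have reduced degree at least about $(d/2m)(1-o(1))$.
\item Combining this with the average degree, a connected component $R'$ of $R$ should contain a cycle (or closed walk) $D$ of length at least $(1-\alpha)d/m$.
\end{itemize}

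\textbf{Defining the partition.}
\begin{itemize}
\item If $R'$ is bipartite with parts $I_0,I_1$, we put $X_j\supseteq\bigcup_{i\in I_j}V_i$ for $j=0,1$.
\item If $R'$ is non-bipartite, we fix a short odd closed walk in it, and the partition can be arbitrary (say $X_0=V(G)$).
\item Every remaining vertex is assigned greedily: a vertex goes to the part opposite to the majority of its $G^\circ$-neighbours lying in clusters of $R'$. Vertices with few such neighbours are handled through the connectivity step below.
\end{itemize}

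\textbf{Constructing the $u,v$-paths.}
\begin{enumerate}
\item Use $\nu d$-connectivity, via Menger's theorem with $|F|\le M$, to obtain short paths in $G-F$ from $u$ and from $v$ into typical vertices of two clusters $V_a,V_b$ on $D$. Their lengths are bounded in terms of $K,\nu,r_1$ because $R$ has at most $r_1$ vertices, and a shortest path through cluster neighbourhoods has length $O(r_1)$.
\item Join $V_a$ to $V_b$ by walks in $R$ that go around $D$ some number of times, together with the back-and-forth detours $V_iV_jV_i$ along an edge of $D$. Each detour adds exactly $2$ to the length.
\item For each $i\le\lfloor(1-2\alpha)d/2\rfloor$, embed the path of length $t+2i$ with Lemma~\ref{embedding}, after slicing (Lemma~\ref{lem:slicing}) to avoid $F$ and the connector paths. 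The key count is that each cluster is visited at most about $(d/2)/(d/m)\cdot O(1)$ times in total, which must stay below $(1-\sqrt\xi)m$; this is where the bound on $|D|$ matters.
\item The base length $t$ is at most $M/10$, where $M$ is chosen from $r_1,K,\nu$.
\item The parity $t\equiv\pi(u,v)$ is automatic in the bipartite case, given how $X_0,X_1$ were defined. In the non-bipartite case, when $\pi=0$ for every pair, we insert the odd walk to fix the parity.
\end{enumerate}

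\textbf{Main obstacle.} The hardest point is the parity guarantee for \emph{every} pair $u,v$ when $R'$ is bipartite but $u$ or $v$ connects to the bipartite core only through paths whose parity is ambiguous, that is, when there are vertices adjacent to both sides. The first thing to try is the following. If some vertex, or some short path, creates an odd closed walk through $R'$, then $G$ effectively behaves like the non-bipartite case, and we declare $X_0=V(G)$ using that gadget. Otherwise the parity is consistent and the greedy assignment is well defined. Handling the robustness of this dichotomy against the deleted set $F$ uses $|F|\le M\ll\nu d$.
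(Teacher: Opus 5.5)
There is a genuine gap, located exactly at what you call the main obstacle. You never give a rule for the partition that controls the parity of the connecting paths robustly, and the dichotomy you propose is false.

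The partition must be fixed before $F$ is chosen, so a single bounded odd gadget cannot justify declaring $X_0=V(G)$. Concretely, take $G=K_{n/2,n/2}$ with parts $P,Q$, plus one edge $xy$ inside $P$. With $4\mid n$, $d=n/2$ and $K\ge3$, all hypotheses hold.
\begin{itemize}
\item The reduced graph is bipartite: an odd cycle in $R$ would embed as an odd cycle avoiding $x$, but every odd cycle of $G$ uses $xy$.
\item The edge $xy$ is a short path creating an odd closed walk through $R'$, so your rule sets $X_0=V(G)$, i.e.\ $\pi\equiv0$.
\item For $F=\{x\}$, $u\in P\setminus\{x\}$ and $v\in Q$, every $u,v$-path in $G-F$ is odd, so no even $t$ exists.
\end{itemize}
Requiring more than $M$ disjoint gadgets does not give a dichotomy either. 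With between $1$ and $M$ gadgets the parity is not globally consistent, so ``otherwise the parity is consistent'' fails. Your greedy rule also only covers vertices with many $G^\circ$-neighbours in clusters of $R'$. Vertices in clusters of other components of $R$ have none at all, and the connectivity step assigns them no label.

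The missing idea is that no global consistency is needed. Label each vertex by a majority vote over a large family of short connectors, and only ever use connectors from that majority family. This is what the paper does.
\begin{itemize}
\item For each $z$, Menger gives $\nu d/2$ paths from $z$ to distinct good vertices of clusters on a long reduced cycle $\Gamma=V_1\cdots V_\ell$, pairwise meeting only at $z$.
\item Since $n<Kd$, at most $n/(\zeta-1)\le\nu d/10$ of them are longer than $\zeta=20K/\nu$. This counting is what supplies many short connectors; your ``$O(r_1)$ shortest path'' remark does not.
\item Label a path ending in $V_j$ by its length plus $j$ modulo $2$. Keep a majority class $\mathcal Q(z)$ of size at least $\nu d/8$, let $f(z)$ be its label, and set $X_i:=f^{-1}(i)$.
\item Given $u,v,F$, each vertex of $F$ or of the chosen $u$-connector kills at most one path of $\mathcal Q(v)$. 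So disjoint majority connectors avoiding $F$ always exist, and the parity of the resulting path depends only on $f(u)+f(v)$.
\item The cluster index along $\Gamma$ serves as the class. If $\ell$ is odd, one picks the arc of $\Gamma$ with the required parity, so the bipartiteness of a component $R'$ never needs analysing.
\end{itemize}

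A smaller point: apply Lemma~\ref{regularity} with $\beta=d/n$, not $1/K$. With $\beta=1/K$, property (5) only yields a reduced cycle of total cluster mass about $n/K$, possibly far below $d$. The minimum degree of $R$ alone only gives cycles of length about $d/(2m)$, which is also too short.
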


\begin{proof}
    By Lemma~\ref{regularity} with $\beta=d/n$, we obtain a partition
 $V(G)=V_0\mathbin{\dot\cup}V_1\mathbin{\dot\cup}\cdots\mathbin{\dot\cup}V_r$,
    a pure subgraph $G^\circ\subseteq G$, and a reduced graph $R$ on $[r]$. The
    clusters $V_1,\ldots,V_r$ have a common size $m$, $|V_0|\le\xi n$,
    $r\le r_1$, and $d(R)\ge(d/n-3\tau)r$. By
    Lemma~\ref{lem:Erdos-cycle}, $R$ contains a cycle $\Gamma$ of length
    $\ell\ge(d/n-3\tau)r$. Relabel the corresponding clusters as
    $V_1,\ldots,V_\ell$, with indices taken modulo $\ell$. For
    $j\in[\ell]$, call a vertex $w\in V_j$ \emph{good} if
    $|N_{G^\circ}(w)\cap V_{j+1}|\ge\sqrt{\xi}\,m$. Since
    $G^\circ[V_j,V_{j+1}]$ is an $(\xi,\tau^+)$-regular pair, all but at
    most $\xi m$ vertices of $V_j$ are good. Thus, if $B$ is the set of
    good vertices in $V_1\cup\cdots\cup V_\ell$, then
    $|B|\ge(1-2\xi)\ell m\ge(1-\alpha/4)d\ge\nu d/2+1$.

    Let $\zeta=20K/\nu$. For each $z\in V(G)$, Menger's theorem gives at
    least $\nu d/2$ paths from $z$ to distinct vertices of
    $B\setminus\{z\}$ which intersect only at $z$. Since these paths are
    internally vertex-disjoint, at most $n/(\zeta-1)\le\nu d/10$ of them
    have length greater than $\zeta$. Discard the paths of length greater than $\zeta$, and let $\mathcal{P}_z$ be the collection of the remaining paths. Label each path of $\mathcal{P}_z$ ending in
    $V_j$ by its length plus $j$ modulo $2$. By the pigeonhole principle,
    we obtain a family $\mathcal Q(z)$ of at least $\nu d/8$ paths, all
    of length at most $\zeta$, with the same label. Denote this label by
    $f(z)\in\{0,1\}$, and let $X_0:=f^{-1}(0)$ and
    $X_1:=f^{-1}(1)$. Then $\pi(u,v)\equiv f(u)+f(v)\pmod 2$ for all
    $u,v\in V(G)$. 
    
    Let
    $M=10(2\zeta+3r_1+10)$.
    Fix distinct vertices $u,v\in V(G)$ and
    $F\subseteq V(G)\setminus\{u,v\}$ with $|F|\le M$. We may choose
    $Q_u\in\mathcal Q(u)$ avoiding $F\cup\{v\}$, and then choose
    $Q_v\in\mathcal Q(v)$ avoiding $F\cup V(Q_u)$. Let
    $u'\in V_x$ and $v'\in V_y$ be their endvertices, and let
    $a:=|Q_u|$ and $b:=|Q_v|$. By the definition of the labels,
    $a+x\equiv f(u)$ and $b+y\equiv f(v)\pmod 2$, and hence
    $$
      a+b\equiv y-x+\pi(u,v)\pmod 2.
    $$
    Since $u'$ and $v'$ are good, we
    may choose disjoint sets
    $W^+\subseteq N_{G^\circ}(u')\cap V_{x+1}$ and
    $W^-\subseteq N_{G^\circ}(v')\cap V_{y+1}$, each of size
    $\sqrt{\xi}\,m/10$, avoiding $F\cup V(Q_u)\cup V(Q_v)$. For each
    $k\in[\ell]$, let
    $\widetilde V_k:=V_k\setminus
    (W^+\cup W^-\cup F\cup V(Q_u)\cup V(Q_v))$. Then
    $|\widetilde V_k|\ge(1-3\sqrt{\xi}/10)m$. By
    Lemma~\ref{lem:slicing}, with $\xi'=20\sqrt{\xi}$, the pairs
    $G^\circ[\widetilde V_k,\widetilde V_{k+1}]$,
    $G^\circ[W^+,\widetilde V_{x+2}]$ and
    $G^\circ[\widetilde V_y,W^-]$ are
    $(\xi',\tau/2^+)$-regular.

    Let $R^*$ consist of the cycle
    $\widetilde v_1\ldots\widetilde v_\ell\widetilde v_1$, together with
    two vertices $w^+,w^-$ and the edges
    $w^+\widetilde v_{x+2}$ and $\widetilde v_yw^-$. Choose a
    $\widetilde v_{x+2},\widetilde v_y$-path $Q$ along $\Gamma$ of length at
    most $\ell$ and parity $y-x$; one of the two arcs of $\Gamma$ has the
    required parity. Starting at $\widetilde v_{x+2}$, traverse $\Gamma$
    once if $\ell$ is even and twice if $\ell$ is odd, and then follow $Q$
    to $\widetilde v_y$. Together with the
    two edges incident to $w^+$ and $w^-$, this gives a
    $w^+,w^-$-walk $S_0$ such that $|S_0|\le3\ell+2$,
    $|S_0|\equiv y-x\pmod 2$, and every vertex of $\Gamma$ occurs at
    least once and at most four times.
    For any integer
    $0\le i\le\lfloor(1-2\alpha)d/2\rfloor$, insert $i$ two-edge walks
    of the form
    $\widetilde v_k\widetilde v_{k+1}\widetilde v_k$ into $S_0$, as
    evenly as possible among the vertices of $\Gamma$. The resulting
    $w^+,w^-$-walk $S_i$ has length $|S_0|+2i$, and each
    $\widetilde v_k$ occurs at most $4+2\lceil i/\ell\rceil$ times.
    Since $\ell\ge(d/n-3\tau)r$ and $rm\ge(1-\xi)n$, we have
    \[
        4+2\left\lceil i/\ell\right\rceil
        \le \tfrac{(1-2\alpha)d}{\ell}+6
        \le (1-\alpha)m
        \le (1-\sqrt{\xi'})|\widetilde V_k|.
    \]
    Let $H_i$ be a path whose cluster sequence is $S_i$, with its first
    vertex assigned to $W^+$, its last vertex assigned to $W^-$, and
    all other vertices assigned to the corresponding
    $\widetilde V_k$. Lemma~\ref{embedding} gives an embedding of $H_i$ into $G^\circ$.
    Adding the two edges from the ends of the embedded path to $u'$ and
    $v'$, together with $Q_u$ and $Q_v$, gives a $u,v$-path in $G-F$ of
    length $t+2i$, where $t:=a+b+2+|S_0|$. Since $a,b\le\zeta$ and
    $|S_0|\le3r_1+2$, we have
    $1\le t\le2\zeta+3r_1+4\le M/10$. Moreover,
    $t\equiv a+b+y-x\equiv\pi(u,v)\pmod 2$. This completes the proof.
\end{proof}
  We next need a corollary about paths between two pairs of vertices in the reduced graph.
  \begin{corollary}\label{cor:verydense}
    Under the assumptions of Lemma~\ref{lem:verydense-path}, there exist a partition $V(G)=X_0\mathbin{\dot\cup}X_1$ and a constant $M=M(K, \nu, r_1)\ge 100$ such that the following holds.
    \begin{enumerate}[label=\rm(\arabic*)]
      \item Let $u_1,v_1,u_2,v_2$ be four distinct vertices of $G$.
        There is an integer $t\le M$ satisfying
        \(
          t\equiv \pi(u_1,v_1)+\pi(u_2,v_2)\pmod2
        \)
        such that, for every integer $0\le i\le \lfloor(1-3\alpha)d/2\rfloor$, there are vertex-disjoint paths $P_1,P_2$, where $P_j$ joins $u_j$ to $v_j$ for $j\in[2]$ and $|P_1|+|P_2|=t+2i$.

      \item
        Let $u,v_1,v_2$ be three distinct vertices of $G$. There is an integer $t\le M$ satisfying
        \(
          t\equiv \pi(u,v_1)+\pi(u,v_2)\pmod2
        \)
        such that, for every integer $0\le i\le \lfloor(1-3\alpha)d/2\rfloor$, there exists a $v_1,v_2$-path $P$ of length $t+2i$ avoiding $u$.

      \item
        For any $L\in\{ 4,6,\ldots,2\left\lfloor(1-2\alpha)d/2\right\rfloor\}$, the graph $G$ contains a cycle of length $L$.

    \end{enumerate}
  \end{corollary}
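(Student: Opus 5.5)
We take the same partition $X_0\mathbin{\dot\cup}X_1$ and the same constant $M$ that Lemma~\ref{lem:verydense-path} provides, possibly enlarged by an absolute factor. Each part then follows by one application of that lemma with a suitably chosen forbidden set $F$; the lemma is applied with $\alpha$ in place of the $3\alpha$ appearing in the corollary, which leaves slack in the range of $i$.

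For (2), apply Lemma~\ref{lem:verydense-path} to the pair $v_1,v_2$ with $F=\{u\}$. This gives an integer $t\le M/10$ with $t\equiv\pi(v_1,v_2)\pmod 2$ and $v_1,v_2$-paths avoiding $u$ of every length $t+2i$ with $0\le i\le\lfloor(1-2\alpha)d/2\rfloor$. The parity condition holds because
\[
\pi(v_1,v_2)\equiv f(v_1)+f(v_2)\equiv \pi(u,v_1)+\pi(u,v_2)\pmod 2,
\]
where $f$ is the labelling defining the partition.

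For (3), fix an edge $uv$ of $G$ (in fact we choose it later). Apply the lemma to $u,v$ with $F=\varnothing$, obtaining $t\equiv\pi(u,v)$ and $u,v$-paths of lengths $t+2i$. Each such path of length at least $2$, together with the edge $uv$, closes a cycle of length $t+2i+1$. We need these cycles to be even, so we need $t$ odd, that is, $\pi(u,v)=1$. Some edge must cross the partition: otherwise every edge lies inside $X_0$ or inside $X_1$, and since $G$ is connected all vertices share one label. So choose $uv$ to be a crossing edge. Then $t$ is odd with $t\le M/10$, and the cycle lengths $t+1,t+3,\ldots$ run up to about $(1-2\alpha)d$. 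This covers every even $L$ from $t+1\le M/10+1$ up to $2\lfloor(1-2\alpha)d/2\rfloor$, where the slack between $\alpha$ and the statement's range absorbs the offset $t$.

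The short lengths $4\le L\le M/10$ remain, and I expect these to be the only delicate point. A first attempt: $d>n/K$ and $d(G)\ge d$ give edge density at least $1/K$. Kővári--Sós--Turán then yields a $K_{s,s}$ with $s$ large as a function of $K$ once $n$ is large, and such a $K_{s,s}$ contains all even cycles of length $4,\ldots,2s$; we take $M/10\le 2s$, which is allowed since $M$ is only required to depend on $K,\nu,r_1$. A cleaner route stays inside the embedding machinery: a regular pair $(V_1,V_2)$ on $\Gamma$, by Lemma~\ref{embedding}, contains every even cycle of bounded length.

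For (1), apply the lemma once, to a path through both pairs. Pick a path $Q_0$ of length at most $\zeta$ from $v_1$ to $u_2$ avoiding $u_1,v_2$; this exists by connectivity, and short paths are available exactly as in the proof of the lemma. A cleaner option is to use the lemma itself with a bounded $F$. Then apply the lemma to $u_1,v_2$ with $F=V(Q_0)\setminus\{u_1,v_2\}$ to get $u_1,v_2$-paths $R$ of length $t'+2i$ avoiding $Q_0$. Delete the middle segment of $Q_0$, joining $v_1$ to $u_2$: splicing $R$ with the endpoint pieces should give $P_1$ from $u_1$ to $v_1$ and $P_2$ from $u_2$ to $v_2$. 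The splitting has to be arranged carefully. A better way is to rerun the proof of Lemma~\ref{lem:verydense-path} directly. Attach $u_1,v_1,u_2,v_2$ by disjoint short paths $Q_{u_1},Q_{v_1},Q_{u_2},Q_{v_2}$ to good vertices. Embed one fixed bounded path from the neighbourhood of $u_1'$ to that of $v_1'$ with parity matched through the clusters on $\Gamma$, and one walk from $u_2'$ to $v_2'$ that absorbs the $i$ inserted detours. Embed both simultaneously with Lemma~\ref{embedding}; disjointness is automatic since we embed their union as one graph, and each cluster multiplicity increases only by $O(r_1)$. The parity bookkeeping repeats the computation in the lemma: the labels of the four attaching paths sum to $\pi(u_1,v_1)+\pi(u_2,v_2)$, and $t\le 4\zeta+6r_1+O(1)\le M$. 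Apart from the short cycles in (3), the main care is this double embedding with its parity count.
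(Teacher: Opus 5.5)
\textbf{The gap is in (3).} Your claim that some edge must cross the partition is not justified. If no edge crosses, connectivity only tells you that all vertices carry the same label, and that is not a contradiction. Lemma~\ref{lem:verydense-path} asserts only that \emph{some} partition exists, and it can be trivial. The label $f(z)$ is a majority vote over an arbitrarily chosen Menger fan. In $K_n$, for example, every fan may legitimately consist of single edges into even-indexed clusters, which gives $X_0=\varnothing$. The lemma remains true there, since such graphs have $u,v$-paths of both parities. When $\pi\equiv 0$, every edge $uv$ yields only $u,v$-paths of even length $t+2i$, and closing them with $uv$ gives odd cycles, so your construction produces no even cycle at all.

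The repair is easy. Take any vertex $w$; since $d_G(w)\ge 3$, by pigeonhole it has two neighbours $u,v$ with the same label, so $\pi(u,v)=0$. Apply the lemma to $u,v$ with $F=\{w\}$ and close each path with $uwv$. Now $t$ is even with $t\ge 2$, and you get every even length from $t+2$ up to $2\lfloor(1-2\alpha)d/2\rfloor$. The lengths below $t+2$ are then handled by your regular-pair argument or by K\H{o}v\'ari--S\'os--Tur\'an.

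The paper avoids the short/long split altogether. It takes a closed walk of length $L$ on the reduced cycle $\Gamma$: traverse $\Gamma$ $q$ times (or $q'$ times when $\ell$ is odd) and oscillate on one fixed edge. It then embeds $C_L$ along this walk with Lemma~\ref{embedding}. This treats every even $L$ from $4$ to $2\lfloor(1-2\alpha)d/2\rfloor$ uniformly.

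\textbf{Part (1).} Your first splicing idea fails because it has the wrong pairing. $R$ joins $u_1$ to $v_2$ and $Q_0$ joins $v_1$ to $u_2$, and no rearrangement of two disjoint paths with those ends yields a $u_1,v_1$-path and a $u_2,v_2$-path. Your fallback, rerunning the lemma's proof and embedding both walks simultaneously, is sound, but it is more work than needed.

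The paper does (1) in two lines:
\begin{enumerate}
\item Apply the lemma to $(u_1,v_1)$ with $F=\{u_2,v_2\}$, and fix the shortest resulting path $P_1$, of length $t_1\le M/10$.
\item Apply the lemma to $(u_2,v_2)$ with $F=V(P_1)$. This is allowed because $V(P_1)$ avoids $u_2,v_2$ and $|V(P_1)|\le M/10+1\le M$.
\end{enumerate}
Setting $t=t_1+t_2$ then gives the required parity and range at once.

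\textbf{Part (2)} coincides with the paper's argument.
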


  \begin{proof}
    Let $V(G)=X_0\mathbin{\dot\cup}X_1$ and $M=M(K, \nu, r_1)$ be the partition and the constant given by Lemma~\ref{lem:verydense-path}.
    We first prove (1). Apply Lemma~\ref{lem:verydense-path} to $u_1,v_1$ with forbidden set $\{u_2,v_2\}$, and take the shortest path supplied by the lemma. This gives a $u_1,v_1$-path $P_1$ of length $t_1\le M/10$ with $t_1\equiv\pi(u_1,v_1)\pmod2$, avoiding $u_2$ and $v_2$. We then apply Lemma~\ref{lem:verydense-path} again to $u_2,v_2$ with forbidden set $V(P_1)$. Thus, for every $0\le i\le\lfloor(1-2\alpha)d/2\rfloor$, there is a $u_2,v_2$-path $P_2$ in $G-V(P_1)$ of length $t_2+2i$ for some $t_2\le M/10$ satisfying $t_2\equiv\pi(u_2,v_2)\pmod2$. Set $t=t_1+t_2$; then (1) holds.

    For (2), apply Lemma~\ref{lem:verydense-path} to $v_1,v_2$ with forbidden set $\{u\}$. The parity assertion follows from $\pi(v_1,v_2)\equiv\pi(u,v_1)+\pi(u,v_2)\pmod2$.

    For (3), let $\eta_0:=d/n$ and apply Lemma~\ref{regularity} directly to $G$ with $\beta=\eta_0$.  This gives a pure graph $G^\circ$, clusters $V_1,\ldots,V_r$ of common size $m$, and a reduced graph of average degree at least $(\eta_0-3\tau)r$.  By Lemma~\ref{lem:Erdos-cycle}, the reduced graph contains a cycle $\Gamma=\widetilde v_1\cdots\widetilde v_\ell\widetilde v_1$ with $\ell\ge(\eta_0-3\tau)r$. Every pair $G^\circ[V_j,V_{j+1}]$ corresponding to an edge of $\Gamma$ is $(\xi,\tau^+)$-regular.

    Let $L$ be even with $4\le L\le2\lfloor(1-2\alpha)d/2\rfloor$. We construct a closed walk $Z_L$ of length $L$ on $\Gamma$ such that each vertex of $\Gamma$ occurs at most $L/\ell+\ell+2$ times. Write $L=q\ell+s$ with $0\le s<\ell$. If $\ell$ is even, then $s$ is even, and we traverse $\Gamma$ exactly $q$ times and then traverse one fixed edge back and forth $s/2$ times. If $\ell$ is odd, let $q'=2\lfloor q/2\rfloor$. Then $0\le L-q'\ell<2\ell$ and $L-q'\ell$ is even. Traverse $\Gamma$ exactly $q'$ times and then traverse one fixed edge back and forth $(L-q'\ell)/2$ times.
    Let $C_L$ be an abstract cycle of length $L$, and assign its vertices to the clusters according to $Z_L$. For every $j\in[\ell]$,
    \[
      |V(C_L)\cap V_j|\le \tfrac L\ell+\ell+2
      \le (1-2\alpha)\tfrac{\eta_0}{(\eta_0-3\tau)(1-\xi)}m+r_1+2
      \le (1-\alpha)m
      \le (1-\sqrt\xi)|V_j|.
    \]
    Hence Lemma~\ref{embedding} embeds $C_L$ in $G$.
  \end{proof}

  We next record the path and cycle structures supplied by the regular
  pairs along a reduced cycle $\Gamma$. Only the clusters indexed by
  $V(\Gamma)$ are used; in the non-bipartite case, additional edges of
  the reduced graph on $V(\Gamma)$ are used only to switch parity.
  The proof of the following lemma is similar to the proofs of Lemma~\ref{lem:verydense-path} and Corollary~\ref{cor:verydense}.

  \begin{lemma}
    \label{lem:paths-on-reduced-cycle}
    Let $K_0,r_1\in\mathbb N$ and
    \(
      0<1/m\ll1/r_1\ll\xi\ll\tau\ll1/K_0,\alpha.
    \)
    Let $G^\circ$ be a graph with clusters $V_1,\ldots,V_r$ of common size
    $m$, where $r\le r_1$, and let $\Gamma=v_1\cdots v_\ell v_1$ be a cycle
    in the reduced graph. Suppose that
    \(
      (1-\alpha/8)d_*\le \ell m\le K_0d_*,
    \)
    and that every pair $G^\circ[V_i,V_{i+1}]$ corresponding to an edge of
    $\Gamma$ is $(\xi,\tau^+)$-regular. Then there are equal-sized sets
    $U_i\subseteq V_i$, sets $\mathcal T_i\subseteq U_i$, and a constant
    $M_0=M_0(K_0,r_1)$ with the following properties. Let
    $\mathcal F:=\bigcup_{i=1}^{\ell}U_i$ and
    $\mathcal T:=\bigcup_{i=1}^{\ell}\mathcal T_i$.

    If the graph induced by $V(\Gamma)$ in the reduced graph is bipartite,
    fix a bipartition $I_0\mathbin{\dot\cup}I_1$ of its vertex set and define
    $\chi(x)=j$ whenever $x\in\mathcal T_i$ and $v_i\in I_j$.

    \begin{enumerate}[label=\textup{(\arabic*)}]
        \item\label{lem4.6-1} $(1-\alpha)d_*\le |\mathcal F|\le(1-\alpha/2)d_*$ and
        $|\mathcal T|\ge(1-4\sqrt\xi)|\mathcal F|$.

      \item\label{lem4.6-2} Every even integer $L$ with $4\le L\le(1-\alpha)d_*$ is the
        length of a cycle in $G^\circ[\mathcal F]$.

      \item\label{lem4.6-3} Let $q\in\{1,2,3\}$, and let
        $(x_1,y_1),\ldots,(x_q,y_q)$ be ordered pairs in $\mathcal T$ whose
        endpoints are all distinct. Let
        $F\subseteq\mathcal F\setminus\bigcup_{j=1}^q\{x_j,y_j\}$ satisfy
        $|F|\le M_0/2+1$ for $q=1$, and $|F|\le M_0/2$ for $q=2,3$. Let
        $s\ge0$ be an integer with $2s\le(1-2\alpha)d_*$. Then there is an integer $t\le M_0/2$ such
        that, for every $0\le i\le s$, the graph $G^\circ[\mathcal F]-F$
        contains pairwise vertex-disjoint $x_j,y_j$-paths $P_{i,j}$,
        $j\in[q]$, with
        \(
          \sum_{j=1}^q |P_{i,j}|=t+2i.
        \)
        In the bipartite case,
        $t\equiv\sum_{j=1}^q(\chi(x_j)+\chi(y_j))\pmod2$; in the
        non-bipartite case, either parity of $t$ may be prescribed.
    \end{enumerate}
  \end{lemma}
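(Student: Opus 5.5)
We follow the template of Lemma~\ref{lem:verydense-path} and Corollary~\ref{cor:verydense}, now working only with the clusters on $\Gamma$ and with no connectivity hypothesis.

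\textbf{Step 1: the sets $U_i$, $\mathcal T_i$ and property~\ref{lem4.6-1}.} Choose $U_i\subseteq V_i$ of a common size $m'$ so that $\ell m'\in[(1-\alpha)d_*,(1-\alpha/2)d_*]$. This is possible because $\ell m\ge(1-\alpha/8)d_*$ and $\ell\le r_1\ll m$. Hence $m'\ge(1-\alpha)m/K_0$ is a constant fraction of $m$. By Lemma~\ref{lem:slicing}, every pair $(U_i,U_{i\pm1})$ is then $(\xi',\tau/2^+)$-regular with $\xi'=O(K_0\xi)$. If $\Gamma$ has chords $v_iv_j$ in the reduced graph, the pairs $(U_i,U_j)$ are also regular, and we use them in the non-bipartite case.

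Let $\mathcal T_i$ be the set of vertices of $U_i$ having at least $\sqrt{\xi}\,m'$ neighbours in both $U_{i-1}$ and $U_{i+1}$. In the non-bipartite case, also require such neighbourhoods in the clusters of one fixed odd cycle $\Gamma'$ of the reduced graph on $V(\Gamma)$, reached along a short path in $\Gamma$. Regularity excludes at most $O(\xi')m'$ vertices per cluster. Choosing constants so that this is at most $4\sqrt\xi\,m'$ gives $|\mathcal T|\ge(1-4\sqrt\xi)|\mathcal F|$.

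\textbf{Step 2: property~\ref{lem4.6-2}.} Repeat the proof of Corollary~\ref{cor:verydense}(3) with $V_j$ replaced by $U_j$. For an even $L\le(1-\alpha)d_*$, build a closed walk of length $L$ on $\Gamma$: traverse $\Gamma$ an appropriate number of times (an even number if $\ell$ is odd) and pad with back-and-forth steps along one edge. Each cluster is then used at most $L/\ell+\ell+2$ times. We need this to be at most $(1-\sqrt{\xi'})m'$, and that is the one place requiring care. Since $L\le(1-\alpha)d_*$ and $\ell m'\ge(1-\alpha)d_*$, we only get $L/\ell\le m'$, which is too tight. So we should instead choose $|\mathcal F|$ near $(1-\alpha/2)d_*$, giving $L/\ell\le \frac{1-\alpha}{1-\alpha/2}m'$. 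The additive $\ell+2\le r_1+2$ is negligible, and Lemma~\ref{embedding} then embeds $C_L$ in $G^\circ[\mathcal F]$.

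\textbf{Step 3: property~\ref{lem4.6-3}.} Fix the pairs $(x_j,y_j)$ with $x_j\in\mathcal T_{a_j}$ and $y_j\in\mathcal T_{b_j}$. Pick disjoint neighbour sets $W_j^+\subseteq N(x_j)\cap U_{a_j+1}$ and $W_j^-\subseteq N(y_j)\cap U_{b_j-1}$, each of size $\sqrt\xi m'/10$, avoiding $F$ and all the $x$'s and $y$'s. Remove $F$ and the $W$'s from the clusters; the remaining pairs stay regular by Lemma~\ref{lem:slicing}.

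For each $j$, take a walk $S_{0,j}$ on $\Gamma$ from $v_{a_j+1}$ to $v_{b_j-1}$ of length at most $3\ell$. In the bipartite case its parity is forced, so $|P_j|\equiv \chi(x_j)+\chi(y_j)$ and the congruence for $t$ follows by summing. In the non-bipartite case, detour once around $\Gamma'$ in one of the walks to flip parity as desired.

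Set $t=\sum_j(|S_{0,j}|+2)\le q(3r_1+4)$, and take $M_0\ge 2\cdot 3(3r_1+4)+10$. Distribute the $i$ extra back-and-forth steps among the $q$ walks, for instance all to the first. Every cluster is then used at most $4q+2\lceil i/\ell\rceil+O(r_1)$ times in total, which Step~2's bound keeps below $(1-\sqrt{\xi'})|\widetilde U_k|$ as $2s\le(1-2\alpha)d_*$. Embed the disjoint union of the $q$ paths simultaneously with Lemma~\ref{embedding}, sending end vertices into the $W$ sets, and attach $x_j,y_j$.

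\textbf{Expected main obstacle.} The key step is the capacity bookkeeping, i.e.\ ensuring $|\mathcal F|$ is large enough relative to the target lengths that every cluster has room. This also governs the exact choice of $|\mathcal F|$ in Step~1.
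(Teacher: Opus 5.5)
Your proposal is correct and follows essentially the same route as the paper: slice each $V_i$ to a common size with $\ell|U_i|$ strictly inside $[(1-\alpha)d_*,(1-\alpha/2)d_*]$ so that (2) has room (the paper takes $|U_i|=\lfloor(1-3\alpha/4)d_*/\ell\rfloor$, which provides exactly the slack you identified); define $\mathcal T_i$ by typical degree into $U_{i\pm1}$; and embed closed or open walks on $\Gamma$ using the embedding lemma. Two small tidy-ups remain. First, drop the extra non-bipartite condition on $\mathcal T_i$: parity is switched purely at the level of the cluster walk, via a detour through an odd cycle of the reduced graph on $V(\Gamma)$, and regularity gives no control over neighbours in clusters not adjacent to $v_i$. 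Second, $M_0$ must also absorb the $O(r_1)$ length of that detour, e.g.\ $M_0=100r_1^2$ as in the paper.
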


  \begin{proof}
    Fix $M_0:=100r_1^2$, and let
    $\mu:=\lfloor(1-3\alpha/4)d_*/\ell\rfloor$. Choose
    $U_i\subseteq V_i$ with $|U_i|=\mu$. By the slicing lemma, every pair
    $G^\circ[U_i,U_{i+1}]$ is $(\sqrt\xi,(\tau/2)^+)$-regular.

    With indices taken modulo $\ell$, let $\mathcal T_i$ consist of the
    vertices of $U_i$ having at least $(\tau/3)\mu$ neighbors in each of
    $U_{i-1}$ and $U_{i+1}$. Regularity gives
    $|\mathcal T_i|\ge(1-2\sqrt\xi)\mu$. Since
    $\ell\mu=(1-3\alpha/4)d_*+O(r_1)$, property~\ref{lem4.6-1} follows. Moreover, properties~\ref{lem4.6-2} and~\ref{lem4.6-3} follow from the same closed-walk construction on the reduced cycle used in Lemma~\ref{lem:verydense-path} and Corollary~\ref{cor:verydense}.
  \end{proof}

  \subsection{A rooted cycle lemma with one exceptional vertex}
  The following lemma shows that if $G$ is a $2$-connected graph, $v\in V(G)$ has degree at least $5$, and all but at most one vertex of $V(G)\backslash\{v\}$ have high degree, then $G$ contains cycles through $v$ that share the same two incident edges and whose lengths form a long arithmetic progression with common difference $2$.

  \begin{lemma}\label{lem:rooted-even-one-exception}
    Let $p\ge8$ be an even integer, let $F$ be a $2$-connected graph, and let $v,z\in V(F)$ be distinct. Suppose that $d_F(v)\ge5$ and that $d_F(w)\ge p$ for every $w\in V(F)\setminus\{v,z\}$. Then there are distinct vertices $x,y\in N_F(v)$ and an even integer $\ell$ such that $F$ contains consecutive even cycles $D_0,D_1,\ldots,D_{(p-8)/2}$ with $vx,vy\in E(D_i)$ and $|D_i|=\ell+2i$ for every $0\le i\le(p-8)/2$.
  \end{lemma}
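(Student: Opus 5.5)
We reduce the statement to the admissible-path result of Chiba, Ota and Yamashita, Lemma~\ref{lem:exceptional-admissible-path}. The idea is to delete the root $v$ and fix two of its neighbours $x,y$. Then every $x,y$-path in $F-v$ closes, via the edges $vx$ and $vy$, into a cycle through $v$ containing both of these edges, of length two more than the path. So it suffices to find $x,y\in N_F(v)$ such that $F-v$ contains about $p/2$ admissible $x,y$-paths. We also need to keep track of parity and of whether the common difference is one or two.

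\textbf{Step 1: a $2$-connected rooted graph.} The graph $F-v$ is connected, but it need not be $2$-connected. If it is $2$-connected, choose any two distinct $x,y\in N_F(v)$; then $(F-v,x,y)$ is $2$-connected. Otherwise consider the block-cut tree of $F-v$. Because $F$ is $2$-connected, every end-block of $F-v$ contains a neighbour of $v$ in its interior. Take a path in the block-cut tree between two end-blocks $B_1,B_2$, and let $x$ and $y$ be neighbours of $v$ in the interiors of $B_1$ and $B_2$. Let $G'$ be the union of the blocks on this path. Then $G'+xy$ is $2$-connected, so $(G',x,y)$ is a $2$-connected rooted graph.

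\textbf{Step 2: degrees in $G'$.} The degree of a vertex of $G'$ can drop for two reasons: deleting $v$ costs at most one, and discarding the blocks off the path is the main loss. To control the second loss, choose $B_1$ and $B_2$ so that the discarded part sits behind a cut vertex, and then treat that cut vertex as the exceptional vertex $z$ of Lemma~\ref{lem:exceptional-admissible-path}. This is the only place where the exceptional vertex is used, so $z$ must be reused for this purpose.

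To make this work, we plan to use the following argument. Choose an end-block $B$ of $F-v$ whose interior contains neither $z$ nor, if possible, other vertices of low degree. Vertices in the interior of $B$ have all their $F$-neighbours in $B\cup\{v\}$, so their degree in $B$ is at least $p-1$. The cut vertex $c$ of $B$ can play the role of $z$, and $v$ is deleted. Inside $B$, choose $x$ to be a neighbour of $v$ in the interior of $B$. The second endpoint $y$ should be a neighbour of $v$ lying in $B$ if one exists; if not, take $y=c$ and instead use a path from $c$ back to $v$ outside $B$. In that case the cycles still share the edges $vx$ and the final edge into $v$, and $d_F(v)\ge 5$ gives the room to choose these. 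The real $z$ is a problem only if it lies in the interior of $B$, so we choose $B$ among at least two end-blocks to avoid this.

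Lemma~\ref{lem:exceptional-admissible-path} with $k=p-2$ then gives $p-2$ admissible $x,y$-paths. If their common difference is two, we get $p-2$ cycle lengths in arithmetic progression with difference two, all of the same parity. If the difference is one, taking alternate paths gives at least $(p-2)/2$ lengths of a single parity. Each path, together with $vx$ and $vy$ (or the fixed outside return path), is a cycle through $v$ containing the required two edges at $v$.

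\textbf{Step 3: parity.} The difficulty is that the progression may consist of odd cycles. To fix this we use a second route. We choose $x'\ne x$ in $N_F(v)$ so that the lengths of the $x,y$-paths and the $x',y$-paths force cycles of opposite parity. If that fails, we use the slack between $p-2$ and the required $(p-6)/2$ cycles to shift by one, for example by routing through a short detour that uses a third neighbour of $v$; we have $d_F(v)\ge5$, so such neighbours are available.

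This parity repair is the step I expect to be the main obstacle, and it is why the target is only $(p-6)/2$ cycles. First try the case split above. If $G'$ is bipartite with $x,y$ on opposite sides, all the cycles are automatically odd, so then we switch the endpoint $y$ to a neighbour of $v$ on the same side as $x$. Such a neighbour exists unless all of $N_F(v)$ lies on one side, and in that case the paths between them are all even and the cycles are even.
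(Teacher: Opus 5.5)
There is a genuine gap, and it sits at the step you flag yourself.

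\textbf{Parity when $F-v$ is $2$-connected.} Lemma~\ref{lem:exceptional-admissible-path} gives no control over parity. If the family it returns has common difference two, every closed cycle may be odd. Applying the lemma again to another pair $x',y$ produces an unrelated family whose parity is just as uncontrolled, so nothing ``forces'' opposite parities. Slack in the number of paths cannot shift lengths by one. A detour through a third neighbour $s$ of $v$ only helps if $vs$ becomes one of the two fixed edges at $v$.

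What is actually needed is one admissible family living in a region $D$, together with two closing routes of opposite parity that are internally disjoint from $D$; you then choose between the routes once. In the non-bipartite case the paper builds this as follows:
\begin{itemize}
\item Take a shortest odd path $Q$ joining two neighbours $a,b$ of $v$.
\item By minimality, every vertex off $Q$ has at most four neighbours on $Q$. Hence $J-V(Q)$ keeps degree at least $p-5$ apart from one exceptional vertex.
\item Apply Lemma~\ref{lem:exceptional-admissible-path} with parameter $p-6$ in the component containing a third neighbour $s$ of $v$.
\item The closing routes $uq_j\cup q_jQa\cup av\cup vs$ and $uq_j\cup q_jQb\cup bv\cup vs$ have opposite parities because $Q$ is odd.
\end{itemize}
This is where $d_F(v)\ge5$ and the drop to $(p-6)/2$ come from. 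Extra care is needed when that component is not $2$-connected.

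\textbf{The non-$2$-connected case.} In general you cannot choose $G'$ so that only one cut vertex loses degree. So you must work in a single end-block $B$ with the pair $(x,c)$, as you eventually suggest. But then the parity of the fixed return path from $c$ to $v$ has to be controlled, and your proposal does not do this. The paper labels each end-block by $|P_i|+\operatorname{dist}_T(r_0,c_i)\pmod 2$. With at least three end-blocks it pairs two with equal labels. With exactly two end-blocks it flips parity through a non-bipartite block on the chain, and otherwise needs separate bipartite and non-bipartite arguments.

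Your last paragraph misses the ``two-defect'' configuration. Suppose $F-v$ is bipartite and has exactly two end-blocks containing all of $N_F(v)$ in their interiors. Suppose further that the end-block avoiding $z$ contains only one neighbour $x$ of $v$. The other end-block contains $z\notin N_F(v)$ in its interior, together with all remaining neighbours, which lie on the side opposite to $x$. Then:
\begin{itemize}
\item any pair of neighbours from different end-blocks yields only odd cycles;
\item any two same-side neighbours lie in a block with two low-degree vertices, $c$ and $z$, which Lemma~\ref{lem:exceptional-admissible-path} cannot handle.
\end{itemize}
The paper resolves this with a shortest path through $c$ between neighbours of $v$, and then works off that path. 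Your closing sentence is also inconsistent: if all of $N_F(v)$ lay on one side, $x$ and $y$ could not have been on opposite sides.
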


  \begin{proof}
    Let $H:=F-v$ and $S:=N_F(v)$. Since $F$ is $2$-connected, $H$ is
    connected, $|S|\ge5$, and
    \begin{equation}\label{eq:rooted-H-degree}
      d_H(w)\ge p-1\qquad\text{for every }w\in V(H)\setminus\{z\}.
    \end{equation}

    We use the following parity observation. Suppose that a graph $J$ contains $k$ admissible $a,b$-paths and that there are two $a,b$-paths of different parities whose internal vertices avoid
    $J$.
    Then one of these two external paths closes at least $\lceil k/2\rceil$ admissible paths into cycles of consecutive even lengths. Indeed, if the common difference is two, all admissible paths have the same parity; if it is one, take the larger parity class.

    We first isolate the non-bipartite argument that will be used twice.

    \begin{claim}\label{claim:rooted-nonbipartite-core}
      Let $J$ be a $2$-connected non-bipartite subgraph of $H$,
      $z_0\in V(J)$, and  $A\subseteq S\cap V(J)$ satisfy
      $|A\setminus\{z_0\}|\ge3$. Suppose that
      $d_J(w)\ge p-1$ for every $w\in V(J)\setminus\{z_0\}$. Then
      $F[V(J)\cup\{v\}]$ contains cycles of $(p-6)/2$ consecutive even
      lengths, all containing the same two edges incident with $v$.
    \end{claim}

    \begin{poc}
      Since $J$ is $2$-connected and non-bipartite, there is an odd path
      between two vertices of $A$.
      Choose a shortest odd path
      $Q=q_0q_1\cdots q_r$ whose endvertices
      $a:=q_0$ and $b:=q_r$ lie in $A$. Then
      $V(Q)\cap A=\{a,b\}$. Moreover, every vertex outside $Q$ has at most
      four neighbors on $Q$. Otherwise, three of five such neighbors have
      indices of the same parity, and replacing the even subpath between
      the first and the last by a two-edge path gives a shorter odd
      $A$-path.
      Choose $s\in A\setminus(V(Q)\cup\{z_0\})$, and let $D$ be the
      component of $J-V(Q)$ containing $s$. Then
      $d_D(w)\ge p-5$ for every $w\in V(D)\setminus\{z_0\}$.

      Suppose first that $D$ is $2$-connected. Since $J$ is $2$-connected,
      there are $u\in V(D)\setminus\{s\}$ and $q_j\in V(Q)$ with
      $uq_j\in E(J)$. Apply
      Lemma~\ref{lem:exceptional-admissible-path} to $(D,u,s)$, with
      exceptional vertex $z_0$ and parameter $p-6$. The two external paths
      \[
        uq_j\cup q_jQa\cup av\cup vs
        \quad\text{and}\quad
        uq_j\cup q_jQb\cup bv\cup vs
      \]
      have different parities, so the parity observation gives the claim.

      Suppose now that $D$ is not $2$-connected. A bridge end-block of $D$
      can only have $z_0$ as its leaf, so there is at most one such block.
      If $D$ has a non-bridge end-block $B$, with cut-vertex $c$, whose
      interior does not contain $s$, then the $2$-connectivity of $J$ gives
      an edge $uq_j$ with $u\in V(B)\setminus\{c\}$ and $q_j\in V(Q)$.
      Choose a $c,s$-path $T$ in
      $D-(V(B)\setminus\{c\})$. Applying
      Lemma~\ref{lem:exceptional-admissible-path} to $(B,u,c)$, with
      exceptional vertex $z_0$ and parameter $p-6$, and closing the
      resulting paths with
      \[
        uq_j\cup q_jQa\cup av\cup vs\cup sTc\text{ or } uq_j\cup q_jQb\cup bv\cup vs\cup sTc
      \]
      gives the required cycles.
      The only remaining possibility is that $D$ has a non-bridge
      end-block $B$ containing $s$ in its interior and a bridge end-block
      with leaf $z_0$. Let $c$ be the cut-vertex of $B$. There is a
      $c,z_0$-path $T$ outside the interior of $B$, and the
      $2$-connectivity of $J$ gives a neighbor $q_j\in V(Q)$ of $z_0$.
      Apply Lemma~\ref{lem:exceptional-admissible-path} to $(B,c,s)$ with
      exceptional vertex $z_0$ and parameter $p-6$. The two fixed external paths are
      \[
        cTz_0\cup z_0q_j\cup q_jQa\cup av\cup vs
        \quad\text{and}\quad
        cTz_0\cup z_0q_j\cup q_jQb\cup bv\cup vs.
      \]
      They have different parities, so the parity observation applies.
      In every case, the chosen external path is fixed, and hence so are
      the two edges incident with $v$.
    \end{poc}

    We also need the following short bipartite version when a block
    cut-vertex and $z$ are both possible low-degree vertices.

    \begin{claim}\label{claim:rooted-bipartite-two-defects}
      Let $J$ be a $2$-connected bipartite subgraph of $H$, let $A$ be a
      set of at least three vertices in one part of $J$, and let
      $c\in V(J)\setminus A$. Suppose that
      $d_J(w)\ge p-1$ for every $w\in V(J)\setminus\{c,z\}$ and
      $A\subseteq S$. Then $F[V(J)\cup\{v\}]$ contains $p-6$ cycles of
      consecutive even lengths, all containing the same two edges incident
      with $v$.
    \end{claim}

    \begin{poc}
      We may assume that $z\notin A$, since otherwise Lemma~\ref{lem:exceptional-admissible-path}, applied to $(J,z,a)$ for any $a\in A\setminus\{z\}$ with exceptional vertex $c$ and parameter $p-6$, gives $p-6$ paths of consecutive even lengths, which together with $vz$ and $va$ give the required cycles.
      By Menger's theorem, there is a path in $J$ whose two ends lie in $A$
      and which contains $c$. Choose such a path $Q$ of minimum length,
      and let $a$ be one of its ends. Then $V(Q)\cap A$ consists precisely
      of the two ends of $Q$.

      Every vertex outside $Q$ has at most four neighbors on $Q$. Indeed,
      split $Q$ at $c$. If a vertex had three neighbors on one side, then,
      since $J$ is bipartite, the first and last of them are at even distance
      at least four on $Q$. Replacing the subpath between them by a two-edge
      path gives a shorter $A$-path containing $c$.
      Choose $s\in A\setminus V(Q)$, and let $D$ be the component of
      $J-V(Q)$ containing $s$. It follows that
        $d_D(w)\ge p-5$ for every $w\in V(D)\setminus\{z\}$.
      
      Suppose first that $D$ is $2$-connected. Since $J$ is $2$-connected,
      there are $u\in V(D)\setminus\{s\}$ and $q\in V(Q)$ with
      $uq\in E(J)$; otherwise $s$ would separate $D-s$ from $Q$.
      Apply Lemma~\ref{lem:exceptional-admissible-path} to $(D,u,s)$,
      with exceptional vertex $z$ and parameter $p-6$. The resulting
      $p-6$ admissible $u,s$-paths are closed by the fixed path
      \[
        uq\cup qQa\cup av\cup vs.
      \]
      Since $a$ and $s$ lie in the same part of $J$, this fixed path has
      the same parity as every $u,s$-path in $D$. Hence we obtain
      $p-6$ cycles of consecutive even lengths.

      Now suppose that $D$ is not $2$-connected. Every end-block of $D$
      has an edge from its interior to $Q$; otherwise its cut-vertex would
      also be a cut-vertex of $J$. Moreover, a bridge end-block can only
      have $z$ as its leaf, so there is at most one such end-block.
      If $D$ has a non-bridge end-block $B$, with cut-vertex $d$, whose
      interior does not contain $s$, choose $u\in V(B)\setminus\{d\}$ and
      $q\in V(Q)$ with $uq\in E(J)$. Also choose a $d,s$-path outside
      the interior of $B$. Apply
      Lemma~\ref{lem:exceptional-admissible-path} to $(B,u,d)$, with
      exceptional vertex $z$ and parameter $p-6$. Close the resulting paths
      using
      \[
        uq\cup qQa\cup av\cup vs
      \]
      followed by the fixed $s,d$-path. This closing path has the same
      parity as every $u,d$-path in $B$, again because $a$ and $s$ lie
      in the same part.

      Otherwise, $D$ has exactly two end-blocks: a non-bridge end-block
      $B$ containing $s$ in its interior and a bridge end-block with leaf
      $z$. Let $d$ be the cut-vertex of $B$. Choose a fixed $d,z$-path
      outside the interior of $B$ and an edge $zq$ with $q\in V(Q)$.
      Apply Lemma~\ref{lem:exceptional-admissible-path} to $(B,d,s)$ with
      exceptional vertex $z$ and parameter $p-6$, and close the resulting paths through the fixed
      $d,z$-path and
      \[
        zq\cup qQa\cup av\cup vs.
      \]
      This closing path has the same parity as every $d,s$-path in $B$.

      Thus, in every case, we obtain $p-6$ cycles of consecutive even
      lengths. The closing path is fixed and contains the edges $va$ and
      $vs$, so these two edges belong to every cycle.
    \end{poc}

    Suppose first that $H$ is $2$-connected. If $H$ is bipartite, choose
    two vertices $x,y\in S$ in the same part. Applying
    Lemma~\ref{lem:exceptional-admissible-path} to $(H,x,y)$, with
    exceptional vertex $z$ and parameter $p-2$, gives $p-2$ admissible
    $x,y$-paths. Since $H$ is bipartite, their lengths have common
    difference two. Adding the edges $vx$ and $vy$ gives the required
    cycles, all containing these two fixed edges. If $H$ is non-bipartite, apply
    Claim~\ref{claim:rooted-nonbipartite-core} with
    $J=H$, $A=S$, and $z_0=z$.

    We may therefore assume that $H$ is not $2$-connected. Let
    $B_1,\ldots,B_r$ be its end-blocks, and let $c_i$ be the cut-vertex
    of $B_i$. Every end-block contains a vertex of $S$ in its interior.
    Indeed, otherwise deleting its cut-vertex would separate that
    interior from $v$ in $F$. Moreover, a bridge end-block can only have
    $z$ as its leaf, by~\eqref{eq:rooted-H-degree}. Thus there is at most
    one bridge end-block, and if it occurs then $z\in S$.
    For every non-bridge end-block $B_i$, choose
    $x_i\in S\cap(V(B_i)\setminus\{c_i\})$. Every vertex of
    $B_i\setminus\{x_i,c_i,z\}$ retains all its $H$-neighbors in $B_i$.
    Hence Lemma~\ref{lem:exceptional-admissible-path}, applied to
    $(B_i,x_i,c_i)$ with exceptional vertex $z$ and parameter $p-6$, gives
    a family $\mathcal P_i$ of $p-6$ admissible $x_i,c_i$-paths. For
    the possible bridge end-block, let $x_i:=z$ and let $\mathcal P_i$
    consist of the single edge $zc_i$.

    If some non-bridge family $\mathcal P_i$ has common difference one,
    choose another end-block $B_j$ and fix one path from $\mathcal P_j$.
    Join the two end-blocks by a fixed $c_i,c_j$-path whose internal
    vertices avoid their interiors, and close through the two-edge path
    $x_ivx_j$. As the lengths in $\mathcal P_i$ are consecutive integers,
    one parity class gives $(p-6)/2$ cycles of consecutive even
    lengths. The two edges at $v$ are fixed.
    Hence we may assume that every non-bridge family has common
    difference two.

    \begin{claim}\label{claim:rooted-at-least-three-endblocks}
      If $r\ge3$, then the conclusion of the lemma holds.
    \end{claim}

    \begin{poc}
      Fix a spanning tree $T$ of $H$ and a root $r_0$. For each $i$, choose
      any path $P_i\in\mathcal P_i$ and label $B_i$ by
      \[
        \lambda_i:=|P_i|+\operatorname{dist}_T(r_0,c_i)\pmod2.
      \]
      This is independent of the choice of $P_i$, since every non-bridge
      family has common difference two, while a bridge family contains only
      one path.

      Among any three end-blocks, two have the same label, say $B_i$ and
      $B_j$. The path $c_iTc_j$ avoids the interiors of these two
      end-blocks. For any $P_i\in\mathcal P_i$ and
      $P_j\in\mathcal P_j$, the cycle obtained from $P_i$, $P_j$,
      $c_iTc_j$, and $x_ivx_j$ has parity
      \[
        |P_i|+|P_j|+\operatorname{dist}_T(c_i,c_j)
        \equiv\lambda_i+\lambda_j\equiv0\pmod2.
      \]
      Thus all these cycles are even. If both end-blocks are non-bridges,
      the pairwise sums of the two length sets form an arithmetic progression of common
      difference two with $2p-13$ terms. If one is the bridge end-block,
      varying the other family gives $p-6$ terms. Both bounds are stronger
      than required, and all cycles contain the fixed edges $vx_i$ and
      $vx_j$.
    \end{poc}

    It remains to treat $r=2$. Then the block-cut tree of $H$ is a path.
    \begin{claim}\label{cla:lem-complete-1}
      If $H$ is bipartite, then the conclusion of the lemma holds.
    \end{claim}
    \begin{poc}
      Let $S_i:=S\cap (V(B_i)\setminus \{c_i\})$. Suppose first that $x\in S_1$ and $y\in S_2$ lie in the same part of $H$. Since the block-cut tree of $H$ is a path and $x,y$ lie in the interiors of its two end-blocks, $(H,x,y)$ is a $2$-connected rooted graph. Lemma~\ref{lem:exceptional-admissible-path}, applied to $(H,x,y)$ with exceptional vertex $z$ and parameter $p-6$, gives $p-6$ admissible $x,y$-paths. Since $H$ is bipartite, their lengths have common difference two. Adding the edges $vx$ and $vy$ gives $p-6$ cycles of consecutive even lengths through $v$.

      Next, assume that $S_1$ and $S_2$ lie in different parts of $H$. If there exists some vertex $y\in S\setminus
      \bigl((V(B_1)\setminus\{c_1\})
      \cup(V(B_2)\setminus\{c_2\})\bigr)$, choose $x\in S_1\cup S_2$ that lies in the same part as $y$. Fix a path between $x$ and $y$. This path contains a non-bridge block. Indeed, if $x\ne z$, the
      end-block containing $x$ is already non-bridge. If $x=z$, note that the path between $x$ and $y$ has length at least $2$. There exists a vertex $w$ on the path such that $d_{H}(w)\ge p-1$. Thus one of the blocks containing $w$ is non-bridge.
      Choose such a non-bridge block $D$, taking the end-block containing
      $x$ when $x\ne z$, and otherwise the first non-bridge block met from
      $x$. Let $a,b$ be the first and last vertices of the fixed path in
      $D$. Since the block-cut tree of $H$ is a path, every vertex of
      $D\setminus\{a,b,z\}$ retains all its $H$-neighbors in $D$ and hence
      has degree at least $p-1$ in $D$. Apply
      Lemma~\ref{lem:exceptional-admissible-path} to $(D,a,b)$ with
      exceptional vertex $z$ and parameter $p-6$.
      Replacing the $a,b$-subpath of the fixed path by the resulting
      admissible paths gives $p-6$ admissible $x,y$-paths. Since $H$ is
      bipartite and $x,y$ lie in the same part, their lengths have common
      difference two. Adding $vx$ and $vy$ gives $p-6$ cycles of consecutive
      even lengths.

      Finally, suppose that every vertex of $S$ lies in the interiors of
      $B_1$ and $B_2$. One of $S_1,S_2$, say $S_i$, has size at least
      three. The set $S_i$ lies in one part of $B_i$, and every vertex of
      $B_i\setminus\{c_i,z\}$ has degree at least $p-1$ in $B_i$.
      Claim~\ref{claim:rooted-bipartite-two-defects}, applied with
      $J=B_i$, $A=S_i$, and $c=c_i$, gives the required cycles.
    \end{poc}

    \begin{claim}\label{claim:rooted-two-endblocks-nonbipartite}
      If $H$ is non-bipartite, then the conclusion of the lemma holds.
    \end{claim}

    \begin{poc}
      Suppose first that both end-blocks are non-bridges. Fix a
      $c_1,c_2$-path through the intermediate blocks. Combining this
      path with one path from each of $\mathcal P_1$ and $\mathcal P_2$, and with the
      two-edge path $x_1vx_2$ gives cycles whose lengths form an arithmetic
      progression of common difference two with $2p-13$ terms. All these
      cycles have the same parity.

      If they are even, we are done. If these cycles are odd, use a non-bipartite block on the block chain to reverse their parity: replace the fixed path through it by one of the opposite parity if it is internal, and otherwise fix a path of the opposite parity in that end-block and vary the family in the other end-block; in either case we obtain at least $p-6$ consecutive even lengths.

      We may therefore assume that one end-block is the bridge $zc_0$.
      Let $B$ be the other end-block, with cut-vertex $c$, and let
      $S_B:=S\cap(V(B)\setminus\{c\})$. Choose $x\in S_B$, and let
      $\mathcal P$ be the corresponding family of $p-6$ admissible $x,c$-paths in $B$.
      Fix the $c,z$-path along the block chain. Together with the path
      $xvz$, it closes every path in $\mathcal P$ into a cycle. These
      cycles have common difference two and the same parity. If they are
      even, we are done. If a non-bipartite block occurs outside $B$,
      replace the segment of the fixed $c,z$-path inside that block by
      a path of the opposite parity. Hence we may assume that $B$ is the
      unique non-bipartite block of $H$.

      If $|S_B|\ge3$, apply
      Claim~\ref{claim:rooted-nonbipartite-core} with
      $J=B$, $A=S_B$, and $z_0=c$. Indeed, every vertex of
      $B\setminus\{c\}$ retains all its $H$-neighbors in $B$, and hence
      has degree at least $p-1$ in $B$.

      It remains to consider $|S_B|\le2$. Since
      \(
        S\cap(B\cup\{z\})\subseteq S_B\cup\{c,z\}
      \)
      and $|S|\ge5$, some vertex of $S$ lies outside $B\cup\{z\}$.
      Consequently, the block chain between $B$ and the bridge $zc_0$
      contains a non-bridge internal block $D$. Otherwise every vertex of
      this part of the chain, except $z$, would have degree at most two in
      $H$, contrary to~\eqref{eq:rooted-H-degree}.
      Let $d_-,d_+$ be the two cut-vertices through which $D$ meets the
      neighboring blocks, with $d_-$ on the $B$-side. Every vertex of
      $D\setminus\{d_-,d_+\}$ retains all its $H$-neighbors in $D$, and
      therefore has degree at least $p-1$ in $D$. Thus
      Lemma~\ref{lem:exceptional-admissible-path}, applied to
      $(D,d_-,d_+)$ with exceptional vertex $z$ and parameter $p-6$, gives
      $p-6$ admissible $d_-,d_+$-paths. Since $B$ is the unique
      non-bipartite block, $D$ is bipartite, so these paths have common
      difference two and the same parity.
      Fix the paths from $d_-$ to $c$ and from $z$ to $d_+$ along the
      block chain. Since $B$ is $2$-connected and non-bipartite, it
      contains $x,c$-paths of both parities. Choose one so that the path
      formed by
      \[
        d_-\text{ to }c,\quad c\text{ to }x\text{ in }B,\quad
        xvz,\quad\text{and}\quad z\text{ to }d_+
      \]
      has the same parity as the admissible paths in $D$. Its internal
      vertices avoid $D$, so it closes all these paths into $p-6$ 
      cycles of consecutive even lengths. All of them contain the fixed edges
      $vx$ and $vz$.
    \end{poc}

    The two claims settle the case $r=2$. Together with the preceding cases, they cover all possibilities for $H$. Thus the lemma holds.
  \end{proof}

  The following corollary shows that the two fixed edges in Lemma~\ref{lem:rooted-even-one-exception} can be chosen to avoid the edge $vz$.

  \begin{corollary}\label{cor:avoid-root-edge}
    Let $p\ge8$ be an even integer, let $F$ be a $2$-connected graph, and let $v,z\in V(F)$ be distinct vertices with $vz\in E(F)$. Suppose that $d_F(v)\ge6$ and that $d_F(w)\ge p$ for every $w\in V(F)\setminus\{v,z\}$. Then there are distinct vertices $x,y\in N_F(v)\setminus\{z\}$ and an even integer $\ell$ such that $F$ contains cycles $D_0,\ldots,D_{(p-8)/2}$ satisfying $vx,vy\in E(D_i)$ and $|D_i|=\ell+2i$ for every $i$.
  \end{corollary}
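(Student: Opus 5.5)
The natural approach is to reduce to Lemma~\ref{lem:rooted-even-one-exception} by deleting the edge $vz$. Set $F':=F-vz$. Then $d_{F'}(v)=d_F(v)-1\ge5$. Every $w\in V(F')\setminus\{v,z\}$ still has $d_{F'}(w)=d_F(w)\ge p$, since only $v$ and $z$ lose degree. Any cycle of $F'$ through $v$ uses two edges at $v$ distinct from $vz$. Hence, if $F'$ is $2$-connected, Lemma~\ref{lem:rooted-even-one-exception} applied to $F'$ with the same $v,z$ gives $x,y\in N_{F'}(v)=N_F(v)\setminus\{z\}$ together with the required cycles $D_0,\ldots,D_{(p-8)/2}$. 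These lie in $F'\subseteq F$, so we are done in this case.

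The main obstacle is that deleting $vz$ may destroy $2$-connectivity. Since $F$ is $2$-connected, $F'$ is still connected. If $F'$ is not $2$-connected, its block-cut tree is a path. The vertices $v$ and $z$ lie in the interiors of the two distinct end-blocks $B_v$ and $B_z$, since $F'+vz=F$ is $2$-connected. Let $c$ be the cut-vertex of $B_v$. We do not need $B_z$ at all; we work inside $B_v$ only.

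First, $B_v$ is not a bridge. If it were, then $v$ would have degree $1$ in $F'$, contradicting $d_{F'}(v)\ge5$. So $B_v$ is $2$-connected. Every vertex of $B_v$ other than $c$ has all its $F'$-neighbours inside $B_v$, and $z\notin V(B_v)$. Therefore $d_{B_v}(w)\ge p$ for every $w\in V(B_v)\setminus\{v,c\}$, and $d_{B_v}(v)=d_{F'}(v)\ge5$.

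Now apply Lemma~\ref{lem:rooted-even-one-exception} to $F:=B_v$ with root $v$ and exceptional vertex $c$ playing the role of $z$. This yields $x,y\in N_{B_v}(v)\subseteq N_F(v)\setminus\{z\}$ and even cycles of consecutive lengths $\ell,\ell+2,\ldots,\ell+p-8$, all containing $vx$ and $vy$. These cycles lie in $B_v\subseteq F$, which proves the corollary.

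The step requiring care is the second case, where $F'$ is not $2$-connected. There one must check two things. The first is that the end-block containing $v$ avoids $z$; this holds because $v$ and $z$ are in different end-block interiors, as $F$ is $2$-connected. The second is that the only low-degree vertex other than $v$ is the cut-vertex $c$, which the lemma tolerates as its single exceptional vertex. A degenerate situation, $B_v=\{v,c\}$, is excluded by the degree bound at $v$.
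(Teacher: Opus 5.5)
Your proposal is correct and follows the paper's proof essentially verbatim: delete $vz$; if $F-vz$ is $2$-connected, apply Lemma~\ref{lem:rooted-even-one-exception} directly with exceptional vertex $z$; otherwise, apply it to the end-block containing $v$, with that block's cut-vertex as the exceptional vertex. The one step the paper spells out more fully is why the block-cut tree of $F-vz$ is a path with $v$ and $z$ in the interiors of distinct end-blocks: every cut-vertex $c$ of $F-vz$ must separate $v$ from $z$, and no component of $(F-vz)-c$ can miss both, since otherwise $c$ would be a cut-vertex of $F$. This is the standard fact you invoke.
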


  \begin{proof}
    Let $F':=F-vz$. The graph $F'$ is connected. If it is $2$-connected, then Lemma~\ref{lem:rooted-even-one-exception}, applied to $F'$ with distinguished vertex $v$ and exceptional vertex $z$, gives the result, since $d_{F'}(v)\ge5$.

    Suppose that $F'$ is not $2$-connected. Then every cut-vertex of $F'$ separates $v$ from $z$. Indeed, if a cut-vertex $c$ left $v$ and $z$ in the same component of $F'-c$, then adding the edge $vz$ would not reconnect any other component, and $c$ would also be a cut-vertex of $F$, which means $F$ is not $2$-connected, a contradiction. Moreover, $F'-c$ has no component containing neither $v$ nor $z$, because adding $vz$ would not reconnect that component and $c$ would again be a cut-vertex of $F$. As a consequence, the block-cut tree of $F'$ is a path, with $v$ and $z$ in the interiors of its two end-blocks. Let $B$ be the end-block containing $v$, and let $c$ be its cut-vertex. Then $d_B(v)=d_{F'}(v)\ge5$, so $B$ is a non-bridge block and hence is $2$-connected, while every vertex of $B\setminus\{v,c\}$ has all its $F'$-neighbors in $B$ and hence has degree at least $p$ in $B$. Applying Lemma~\ref{lem:rooted-even-one-exception} to $B$, with distinguished vertex $v$ and exceptional vertex $c$, we get the required family of cycles.
  \end{proof}

  \subsection{Proof of Lemma~\ref{lem:dense-expander}}\label{subsec:dense-expander-proof}

The proof has four main stages. First, the regularity lemma and a long
cycle in the reduced graph yield an even cycle of length $(1-o(1))d$,
together with flexible paths between a large set of typical vertices.
Second, we repeatedly apply four local augmentations to extend this
cycle. Third, when the procedure terminates, we obtain a stable
decomposition
$V(G)=A\mathbin{\dot\cup}V(C)\mathbin{\dot\cup}W$ in which $G[A]$ is
sparse, $G[W]$ has large minimum degree, $C$ is an even cycle of length
at least $(1-o(1))d$, and $H\subseteq G[A\cup V(C)]$. Finally, we extend
the parity labelling to $A\cup V(C)$ and combine the resulting
variable-length paths with admissible path families in $G[W]$ to obtain
consecutive even cycles.
    \begin{proof}[Proof of Lemma~\ref{lem:dense-expander}]

    Suppose, for contradiction, that $G$ contains no $d/2$ consecutive
    even cycle lengths. By~Proposition~\ref{prop:minicoeg}, $n\ge d+2$,
    $\delta(G)\ge d/2$, and $G$ is $2$-connected. By replacing $H$ with
    $G[V(H)]$, we may assume that $H$ is an induced subgraph of $G$, since
    adding edges preserves both the degree property and
    vertex-connectivity.
    Choose $r_1\in\mathbb N$ and $\xi,\tau>0$ so that
    \[
      0<1/h\ll1/r_1\ll\xi\ll\tau\ll1/K\ll\nu\ll \eps_1,\eps_2\ll \eps.
    \]
    Apply Lemma~\ref{regularity} to $H$ with $\beta=(1-\eps)d/h$, and let
    $H^\circ$, $R^\ast$, and $m$ denote the resulting pure graph, reduced
    graph, and cluster size. Lemma~\ref{lem:Erdos-cycle} gives a cycle
    $\Gamma=v_1\cdots v_\ell v_1$ in $R^\ast$ with
    $\ell\ge((1-\eps)d/h-3\tau)r$.
    Since $rm\ge(1-\xi)h$ and $\xi\ll \tau\ll \eps$, we have $(1-\eps-\tfrac{1}{100}\eps)d\le \ell m\le h\le Kd$.

    By Lemma~\ref{lem:paths-on-reduced-cycle} with $(K_0,d_{\ast}, \alpha)=(2K,(1-\eps)d, \eps)$ on $\Gamma$,
    let $U_i,\mathcal T_i,\mathcal F$, and $\mathcal T$ denote the sets, and let $M_0$ denote the constant supplied by that lemma. Then
    $(1-2\eps)d\le |\mathcal F|\le(1-\eps)d,~|\mathcal T|\ge (1-4\sqrt{\xi})|\mathcal{F}|$.
    By taking $r_1$ sufficiently large, we may assume that $M_0\ge 200$.
    Moreover, the graph $H^\circ[\mathcal F]$ contains a cycle of every even length at most $(1-2\eps)d$. Let $C_0$ be the even cycle in $H^{\circ}[\mathcal{F}]$ of length exactly $(1-2\eps)d$.
    Meanwhile, for distinct $x,y\in\mathcal T$ and
    $F\subseteq\mathcal F\setminus\{x,y\}$ with $|F|\le M_0/2+1$,
    Lemma~\ref{lem:paths-on-reduced-cycle}\ref{lem4.6-3} gives an integer
    $t\le M_0/2$ such that $H^\circ[\mathcal F]-F$ contains
    $xy$-paths of lengths
    \begin{equation}\label{eq:selected-path-span}
      t,t+2,\ldots,
      t+(1-3\eps)d.
    \end{equation}
    If $R_\Gamma:=R^\ast[V(\Gamma)]$ is bipartite, let $\chi_{\mathcal F}$ be its bipartition labelling on $\mathcal T$. If $R_\Gamma$ is non-bipartite, fix any map $\chi_{\mathcal F}:\mathcal T\to\{0,1\}$, for instance the parity of the index of the cluster containing the vertex. In the bipartite case the first length in~\eqref{eq:selected-path-span} has parity $\chi_{\mathcal F}(x)+\chi_{\mathcal F}(y)$, while in the non-bipartite case either parity may be prescribed; in particular, we may always choose this same parity.

    Since $h\le Kd$ and $\tau,\xi\ll\eps/K$, we have
    $(\tau+2\xi)h\le\eps d$. Hence every vertex loses at most $\eps d$
    incident edges when passing from $H$ to $H^\circ$.

    \medskip
    \noindent
    \textbf{The cleaning procedure.}
    Starting with the even cycle $C_0\subseteq G[\mathcal F]$ constructed
    above, define for each current even cycle $C_r$
    \[
      A_r:=\left\{x\in V(G)\setminus V(C_r):d_{C_r}(x)\ge\left(\tfrac13+20\eps\right)d-2r\right\}
    \]
    and
    \[
      B_r:=\left\{x\in V(G)\setminus\bigl(V(C_r)\cup A_r\bigr):d_{A_r}(x)\ge20\eps d\right\}.
    \]
    \begin{figure}[H]
      \begin{center}
        \includegraphics[width=0.8\textwidth]{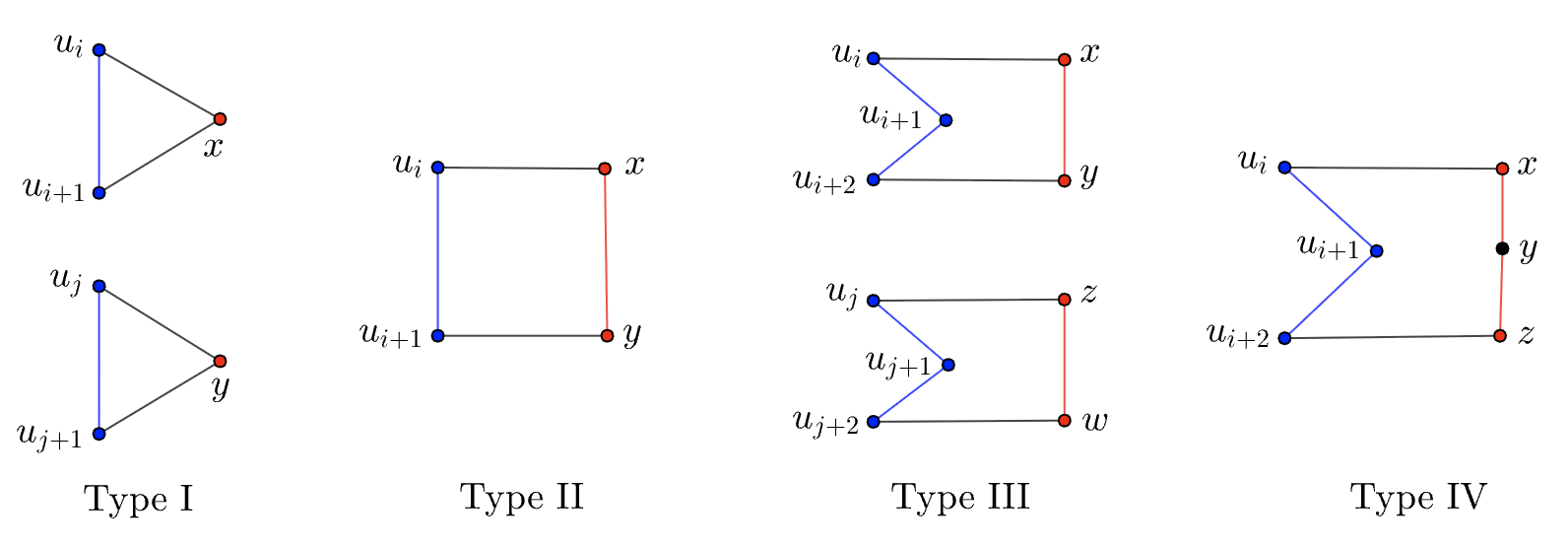}
        \caption{The four augmenting configurations. The blue vertices and edges belong to $C_r$; in Types I--III the red vertices and edges belong to $G[A_r]$, while in Type IV the two endvertices lie in $A_r$ and the middle vertex may lie in $A_r\cup B_r$.}
        \label{fig:4types}
      \end{center}
    \end{figure}
    Let $W_r:=V(G)\setminus\bigl(V(C_r)\cup A_r\cup B_r\bigr)$. If one
    of the four configurations in Figure~\ref{fig:4types} occurs for
    $(C_r,A_r,B_r)$, perform the corresponding replacement to obtain an even
    cycle $C_{r+1}$ of length $|C_r|+2$, and then update
    $A_{r+1},B_{r+1},W_{r+1}$. Because each replacement increases the cycle
    length, the procedure terminates unless it has already produced the
    required interval. Write $C_r:=u_1u_2\cdots u_{\ell_r}u_1$, with
    subscripts read modulo $\ell_r$. The four configurations are as follows.

    \begin{enumerate}[label=\textup{(\roman*)}]
      \item[\textbf{Type I.}] There are two distinct vertices $x,y\in A_r$ and two edges $u_iu_{i+1},u_ju_{j+1}\in E(C_r)$ such that $xu_i,xu_{i+1}$, $yu_j,yu_{j+1}\in  E(G)$. The two edges  $u_iu_{i+1},u_ju_{j+1}$ of $C_r$ share at most one vertex.
      \item[\textbf{Type II.}] There is an edge $xy\in E(G[A_r])$ and an edge $u_iu_{i+1}\in E(C_r)$ such that $u_ix,yu_{i+1}\in E(G)$.
      \item[\textbf{Type III.}] There are two vertex-disjoint edges
      $xy,zw\in E(G[A_r])$ and two vertex-disjoint subpaths
      $u_iu_{i+1}u_{i+2}$ and $u_ju_{j+1}u_{j+2}$ of $C_r$ such that
      $u_ix,yu_{i+2},u_jz,wu_{j+2}\in E(G)$.
      \item[\textbf{Type IV.}] There are distinct vertices
      $x,y,z\in V(G)\setminus V(C_r)$ with $x,z\in A_r$ and
      $y\in A_r\cup B_r$ such that $xyz$ is a path. Moreover, $C_r$
      contains a subpath $u_iu_{i+1}u_{i+2}$ with
      $u_ix,zu_{i+2}\in E(G)$.
    \end{enumerate}

    We perform the replacements as follows. In Type I, replace the edges
    $u_iu_{i+1}$ and $u_ju_{j+1}$ by the paths $u_ixu_{i+1}$ and
    $u_jyu_{j+1}$, respectively. In Type II, replace $u_iu_{i+1}$ by
    $u_ixyu_{i+1}$. In Type III, replace $u_iu_{i+1}u_{i+2}$ and
    $u_ju_{j+1}u_{j+2}$ by $u_ixyu_{i+2}$ and $u_jzwu_{j+2}$,
    respectively. In Type IV, replace $u_iu_{i+1}u_{i+2}$ by the simple
    path $u_ixyzu_{i+2}$, whose internal vertices are outside $C_r$.
    Each replacement produces an even cycle $C_{r+1}$
    with $|C_{r+1}|=|C_r|+2$. Suppose that the process terminates after
    $s$ replacements, and let $C:=C_s$, $A:=A_s$, $B:=B_s$, and $W:=W_s$.
    Since each replacement deletes at most two vertices of the current cycle, if $x\in A_r$, then $d_{C_{r'}}(x)\ge(\tfrac13+20\eps)d-2r'$ for every $r'\ge r$. Hence $x\in A_{r'}\cup V(C_{r'})$, and therefore every vertex belonging to some $A_r$ lies in $A\cup V(C)$. None of Types I--IV occurs for the final triple $(C,A,B)$.

    \begin{claim}\label{claim:C-small}
      The following properties hold for $A,B$, and $C$.
      \begin{enumerate}[label=\textup{(\arabic*)}]
        \item\label{c-sma1} $|C|\le d-2$.
        \item\label{c-sma2}
        $d_C(a)\ge(\tfrac13+18\eps)d$ for every $a\in A$, and all but at most one vertex
        $a\in A$ satisfy $d_C(a)\le |C|/2$.
        \item\label{c-sma3} $B=\varnothing$.
        \item\label{c-sma4} $e(G[A])\le1$.
      \end{enumerate}
    \end{claim}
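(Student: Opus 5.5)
We prove the four items in order, using only that none of Types I--IV occurs for the final triple $(C,A,B)$, together with the definitions of $A_r,B_r$ and the hypothesis that $G$ has no $d/2$ consecutive even cycle lengths.

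\textbf{Item (1).} If $|C|\ge d$ then $|C|=d$ exactly, since each step adds $2$ to an even length $(1-2\eps)d$. The cycles of lengths $4,\dots,(1-2\eps)d$ in $H^\circ[\mathcal F]$, the cycles $C_1,\dots,C_s$ of lengths $(1-2\eps)d+2,\dots,d$, and an edge (a $2$-cycle) then give $d/2$ consecutive even lengths $2,4,\dots,d$, which is a contradiction.

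\textbf{Bound on the number of steps.} Since $|C_0|=(1-2\eps)d$ and $|C|\le d-2$, we have $s\le \eps d$. Hence the threshold $(\tfrac13+20\eps)d-2s$ is at least $(\tfrac13+18\eps)d$, which gives the first half of (2).

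\textbf{Second half of (2).} Suppose two vertices $a,a'\in A$ have $d_C>|C|/2$. Pigeonhole on the $|C|$ edges of $C$ then gives, for each of them, many edges $u_iu_{i+1}$ of $C$ with both endpoints adjacent to that vertex. Indeed, a vertex adjacent to more than half of the cycle's vertices is adjacent to two consecutive ones, and in fact to about $2d_C-|C|$ such pairs. We would then choose an edge for $a$ and a different edge for $a'$ sharing at most one vertex. This is possible because $a'$ has at least two consecutive pairs, and we must exclude the one identical edge. This yields Type I, a contradiction. The subtle point is when $a'$ has exactly one consecutive pair and it coincides with $a$'s unique pair; then we need $2d_C(a)-|C|\ge2$, or we should count more carefully. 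Since $d_C>|C|/2$ with $|C|$ even forces $d_C\ge |C|/2+1$, each such vertex has at least two consecutive pairs, so the choice exists.

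\textbf{Item (4).} Suppose $xy\in E(G[A])$. Both $x$ and $y$ have at least $(\tfrac13+18\eps)d>|C|/3$ neighbours on $C$. If some $u_i\in N(x)$ has $u_{i+1}\in N(y)$ (or the reverse), this is Type II. Otherwise $N(y)$ avoids $N(x)^{+}$ and $N(x)^{-}$ (the successor and predecessor shifts along $C$), so $|N(x)^+\cup N(y)|\le|C|$. Hence $N(x)^+$ and $N(y)$ together cover at most $|C|$ positions, with $|N(x)|+|N(y)|>\tfrac23|C|$. To exclude the remaining case we use the shift by two: we look for $u_i\in N(x)$ with $u_{i+2}\in N(y)$, which gives the partial Type III / Type IV structure. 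With two disjoint edges $xy,zw$ in $A$ and such offset-$2$ occurrences at disjoint positions, we get Type III. This forces every edge of $G[A]$ to have few offset-$2$ occurrences, and a counting argument (the sets $N(x)^{+2}$ and $N(y)$ each exceeding $|C|/3$, together with the avoidances above) should show that each edge has many offset-$2$ occurrences. Two disjoint edges would then give Type III, and two edges sharing a vertex, say $xy,yz$, should give Type IV via the path $xyz$ with $u_i\in N(x)$, $u_{i+2}\in N(z)$. Here $N(x)$ and $N(z)$ each exceed $|C|/3$, so $N(x)^{+2}\cap N(z)\neq\varnothing$ unless their union covers everything, which is impossible by size plus $\eps$ slack. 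Hence $G[A]$ has at most one edge. I expect this counting, especially guaranteeing offset-$2$ coincidences from sets of size just over $|C|/3$, to be the main obstacle. I would exploit that $N(x)$, $N(x)^+$ and $N(x)^-$ must avoid $N(y)$, which forces $|C|\ge |N(y)|+|N(x)^{\pm}\cup N(x)|$; being more than $|C|/3$ each, this gives $|N(x)\cup N(x)^+\cup N(x)^-|<\tfrac23|C|$, so $N(x)$ is clustered into long runs, and then a run has both $u_i,u_{i+1}\in N(x)$ for an offset-$1$ Type I with another vertex.

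\textbf{Item (3).} Take $y\in B$; it has at least $20\eps d$ neighbours in $A$. Any two of them, $x,z$, together with the path $xyz$, give Type IV as soon as $N_C(x)^{+2}\cap N_C(z)\ne\varnothing$. Among $20\eps d$ vertices each with more than $|C|/3$ neighbours on $C$, pigeonhole on the $|C|\le d$ positions gives two vertices $x,z$ with $u_i\in N(x)$ and $u_{i+2}\in N(z)$: the sum of degrees exceeds $20\eps d\cdot d/3$, so some position $u_{i+2}$ has many $A$-neighbours among them while $u_i$ has at least one, and these can be chosen distinct. This is a contradiction, so $B=\varnothing$.
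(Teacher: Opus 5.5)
Your items (1) and (2) are fine. For the second half of (2), your direct count $e(C[N_C(a)])\ge 2d_C(a)-|C|\ge 2$ is a valid substitute for the paper's dichotomy.

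Items (3) and (4) have genuine gaps, and the missing idea is the same in both: you never use the absence of Type~I to make the neighbourhoods on $C$ \emph{independent}. In the paper, absence of Type~I forces one of two situations. Either a single exceptional vertex $a^\ast$ is the only one whose neighbourhood contains an edge of $C$, or every such edge is one fixed edge $e^\ast$. After discarding $a^\ast$, or one endpoint of $e^\ast$, every $N_C(a)$ is independent in $C$ and still has size $>|C|/3$.

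Your pigeonhole step for (3) (``some position $u_{i+2}$ has many neighbours among them while $u_i$ has at least one'') is unjustified, and it genuinely fails without independence. Suppose every vertex of $N_A(y)$ had neighbourhood $\{u_j:j\equiv 0,1\pmod 4\}$ on $C$, with $4\mid |C|$. All sizes exceed $|C|/3$, yet no two occupied positions are at distance $2$, so no Type~IV exists; only Type~I excludes this. Even independence of two sets does not suffice: even and odd positions have no cross pair at distance $2$. This is why the paper takes three neighbours $a_1,a_2,a_3$ of $b$ and proves, by a coin-shifting count, that three independent subsets of a cycle $D$, each of size $>|D|/3$, contain a cross pair at distance $2$.

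In (4), your step for a path $xyz$ in $G[A]$ (``$N(x)^{+2}\cap N(z)\neq\varnothing$ unless their union covers everything'') is false as stated: two sets of size just above $|C|/3$ can be disjoint. What works is to bring in $N_C(y)$ as a third set. Type~II gives $N_C(x)^{+1}\cap N_C(y)=\varnothing$ and $N_C(y)\cap N_C(z)^{-1}=\varnothing$, and Type~IV gives $N_C(x)^{+1}\cap N_C(z)^{-1}=\varnothing$. Three pairwise disjoint subsets of $V(C)$, each of size $>|C|/3$, cannot exist, so $G[A]$ is a matching.

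For two matching edges you only say that a counting argument ``should show'' many offset-$2$ occurrences. The premise you propose to use, that $N(x)$ avoids $N(y)$, does not follow from Types I--IV, since a common neighbour of $x$ and $y$ on $C$ only creates a triangle. The paper instead labels each edge $ab$ so that $Y=N_C(b)$ is independent, again from Type~I. It then combines $X^{+1}\cap Y=\varnothing$ (Type~II) with $Y\cap Y^{-1}=\varnothing$ to get $|X^{+1}\cap Y^{-1}|\ge |X|+2|Y|-|C|>5$. Finally it chooses vertex-disjoint two-edge subpaths for $xy$ and $zw$, which yields Type~III.
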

    \begin{poc}
      Each replacement increases the cycle length by two, so
      $|C_r|=|C_0|+2r$ for $0\le r\le s$. Moreover,
      Lemma~\ref{lem:paths-on-reduced-cycle}\ref{lem4.6-2} gives a cycle of every even
      length from $4$ to $|C_0|$ in $H^\circ[\mathcal F]$, and the
      convention that $K_2$ has length $2$ supplies the first term.
      Together with $C_1,\ldots,C_s$, this gives every even length from
      $2$ to $|C|$. Thus $|C|\ge d$ would give cycles of lengths
      $2,4,\ldots,d$, contrary to our assumption. Since $|C|$ and $d$
      are even,~\ref{c-sma1} follows.

      Since $|C_0|\ge(1-2\eps)d-2$, it follows from $|C|\le d-2$ that
      $s=(|C|-|C_0|)/2\le\eps d$.
      Note that each replacement adds at most four vertices not in the cycle and deletes at most two vertices of the cycle. Hence
        $|V(C)\setminus V(C_0)|\le4s\le4\eps d$ and
      $|V(C_0)\setminus V(C)|\le2s\le2\eps d$.
      The definitions of the final sets give
      $d_C(a)\ge(\tfrac13+20\eps)d-2s\ge(\tfrac13+18\eps)d$ for every $a\in A$ and
      $d_A(b)\ge20\eps d$ for every $b\in B$. The first inequality proves
      the lower bound in~\ref{c-sma2}.

      Fix a clockwise orientation of $C$. For a set $S\subseteq V(C)$ and an integer $q$, let $S^{+q}$ (respectively, $S^{-q}$) denote the set obtained by shifting each vertex of $S$ by $q$ positions clockwise (respectively, counterclockwise) along the cycle. When $S=\{v\}$ is a singleton, we write $v^{+q}$ and $v^{-q}$ in place of $\{v\}^{+q}$ and $\{v\}^{-q}$, respectively. For $a\in A$, let
      $\mathcal E(a):=\{u_iu_{i+1}\in E(C):u_i,u_{i+1}\in N_C(a)\}$.
      Since Type~I is absent, one of the following holds:
      \begin{enumerate}[label=\textup{(\roman*)}]
        \item\label{case-1-i} there is at most one vertex $a^\ast\in A$
        (called the \emph{exceptional vertex}) for which
        $\mathcal E(a^\ast)\ne\varnothing$, or
        \item\label{case-1-ii} there is an edge $e^\ast\in E(C)$ such that for every $a\in A$, $\mathcal E(a)\subseteq\{e^\ast\}$.
      \end{enumerate}
      Indeed, distinct $a,b\in A$ and distinct edges
      $e\in\mathcal E(a)$ and $f\in\mathcal E(b)$ would form Type I.
      In~\ref{case-1-i}, $N_C(a)$ is independent for every
      $a\ne a^\ast$, so $d_C(a)\le |C|/2$. In~\ref{case-1-ii}, every
      $S\subseteq V(C)$ satisfies $e(C[S])\ge2|S|-|C|$, because the
      vertices outside $S$ meet at most $2(|C|-|S|)$ edges of $C$.
      Hence $|N_C(a)|>|C|/2$ would imply
      $e(C[N_C(a)])\ge2$, contradicting
      $\mathcal E(a)\subseteq\{e^\ast\}$. This proves the upper bound
      in~\ref{c-sma2}.

      We use the following cyclic observation. Let $D$ be a cycle and
      $S_1,S_2,S_3$ be independent subsets of $V(D)$ with
      $|S_i|>|D|/3$ for every $i\in[3]$. Then there are distinct
      $i,j\in[3]$ and vertices $s_i\in S_i$, $s_j\in S_j$ at distance
      $2$ on $D$. To see this, suppose, for a contradiction, that no such
      pair exists. Orient $D$ and let
      $I(v):=\{i\in[3]:v\in S_i\}$. Then $|I(v)|\ge2$ implies
      $I(v^{+2})=\varnothing$, while $|I(v)|=3$ also implies
      $I(v^{+1})=\varnothing$ as $S_1,S_2,S_3$ are independent. From each
      vertex $v$ with two labels, send one coin to $v^{+2}$; from each
      vertex with three labels, send one coin to each of $v^{+1}$ and
      $v^{+2}$. Each receiving vertex has no label, and no vertex receives
      two coins: independence makes the label sets of consecutive vertices
      disjoint, and two subsets of $[3]$ of size at least two cannot be
      disjoint. Hence
      \[
        \sum_{i=1}^3|S_i|
        =|\{v:I(v)\ne\varnothing\}|+
        \sum_{\substack{v\in V(D)\\I(v)\ne\varnothing}}(|I(v)|-1)
        \le |D|,
      \]
      a contradiction.

      Suppose that $b\in B$. Since $d_A(b)\ge20\eps d$ and
      $20\eps d\ge4$, choose distinct $a_1,a_2,a_3\in N_A(b)$, avoiding
      $a^\ast$ in~\ref{case-1-i}. In this case set $S_i:=N_C(a_i)$.
      In~\ref{case-1-ii}, fix one endvertex $r$ of $e^\ast$ and set
      $S_i:=N_C(a_i)\setminus\{r\}$. In either case the sets $S_i$ are
      independent in $C$, and
      $|S_i|\ge d_C(a_i)-1\ge(\tfrac13+18\eps)d-1>|C|/3$.
      The cyclic observation gives distinct $i,j\in[3]$ and a subpath
      $uvw$ of $C$ with $u\in S_i$ and $w\in S_j$. Replacing $uvw$ by
      $ua_i b a_jw$ is a Type IV augmentation, a contradiction.
      Thus $B=\varnothing$, proving~\ref{c-sma3}.

      To prove~\ref{c-sma4}, suppose first that $xyz$ is a path in $G[A]$.
      Since Types II and IV are absent, the sets
      $N_C(x)^{+1},N_C(y),N_C(z)^{-1}$ are pairwise disjoint.
      By~\ref{c-sma2}, each has size at least
      $(\tfrac13+18\eps)d>|C|/3$, a contradiction. Hence $G[A]$ is a
      matching.
      Suppose that this matching contains two edges $xy$ and $zw$. For
      either edge $ab\in\{xy,zw\}$, label its ends so that $b$ is not the
      exceptional vertex in~\ref{case-1-i}, and let
      $X=N_C(a)$ and $Y=N_C(b)$. In~\ref{case-1-ii}, instead fix one
      endvertex $r$ of $e^\ast$ and let
      $X=N_C(a)\setminus\{r\}$ and $Y=N_C(b)\setminus\{r\}$.
      In either case, $Y$ is independent in $C$ and
      $|X|,|Y|\ge(\tfrac13+18\eps)d-1$. Since Type II is absent,
      $X^{+1}\cap Y=\varnothing$, while independence gives
      $Y\cap Y^{-1}=\varnothing$. Hence
      \[
        |X^{+1}\cap Y^{-1}|
        \ge |X|+2|Y|-|C|
        \ge3\left(\left(\tfrac13+18\eps\right)d-1\right)-|C|>5.
      \]
      Each vertex in this intersection is the middle vertex of a
      two-edge subpath of $C$ whose ends are adjacent to $a$ and $b$,
      respectively. Thus each of $xy$ and $zw$ has more than five such
      subpaths. Choose one for $xy$. It meets at most five of those for
      $zw$, so a vertex-disjoint second subpath remains. These two subpaths
      and the edges $xy,zw$ form Type III, a contradiction. Therefore
      $e(G[A])\le1$, proving~\ref{c-sma4}.
    \end{poc}

    \begin{claim}\label{claim:A-B-bound}
      $W\neq \varnothing$.
    \end{claim}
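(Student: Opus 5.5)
Suppose for contradiction that $W=\varnothing$. By Claim~\ref{claim:C-small}\ref{c-sma3} we also have $B=\varnothing$, so $V(G)=A\mathbin{\dot\cup}V(C)$. The plan is to count edges in this decomposition and contradict~\ref{couneg1}, namely $e(G)\ge\tfrac{(n-1)(d-1)}2$. Write $c:=|C|\le d-2$ (Claim~\ref{claim:C-small}\ref{c-sma1}) and $a:=|A|=n-c$. Since $n\ge d+2$ by Proposition~\ref{prop:minicoeg}, we have $a\ge 4$.

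The edges split into three groups. First, $e(G[V(C)])\le\binom{c}{2}$. Second, $e(G[A])\le 1$ by Claim~\ref{claim:C-small}\ref{c-sma4}. Third, by Claim~\ref{claim:C-small}\ref{c-sma2} every $x\in A$ other than at most one exceptional vertex satisfies $d_C(x)\le c/2$, and the exceptional vertex has $d_C\le c$. Hence
\[
e(G)\le \tbinom{c}{2}+\tfrac{c}{2}(a-1)+c+1 .
\]
Compare this with the requirement $\tfrac{(d-1)(a+c-1)}{2}$. The coefficient of $a$ on the right is $\tfrac{d-1}{2}$, while on the left it is $\tfrac c2\le\tfrac{d-2}{2}$. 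So each vertex of $A$ contributes at least $\tfrac12$ more to the right-hand side than to the left. The remaining terms depend only on $c$: the left has $\binom c2-\tfrac c2+c+1$, and the right has $\tfrac{(d-1)(c-1)}{2}$. Their difference is
\[
\tfrac{c^2}{2}-\tfrac{(d-1)(c-1)}{2}+1=\tfrac{c(c-d+1)+d+1}{2}\le \tfrac{-c+d+1}{2},
\]
using $c\le d-2$. Because $c\ge(1-2\eps)d$, this is at most $\eps d+1$.

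Combining the two contributions, the inequality $e(G)\ge\tfrac{(d-1)(n-1)}2$ forces $a/2\lesssim \eps d+O(1)$, so $|A|=O(\eps d)$. The real obstacle is this regime, where $A$ is small and the crude bound $\binom c2$ on $e(G[V(C)])$ is too weak. Here I would use $\delta(G)\ge d/2$. Each vertex of $A$ sends at most $c/2$ edges to $C$ and has at most one neighbour in $A$, which is fine since $c/2+1\ge d/2$ up to $O(\eps d)$. The stronger input comes from the vertices of $C$: together with the $2$-connectivity and density, every vertex $u\in V(C)$ must have degree at least $\tfrac{d-1}{2}$ on average. A sharper count is
\[
2e(G)=\sum_{u\in V(C)}d(u)+\sum_{x\in A}d(x)\le c(c-1)+2e(A,C)+2a+2,
\]
so $e(G)\le\binom c2+e(A,C)+a+1$.

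When $a$ is small, I would exploit the near-clique structure instead. If $G[V(C)]$ is nearly complete on $c\le d-2$ vertices and $A$ is tiny, then $G$ is almost a graph on $c+a$ vertices with at least $\tfrac{(d-1)(c+a-1)}2$ edges. Chvátal's condition (Lemma~\ref{chvatal}) then gives Hamiltonicity, and Bondy's theorem (Lemma~\ref{bondy}) gives pancyclicity. This yields all even lengths $2,\dots,d$ as soon as $n\ge d$, which is a contradiction. I expect the main difficulty to be making this final Hamiltonicity step rigorous for $n$ slightly larger than $d$, by mimicking the degree-sequence computation in Proposition~\ref{prop:minicoeg}.
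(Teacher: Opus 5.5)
Your edge count is exactly the paper's bound, $e(G)\le\binom c2+\frac c2(a-1)+c+1=\binom c2+\frac{(a+1)c}{2}+1$. Your algebra up to the constant-term difference $\frac{c(c-d+1)+d+1}{2}\le\frac{d+1-c}{2}$ is also correct. The gap comes next. You replace $\frac{d+1-c}{2}$ by $\eps d+1$ using $c\ge(1-2\eps)d$, and this discards precisely the term that finishes the proof. You then wrongly conclude that a regime with $|A|=O(\eps d)$ survives, and propose to handle it with a Chv\'atal--Bondy Hamiltonicity argument that you say you cannot make rigorous. That step would not go through as sketched. The degree-sequence computation in Proposition~\ref{prop:minicoeg} is specific to $n\in\{d,d+1\}$; at $k=2$ it already fails once $n\ge d+3$. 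Nothing in your proposal establishes Hamiltonicity of $G$, so as written the proposal does not prove the claim.

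The missing observation is that this regime is empty. Your own inequality reads $\frac a2\le\frac{d+1-c}{2}$, i.e.\ $n=a+c\le d+1$, which contradicts $n\ge d+2$. Equivalently, as the paper does, simplify before comparing: $\binom c2+\frac{(a+1)c}{2}+1=\frac{cn}{2}+1\le\frac{(d-2)n}{2}+1$, and $\frac{(d-1)(n-1)}{2}-\frac{(d-2)n}{2}-1=\frac{n-d-1}{2}>0$ because $n\ge d+2$.

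The minimum-degree idea you mention also closes at once, but you have its direction reversed: $c\le d-2$ gives $\frac c2+1\le\frac d2$. Since $a\ge4$, some $x\in A$ is neither the exceptional vertex nor an endpoint of the at most one edge of $G[A]$. Then $d_G(x)=d_C(x)\le\frac c2\le\frac{d-2}{2}<\frac d2\le\delta(G)$, a contradiction.

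Everything after your coefficient comparison should be replaced by one of these short arguments.
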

    \begin{poc}
      Suppose that $W=\varnothing$. Since $B=\varnothing$, we have
      $n=|A|+|C|$. By Claim~\ref{claim:C-small}, $e(G[A])\le1$, and all
      but at most one vertex of $A$ have at most $|C|/2$ neighbors in $C$.
      Therefore
      \begin{equation}\label{eq:A-C-edge-bound}
        e_G(A,V(C))
        \le \max\left\{\tfrac{|C|}{2}(|A|-1)+|C|, \tfrac{|C|}{2}|A|\right\}\le
        \tfrac{(|A|+1)|C|}{2}.
      \end{equation}
      Since $e(G[V(C)])\le\binom{|C|}{2}$, it follows that
      \begin{align*}
        e(G)
        &\le1+\binom{|C|}{2}+\tfrac{(|A|+1)|C|}{2}
        =1+\tfrac{n|C|}{2}
        \le1+\tfrac{(d-2)n}{2}<\tfrac{(d-1)(n-1)}{2}.
      \end{align*}
      The final inequality holds since $n\ge d+2$. This is a contradiction. Therefore $W\ne\varnothing$.
    \end{poc}

    Since $B=\varnothing$, the definitions of $A$ and $W$ give
    $d_C(w)<(1/3+20\eps)d$ and $d_A(w)<20\eps d$ for every $w\in W$.
    Consequently, $d_G\bigl(w,A\cup V(C)\bigr)
    <\left(\tfrac13+40\eps\right)d$.
    Since $\delta(G)\ge d/2$, we see that for every $w\in W$,
    \begin{equation}\label{eq:degreeofW}
          d_{G[W]}(w)>(1/6-40\eps)d\ge d/7.
    \end{equation}
    In particular,
    $|W|\ge d/7+1$.

\begin{claim}\label{claim:R-outside-H}
  $W\cap V(H)=\varnothing$.
\end{claim}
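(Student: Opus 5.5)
We argue by contradiction: assume some $w_0\in W\cap V(H)$, and build $d/2$ consecutive even cycle lengths by combining an admissible path family inside $G[W\cap V(H)]$ with the long flexible path families of Lemma~\ref{lem:paths-on-reduced-cycle}\ref{lem4.6-3} inside $\mathcal F$.

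\textbf{Step 1: a dense structure inside $W$.} Since $H$ is induced in $G$ and every $w\in W\cap V(H)$ has fewer than $(1/3+40\eps)d$ neighbours in $A\cup V(C)$, while $\delta(H)\ge(1-\eps)d/2$, each such $w$ has more than $(1/6-41\eps)d\ge d/7$ neighbours in $W\cap V(H)$. Thus $W_H:=W\cap V(H)$ induces a subgraph of minimum degree at least $d/7$. Passing to an end-block (or a $2$-connected piece) of $G[W_H]$ keeps minimum degree at least $d/7$ apart from one cut vertex, so we obtain a $2$-connected graph $J\subseteq G[W_H]$ with all but at most one vertex of degree at least $d/7$.

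\textbf{Step 2: linking $J$ to $\mathcal T$.} The set $\mathcal F$ meets $V(C)$ in all but $O(\eps d)$ vertices, because $C_0\subseteq\mathcal F$ and $|V(C_0)\setminus V(C)|\le2\eps d$. Hence $\mathcal F$ is essentially disjoint from $W$; we discard the $O(\eps d)$ exceptional vertices and keep $|\mathcal T|\ge(1-3\eps)d$. Since $H$ is $\nu d$-connected, Menger's theorem gives $\nu d$ vertex-disjoint paths in $H$ from $V(J)$ to $\mathcal T$. Because $h\le Kd$ and $1/K\ll\nu$, averaging gives at least $\nu d/2$ of these paths of length at most $4K/\nu$. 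We then pigeonhole over two labels on each path: the value $\chi_{\mathcal F}$ of its endpoint in $\mathcal T$, and the parity of its length plus the bipartite class of its start in $J$ (if $J$ is bipartite). This leaves linear-size families with prescribed parity data. We pick two such paths $P_1,P_2$, starting at $a,b\in V(J)$ and ending at $x,y\in\mathcal T$.

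\textbf{Step 3: combining the two families.} Lemma~\ref{lem:exceptional-admissible-path}, applied to a suitable $2$-connected rooted piece of $J$ containing $a,b$ with parameter about $d/7-1$ (if needed, after rerouting $P_1,P_2$ to enter distinct end-blocks as in the proof of Lemma~\ref{lem:rooted-even-one-exception}), gives about $d/7$ admissible $a,b$-paths, hence at least $d/14$ of a common parity. On the other side, Lemma~\ref{lem:paths-on-reduced-cycle}\ref{lem4.6-3}, with $F$ the $O(K/\nu)\le M_0/2$ interior vertices of $P_1\cup P_2$ lying in $\mathcal F$, gives $x,y$-paths of lengths $t,t+2,\ldots,t+(1-3\eps)d$. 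Closing one path from each family through $P_1,P_2$ produces cycles whose lengths are the sumset of the two progressions. This sumset has more than $(1-3\eps)d/2+d/14>d/2$ terms with common difference two. All these cycles are even, provided the total parity is even. In the non-bipartite case of $R_\Gamma$ we arrange this by prescribing the parity of $t$. In the bipartite case we arrange it through the pigeonholed labels: we choose $P_1,P_2$ so that $\chi(x)+\chi(y)+|P_1|+|P_2|+(\text{parity of the }a,b\text{-paths})$ is even. When $J$ is non-bipartite, both parities of $a,b$-paths are available, by the parity observation in Lemma~\ref{lem:rooted-even-one-exception}.

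\textbf{Main obstacle.} The delicate step is the parity bookkeeping in Step~3, in the case where both $R_\Gamma$ and $J$ are bipartite. There the parity of the cycle is forced by the labels, so Step~2 must select two linking paths whose combined label is even. The pigeonhole is designed for this: among $\nu d/2$ paths, two share the same label pair, and the same-label pair makes the sum even. A second, more technical issue is that $a$ and $b$ must be placed so that Lemma~\ref{lem:exceptional-admissible-path} applies, meaning $(J',a,b)$ is $2$-connected. To handle this, we would first try taking $J$ to be a $\nu d/100$-connected subgraph given by Lemma~\ref{lem:Mader-connected}. This requires average degree $\ge\nu d/25$, which is fine, but minimum degree is lost, so we fall back on the end-block argument if needed.
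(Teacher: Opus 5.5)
\textbf{Verdict: genuine gap.} Your architecture is the paper's: degree at least $d/7$ inside $W_H$, a $2$-connected end-block, $\nu d$ short Menger paths to $\mathcal T$, pigeonholing on labels, and summing a difference-two progression in $\mathcal F$ with one of length $\Theta(d)$ outside. Apart from a disjointness point below, it works when $R_\Gamma$ is non-bipartite or when $J$ is bipartite.

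\textbf{The failing subcase.} It breaks when $R_\Gamma$ is bipartite and $J$ is non-bipartite. There $t\equiv\chi_{\mathcal F}(x)+\chi_{\mathcal F}(y)$ is forced and $P_1,P_2$ are fixed. So the closing route through $\mathcal F$ has a fixed parity, and you need about $d/14$ $a,b$-paths in $J$ forming a difference-two progression of a \emph{prescribed} parity.
\begin{itemize}
\item Lemma~\ref{lem:exceptional-admissible-path} only guarantees common difference $1$ or $2$. If it returns difference $2$, the whole family may have the wrong parity.
\item Knowing that single $a,b$-paths of both parities exist in a non-bipartite $J$ does not repair this.
\item The parity observation in the proof of Lemma~\ref{lem:rooted-even-one-exception} needs two \emph{external} closing paths of opposite parities, which you do not have.
\item Re-choosing $P_1,P_2$ does not obviously help, since the admissible family changes with $a,b$ and all linking paths may carry the same label.
\end{itemize}

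\textbf{How the paper avoids it.} It takes a maximum spanning bipartite subgraph $R$ of $H[W_H\setminus\mathcal F]$, so $\delta(R)\ge d/16$, and lets $J$ be an end-block of $R$. Halving the degree costs nothing: only about $3\eps d/2$ lengths are needed beyond the $(1-3\eps)d/2+1$ supplied by $\mathcal F$. Your bipartite pigeonhole (same $\chi_{\mathcal F}$-value, same side of $J$, same length parity) then always applies. Alternatively, you could join an auxiliary root to the starts of linking paths sharing a label and apply Lemma~\ref{lem:rooted-even-one-exception} itself, as the paper does in its later claims.

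\textbf{Disjointness.} Your $J$ is built inside $W_H$, so it may contain some of the up to roughly $4\eps d$ vertices of $\mathcal F\cap W$. The flexible paths of Lemma~\ref{lem:paths-on-reduced-cycle}\ref{lem4.6-3} lie in $H^\circ[\mathcal F]$ and can avoid only a forbidden set of constant size. They may therefore run through $V(J)$ and collide with the paths inside $J$. Removing these vertices from $\mathcal T$, as you propose, does not prevent this. You must delete $\mathcal F$ from $W_H$ before choosing $J$, which lowers the minimum degree by at most $4\eps d$.
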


\begin{poc}
Suppose otherwise, and let $W_H:=W\cap V(H)\neq\varnothing$. For every $v\in W_H$, we have $d_G(v,A\cup V(C))<(\tfrac13+40\eps)d$. Since $\delta(H)\ge(1-\eps)d/2$, this implies that
\begin{equation}\label{eq:RH-min-degree}
  d_{H[W_H]}(v)
  \ge d_H(v)-d_G(v,A\cup V(C))
  \ge \tfrac d7.
\end{equation}
Thus $|W_H|\ge d/7$.
Since $V(C_0)\subseteq\mathcal F$, $|\mathcal F|\le(1-\eps)d$, and $|V(C_0)|\ge(1-2\eps)d-2$, we have $|\mathcal F\setminus V(C_0)|\le\eps d+2$. Also, $|V(C_0)\setminus V(C)|\le2\eps d$. As $W$ is disjoint from $V(C)$, it follows that $|W_H\cap\mathcal F|\le4\eps d$ for sufficiently large $d$. Hence
$\delta(H[W_H\setminus\mathcal F])\ge d/8$.

Let $R$ be a spanning bipartite subgraph of $H[W_H\setminus\mathcal F]$ with a maximum number of edges. By maximality, $\delta(R)\ge d/16$. Let $R'$ be a nontrivial end-block of a component of $R$. Then every vertex of $R'$, except possibly its cut-vertex, has degree at least $d/16$ in $R'$.
Since $H$ is $\nu d$-connected, Menger's theorem gives at least $\nu d$ pairwise vertex-disjoint paths from $\mathcal F$ to $R'$. By truncating these paths, we may assume that their interiors are disjoint from $\mathcal F\cup V(R')$. Recall that $|\mathcal F\setminus\mathcal T|\le4\sqrt{\xi}|\mathcal F|\le4\sqrt{\xi}d$ and $\xi\ll\nu$. Hence at least $20$ of these paths start in $\mathcal T$.
Since $\chi_{\mathcal F}$ takes two values and $R'$ is bipartite, the pigeonhole principle gives two paths $P_1,P_2$ such that
\begin{enumerate}[label=(\roman*)]
  \item $|P_1|\equiv |P_2|\pmod2$;
  \item their initial vertices have the same $\chi_{\mathcal F}$-value and their final vertices lie in the same part of $R'$.
\end{enumerate}
Write $P_j=u_j\cdots v_j$ for $j\in[2]$, where $u_j\in\mathcal T$ and $v_j\in V(R')$. By~\eqref{eq:selected-path-span}, there are at least $(1-3\eps)d/2$ $u_1u_2$-paths whose lengths are consecutive even integers. Applying Lemma~\ref{lem:exceptional-admissible-path} to $(R',v_1,v_2)$, with the possible cut-vertex as the exceptional vertex and parameter $d/16-1$, gives $d/16-1$ $v_1v_2$-paths whose lengths are consecutive even integers.
Together with $P_1$ and $P_2$, these paths form more than $d/2$ cycles of consecutive even lengths. This is a contradiction.
\end{poc}
     In particular, $H\subseteq G[A\cup V(C)]$. Let $U=A\cup V(C)$.
    
    \medskip
    
    \noindent\textbf{Extending the parity labelling to $U$.}
First, we establish the following property.
\begin{enumerate}[label=$({\ast})$]
\item Every vertex $v\in U\setminus\mathcal T$ has at least $\nu d/4$
internally vertex-disjoint paths in $G[U]$, each of length at most $4K/\nu$,
ending at distinct vertices of $\mathcal T$.
\end{enumerate}

Suppose first that $v\in V(H)\setminus\mathcal T$. Since $H$ is $\nu d$-connected and $|\mathcal T|\ge\nu d$, Menger's theorem gives a family of $\nu d$ paths from $v$ to distinct vertices of $\mathcal T$ which intersect only at $v$. As $h\le Kd$, at least $\nu d/2$ of these paths have length at most $4K/\nu$.

Now suppose that $v\in U\setminus V(H)$. If $v\in A$, then Claim~\ref{claim:C-small}, $|V(C)\setminus V(C_0)|\le4\eps d$, and $|\mathcal F\setminus\mathcal T|\le4\sqrt\xi d$ show that $v$ has more than $d/3$ neighbors in $\mathcal T$. Suppose next that $v\in V(C)\setminus V(C_0)$, and let $r$ be the round in which $v$ was inserted into the cycle. If $v\in A_r$, then $d_{C_0}(v)\ge(1/3+20\eps)d-6r>d/3$, and again $v$ has more than $d/3$ neighbors in $\mathcal T$. If $v\in B_r$, then $v$ has at least $20\eps d$ neighbors in $A_r$. Every vertex of $A_r$ lies in $U$ by the persistence observation above and has more than $d/3$ neighbors in $\mathcal T$. We may therefore choose $\nu d/4$ distinct vertices in $N_{A_r}(v)$ and then greedily choose distinct endvertices in $\mathcal T$, giving the required paths of length two.

Let $\chi=\chi_{\mathcal F}$ on $\mathcal T$. For each $u\in U\setminus\mathcal T$, classify the paths above according to the value of $|Q|+\chi_{\mathcal F}(u')$ modulo two, where $u'$ is the endvertex of $Q$ in $\mathcal T$. Retain a collection of at least $\nu d/8$ paths with the same value and define $\chi(u)$ to be this value. Thus every retained path $Q$ from $u$ to $u'\in\mathcal T$ satisfies $|Q|+\chi_{\mathcal F}(u')\equiv\chi(u)\pmod2$.

We finally show that this extension has the following property.
\begin{enumerate}[label=$({\ast}{\ast})$]
    \item\label{propextension}
Let $q\in\{1,2,3\}$, and let
$(x_1,y_1),\ldots,(x_q,y_q)$ be pairs of vertices in $U$ whose $2q$ vertices are
distinct. For every integer $0\le i\le(1-3\eps)d/2$, there are
pairwise vertex-disjoint $x_jy_j$-paths $P_{i,j}$ in $G[U]$,
$j\in[q]$, such that
$\sum_{j=1}^q|P_{i,j}|=t+2i$ and $t\le M_0$.
If $R_\Gamma$ is bipartite, then
$t\equiv\sum_{j=1}^q(\chi(x_j)+\chi(y_j))\pmod2$; if $R_\Gamma$ is
non-bipartite, either parity of $t$ may be prescribed.
\end{enumerate}

For each endpoint among $x_1,y_1,\ldots,x_q,y_q$, take the trivial path if it belongs to $\mathcal T$, and otherwise choose one of the retained paths used to define its $\chi$-value. Since each nontrivial family has linear size and all these paths have bounded length, they may be chosen pairwise vertex-disjoint, avoiding the other endpoints, and with distinct endvertices $x'_1,y'_1,\ldots,x'_q,y'_q\in\mathcal T$. Use as a forbidden set all vertices of these paths lying in $\mathcal F$, except the endvertices in $\mathcal T$. By the choice of $r_1$, this set has size at most $M_0/2$.

Lemma~\ref{lem:paths-on-reduced-cycle}\ref{lem4.6-3}, applied with this
forbidden set to the pairs $(x'_1,y'_1),\ldots,(x'_q,y'_q)$, gives
pairwise vertex-disjoint $x'_jy'_j$-paths $P'_{i,j}$ in
$H^\circ[\mathcal F]$, $j\in[q]$, such that
$\sum_{j=1}^q|P'_{i,j}|=t'+2i$ for every
$0\le i\le(1-3\eps)d/2$, where $t'\le M_0/2$. In the bipartite case,
$t'\equiv\sum_{j=1}^q\bigl(\chi_{\mathcal F}(x'_j)+
\chi_{\mathcal F}(y'_j)\bigr)\pmod2$. In the non-bipartite case, we
prescribe the parity of $t'$ so that the resulting initial length has the
desired parity. Joining these paths to the corresponding paths from the
original endpoints gives the required $P_{i,j}$. Let $t$ be the sum of
$t'$ and the lengths of the connecting paths. In the bipartite case,
\[\sum_{j=1}^q|P_{i,j}|
  \equiv \sum_{j=1}^q\bigl(|Q_{x_j}|+|Q_{y_j}|+
      \chi_{\mathcal F}(x'_j)+\chi_{\mathcal F}(y'_j)\bigr)\equiv \sum_{j=1}^q\bigl(\chi(x_j)+\chi(y_j)\bigr)\pmod2.\]
In the non-bipartite case, the prescribed parity of $t'$ gives the
prescribed parity of $t$. By the choice of $r_1$, the total length of the
connecting paths is at most $M_0/2$, and hence $t\le M_0$.

    \medskip
    \noindent
    \textbf{Completion of the proof.}
      Recall that $R_\Gamma=R^\ast[V(\Gamma)]$ is the subgraph of the reduced graph induced by the vertices of $\Gamma$.
      Also recall that $\delta(G[W])\ge d/7$ and, by
      Claim~\ref{claim:R-outside-H}, $W\cap V(H)=\varnothing$. We divide into
      two cases according to whether $R_\Gamma$ is bipartite.

      \medskip
      
      \textbf{Case 1. The graph $R_\Gamma$ is non-bipartite.} Let $D$ be an end-block of $G[W]$. Let $b$ be its cut-vertex if it exists; otherwise, choose $b$ arbitrarily in $V(D)$. Since $\delta(G[W])\ge d/7$, every vertex of $D$ other than $b$ has degree at least $d/7$ in $D$. Since $G$ is $2$-connected, Menger's theorem gives two vertex-disjoint paths $P_i=x_i\cdots v_i$, $i\in[2]$, between $U$ and $D$. By truncating these paths, we may assume that $x_i\in U$, $v_i\in V(D)$, and their internal vertices lie outside $U\cup V(D)$.
Applying Lemma~\ref{lem:exceptional-admissible-path} to $(D,v_1,v_2)$, with $b$ as the possible exceptional vertex and parameter $d/7-1$, gives $d/7-1$ admissible $v_1v_2$-paths. Retaining one parity class if necessary, we obtain a family $\mathcal Q$ of at least $d/14-1$ paths whose lengths have common difference two. Let $j\in\{0,1\}$ be defined by $|Q|+|P_1|+|P_2|\equiv j\pmod2$ for every $Q\in\mathcal Q$.

Since $R_\Gamma$ is non-bipartite, property~\ref{propextension}, applied with $q=1$ and with the parity of the initial length prescribed to be $j$, gives $x_1x_2$-paths in $G[U]$ of lengths
$c,c+2,\ldots,c+(1-3\eps)d$, where $c\equiv j\pmod2$. Together with the paths in $\mathcal Q$ and the paths $P_1,P_2$, they yield more than $d/2$ consecutive even cycle lengths.

      \medskip
     
      \textbf{Case 2. The graph $R_\Gamma$ is bipartite.} Recall that $W\ne\varnothing$ and $\delta(G[W])\ge d/7$. Since $R_\Gamma$ is bipartite, we first give a new upper bound for $e(G[U])$ by the following claim.

      \begin{claim}\label{claim:U-edge-deficiency}
        $e(G[U])\le \tfrac{d-2}{2}|U|-\tfrac{d^2}{5}$.
      \end{claim}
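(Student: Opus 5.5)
The plan is to count edges directly, splitting $U=A\cup V(C)$. We need $e(G[A])$, $e_G(A,V(C))$ and $e(G[V(C)])$. The first two terms are controlled exactly as in the proof of Claim~\ref{claim:A-B-bound}. By Claim~\ref{claim:C-small}, $e(G[A])\le 1$, and \eqref{eq:A-C-edge-bound} gives $e_G(A,V(C))\le \tfrac{(|A|+1)|C|}{2}$. The new input, and the only place where bipartiteness of $R_\Gamma$ enters, is a near-bipartite bound on $G[V(C)]$:
\[
  e(G[V(C)])\le \tfrac{|C|^2}{4}+c\,\eps d^2
\]
for an absolute constant $c$ (something like $c=20$ should suffice).

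To prove this bound, first compare $V(C)$ with the dense core. Recall $V(C_0)\subseteq\mathcal F\subseteq V(H)$, $|V(C)\setminus V(C_0)|\le 4\eps d$ and $|C|\le d$. Put $Y:=V(C)\cap\mathcal F$; then $|V(C)\setminus Y|\le 4\eps d$. Edges of $G[V(C)]$ meeting $V(C)\setminus Y$ number at most $4\eps d\cdot d$.

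Next consider edges inside $Y$. Since $H$ was taken induced in $G$, every such edge is an edge of $H$. Passing from $H$ to the pure graph $H^\circ$ costs each vertex at most $(\tau+2\xi)h\le\eps d$ edges. Hence at most $\eps d\,|Y|\le\eps d^2$ edges of $G[Y]$ are missing from $H^\circ$.

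The remaining edges lie in $H^\circ[Y]$. By Lemma~\ref{regularity}(3)--(5), $H^\circ$ has no edges inside a cluster, and it has edges between $V_i$ and $V_j$ only when $ij\in E(R^\ast)$. Every vertex of $Y$ lies in some $U_i\subseteq V_i$ with $v_i\in V(\Gamma)$. Since $R_\Gamma=R^\ast[V(\Gamma)]$ is bipartite with bipartition $I_0\mathbin{\dot\cup}I_1$, every edge of $H^\circ[Y]$ joins $Y_0:=\bigcup_{v_i\in I_0}U_i\cap Y$ to $Y_1:=\bigcup_{v_i\in I_1}U_i\cap Y$. Therefore $e(H^\circ[Y])\le |Y_0||Y_1|\le |Y|^2/4\le |C|^2/4$. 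Summing the three contributions gives the displayed bound with $c=5$ (up to lower-order terms).

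Combining everything, and using $|U|=|A|+|C|$,
\[
  e(G[U])\le 1+\tfrac{|C|^2}{4}+c\eps d^2+\tfrac{(|A|+1)|C|}{2}
  =\tfrac{|C|}{2}|U|-\tfrac{|C|^2}{4}+\tfrac{|C|}{2}+1+c\eps d^2 .
\]
The function $s\mapsto \tfrac{s}{2}|U|-\tfrac{s^2}{4}$ is nondecreasing for $s\le |U|$, and we have $|C|\le d-2\le |U|$. So the first two terms are at most $\tfrac{d-2}{2}|U|-\tfrac{(d-2)^2}{4}$. Alternatively, one can use $|C|\ge(1-2\eps)d-2$ and compare directly.

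In either case the deficiency is $\tfrac{d^2}{4}-O(\eps d^2)-O(d)$, which exceeds $\tfrac{d^2}{5}$ because $\eps\ll 1/5$ and $d$ is large. This yields $e(G[U])\le\tfrac{d-2}{2}|U|-\tfrac{d^2}{5}$.

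I expect the main point needing care to be the near-bipartite bound on $G[V(C)]$. Three things must be justified there: that $V(C)$ differs from a subset of $\mathcal F$ by only $O(\eps d)$ vertices, that inducedness of $H$ lets us pass from $G$-edges to $H$-edges, and that the pure-graph deletion is $O(\eps d)$ per vertex, which relies on $h\le Kd$ and $\tau,\xi\ll\eps/K$. The rest is the same arithmetic as in Claim~\ref{claim:A-B-bound}.
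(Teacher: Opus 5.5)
Your proposal is correct and follows essentially the paper's argument. You bound $e(G[A])+e_G(A,V(C))$ via Claim~\ref{claim:C-small}, and you bound $e(G[V(C)])$ by $|C|^2/4+O(\eps d^2)$ using inducedness of $H$, the per-vertex loss in passing to $H^\circ$, bipartiteness of $H^\circ[\mathcal F]$, and $|V(C)\setminus V(C_0)|\le 4\eps d$. You then finish with $|C|\le d-2$ and $|C|\ge(1-2\eps)d-2$. The paper does the same with $V(C_0)$ in place of your $Y$, and uses the slightly cruder bound $d^2/4+6\eps d^2$.

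One caveat: your monotonicity shortcut needs $|U|\ge d-2$, which you have not justified. So the second route you mention, using $|C|\ge(1-2\eps)d-2$ directly as the paper does, is the one to rely on.
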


      \begin{poc}
        Since $R_\Gamma$ is bipartite, so is the pure graph $H^\circ[\mathcal F]$. Hence, $e\bigl(H^\circ[V(C_0)]\bigr)\le |C_0|^2/4$.
        Since $H=G[V(H)]$ and every vertex loses at most $\eps d$ incident edges when passing to $H^\circ$, we have
        \[
          e(G[V(C_0)])\le \tfrac{|C_0|^2}{4}+\eps d^2
          \le\tfrac{d^2}{4}+\eps d^2.
        \]
        Moreover, $|V(C)\setminus V(C_0)|\le4\eps d$, and hence $e(G[V(C)])\le d^2/4+6\eps d^2$.
        By Claim~\ref{claim:C-small}, we have $e(G[A])+e_G(A,V(C))\le1+(|A|+1)|C|/2$. Therefore,
        \[
          e(G[U])=e(G[A\cup V(C)])
          \le1+\tfrac{(|A|+1)|C|}{2}+\tfrac{d^2}{4}+6\eps d^2.
        \]
        Since $|A|+|C|=|U|$, it follows that
        \begin{align*}
          e(G[U])
          &\le 1+\tfrac{(|U|-|C|+1)|C|}{2}+\tfrac{d^2}{4}+6\eps d^2\\
          &\le \tfrac{d-2}{2}|U|-\tfrac{|C|^2-|C|}{2}+\tfrac{d^2}{4}+6\eps d^2+1\le \tfrac{d-2}{2}|U|-\tfrac{d^2}{5}.
        \end{align*}
        The second inequality uses $|C|\le d-2$, while the last follows from $|C|\ge(1-2\eps)d-2$ and the hierarchy. This proves the claim.
      \end{poc}

      \begin{claim}\label{claim:large-RU-matching}
        The bipartite graph $G[W,U]$ contains a matching of size at least $d/100$.
      \end{claim}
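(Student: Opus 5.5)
We argue by contradiction: suppose the maximum matching of $G[W,U]$ has fewer than $d/100$ edges. By K\H{o}nig's theorem, there is then a vertex cover $Z$ of $G[W,U]$ with $|Z|<d/100$. Put $Z_U:=Z\cap U$ and $Z_W:=Z\cap W$. Every $W$--$U$ edge is incident with $Z$, so
\[
  e_G(W,U)\le |Z_U|\,|W|+|Z_W|\,|U| .
\]
The plan is to show this forces too few edges overall. Since the edge count is global, the most natural contradiction is with minimality. Concretely, we compare $G$ with $G[W\cup Z_U]$, or more simply bound the three pieces $e(G[U])$, $e_G(W,U)$ and $e(G[W])$ separately.

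\textbf{Bounding $e(G[W])$.} Since $1\le |W|<n$, the graph $G[W]$ is a proper induced subgraph of the minimal counterexample. It is therefore not a counterexample, and because $G$ contains no $d/2$ consecutive even cycle lengths, neither does $G[W]$. Hence $e(G[W])\le \tfrac{(d-1)(|W|-1)}{2}$.

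\textbf{Summing the pieces.} Combining this with Claim~\ref{claim:U-edge-deficiency} gives
\[
  e(G)\le \tfrac{d-2}{2}|U|-\tfrac{d^2}{5}+\tfrac{(d-1)(|W|-1)}{2}+e_G(W,U).
\]
We need this to be less than $\tfrac{(d-1)(n-1)}{2}=\tfrac{(d-1)(|U|+|W|-1)}{2}$. So it suffices to show
\[
  e_G(W,U)<\tfrac{|U|}{2}+\tfrac{d^2}{5}.
\]

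\textbf{The main obstacle.} The naive cover bound gives $|Z_U||W|+|Z_W||U|$, and this could be huge when $|W|$ or $|U|$ is large. So we must use more structure; this is the step I expect to be the main difficulty. Each $w\in W$ has $d_G(w,U)<(1/3+40\eps)d$, which bounds the edges from $Z_W$ by roughly $|Z_W|\,d/3\le d^2/300$. That disposes of the $|Z_W||U|$ term.

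For vertices of $W\setminus Z_W$, all their $U$-neighbours lie in $Z_U$. We handle these by applying minimality to a modified graph rather than to $G[W]$. Let $G'$ be the graph on $W\cup Z_U$ consisting of $G[W]$ together with the edges between $W$ and $Z_U$; equivalently, contract $U$ suitably. Then $G'$ is a proper subgraph on $|W|+|Z_U|$ vertices, since $|U|\ge |C|>|Z_U|$. Its cycles are cycles of $G$, so $G'$ has no $d/2$ consecutive even cycle lengths, and minimality gives
\[
  e(G[W])+e_G(W\setminus Z_W,Z_U)\le \tfrac{(d-1)(|W|+|Z_U|-1)}{2}.
\]

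\textbf{Final count.} We then combine:
\begin{itemize}
  \item the $G'$ bound above for $e(G[W])+e_G(W\setminus Z_W,Z_U)$;
  \item the bound $d^2/300$ for the edges from $Z_W$;
  \item Claim~\ref{claim:U-edge-deficiency} for $e(G[U])$.
\end{itemize}
This yields
\[
  e(G)\le \tfrac{d-2}{2}|U|-\tfrac{d^2}{5}+\tfrac{(d-1)(|W|+|Z_U|-1)}{2}+\tfrac{d^2}{300}.
\]
Since $|Z_U|\le d/100$, the extra term $(d-1)|Z_U|/2$ is at most $d^2/200$. The deficiency $d^2/5$ absorbs it together with $d^2/300$, and the loss $|U|/2$ only helps. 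The result is $e(G)<\tfrac{(d-1)(n-1)}{2}$, contradicting~\ref{couneg1}.

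The remaining care is to check two things. First, that $G'$ (equivalently $G[W\cup Z_U]$) is genuinely smaller than $G$; this follows from $|Z_U|<|U|$. Second, that the edge-disjoint decomposition of $E(G)$ into the pieces above is exhaustive. If the induced-subgraph formulation is used instead, $e(G[Z_U])$ is double-counted, but this only strengthens the needed inequality.
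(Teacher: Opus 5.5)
Your argument is correct and is essentially the paper's proof. The paper takes a maximal matching $M$ and uses $V(M)$ as the cover, so $M_U,M_W$ play the roles of your $Z_U,Z_W$. It then bounds $e(G[W\cup M_U])$ by minimality, $e(G[U])$ by Claim~\ref{claim:U-edge-deficiency}, and $e_G(M_W,U\setminus M_U)\le d^2/200$ by the degree bound on $W$, exactly as in your final count. Invoking K\H{o}nig's theorem instead of a maximal matching is harmless but unnecessary. You should drop the aside that $G'$ is ``equivalently'' a contraction of $U$, since contracting could create new cycles; the subgraph definition you actually use is the right one.
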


      \begin{poc}
        Let $M$ be a maximal matching in $G[W,U]$. Suppose that $|M|<d/100$. Let $M_U=V(M)\cap U$ and $M_W=V(M)\cap W$. By the maximality of $M$, there is no edge between $W\setminus M_W$ and $U\setminus M_U$, and hence
        \begin{equation*}
          e(G)\le e(G[W\cup M_U])+e(G[U])+e_G(M_W,U\setminus M_U).
        \end{equation*}
       Since $G$ is the minimal counterexample, $e(G[W\cup M_{U}])\le \tfrac{d-1}{2}(|W|+|M_U|-1)$.
       Moreover, for each $w\in W$, $d_G(w,U)<(1/3+40\eps)d$. Hence
       $$
         e_G(M_W,U\setminus M_U)\le d/100\cdot (1/3+40\eps)d\le d^2/200.
       $$
       By Claim~\ref{claim:U-edge-deficiency}, $e(G[U])\le \tfrac{d-2}{2}|U|-\tfrac{d^2}{5}$. We obtain
       \begin{align*}
           e(G)&\le \tfrac{d-1}{2}(|W|+|M_U|-1)+d^2/200+\tfrac{d-2}{2}|U|-\tfrac{d^2}{5}\\
           &\le \tfrac{d-1}{2}(|W|+|U|-1)+|M_U|\cdot \tfrac{d-1}{2}-\tfrac{d^2}{10}\le \tfrac{d-1}{2}(|G|-1)-\tfrac{d^2}{20}.
       \end{align*}
       The final inequality holds because $V(G)=W\mathbin{\dot\cup}U$ and $|M_U|\le d/100$.
        This contradicts $e(G)\ge(d-1)(|G|-1)/2$.
      \end{poc}

      Fix a matching $M$ of size at least $d/100$ in $G[W,U]$ and the label function $\chi:U\rightarrow\{0,1\}$.

\begin{claim}\label{claim:disconnected-remainder}
  If $G[W]$ is disconnected, then $G$ contains $d/2$ consecutive even cycle lengths.
\end{claim}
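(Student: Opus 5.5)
The plan is to use two distinct components of $G[W]$ to build two vertex-disjoint ``$W$-detours'' out of $U$, and to join them through $U$ with property~\ref{propextension} for $q=2$, which lets the two connecting paths in $G[U]$ vary in total length by $2i$ for $0\le i\le(1-3\eps)d/2$. The key is that each component of $G[W]$ has minimum degree at least $d/7$ by~\eqref{eq:degreeofW}. So each component supports an admissible path family of length about $d/14$ with common difference two, obtained by applying Lemma~\ref{lem:exceptional-admissible-path} to an end-block as in Case~1. Two independent families of about $d/14$ terms each, plus the $(1-3\eps)d/2$ flexibility in $U$, give far more than $d/2$ consecutive even values. The only issue is parity, since in Case~2 property~\ref{propextension} fixes the parity of the initial length $t$ via $\chi$.

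\textbf{Step 1: detours through each component.} Let $W_1,W_2$ be two components of $G[W]$ and let $D_k$ be an end-block of $G[W_k]$ with cut-vertex $b_k$ (or $b_k$ arbitrary if $G[W_k]$ is $2$-connected). Since $G$ is $2$-connected and $N_G(W_k)\subseteq U$, there are two disjoint paths from $U$ to $D_k$ whose interiors lie in $W_k\setminus V(D_k)$. Equivalently, there are two distinct vertices $x_k,y_k\in U$ with edges into $W_k$, joined through $D_k$. Applying Lemma~\ref{lem:exceptional-admissible-path} to the endpoints in $D_k$ gives at least $d/7-1$ admissible paths. Keeping one parity class yields $x_k,y_k$-paths through $W_k$ whose lengths are $s_k,s_k+2,\dots,s_k+2(d/14-2)$. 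I will choose $x_1,y_1,x_2,y_2$ distinct. This should be possible because each $W_k$ has at least two attachment vertices in $U$ ($G$ is $2$-connected), and Claim~\ref{claim:large-RU-matching} can be used to spread the endpoints if they collide. If only three distinct attachment vertices are available, I will instead use the $q=1$ version with a shared vertex.

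\textbf{Step 2: closing the cycles and fixing parity.} Apply property~\ref{propextension} with $q=2$ to the pairs $(y_1,x_2),(y_2,x_1)$. The resulting cycles have lengths $s_1+s_2+t+2(i+j_1+j_2)$, with parity $s_1+s_2+\sum(\chi(x_k)+\chi(y_k))$. The obstacle is when this is odd. To fix it, I will use freedom in the choice of attachment vertices. The matching $M$ of size $d/100$ together with the minimum degree $d/7$ inside $W$ gives many choices of attachment vertices with different $\chi$-values or different path parities; a path family with common difference one would also resolve the parity directly. Alternatively, I will pair the endpoints the other way, as $(x_1,x_2),(y_1,y_2)$, or use a single component with two matching edges and the $q=1$ property. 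Changing $\chi$ at one endpoint flips the parity. I expect this parity adjustment to be the main obstacle, specifically when all attachment vertices of a component have the same $\chi$-value and the component is bipartite in a matching way. In that case I would try to derive a contradiction from the edge count, as in Claim~\ref{claim:U-edge-deficiency}, or apply Lemma~\ref{lem:rooted-even-one-exception} to an auxiliary root joined to the attachment vertices.

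\textbf{Conclusion.} Once the parity is even, the lengths cover an interval of even numbers of size at least $(1-3\eps)d/2+2(d/14-2)>d/2$, contradicting the assumption that $G$ has no $d/2$ consecutive even cycle lengths.
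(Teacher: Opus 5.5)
Your Step~1 matches the paper's construction, but Step~2 is where the proof actually lives, and there you give no argument. You list several possible repairs and then defer a supposed hard case (all attachment vertices in one $\chi$-class, component ``bipartite in a matching way'') to an unspecified edge-counting or rooted-cycle argument. Some of the listed repairs cannot work in general. Re-pairing the endpoints as $(x_1,x_2),(y_1,y_2)$ leaves the parity unchanged, since in the bipartite case $t\equiv\sum_k(\chi(x_k)+\chi(y_k))$ regardless of the pairing. Swapping an attachment vertex for one of the opposite $\chi$-value presupposes that such vertices exist in $N_U(W_k)$, and nothing guarantees this. As written, the parity obstruction is not resolved.

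The missing idea is a short dichotomy that is already implicit in your own computation. For the fixed detours, put $\mu_k:=s_k+\chi(x_k)+\chi(y_k)$.
\begin{itemize}
\item Close the $W_k$-detour alone with the $x_k,y_k$-paths in $G[U]$ supplied by property~\ref{propextension} with $q=1$. This gives cycles of parity $\mu_k$. If $\mu_k$ is even, they give $(1-3\eps)d/2+d/14-1>d/2$ consecutive even lengths.
\item If instead $\mu_1\equiv\mu_2\equiv1$, your cross cycles through both components have parity $\mu_1+\mu_2\equiv0$. This also holds when the two pairs share one vertex, using the $q=1$ version with that vertex forbidden.
\end{itemize}
So one of the two constructions always succeeds, and no edge count or rooted lemma is needed. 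This is exactly the paper's argument, with $\lambda_i+\chi(x_i^1)+\chi(x_i^2)$ playing the role of $\mu_i$.

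The one remaining point needing care is that the two attachment pairs must not coincide. If they did, the cross cycle would have no $U$-segment to vary and would give only about $d/7$ lengths. $2$-connectivity alone does not rule this out, since a $2$-separator is allowed. This is where the matching $M$ from Claim~\ref{claim:large-RU-matching} is genuinely used.
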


\begin{poc}
  We choose two components $D_1$ and $D_2$ of $G[W]$, together with two independent edges $x_i^1y_i^1$ and $x_i^2y_i^2$ from $U$ to $D_i$ for each $i\in[2]$. Such edges exist since $G$ is $2$-connected. If $D_i$ is $2$-connected, Lemma~\ref{lem:exceptional-admissible-path}, applied to $(D_i,y_i^1,y_i^2)$ with parameter $d/7-1$, gives $d/7-1$ admissible $y_i^1y_i^2$-paths.

  Suppose that $D_i$ is not $2$-connected. Let $B_i$ be an end-block of $D_i$ with cut-vertex $c_i$. Since $G$ is $2$-connected, the edges between $U$ and $D_i$ contain a matching of size two, and the interior of $B_i$ has an edge to $U$. We may therefore choose the two independent attachment edges so that $y_i^1$ lies in the interior of $B_i$. If $y_i^2$ also lies in the interior of $B_i$, apply Lemma~\ref{lem:exceptional-admissible-path} to $(B_i,y_i^1,y_i^2)$, with $c_i$ as the possible exceptional vertex and parameter $d/7-1$. Otherwise, fix a $c_iy_i^2$-path in $D_i-(V(B_i)\setminus\{c_i\})$, apply the same lemma to $(B_i,y_i^1,c_i)$, and extend each resulting path by this fixed path. In either case, $D_i$ contains $d/7-1$ admissible $y_i^1y_i^2$-paths.

  Retaining one parity class if necessary, we obtain a family $\mathcal Q_i$ of at least $d/14-1$ $y_i^1y_i^2$-paths whose lengths form an arithmetic progression with common difference two. Let $\lambda_i\in\{0,1\}$ denote their common parity. The $U$-endvertex of any edge of $M$ can be included in such an attachment pair for its component, as can be seen by comparing the edge with a matching of size two between $U$ and that component. Choose an arbitrary attachment pair for one component. If an edge of $M$ meeting another component has its $U$-endvertex outside this pair, use an attachment pair containing that endvertex. Otherwise all but at most two edges of $M$ meet the first component, and we choose there an edge whose $U$-endvertex avoids an attachment pair of any other component. Thus the choices may be made so that $\{x_1^1,x_1^2\}\ne\{x_2^1,x_2^2\}$.

  Suppose first that $\lambda_i+\chi(x_i^1)+\chi(x_i^2)\equiv0\pmod2$ for some $i\in[2]$. By~\ref{propextension}, there are at least $(1-3\eps)d/2$ $x_i^1x_i^2$-paths in $G[U]$ whose lengths form an arithmetic progression with common difference two and have parity $\chi(x_i^1)+\chi(x_i^2)$. Together with the paths in $\mathcal Q_i$ and the edges $x_i^1y_i^1,x_i^2y_i^2$, they give more than $d/2$ consecutive even cycle lengths.

  We may therefore assume that $\lambda_i+\chi(x_i^1)+\chi(x_i^2)\equiv1\pmod2$ for both $i\in[2]$. Suppose first that the two sets $\{x_1^1,x_1^2\}$ and $\{x_2^1,x_2^2\}$ intersect. By relabelling, assume that $x_1^1=x_2^1$. Since the two sets are distinct, $x_1^2\ne x_2^2$. By the proof of~\ref{propextension}, with $x_1^1$ added to the forbidden set, there are at least $(1-3\eps)d/2$ $x_1^2x_2^2$-paths in $G[U]-x_1^1$ whose lengths form an arithmetic progression with common difference two and have parity $\chi(x_1^2)+\chi(x_2^2)$. Together with paths in $\mathcal Q_1,\mathcal Q_2$ and the four attachment edges, they form even cycles, since the sum of the two quantities $\lambda_i+\chi(x_i^1)+\chi(x_i^2)$ is even. The three path families give more than $d/2$ consecutive even cycle lengths.

  We may now assume that $x_1^1,x_1^2,x_2^1,x_2^2$ are all distinct. Applying~\ref{propextension} with $q=2$, we obtain pairwise vertex-disjoint $x_1^jx_2^j$-paths $P_{k,j}$, $j\in[2]$, such that $\sum_{j=1}^2|P_{k,j}|=t+2k$ for every $0\le k\le(1-3\eps)d/2$, where $t\equiv\sum_{j=1}^2\bigl(\chi(x_1^j)+\chi(x_2^j)\bigr)\pmod2$. Joining $P_{k,1},P_{k,2}$ with paths in $\mathcal Q_1,\mathcal Q_2$ and the four attachment edges gives even cycles, since the sum of the two quantities $\lambda_i+\chi(x_i^1)+\chi(x_i^2)$ is even. These path families again give more than $d/2$ consecutive even cycle lengths.
\end{poc}

By Claim~\ref{claim:disconnected-remainder}, we may assume that $G[W]$
is connected. By the pigeonhole principle, at least $d/200$ edges of $M$ have their $U$-endpoint in $\chi^{-1}(i)$ for some $i\in\{0,1\}$. Let $M'$ be the resulting submatching.

\begin{claim}\label{claim:two-connected-remainder}
  If $G[W]$ is $2$-connected, then $G$ contains $d/2$ consecutive even
  cycle lengths.
\end{claim}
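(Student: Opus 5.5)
We plan to apply the rooted cycle lemma (Corollary~\ref{cor:avoid-root-edge} or Lemma~\ref{lem:rooted-even-one-exception}) to an auxiliary graph built from $G[W]$ and the matching $M'$.

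Write $M'=\{w_ju_j: j\in[m']\}$ with $w_j\in W$, $u_j\in U$, $\chi(u_j)=i$ for all $j$, and $m'\ge d/200$. Form $F$ from $G[W]$ by adding a new vertex $v$ adjacent to every $w_j$. Since $G[W]$ is $2$-connected and $d_F(v)=m'\ge d/200\ge 6$, the graph $F$ is $2$-connected, because adding a vertex with at least two neighbours to a $2$-connected graph preserves $2$-connectivity. Every vertex of $W$ has degree at least $d/7$ in $F$ by \eqref{eq:degreeofW}. Let $p$ be the largest even integer at most $d/7$, and apply Lemma~\ref{lem:rooted-even-one-exception} with root $v$ and an arbitrary exceptional vertex $z\in W$; the exceptional vertex is not needed here. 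This gives two neighbours $w_a,w_b$ of $v$ and even cycles $D_0,\dots,D_{(p-8)/2}$ through $vw_a$ and $vw_b$ with $|D_k|=\ell+2k$. Deleting $v$ yields $w_a,w_b$-paths $Q_k\subseteq G[W]$ of lengths $\ell-2+2k$, all of even length. This gives roughly $d/14-4$ consecutive even values.

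Next we connect through $U$. The vertices $u_a,u_b$ are distinct, since $M'$ is a matching, and $\chi(u_a)=\chi(u_b)$. Apply \ref{propextension} with $q=1$ to the pair $(u_a,u_b)$. It gives $u_a,u_b$-paths in $G[U]$ of lengths $t,t+2,\dots,t+(1-3\eps)d$ with $t\equiv \chi(u_a)+\chi(u_b)\equiv 0\pmod 2$. Closing $Q_k$ with the edges $w_au_a$, $w_bu_b$ and such a path gives cycles of length $(\ell-2+2k)+2+(t+2i)$. Since $U\cap W=\varnothing$, these are genuine cycles, and all their lengths are even. Their lengths form the sumset of two arithmetic progressions with difference $2$, having about $(1-3\eps)d/2+1$ and about $d/14-3$ terms respectively. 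The sumset therefore contains about $(1-3\eps)d/2+d/14-4>d/2$ consecutive even lengths once $\eps$ is small. This contradicts the assumption on $G$.

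The main point needing care is the parity bookkeeping. The rooted lemma fixes two edges at the root, so both endpoints in $U$ are matching partners lying in the same $\chi$-class. That is exactly why we passed to $M'$. We must also check that $\ell-2\ge 0$, so that each $Q_k$ is a genuine path. This holds because each $D_k$ is a cycle through $v$ and two distinct neighbours, so $|D_k|\ge 4$ when $w_a\ne w_b$. A parity failure is excluded because every cycle $D_k$ is even, which forces $|Q_k|$ to be even, and $t$ is even. So the only real obstacle is confirming that $F$ meets the hypotheses of Lemma~\ref{lem:rooted-even-one-exception}. The degree condition $d_F(w)\ge p$ holds, since vertices of $W$ only gain degree in $F$.
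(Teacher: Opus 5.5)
Your proposal is correct and follows the paper's proof essentially step for step. You use the same auxiliary graph (a new root joined to the $W$-ends of $M'$) and the same application of Lemma~\ref{lem:rooted-even-one-exception} with an arbitrary exceptional vertex. You then close the resulting even $w_a,w_b$-paths through the two matching edges and the even-parity $u_a,u_b$-path family from \ref{propextension}, exactly as the paper does, to get about $(1-3\eps)d/2+d/14-4>d/2$ consecutive even lengths.
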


\begin{poc}
We construct an auxiliary graph $F$ with vertex set $V(F)=W\cup\{z_0\}$. Retain all edges of $G[W]$, and add the edge $z_0w$ for every $w\in V(M')\cap W$. Since $G[W]$ is $2$-connected and $z_0$ has at least two neighbors, $F$ is also $2$-connected. Moreover, every vertex of $W$ still has degree at least $d/7$ in $F$.

Apply Lemma~\ref{lem:rooted-even-one-exception} to $F$ with root $z_0$ and any vertex of $W$ as the exceptional vertex. Let $s:=d/14-3$. We obtain even cycles $D_0,\ldots,D_{s-1}$ of consecutive lengths that contain the same two edges $z_0w,z_0w'$.
Let $aw,a'w'$ be the corresponding edges of $M'$. Since $M'$ is a matching, $a$ and $a'$ are distinct vertices of $\chi^{-1}(i)$. By~\ref{propextension}, there are at least $(1-3\eps)d/2$ consecutive even $aa'$-paths in $G[U]$. Replacing the path $wz_0w'$ in every $D_i$ by the edges $wa$ and $a'w'$ gives $d/14-3$ consecutive even $aa'$-paths whose internal vertices lie in $W$. Joining the two path families yields at least $(1-3\eps)d/2+d/14-4>d/2$ consecutive even cycle lengths.
\end{poc}

\begin{claim}\label{claim:one-connected-remainder}
  If $G[W]$ is connected but not $2$-connected, then $G$ contains
  $d/2$ consecutive even cycle lengths.
\end{claim}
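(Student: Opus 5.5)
We aim to reduce to the $2$-connected argument of Claim~\ref{claim:two-connected-remainder} by working inside a suitable end-block of $G[W]$. Since $G[W]$ is connected but not $2$-connected, its block-cut tree has at least two end-blocks. Every end-block $D$ with cut-vertex $c_D$ has all vertices other than $c_D$ of degree at least $d/7$ in $D$ by~\eqref{eq:degreeofW}, so $D$ is non-bridge and $2$-connected. Because $G$ is $2$-connected, the interior of each end-block sends at least one edge to $U$; otherwise $c_D$ would be a cut-vertex of $G$.

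The case split depends on how the matching $M'$ (all $U$-endpoints in $\chi^{-1}(i)$) meets the end-blocks.

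\textbf{Case A: some end-block $D$ contains the $W$-endpoints of at least two edges of $M'$ in its interior.} Build the auxiliary graph $F$ on $V(D)\cup\{z_0\}$ by joining $z_0$ to the $W$-endpoints of those edges. If there are at least six such endpoints, then $d_F(z_0)\ge 5$. Since $D$ is $2$-connected and $z_0$ has at least two neighbours, $F$ is $2$-connected. Apply Lemma~\ref{lem:rooted-even-one-exception} with root $z_0$, exceptional vertex $c_D$ and $p$ about $d/7$. This gives about $d/14$ consecutive even cycles through fixed edges $z_0w,z_0w'$. Then replace $wz_0w'$ by $wa,a'w'$ and close with the $aa'$-paths of~\ref{propextension}. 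Because $\chi(a)=\chi(a')$, those paths are even-parity, so the resulting cycles are even and give $(1-3\eps)d/2+d/14-4>d/2$ consecutive lengths.

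\textbf{Case B: the $M'$-endpoints are spread out.} Suppose they lie in many different blocks, or in non-end blocks. This case is handled like Claim~\ref{claim:disconnected-remainder}. Pick two end-blocks $D_1,D_2$. For each, take an attachment edge $x_jy_j$ from $U$ into its interior, and use a path from $y_j$ to $c_{D_j}$. Lemma~\ref{lem:exceptional-admissible-path} applied to $(D_j,y_j,c_{D_j})$ gives admissible families; retain one parity class, leaving at least $d/14-1$ terms. Join $c_{D_1}$ to $c_{D_2}$ by a fixed path through the block chain, avoiding both interiors. This yields a $W$-side family of $x_1x_2$-paths. Varying one block family and fixing the other gives at least $d/14-1$ consecutive lengths with common difference two. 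If their parity agrees with $\chi(x_1)+\chi(x_2)$, close them using~\ref{propextension} with $q=1$ and we are done.

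Otherwise, use a third attachment or an $M'$ edge into an intermediate block to flip parity. If the block tree has at least three end-blocks, use the labelling trick of Claim~\ref{claim:rooted-at-least-three-endblocks}: label each end-block by $\chi(x_j)$ plus the parity of its path family plus the tree distance to a root. Two of any three end-blocks share a label, and the corresponding cycles are even. If there are exactly two end-blocks, the block tree is a path. In that case, exploit $|M'|\ge d/200$: many $M'$ edges attach either to the interior of one end-block, which returns us to Case A, or to internal vertices of the chain. Internal vertices have $M'$-partners all of the same $\chi$-label. Choosing one attachment in an end-block interior and one $M'$ attachment $a'w'$ appropriately, and comparing against the pair from the other end-block, one of the two $U$-pairs has the right parity by pigeonhole on the sum of labels, as in Claim~\ref{claim:disconnected-remainder}. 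If the $U$-endpoints coincide, use the $q=2$ version of~\ref{propextension} or add the common vertex to the forbidden set.

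\textbf{Main obstacle.} I expect the hardest part to be the parity bookkeeping in the two-end-block chain case. The fixed connecting path through internal blocks may be bipartite with a rigid parity, and the end-block attachments may all sit at the wrong labels. The plan there is the one above: apply pigeonhole to the $\ge d/200$ same-label edges of $M'$. Either a block's interior contains at least six $M'$-endpoints, which is Case A, or, since the $W$-endpoints of $M'$ are distinct, there are many blocks each carrying an $M'$-endpoint. In the latter situation, two $M'$ edges $aw,a'w'$ with $\chi(a)=\chi(a')$ whose $W$-endpoints are separated by a non-bridge block give a $W$-path family between $w$ and $w'$ through Lemma~\ref{lem:exceptional-admissible-path} with common difference two. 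Closing it by even-parity $aa'$-paths yields the required $>d/2$ consecutive even cycle lengths.
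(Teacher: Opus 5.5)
Your Case A is sound once you require at least five $M'$-ends in the interior of the end-block, as you implicitly do, because Lemma~\ref{lem:rooted-even-one-exception} needs $d_F(z_0)\ge 5$. Your end-block labelling is also correct, since it is the paper's pair of easy cases, but only when the two chosen end-blocks have \emph{distinct} attachment vertices in $U$. The gap is Case B, which is where the real difficulty lies.

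Nothing forces distinct attachments. The $2$-connectivity of $G$ allows every end-block interior to meet $U$ through only one or two vertices. Then the labelling trick only produces cycles inside $W\cup\{a\}$, with no $U$-path family at all. ``Use the $q=2$ version or add the common vertex to the forbidden set'' is not an argument.

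Your fallback also fails. You take two $M'$-edges whose $W$-ends are separated by a non-bridge block, and apply Lemma~\ref{lem:exceptional-admissible-path} to that block. But such a block can have many cut vertices of small degree inside it, and then the lemma gives nothing. Even when the lemma applies, the resulting $w,w'$-family has a fixed parity that you never show is even.

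Here is a concrete configuration that defeats every step of Case B.
\begin{itemize}
\item Let $G[W]$ be an even cycle $u_1\cdots u_Lu_1$ with a pendant copy $B_i$ of $K_{s,s}$, $s\ge d/7$, attached at each $u_i$.
\item Let each $B_i$ meet $U$ only through one edge $y_ia$, where $a$ is a single vertex with $\chi(a)=1$, and all $y_i$ lie in the same bipartition class of $G[W]$ as the vertices $u_{2k}$.
\item Let $M'$ match the vertices $u_{2k}$ into $\chi^{-1}(0)$.
\end{itemize}
This is consistent with $\delta(G[W])\ge d/7$ and with $G$ being $2$-connected. In it:
\begin{itemize}
\item no end-block interior meets $M'$, so Case A is void;
\item all end-block attachments coincide at $a$;
\item any two $M'$-ends lie in the cycle block, in which every vertex has degree $2$, so no admissible family exists between them;
\item a cycle using one attachment $ay_i$ and one $M'$-edge is odd, since its $W$-part is even and its $U$-part is odd.
\end{itemize}
The even cycles that do exist have the form $a\,y_i\,B_i\,u_i\cdots u_{2k}\,a'_{2k}\cdots a'_{2l}\,u_{2l}\cdots u_j\,B_j\,y_j\,a$. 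They pass through two end-blocks glued at $a$ and close through an $M'$-pair in $U-a$, and your plan never builds them.

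The missing idea is the paper's global auxiliary graph. After the two easy cases, the attachments $a_i$ take at most two values. The paper splits $M'$ into disjoint submatchings $M_z,M_w$ and adds to $G[W]$:
\begin{itemize}
\item vertices $z_0,w_0$ joined to the $W$-ends of $M_z$ and $M_w$ respectively;
\item a possibly low-degree vertex $z_1$;
\item for each end-block with no matching end in its interior, an edge from that interior to $w_0$ or to $z_1$, according as $\chi(a_i)=0$ or $1$;
\item the edges $z_0z_1$ and $z_1w_0$.
\end{itemize}
The resulting graph $F$ is $2$-connected, and $z_1$ is its only low-degree vertex besides the root. Corollary~\ref{cor:avoid-root-edge} then gives consecutive even cycles through $z_0$. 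Each auxiliary vertex, and the edge $z_1w_0$ (which stands for an odd $U$-path), is replaced using~\ref{propextension} with $q\le 3$. The rooted cycle lemma thus does, all at once, the block-tree and parity bookkeeping that your Case B attempts by hand and does not complete.
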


\begin{poc}
Let $B_1,\ldots,B_q$ be the end-blocks of $G[W]$, with cut-vertices
$c_1,\ldots,c_q$. Since $G$ is $2$-connected, for every $i\in[q]$
there is an edge $a_ix_i$ from $U$ to the interior of $B_i$.
Apply Lemma~\ref{lem:exceptional-admissible-path} to $(B_i,x_i,c_i)$ with parameter
$d/7-1$. This gives a family of $d/7-1$ admissible
$x_ic_i$-paths with common difference
$\rho_i\in\{1,2\}$. When $\rho_i=2$, let $\sigma_i$ be the common
parity of these paths. Fix a spanning tree $T$ of $G[W]$ and a vertex
$v\in V(W)$.

We first consider two easy cases. Suppose that $\rho_i=1$ and
$a_j\ne a_i$ for some $j\ne i$. Fix an $x_jc_j$-path in $B_j$
and the $c_ic_j$-path in $T$. By~\ref{propextension}, there are at
least $(1-3\eps)d/2$ admissible $a_ia_j$-paths in $U$. Since the
paths in $B_i$ have common difference one, we may take the parity class
which makes the resulting cycles even. This class contains at least
$d/14-1$ paths. Combining the two path families gives more than
\(
  {(1-3\eps)d}/{2}+d/{14}-2>\tfrac d2
\)
consecutive even cycle lengths.
Next, suppose that $\rho_i=\rho_j=2$, $a_i\ne a_j$, and
\begin{equation}\label{eq:sumparity}
  \sigma_i+\operatorname{dist}_T(v,c_i)+\chi(a_i)
  \equiv
  \sigma_j+\operatorname{dist}_T(v,c_j)+\chi(a_j)
  \pmod2.
\end{equation}
Since
$\operatorname{dist}_T(c_i,c_j)\equiv
\operatorname{dist}_T(v,c_i)+\operatorname{dist}_T(v,c_j)\pmod2$,
the two end-block path families, the $c_ic_j$-path in $T$, and
the $a_ia_j$-paths in $U$ form even cycles. These three
families give more than $d/2$ consecutive even cycle lengths.

We may therefore assume that neither case occurs. It follows that
there are at most two distinct vertices among $a_1,\ldots,a_q$.
Indeed, if some $\rho_i=1$, then the first case implies that every
$a_j=a_i$. Otherwise all $\rho_i=2$, and among three distinct $a_i$,
two of the corresponding values in~\eqref{eq:sumparity} would be equal
by the pigeonhole principle.

By interchanging the labels $0$ and $1$ if necessary, assume that every edge of $M'$ has its $U$-endpoint in $\chi^{-1}(0)$. Delete from $M'$ at most two edges incident with vertices among $a_1,\ldots,a_q$. We can then choose two disjoint submatchings $M_z,M_w$, each of size $d/500$.
We now construct an auxiliary graph $F$. Start with $G[W]$ and add
three new vertices $z_0,z_1,w_0$. Join $z_0$ to vertices in $V(M_z)\cap W$, and join $w_0$ to vertices in $V(M_w)\cap W$. For every $B_i$ with $(V(B_i)\setminus\{c_i\})\cap (V(M_z)\cup V(M_w))=\varnothing$, add the edge
$w_0x_i$ if $a_i\in \chi^{-1}(0)$, and add the edge $z_1x_i$ if $a_i\in \chi^{-1}(1)$. Finally, add $z_0z_1$ and $z_1w_0$.
Then the following properties hold for the new graph $F$.
\begin{enumerate}[label=(\arabic*)]
    \item $F$ is $2$-connected.
    \item $d_F(x)\ge d/500$ for every vertex of $F$ except $z_1$.
\end{enumerate}
To see (1), deleting one of
$z_0,z_1,w_0$ leaves a connected graph. Now let $x\in W$. If
$G[W]-x$ is connected, then so is $F-x$. Otherwise, every component
of $G[W]-x$ contains the interior of some end-block $B_i$. If that interior contains a vertex in $M_z\cup M_w$, it is joined to $z_0$ or $w_0$.
If it contains no matching end, the edge $w_0x_i$ or $z_1x_i$ was added. Thus every component of $G[W]-x$ is joined to the path $z_0z_1w_0$, and hence $F-x$ is connected.
For (2), $d_F(z_0),d_F(w_0)\ge d/500$, while every vertex of $W$ has
degree at least $d/7$ in $F$.

Next, apply
Corollary~\ref{cor:avoid-root-edge} with root $z_0$,
exceptional vertex $z_1$, prescribed edge $z_0z_1$, and parameter
$d/500$. Let $s:=d/1000-3$. We obtain even cycles
$D_0,\ldots,D_{s-1}$ of consecutive lengths. They contain the same
two edges incident with $z_0$ and avoid $z_0z_1$. Let the two edges be $z_0y_1$ and $z_0y_2$.
It remains to replace the new vertices in each $D_i$.

Suppose first that $z_1w_0\notin D_i$. For each new vertex $z$ on $D_i$, the two edges of $D_i$ incident with $z$ both join $z$ to $W$; write them as $rz$ and $zr'$. Let $ar$ and $br'$ be the corresponding original edges. If $a=b$, replace $rzr'$ directly by $rar'$. If $a\ne b$, replace it by $ra$, $br'$ and an $ab$-path in $U$. Here $a,b\in \chi^{-1}(0)$ when $z\in\{z_0,w_0\}$, while $a,b\in \chi^{-1}(1)$ when $z=z_1$.

The other case is that $D_i$ uses the edge $z_1w_0$.
In this case it contains a path $rz_1w_0r'$. Let
$ar$ and $br'$ be the two corresponding original edges. Then
$a\in\chi^{-1}(1)$ and $b\in\chi^{-1}(0)$, and we replace this path by
$ra$, an $ab$-path in $U$, and $br'$. In addition, replace the
two-edge path of $D_i$ through $z_0$ exactly as in the preceding case.

In total, at most three disjoint pairs of vertices in $U$ need to be joined. They are disjoint because $M_z$ and $M_w$ avoid the vertices $a_i$, while the additional edges incident with $z_1$ and $w_0$ correspond to different $\chi$-classes. After omitting any pair with equal ends, property~\ref{propextension} gives pairwise vertex-disjoint paths for all remaining pairs, with their total length ranging through an arithmetic progression of common difference two and with first term at most $M_0$. Hence every replacement gives a simple cycle in $G$.
In the first case, the two ends of each replacement pair have the same $\chi$-value, so the path between them has even length. For the pair arising from $z_1w_0$, the two ends have different $\chi$-values, so the path between them has odd length. The two attachment edges are replaced one-for-one, while the additional edge $z_1w_0$ is removed. Therefore every replacement changes the cycle length by an even number.

By~\ref{propextension} with $q$ at most $3$, it follows that every $D_i$ gives an interval of at least $(1-3\eps)d/2$ consecutive even cycle lengths. The first length of
this interval differs from $|D_i|$ by a bounded even number. Since
$|D_0|,\ldots,|D_{s-1}|$ are consecutive even integers, these intervals
overlap for sufficiently large $d$. Since the first-length shifts are bounded by $M_0$, their union contains at least $(1-3\eps)d/2+d/1000-2M_0-4\ge d/2$ consecutive even cycle lengths for sufficiently large $d$.
\end{poc}

Claims~\ref{claim:two-connected-remainder} and~\ref{claim:one-connected-remainder} complete the proof.
\end{proof}

\section{Intermediate case: proof of Lemma~\ref{lem:medium-expander}}\label{section-med}

  In this section, we prove a stronger version of Lemma~\ref{lem:medium-expander}.
  \begin{lemma}\label{lem:dense}
    Let $0<\tfrac{1}{n},\tfrac{1}{d}\ll \tfrac{1}{K}\ll \eps_1,\eps_2\ll 1/C,1/5$, with $\tfrac{1}{4}\log^{1000}n\leq d\leq \tfrac{n}{K}$ and $Cd\in 2\mathbb{N}$. If $G=(V,E)$ is an $n$-vertex $(\eps_1,\eps_2d)$-expander with $\delta(G)\geq \tfrac{d}{2}$, then $G$ contains $\tfrac{Cd}{2}$ consecutive even cycle lengths.
  \end{lemma}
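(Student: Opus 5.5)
We follow the scheme sketched in the overview of Lemma~\ref{lem:medium-expander}. Set $m$ to be the smallest even integer exceeding $\log^4(n/d)$, and let $L_G$ be the set of vertices of degree at least $dm^{31}$. Since $d\ge\tfrac14\log^{1000}n$, we have $m\le d^{1/200}$, so polylogarithmic losses are negligible against $d$. We first reserve an \emph{adjuster part} $\mathcal A$ of size $o(d)$ away from the rest of the construction. If $G$ is bipartite, $\mathcal A$ is a single adjuster in the sense of Liu and Montgomery: two ends $x,y$ together with $x,y$-paths whose lengths cover every even number in an interval $[\ell_0,\ell_0+\Omega(m^2)]$. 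If $G$ is non-bipartite, Lemma~\ref{lem:oddcyclelengthinexpander} gives an odd cycle of length $O(m)$. Attaching it via short paths yields an odd simple adjuster, which lets us switch the parity of the adjuster lengths. By Lemma~\ref{lem:delete-few-vertices-expander}, after deleting $\mathcal A$ (and later any set of size $o(d)$), the graph remains an $(\eps_1/2,\eps_2 d)$-expander. Lemma~\ref{distance} therefore still connects any two sets of size $\ge\eps_2 d$ by paths of length $O(m)$ avoiding the used vertices.

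Next we build the \emph{connecting gadgets}. If $|L_G|\ge$ (a suitable polylogarithmic multiple of $C$), each gadget is a vertex of $L_G$ together with a large star of $\Omega(dm^{30})$ leaves, chosen disjointly. Otherwise, robust expansion of $G-L_G$, whose maximum degree is below $dm^{31}$, gives many pairwise internally disjoint webs in the Liu--Montgomery sense. Each web has a core vertex joined by short paths to an exterior of size $\Omega(dm^{2})$, and the webs are built greedily, using that the removed set is small. We then link gadgets one by one with Lemma~\ref{distance}, connecting the exterior of the current gadget to the union of exteriors of unused gadgets while avoiding everything used so far. This produces a path $P$ of length between $Cd$ and $Cdm^{30}$ on which consecutive core vertices $c_1,c_2,\dots,c_N$ are at distance at most $200m$ (respectively $49Cm$ in the high-degree case), with $N\ge Cd/(200m)$.

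Finally we close the cycles. We join the first core $c_1$ to one end $x$ of $\mathcal A$ by a short path. For each $j$ we join $c_j$ to the other end $y$ by a short path, using the still unused large exteriors at $c_j$ and Lemma~\ref{distance} to avoid $P[c_1,c_j]$. Combining $P[c_1,c_j]$ with the adjuster paths gives cycles whose lengths cover an interval of $\Omega(m^2)$ consecutive even values (all values in the non-bipartite case, after fixing parity with the odd adjuster), starting near $|P[c_1,c_j]|+O(m)$. Since consecutive $|P[c_1,c_j]|$ differ by $O(m)\ll m^2$, these intervals overlap. Their union therefore covers all even lengths from about $\ell_0$ to $\ell_0+|P|\ge\ell_0+Cd$, which is at least $Cd/2$ consecutive even lengths.

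The main obstacle is this last step. The connecting path from $c_j$ to $y$ must have length controlled to within $O(m)$, and it must avoid $P[c_1,c_j]$ and $\mathcal A$ while we cannot afford to reserve fresh space for every $j$. The plan is to pre-reserve, for each gadget, a disjoint sub-exterior of size $\ge\eps_2 d$. We then route all closing paths through a common short "hub" path into $y$, re-choosing disjointly for each $j$ separately, since only one cycle is needed at a time. In the web case, the delicate point is ensuring the webs survive the deletion of the growing path, which requires the robust (edge-deletion) version of expansion together with the bound $\Delta(G-L_G)<dm^{31}$.
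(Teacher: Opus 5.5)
\textbf{Gap: nothing forces the path to have length at least $Cd$ in the high-degree branch.} Your overall architecture is the paper's: an even adjuster plus an odd simple adjuster for parity, gadgets chosen according to $|L_G|$, a long path whose closing points are $O(m)$ apart, and overlapping intervals of width $\Theta(m^2)$. The failure is in the high-degree case. Lemma~\ref{distance} bounds link lengths only from above, so linking $L_G$-gadgets one by one may produce links of length $2$. There are at most $|L_G|$ gadgets, so the total length is at most $|L_G|$ times the longest link. With a merely polylogarithmic threshold this can be far below $Cd$, since $d\ge m^{200}$.

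You also cannot just raise the threshold to $|L_G|\ge Cd/2$. The complementary web branch is built from many disjoint $\tfrac{d}{50}$-stars (Lemma~\ref{claim:star}), and that step needs $|L_G|<\tfrac{11}{50}d$ together with $\delta(G)\ge d/2$ to keep $G-X-V(\mathcal S)$ of average degree at least $d/50$. So for $\tfrac{11}{50}d\le |L_G|<Cd/2$, your claim that $|P|\ge Cd$ with $N\ge Cd/(200m)$ closing points is unsupported.

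The missing idea is to lengthen each link deliberately while keeping it $O(Cm)$. The paper fixes $d/10+1$ vertices $V_0\subseteq L_G^1$. Between consecutive $v_j,v_{j+1}$ it threads $2C$ vertex-disjoint units from Lemma~\ref{lem:lar-unit}, joining consecutive unit cores through their exteriors by paths of length in $[5,21m+2]$ (Claims~\ref{cla:many-unit-lar} and~\ref{cla:units-link}). Each link then has length in $[10C+3,49Cm]$, so the $d/10$ links give $\ell(Q)\ge Cd$ while the gaps stay below $m^2$ (Lemma~\ref{cla:longpath}).

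\textbf{Secondary issue: the closing step uses the wrong scales.} When closing at $c_j$, the avoided set contains $P[c_1,c_j]$, which has $\Theta(Cd)$ vertices. So Lemma~\ref{lem:delete-few-vertices-expander} does not apply, since it only allows deleting $O(\eps_1\eps_2 d)$ vertices. A reserved sub-exterior of size $\eps_2 d$ is also far too small for Lemma~\ref{distance}, which needs both sets to have size $x$ with $\rho(x)x/4$ at least the size of the avoided set.

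No hub or reservation is needed. Each cycle is built separately, so the paper re-applies Lemma~\ref{distance} for every target length. It connects from the neighbourhood of an $L_G$-vertex (size at least $dm^{31}$) or from a surviving web exterior (size $\Theta(dm^{31})$) to the stars or units at the adjuster ends (size of order $dm^{28}$). In the web case this is why the top length is capped at $D=Cdm^{27}$: the avoided set then has size at most $2D\le\tfrac14\rho(dm^{28}/700)\,dm^{28}/700$.

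Keeping the end webs intact is not a robust-expansion issue either. It comes from pigeonhole over internally disjoint webs: few of them meet the used set in more than $m^2/2$ interior vertices. The paper uses this to build a fresh path $P(L)$ for each target length by a maximality argument (Claim~\ref{lem_long_path_web}) rather than slicing one long path. Your single-path variant would also work in the web case if each new web is selected by the same pigeonhole.
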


  Note that Lemma~\ref{lem:medium-expander} follows from Lemma~\ref{lem:dense} with %$(G,C,d):=(H,2,(1-\eps)d)$. 
  $(n,G,C)=(h,H,2)$ and $(d,\eps_2)=(2\lfloor(1-\eps)d/2\rfloor,
  \tfrac{\eps_2d}{2\lfloor(1-\eps)d/2\rfloor})$.
  Throughout this section, let $m$ be the smallest even integer larger than $\log^4\tfrac{n}{d}$. Then
\begin{equation}\label{equ:medium}
  d\ge m^{200}
  \qquad\text{and}\qquad
  n\ge dm^{200}.
\end{equation}
Every auxiliary statement below that is written with the lower bound $d\geq\log^{1000}n$ remains valid with $d\geq\tfrac14\log^{1000}n$, since this lower bound is used only to guarantee~\eqref{equ:medium}. We use this strengthened version without further comment. Let
$$
  L_G:=\{v\in V(G): d_G(v)\geq dm^{31}\}.
$$
We divide the proof into two cases according as $|L_G|\geq 11d/50$ or $|L_G|<11d/50$.
  
\subsection{Tools}
We now introduce the definitions of a \emph{unit} and a \emph{web}, which may be viewed as ``tree-like'' structures.
  \begin{definition}[unit; see~\cite{Liu-balanced-sub}]\label{defn:unit}
    For $h_1,h_2,h_3\in \mathbb{N}$, a graph $F$ is an \emph{$(h_1,h_2,h_3)$-unit} if it contains distinct vertices $u$ (the \emph{core} vertex of $F$) and $x_1,\ldots,x_{h_1}$, and $F=\bigcup_{i\in[h_1]}(P_i\cup S_{i})$, where
    \begin{itemize}
      \item $\mathcal{P}=\{P_i:i\in[h_1]\}$ is a collection of pairwise internally vertex-disjoint paths, each of length exactly $s$ for some $s\leq h_3$, such that $P_i$ is a $u,x_i$-path, and
      \item $\mathcal{S}=\{S_i:i\in[h_1]\}$ is a collection of vertex-disjoint $h_2$-stars such that $S_{i}$ has center $x_i$ and $\bigcup_{i\in[h_1]}(V(S_{i})\backslash \{x_i\})$ is disjoint from $V(\mathcal{P})$.
    \end{itemize}
  \end{definition}
  We call $S_{i}$ a \emph{pendant} star in the unit $F$ and every such path $P_i$ a \emph{branch} of $F$. Define the \emph{exterior} $\mathsf{Ext}(F):=\bigcup_{i\in[h_1]}(V(S_{i})\backslash \{x_i\})$ and \emph{interior} $\mathsf{Int}(F):=V(F)\backslash \mathsf{Ext}(F)$.

  \begin{figure}[H]
    \begin{center}
      \includegraphics[scale=0.5]{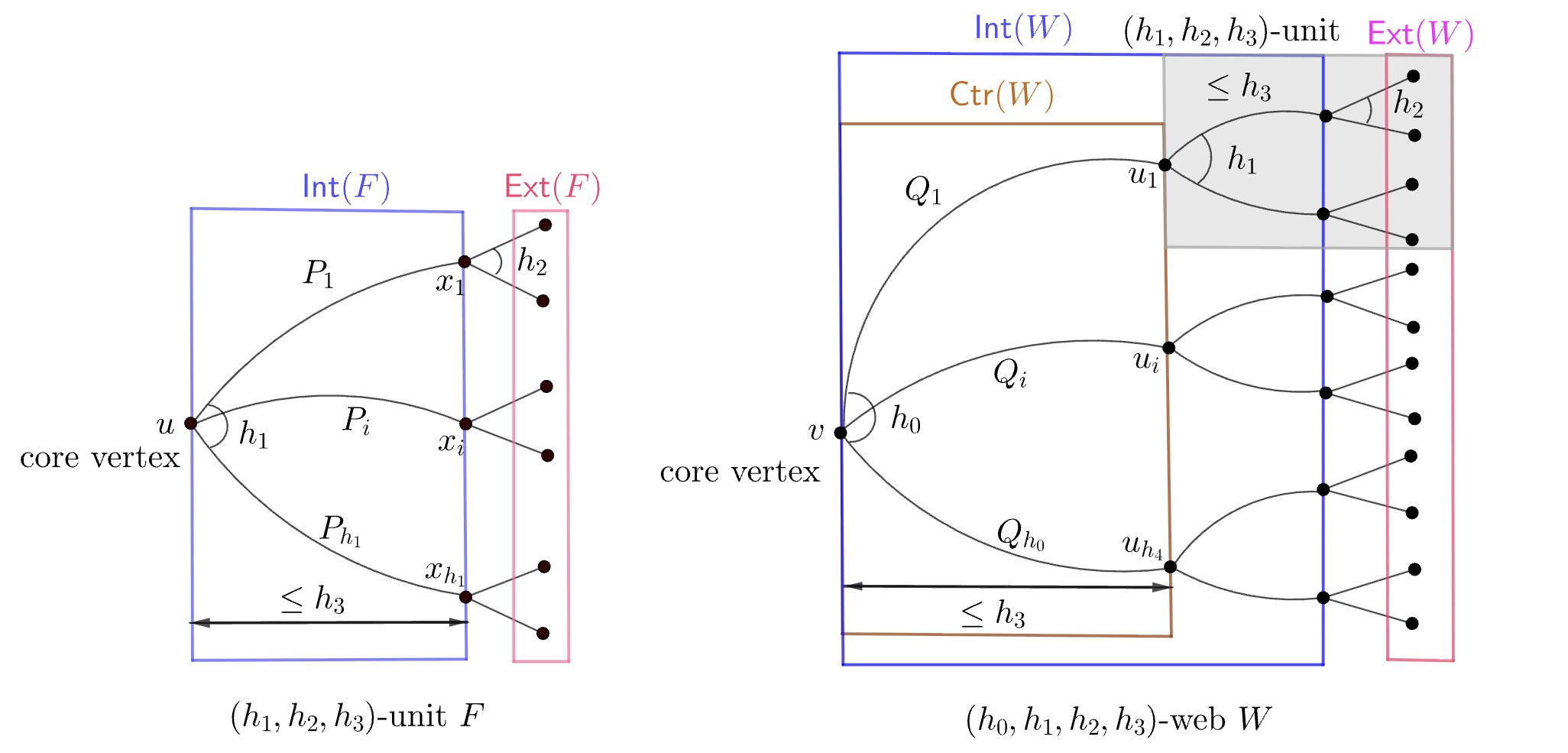}\\
      \caption{A unit and a web.}
      \label{webdraw}
    \end{center}
  \end{figure}

  \begin{definition}[web; see~\cite{Liu-balanced-sub}]\label{defn:web}
    For $h_0,h_1,h_2,h_3\in \mathbb{N}$, a graph $W$ is an \emph{$(h_0,h_1,h_2,h_3)$-web} if it contains distinct vertices $v$ (the \emph{core} vertex of $W$), $u_1,\ldots,u_{h_0}$, and $W=\bigcup_{i\in[h_0]}(Q_i\cup F_{i})$, where
    \begin{itemize}
      \item $\mathcal{Q}=\{Q_i:i\in[h_0]\}$ is a collection of pairwise internally vertex-disjoint paths such that each $Q_i$ is a $v,u_i$-path of length exactly $t$ for some $t\leq h_3$, and
      \item $\mathcal{F}=\{F_i:i\in[h_0]\}$ is a collection of vertex-disjoint $(h_1,h_2,h_3)$-units such that $F_{i}$ has core vertex $u_i$ and $\bigcup_{i\in[h_0]}(V(F_{i})\backslash \{u_i\})$ is vertex-disjoint from $V(\mathcal{Q})$.
    \end{itemize}
  \end{definition}

  We call each $Q_i$ a \emph{branch} and the branches inside each unit $F_i$ the \emph{second-level branches} of $W$. We define the \emph{exterior}, \emph{interior}, and \emph{center} of $W$ by $\mathsf{Ext}(W):=\bigcup_{i\in[h_0]}\mathsf{Ext}(F_i)$, $\mathsf{Int}(W):=V(W)\backslash \mathsf{Ext}(W)$, and $\mathsf{Ctr}(W):=V(\mathcal{Q})$, respectively.

To adjust path lengths as desired, we use the notion of an \emph{adjuster}, introduced by Liu and Montgomery in~\cite{Liu-Mon-JAMS}.

  \begin{definition}[\cite{Liu-Mon-JAMS}]\label{def:adj}
    An $(\ell,k)$-\emph{adjuster} $\mathcal{A}=(v_1, v_2, F_1, F_2, A,\mathcal{P})$ in a graph $G$ consists of
    two vertices $v_1,v_2$ of $G$, two subgraphs $F_1,F_2$ of $G$, a vertex set $A\subseteq V(G)$, and a collection of $v_1,v_2$-paths $\mathcal{P}$ in $G$ satisfying the following properties.
    \stepcounter{propcounter}
    \begin{enumerate}[label = ({\bfseries \Alph{propcounter}\arabic{enumi}})]
        \rm
      \item\label{adj-oddadj-1} $F_1$ and $F_2$ are either both stars or both units, and $F_i$ has core $v_i$ for each $i\in[2]$.
        \rm
      \item\label{adj-oddadj-2} $\mathsf{Int}(F_1)\cap \mathsf{Int}(F_2)=\varnothing$.\rm
      \item\label{adj-oddadj-3} $|A|\le 2\ell$ and $A\subseteq V(G)\backslash\bigcup_{i\in [2]}V(F_i)$.
        \rm
      \item\label{adj-oddadj-4} $\mathcal{P}$ consists of $k+1$ $v_1,v_2$-paths in $G[A\cup \{v_1,v_2\}]$ of lengths $\ell', \ell'+2, \dots, \ell'+2k$ for some $\ell'\leq \ell$.
    \end{enumerate}

    Furthermore, we define the \emph{length} and \emph{perimeter} of $\mathcal{A}$ by $\ell(\mathcal{A}):=\ell'+2k$ and $p(\mathcal{A}):=A\cup \mathsf{Int}(F_1)\cup \mathsf{Int}(F_2)$, respectively.
  \end{definition}

  Definition~\ref{def:adj} changes path lengths only by $2$. Since passing to a bipartite subgraph would lose half the minimum degree, we also use a short odd cycle to change parity. This motivates the following odd simple adjuster.

  If, in Definition~\ref{def:adj}, we replace only~\ref{adj-oddadj-4}, while retaining~\ref{adj-oddadj-1}--\ref{adj-oddadj-3}, with
  \begin{itemize}
    \item $\mathcal{P}$ consists of two $v_1,v_2$-paths in $G[A\cup \{v_1,v_2\}]$ of lengths $\ell', \ell'+1$ for some $\ell'\leq \ell$,
  \end{itemize}
  then $\mathcal{O}=(v_1,v_2,F_1,F_2,A,\mathcal{P})$ is called an $(\ell,1)$-\emph{odd simple adjuster}. We define
  \[
    \ell(\mathcal{O}):=\ell'+1,\qquad
    p(\mathcal{O}):=A\cup\mathsf{Int}(F_1)\cup\mathsf{Int}(F_2).
  \]

  \subsection[Many large-degree vertices]{Many large-degree vertices: $|L_G|\geq \tfrac{11}{50}d$}\label{subsec:52} We prove the following lemma. Recall that $m$ is the smallest even integer larger than $\log^4\tfrac{n}{d}$.

  \begin{lemma}\label{lem:lar-dense}
    Let $C\in\mathbb N$ and
    $0<\tfrac{1}{n},\tfrac{1}{d}\ll \tfrac{1}{K}\ll \eps_1,\eps_2\ll 1/C<1/5$, with $\tfrac{1}{4}\log^{1000}n\leq d\leq \tfrac{n}{K}$ and $Cd\in 2\mathbb{N}$.
    Suppose $G=(V,E)$ is an $n$-vertex $(\eps_1,\eps_2d)$-expander with $\delta(G)\geq \tfrac{d}{2}$. If $|L_G|\ge  \tfrac{11}{50}d$, 
    then $G$ contains a cycle of length $\ell$ for any even integer $\ell$ with
        $10m^3-2m^2+(250C+10)m\leq \ell\leq Cd+10m^3-50Cm$.
  \end{lemma}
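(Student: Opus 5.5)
We follow the scheme from the proof overview of Lemma~\ref{lem:medium-expander}. The idea is to build one adjuster that varies a path length by steps of two over a range of order $m^2$, and one long path through many high-degree vertices whose consecutive members are at distance $O(Cm)$. Closing subpaths of this long path through the adjuster then gives overlapping windows of even cycle lengths.

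First we reserve the adjuster. Since $\delta(G)\ge d/2$ and $G$ is an $(\eps_1,\eps_2d)$-expander, we build an $(O(m^3),\Theta(m^2))$-adjuster $\mathcal A=(v_1,v_2,F_1,F_2,A,\mathcal P)$ with $F_1,F_2$ large stars, following Liu--Montgomery. We use Lemma~\ref{lem:oddcyclelengthinexpander} to get a short odd cycle when $G$ is non-bipartite, and expansion plus Lemma~\ref{distance} to chain together $\Theta(m)$ short cycles of bounded length, each of which can be traversed along either of its two arcs. The parameters are chosen so that the path lengths are $\ell',\ell'+2,\dots,\ell'+2k$ with $2k\approx 2m^2$ and $\ell(\mathcal A)\le 10m^3$. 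This accounts for the $10m^3-2m^2$ terms in the range. If $G$ is bipartite, all closing parities are automatically even. Otherwise we also reserve an odd simple adjuster $\mathcal O$ so that every closing path can be given the parity that makes the cycle even. The set $p(\mathcal A)$ (and $p(\mathcal O)$) has size $O(m^3)\ll \rho(\cdot)$-thresholds, so by Lemma~\ref{lem:delete-few-vertices-expander} and Lemma~\ref{distance} the remaining graph still connects any two sets of size $\ge\eps_2 d$ by paths of length $O(m)$ avoiding it.

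Second, we build the long path. The vertices of $L_G$ have degree at least $dm^{31}$, and we may discard those meeting the reserved part, still keeping at least $d/5$ of them. We give each $v\in L_G$ a private star $S_v$ with about $dm^{30}$ leaves, chosen greedily to be disjoint from one another and from the reserved part; this is possible since the total size is $O(d^2m^{30})\ll dm^{31}|L_G|$, and conflicts can be resolved by greedy deletion. Then we link the stars one by one. Starting from $F_1$'s exterior, we repeatedly apply Lemma~\ref{distance} between the exterior of the current last star and the union of exteriors of unused stars, avoiding the path built so far. The forbidden set has size at most $O(Cdm)$, which is far below $\rho(x)x/4$ for $x\approx dm^{30}$. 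Each link has length at most $49Cm$. We continue until the path has length in $[Cd,5Cdm]$, using at most about $Cd/(2m)\cdot O(1)$ hubs, and finally link back to $F_2$'s exterior. This gives a $v_1,v_2$-path $P$ avoiding $A$ whose hubs $w_0,w_1,\dots$ are at mutual distance at most $49Cm$. Because every hub has huge degree and unused star leaves remain available, we may also shortcut: for each $j$, the path $v_1 P w_j$ plus a short $w_j$--$v_2$ link of length $O(m)$ through fresh star leaves avoids the earlier portion of $P$.

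Third, we combine. For each $j$ the closing path $Q_j$ has length $q_j=|v_1Pw_j|+O(m)$, and $q_{j+1}-q_j\le 49Cm+O(m)$. Closing $Q_j$ through $\mathcal P$ gives cycles of all even lengths in $[q_j+\ell',q_j+\ell'+2k]$, after parity-correcting via $\mathcal O$ in the non-bipartite case. Since $2k\approx 2m^2\gg 50Cm$, consecutive windows overlap. Their union covers every even integer from the first window's start, at most $10m^3-2m^2+(250C+10)m$, to the last, at least $Cd+10m^3-50Cm$.

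The main obstacle is the bookkeeping in the second step. The shortcut links from each hub back to $v_2$ must be disjoint from the already-used segment of $P$ and from $\mathcal A$, and the increments must stay bounded by $O(Cm)$ rather than growing. I would handle this by reserving, for each hub, a second disjoint pool of star leaves used only for closing, and by always applying Lemma~\ref{distance} with the forbidden set being the current path together with the adjuster perimeter.
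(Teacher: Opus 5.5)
Your overall plan matches the paper's: reserve the adjuster(s), build a long path through vertices of $L_G$ with consecutive hubs $O(Cm)$ apart, then close subpaths through the adjuster so that windows of width about $2m^2$ overlap. However, the second step has a genuine gap: nothing guarantees that your path ever reaches length $Cd$. Lemma~\ref{distance} gives only an \emph{upper} bound on each hub-to-hub link, namely length at most $m+2$. The links may be as short as $2$, since two vertices of degree at least $dm^{31}$ can easily share neighbours. The hypothesis only provides $|L_G|\ge\tfrac{11}{50}d$, so you have about $d/5$ hubs. With short links your path has length about $2|L_G|$, possibly $\tfrac{11}{25}d<Cd$, and then the upper end $Cd+10m^3-50Cm$ of the range is never reached. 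Your count of ``at most about $Cd/(2m)\cdot O(1)$ hubs'' tacitly assumes every link has length $\Omega(m)$, which nothing in your construction ensures.

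The missing idea is a device that forces a \emph{lower} bound on segment lengths. In the paper (Lemma~\ref{cla:longpath}), each segment $v_j\to v_{j+1}$ is routed through $2C$ vertex-disjoint units supplied by Lemma~\ref{lem:lar-unit}. Entering a unit through its exterior, passing its core, and leaving through another branch costs at least $4$ edges. Hence every segment has length in $[10C+3,49Cm]$, and $d/10$ hubs already give a path of length in $[Cd,5Cdm]$.

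There are also smaller errors.
\begin{itemize}
\item \textbf{Private stars.} Pairwise disjoint private stars with about $dm^{30}$ leaves need not exist. If $d/5$ hubs share one neighbourhood of size $dm^{31}$, such stars would need $d^2m^{30}/5>dm^{31}$ leaves, since $d\ge m^{200}$. Comparing the total size with the degree sum does not give Hall's condition. Fortunately the stars are unnecessary: at each linking or closing step, use $N(w)$ minus the current forbidden set, which has size $O(dm^{29})\ll dm^{31}$, exactly as the paper does. The same fix removes the need for your ``second pool'' of closing leaves.
\item \textbf{Adjuster parameters.} Chaining $\Theta(m)$ cycles gives only $\Theta(m)$ steps of $2$, not $m^2$. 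Moreover, cycles of bounded length need not exist.
\item \textbf{Upper endpoint.} To hit the exact upper endpoint you need $\ell(\mathcal A)\ge 10m^3$, not $\ell(\mathcal A)\le 10m^3$. The paper's Lemma~\ref{lem:lar-adjuster} gives $10m^3\le\ell(\mathcal A)\le 10m^3+10m$ with $m^2$ steps.
\item \textbf{Parity in the non-bipartite case.} You should state that $\mathcal O$ and $\mathcal A$ are joined in series by a short path between their cores. Only then can each closing path choose the parity that makes the cycle even.
\end{itemize}
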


  After fixing $G$, we can, since $|L_G|\ge\tfrac{11}{50}d$, choose two disjoint subsets $L_G^0$ and $L_G^1$ of $L_G$ such that $|L_G^0|=\tfrac{d}{50}$ and $|L_G^1|=\tfrac{d}{5}$.
  In the following part of this subsection, we fix our choices of $L_G^0$ and $L_G^1$ when $G$ is given.
  The proof sketch has two steps. First, we use $L_G^0$ to construct two adjusters: one for controlling parity (when needed) and the other for controlling even length changes (see Section~\ref{subsec:adj}). We then use vertices in $L_G^1$ to find a long path greedily while avoiding these adjusters. After truncating the path to the desired length and linking it to the adjusters, we obtain $\tfrac{Cd}{2}$ consecutive even cycle lengths (see Section~\ref{subsec:longp}).

  \subsubsection{Constructing adjusters}\label{subsec:adj}

  We now use two vertices in $L_G^0$ to construct a $(4m,1)$-\emph{odd simple adjuster}; see Lemma~\ref{lem:odd-sim-adj}.
  \begin{lemma}\label{lem:odd-sim-adj}
    Suppose $0<\tfrac{1}{n},\tfrac{1}{d}\ll \tfrac{1}{K}\ll \eps_1,\eps_2<\tfrac{1}{20}$ with $\tfrac{1}{4}\log^{1000}n\leq d\leq \tfrac{n}{K}$. Let $G=(V,E)$ be an $n$-vertex $(\eps_1,\eps_2d)$-expander with $\delta(G)\geq d/2$. If $G$ contains at least one odd cycle and $|L_G|\ge \tfrac{11}{50}d$, then there exists a $(4m,1)$-odd simple adjuster $\mathcal{O}=(v_1,v_2,S_1,S_2,O,\mathcal{P})$ in $G$ such that $v_1,v_2\in L_G^0$, where $S_i$ is a $dm^{28}$-star for each $i\in[2]$.
  \end{lemma}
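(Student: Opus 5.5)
We use the short odd cycle from Lemma~\ref{lem:oddcyclelengthinexpander}, attach two vertices of $L_G^0$ to it by short paths, and read off two paths of lengths differing by one. Since $G$ contains an odd cycle and $\delta(G)\ge d/2\ge d/10$, Lemma~\ref{lem:oddcyclelengthinexpander} (with $\eps_2<1/20$) gives an odd cycle $D$ of length at most $2m_0+5$, where $m_0=\tfrac{2}{\eps_1}\log^3(\tfrac{15n}{\eps_2 d})$. Since $m>\log^4(n/d)$ and $\log(n/d)\ge\log K$ is large compared with $1/\eps_1,\log(1/\eps_2)$, we get $m_0\le m/10$, so $|D|\le m/4$.

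Next we pick two distinct vertices $v_1,v_2\in L_G^0\setminus V(D)$; this is possible since $|L_G^0|=d/50\gg m$. For each $i$, we choose a $dm^{28}$-star $S_i$ centred at $v_i$, with leaves taken from $N(v_i)\setminus V(D)$. Because $d(v_i)\ge dm^{31}$, we can make the two leaf sets disjoint and avoid any set of size $O(dm^{28})$; at this stage we keep the stars large but only reserve them.

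We then connect. By Lemma~\ref{distance}, we join $\mathsf{Ext}(S_1)$ to a set of $\eps_2 d$ neighbours of a vertex of $D$ with a path $R_1$ of length at most $m_0$. The path avoids a forbidden set $W$ consisting of $V(D)$ and $\mathsf{Ext}(S_2)$. Here lies the main obstacle: $|\mathsf{Ext}(S_2)|=dm^{28}$ is far larger than the $\rho(x)x/4$ allowed when $x\approx\eps_2 d$. We resolve this by applying Lemma~\ref{distance} with large sets, $X_1=\mathsf{Ext}(S_1)$ of size $x=dm^{28}$, for which $\rho(x)x/4\ge \eps_1 dm^{28}/(4\log^2(15m^{28}/\eps_2))$. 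This comfortably exceeds the size of a small reserved part of $S_2$. Concretely, we first link using only sub-stars of size $dm^{28}$ chosen inside the huge neighbourhoods. For the other endpoint set, we grow the target via the ball of radius $O(m)$ around $D$: using $\delta\ge d/2$ and expansion, this ball has size at least $dm^{28}$ while avoiding the reserved sets. Alternatively, we truncate the path at its first vertex of $\mathsf{Ext}(S_1)$ and then delete that leaf from the star, replacing it by a fresh neighbour of $v_1$, which is abundant since $d(v_1)\ge dm^{31}$. Symmetrically, we obtain $R_2$ from $\mathsf{Ext}(S_2)$ to $D$, disjoint from $R_1$, from $S_1$, and from the leaves used. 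We shorten both paths so that their interiors avoid $D$, and let them hit $D$ at $d_1,d_2$. If $d_1=d_2$, we reroute $R_2$ avoiding $d_1$, which is possible since one extra forbidden vertex is harmless.

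Finally, the two arcs of $D$ between $d_1$ and $d_2$ have lengths of different parity summing to $|D|$. Together with $v_1y_1\cup R_1$ and $R_2\cup y_2v_2$, where $y_i$ is the leaf used, they give two $v_1,v_2$-paths whose lengths have different parity. To obtain lengths exactly $\ell'$ and $\ell'+1$, we instead take $D$ shortest so that no chord shortcut exists and use the standard trick: we pick $d_1,d_2$ adjacent on $D$. We arrange this by attaching $R_2$ to the $\eps_2 d$ neighbour set of the neighbour of $d_1$ on $D$; the two arcs then have lengths $1$ and $|D|-1$. Since that difference is even, not one, this is still wrong. So we attach $R_1,R_2$ at antipodal vertices of $D$, where the arcs have lengths $(|D|\pm1)/2$, which differ by exactly one. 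The resulting paths have lengths $\ell'$ and $\ell'+1$ with $\ell'\le 2(m_0+2)+|D|/2\le 4m$. We set $O$ to be the union of the interiors of $R_1,R_2$ together with the leaves $y_i$ and $V(D)$. Then $|O|\le 8m$, which gives~\ref{adj-oddadj-3}, and the stars with the leaves $y_i$ removed give~\ref{adj-oddadj-1} and~\ref{adj-oddadj-2}, after a final trim to exactly $dm^{28}$ leaves.
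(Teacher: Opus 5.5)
Your final construction is exactly the paper's: a short odd cycle $D$ from Lemma~\ref{lem:oddcyclelengthinexpander}, antipodal attachment vertices $u_1,u_2$ so that the two arcs have lengths $(|D|\pm1)/2$, and short connections to $v_1,v_2\in L_G^0$ via Lemma~\ref{distance}. The gap is in how you reconcile the stars with the connecting paths, which does not work as written.

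\begin{itemize}
\item You reserve $S_1,S_2$ first, then require $R_1$ to avoid $\mathsf{Ext}(S_2)$ and $R_2$ to avoid $S_1$. Lemma~\ref{distance} cannot deliver this. Its allowance $\rho(x)x/4$ is governed by the smaller end set, which here has size $O(d)$. Even with $x=dm^{28}$ the allowance is below $dm^{28}/4<|\mathsf{Ext}(S_2)|$.
\item The ball workaround fails for the same reason. A set of size $x$ is only guaranteed $\rho(x)x$ new neighbours, far fewer than $dm^{28}$ while the ball has size $O(d)$. So nothing stops the reserved leaves from absorbing its whole boundary in the first steps. A ball around $D$ also loses control of which vertex of $D$ you attach to, whereas you need the antipodal $u_i$. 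Its radius is also missing from your bound $\ell'\le 2(m_0+2)+|D|/2$.
\item The leaf-replacement alternative is the right idea, but you apply it only to the leaf $y_i$ through which $R_i$ leaves $S_i$. You still rely on the unachievable avoidance requirements to keep every other leaf off $R_1\cup R_2$. Without them, leaves of either star can lie on $R_1\cup R_2$, and then \ref{adj-oddadj-3} fails for your $O$.
\end{itemize}

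The clean fix is the paper's ordering: build the paths first and choose the stars last.
\begin{itemize}
\item Take disjoint sets $N'(u_i)\subseteq N(u_i)\setminus V(D)$ of size $d/5$. Take disjoint sets $N'(v_i)\subseteq N(v_i)$ of size at least $dm^{31}/3$ avoiding $V(D)\cup N'(u_1)\cup N'(u_2)\cup\{v_1,v_2\}$.
\item Join $N'(u_1)$ to $N'(v_1)$ in $G-(V(D)\cup\{v_2\})$ and extend to a $u_1,v_1$-path $Q_1'$.
\item Join $N'(u_2)\setminus V(Q_1')$ to $N'(v_2)\setminus V(Q_1')$ in $G-(V(D)\cup V(Q_1'))$ and extend to $Q_2'$.
\end{itemize}
All forbidden sets have size $O(m)$, far below $\rho(d/5)d/20$, since $d\ge m^{200}$.

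Only now set $O:=(V(Q_1')\cup V(Q_2')\cup V(D))\setminus\{v_1,v_2\}$, so $|O|\le 6m$. Pick two disjoint $dm^{28}$-stars centred at $v_1,v_2$ with leaves in $N(v_i)\setminus(O\cup\{v_1,v_2\})$. This is possible because $d(v_i)\ge dm^{31}\gg 2dm^{28}+|O|$, and it gives \ref{adj-oddadj-1}--\ref{adj-oddadj-3} at once.

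A small slip: arcs of lengths $1$ and $|D|-1$ differ by the odd number $|D|-2$, not an even number. The only objection there is that the difference is not $1$.
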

  \begin{proof}
    By Lemma~\ref{lem:oddcyclelengthinexpander}, there is an odd cycle $C$ with $|V(C)|\leq 2m+5$.
    Let $u_1,u_2$ be two vertices of $C$ at distance $\tfrac{|V(C)|-1}{2}$ on $C$.
    Denote the $u_1,u_2$-path of odd length in $C$ by $P_C^1$ and the other path by $P_C^2$.
    Since $\delta(G)\geq \tfrac{d}{2}$ and $d\geq m^{200}$, we can take two vertex-disjoint sets $N'(u_1)\subseteq N(u_1)\backslash V(C)$ and $N'(u_2)\subseteq N(u_2)\backslash V(C)$, each of size at least $\tfrac{d}{5}$. Choose $v_1,v_2\in L_G^0\backslash V(C)$. We can then take two vertex-disjoint sets $N'(v_1)\subseteq N(v_1)\backslash (V(C)\cup N'(u_1)\cup N'(u_2))$ and $N'(v_2)\subseteq N(v_2)\backslash (V(C)\cup N'(u_1)\cup N'(u_2))$, each of size at least $\tfrac{dm^{31}}{3}$.
    Applying Lemma~\ref{distance} with $(X_1,X_2,W)=(N'(u_1),N'(v_1),V(C))$, there is a path $Q_1$ of length at most $m$ between $N'(u_1)$ and $N'(v_1)$, and $Q_1$ can be extended to a $u_1,v_1$-path $Q_1'$ of length at most $m+2$. Similarly, applying Lemma~\ref{distance} with $(X_1,X_2,W)=(N'(u_2)\backslash V(Q_1'),N'(v_2)\backslash V(Q_1'),V(C)\cup V(Q_1'))$, there is a $u_2,v_2$-path $Q_2'$ of length at most $m+2$. Then $P_1:=Q_1'P_C^1Q_2'$ and $P_2:=Q_1'P_C^2Q_2'$ are two paths whose lengths differ by $1$. Let $\mathcal{P}:=\{P_1,P_2\}$.

    Let $O=(V(Q_1')\cup V(Q_2')\cup V(C))\backslash\{v_1,v_2\}$. Hence $|O|\le 2m+5+2(m+2)\le 2\cdot 3m$. Since $v_1,v_2$ are in $L_G$, both have degree at least $dm^{31}$. By the pigeonhole principle, there exist two vertex-disjoint $dm^{28}$-stars $S_1$ and $S_2$, with centers $v_1$ and $v_2$, respectively, avoiding $O$. The shorter of $P_1$ and $P_2$ has length at most $2(m+2)+m+2=3m+6\leq 4m$.
    Therefore, $(v_1,v_2,S_1,S_2,O,\mathcal{P})$ is the required $(4m,1)$-odd simple adjuster in $G$.
  \end{proof}
  
The next lemma, Lemma~\ref{lem:lar-adjuster}, uses the vertices in $L_G^0$ to construct adjusters that control even length changes. Its proof follows a method similar to that of~\cite{Liu-balanced-sub}. For completeness, we include a proof in Appendix~\ref{proofadj-odd} of the arXiv version.

  \begin{lemma}\label{lem:lar-adjuster}
    Suppose $0<\tfrac{1}{n},\tfrac{1}{d}\ll \tfrac{1}{K}\ll \eps_1,\eps_2<\tfrac{1}{5}$ with $\tfrac{1}{4}\log^{1000}n\leq d\leq \tfrac{n}{K}$. Let $G=(V,E)$ be an $n$-vertex $(\eps_1,\eps_2d)$-expander with $\delta(G)\geq \tfrac{d}{2}$. Let $W\subseteq V(G)$ be a set of size at most $m^{10}$. If $|L_G|\geq \tfrac{11}{50}d$, then there exists a $(100m^3,m^2)$-adjuster $\mathcal{A}=(v_1,v_2,S_1,S_2,A,\mathcal{P})$ in $G-L_G^1-W$ such that $10m^3\leq \ell(\mathcal{A})\leq 10m^3+10m$, where $v_1,v_2\in L_G^0$ and $S_j$ is a $dm^{28}$-star for each $j\in[2]$.
  \end{lemma}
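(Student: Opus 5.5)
The plan is to follow the Liu--Montgomery adjuster construction, with the high-degree vertices of $L_G^0$ supplying the two cores. Set $G':=G-L_G^1-W$. Since $|L_G^1|+|W|\le d/5+m^{10}$ is far smaller than $\rho(\eps_2 d/2)\eps_2 d/4$ is not necessarily true, so I will not rely on Lemma~\ref{lem:delete-few-vertices-expander} directly. Instead, each application of Lemma~\ref{distance} will be made in $G$ with the forbidden set enlarged by $L_G^1\cup W$. The two linked sets will always have size at least $dm^{20}$, and for such sizes $\rho(x)x/4\gg d/5+m^{10}$, so these applications are legitimate.

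\textbf{Step 1: many short even-distance gadgets.} I will build a chain of $m^2$ \emph{simple adjusters}. Each gadget should be a pair of $a,b$-paths whose lengths are $\ell_j$ and $\ell_j+2$, with $\ell_j\le 10m$. The plan for a single gadget is the following. Take a short cycle through a vertex; if the expander contains an odd cycle, use Lemma~\ref{lem:oddcyclelengthinexpander} and go around it twice so that the parity is fixed. More robustly, I will run the standard construction instead: take a vertex $x$ and two vertex-disjoint sets among its neighbours, and use expansion to find a short cycle $D$ of length at most $2m+5$ of even length. For the even case, the two arcs between two antipodal vertices $u,u'$ of $D$ then differ by $2$ after a suitable choice of the pair. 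Here $\delta(G)\ge d/2\ge m^{200}$ gives ample room to choose neighbourhoods that avoid all previously used vertices; there are at most $O(m^3)$ of these. I will connect consecutive gadgets $D_1,\dots,D_{m^2}$ in series by paths of length at most $m$, using Lemma~\ref{distance} between neighbourhoods of size $d/5$ while avoiding everything used so far. Choosing independently one of the two arcs in each of the $m^2$ gadgets yields $v$-to-$v'$ paths whose lengths run over $\ell',\ell'+2,\dots,\ell'+2m^2$, with $\ell'=O(m^3)$. The total vertex set $A$ has size at most $m^2\cdot 4m\le 100m^3$.

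\textbf{Step 2: attaching the cores and fixing the length.} I will pick $v_1,v_2\in L_G^0$ outside everything used so far. Since $d(v_i)\ge dm^{31}$, I can reserve disjoint neighbourhoods of size $dm^{30}$ and join $v_1$ and $v_2$ to the two ends of the chain by paths of length at most $m+2$. The stars $S_1,S_2$ are then taken as $dm^{28}$-stars at $v_1,v_2$ that avoid $A$, $W$ and $L_G^1$. This is possible because at most $100m^3+m^{10}+d/5\ll dm^{30}$ neighbours are lost. To reach the window $10m^3\le\ell(\mathcal A)\le 10m^3+10m$, I will lengthen the connecting path before the last gadget. I will do this by inserting a path of prescribed approximate length, built as a concatenation of short expander paths. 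Each such piece has length between $1$ and $m$, so the target window of width $10m$ can be hit exactly. The cap $|A|\le 200m^3$ is kept by accounting for the lengths.

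\textbf{Main obstacle.} The hardest step is producing, inside $G'$, a gadget whose two paths differ by exactly $2$. The expander might be bipartite, in which case every cycle is even; alternatively, a short odd cycle might be the only short cycle available. If the shortest cycle near a vertex is odd, I will instead use two odd cycles linked by a path: choosing the short arc or the long arc in each, with lengths differing by $1$ each, gives total differences $0,1,2$, and this still contains a $+2$ option. If it is even, I will use the antipodal arcs directly. The existence of a short cycle of length $O(m)$ follows by the argument of Lemma~\ref{lem:oddcyclelengthinexpander}, applied to any two disjoint neighbourhoods of a vertex in $G$, with the forbidden set added.
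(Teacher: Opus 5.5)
\textbf{There is a genuine gap in how you link things.} Apart from the stars at $v_1,v_2$, every connection in your plan involves the neighbourhood of an ordinary vertex:
\begin{itemize}
\item the short cycle through $x$;
\item the series links between consecutive gadgets;
\item the final attachment of $v_1,v_2$ to the ends of the chain, where one of the two linked sets is the usable neighbourhood of an ordinary end-vertex.
\end{itemize}
These sets have size only $x=\Theta(d)$. The forbidden set, however, must contain $L_G^1$, of size $d/5$, because the adjuster has to lie in $G-L_G^1-W$.

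Lemma~\ref{distance} tolerates only $\rho(x)x/4$ forbidden vertices. For $x=d/5$ this is $\eps_1 d/(20\log^2(3/\eps_2))<d/100$. So none of these applications is legitimate, and your opening assertion that the linked sets always have size at least $dm^{20}$ does not hold for your own construction.

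This is not just bookkeeping. The expansion of a $\Theta(d)$-set can be absorbed entirely by $L_G^1$. The part of $G-L_G^1$ around an arbitrary vertex may therefore be a small piece cut off from everything else, and a gadget built there can be neither chained nor attached to $v_1,v_2$. The gadget itself is fine: two odd cycles in series, when no short even cycle appears, do give two paths differing by exactly $2$.

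\textbf{The missing idea} is to give every gadget its endpoints in $L_G^0$. Then all linking is between sets of size $\gg d$, and $L_G^1\cup W$ becomes negligible. Short cycles cannot give this, because a cycle through two prescribed high-degree vertices has arcs whose lengths differ by an uncontrolled amount.

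The paper instead builds each $+2$ gadget (Lemma~\ref{lem:lar-simple-adj}) directly between cores $v_1,v_2\in L_G^0$:
\begin{itemize}
\item It links $N(v_1)$ and $N(v_2)$ by $dm^{26}$ disjoint short paths avoiding $L_G^0\cup L_G^1\cup W$.
\item By pigeonhole it keeps $dm^{25}$ of them of equal length, and lets $B\subseteq N(v_1)$ be their ends.
\item If some outside vertex $w$ has two neighbours $w_1,w_2\in B$, it uses the detour $w_1ww_2$, which changes the length by exactly $2$ regardless of parity.
\item If no such $w$ exists, then $|N(B)|=\Omega(d|B|)$. This set can be linked to $N(v_3)$ for a third core $v_3\in L_G^0$, and $v_1$ itself serves as the common neighbour of $w_1,w_2$.
\end{itemize}
These simple adjusters carry $dm^{28}$-stars at their $L_G^0$-cores and are iterated, so every connection runs between huge sets. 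This removes the linking problem and also makes your bipartite/non-bipartite case analysis unnecessary.
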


  \subsubsection{Finding a long path}\label{subsec:longp}
  Since $|L_G|$ may be smaller than $d$, a path through all of $L_G$ can still be too short. Lemma~\ref{lem:lar-unit} supplies the vertex-disjoint units used in Lemma~\ref{cla:longpath} to build a path of length in $[Cd,5Cdm]$. We omit its proof, which is almost identical to that of Lemma~4.2 in~\cite{Liu-balanced-sub}.

  \begin{lemma}[\cite{Liu-balanced-sub}]\label{lem:lar-unit}
    For each $0<\varepsilon_1,\varepsilon_2<1$, the following holds for all sufficiently large $K=K(\eps_1,\eps_2)$. Let $G$ be an $n$-vertex $(\varepsilon_1,\varepsilon_2d)$-expander with $\delta(G)\geq d$, $n\geq Kd$ and $d\geq m^{200}$. Then, given any $W\subseteq V(G)$ with $|W|\leq dm^{50}$, the graph
    $G-W$ contains a $(4\sqrt{d}m^6,\sqrt{d}m^{23},10m)$-unit.
  \end{lemma}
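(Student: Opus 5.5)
I would follow the standard unit-construction argument of Liu--Montgomery type: first find many disjoint stars at well-chosen ``tips'', then connect one core vertex to many of these stars by short paths of a common length. Set $h_1:=4\sqrt d m^6$, $h_2:=\sqrt d m^{23}$, $h_3:=10m$, and work in $G':=G-W$. Because $|W|\le dm^{50}$ and sets of size around $dm^{60}$ expand by at least $\rho\cdot |X|\gg dm^{50}$ (as $n\ge Kd$, $d\ge m^{200}$ and $\rho\approx \eps_1/\log^2(n/d)\ge m^{-1}$), $W$ is negligible for expansion of all sets that are not tiny.

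\textbf{Step 1: the degree dichotomy.} Call a vertex \emph{heavy} if it has at least $2h_2+dm^{50}$ neighbours. If there are at least $h_1+1$ heavy vertices outside $W$, I would take a core $u$ together with $h_1$ heavy vertices. Using Lemma~\ref{distance} repeatedly, I would connect $u$ greedily to these heavy vertices by paths of length at most $m$, avoiding the previously used paths and stars; each time I would give the newly reached heavy vertex an $h_2$-star avoiding everything used so far. The avoided set stays of size $O(h_1(m+h_2))\ll dm^{50}$, which is small compared with the expansion of the large neighbourhoods involved.

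\textbf{Step 2: the sparse case.} Here all but $h_1$ vertices have degree less than $2h_2+dm^{50}$. The minimum degree $\delta\ge d\gg h_2$ lets me build many vertex-disjoint $h_2$-stars greedily: for instance $dm^{10}$ of them outside $W$, chosen with centres of bounded degree. Let $X$ be their union of centres. I then grow a ball around a candidate core $u$ in $G'-$(star leaves) and use the expansion of balls: $|B^r(u)|$ grows like $\exp(r^{1/4})$ up to size $n/2$ in radius $O(m)$. Meanwhile the star family, being large ($\ge$ size $dm^{10}\ge k/2$), also expands. Lemma~\ref{distance} then supplies many paths from $u$ to centres of stars. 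To make them internally disjoint, I would find them one at a time while forbidding previous paths; the forbidden set has size at most $h_1\cdot 10m\ll \rho\cdot \sqrt d$. The set from which paths start has to be $N(u)$ minus used vertices, which still has size at least $d/2$.

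\textbf{Step 3: equalising lengths.} The unit definition requires all branches to have \emph{exactly} the same length $s\le h_3$. Since every path has length at most $m+2$, pigeonholing over the at most $10m$ possible lengths loses only a factor $10m$. So I would build $10m\cdot h_1$ paths and keep $h_1$ of equal length. This is why the proposal targets $h_1m$ branches, and the counts still fit because $h_1 m^{2}\ll d$.

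\textbf{Main obstacle.} The difficulty is keeping the star leaves disjoint from paths chosen later. A later path may pass through leaves of an earlier star. I would handle this by reserving the stars first and then forbidding their leaves. This is affordable only if the total number of leaves, $10mh_1h_2 = 40\, d\, m^{30}$, is small compared with the expansion of the sets we connect. That comparison fails for neighbourhoods of size $d$, so the connecting sets have to be enlarged first: I would expand $N(u)$ to a ball of radius $O(m)$ of size at least $dm^{40}$ before applying Lemma~\ref{distance}. I expect this balancing of sizes, done in the style of Lemma~4.2 of~\cite{Liu-balanced-sub}, to be the delicate step.
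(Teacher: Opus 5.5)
Your proposal has a genuine gap in Step~2, and the gap comes from never using the \emph{robust} (edge-deletion) part of the expander definition. Nothing you invoke gives a single large star in $G-W$. That includes Lemma~\ref{distance}, ball growth and $\delta(G)\ge d$. Vertices of $W$ may have unbounded degree and can absorb every neighbourhood. Your dichotomy counts only heavy vertices \emph{outside} $W$, so ``all but $h_1$ vertices have degree less than $2h_2+dm^{50}$'' does not follow.

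In fact the lemma is false for mere vertex-expanders. Let $|W|=dm^{50}$, let $G[V\setminus W]$ be a $3$-regular expander, and join every vertex outside $W$ to $d$ vertices of $W$, spread evenly. For small $\eps_1$ this graph has $\delta(G)\ge d$ and $|N_G(X)|\ge\rho(|X|)|X|$ for all relevant $X$. Yet $G-W$ has maximum degree $3$ and contains no unit. Any argument built only on vertex expansion, as yours is, must therefore break.

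Building stars in $G-W$ needs a robust argument, for instance the following. Let $U$ be the union of a maximal family of disjoint $(d/m)$-stars in $G-W$, and suppose $|U|<n/4$. Take $X\subseteq V\setminus(W\cup U)$ with $|X|=n/2$, and let $F$ be the set of edges from $X$ to $V\setminus(W\cup U\cup X)$. Maximality gives $e(F)\le (d/m)|X|\le d(G)\rho(|X|)|X|$, since $\rho(n/2)\ge 1/m$. Also $N_{G\setminus F}(X)\subseteq W\cup U$, so $|U|\ge \rho(n/2)\,n/2-|W|\ge n/(4m)$. Hence there are at least $n/(8d)$ such stars, and the same argument gives many $2h_2$-stars.

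The second problem is fixing the core $u$ in advance. Lemma~\ref{distance} applied to sets of size $x$ avoids only about $\rho(x)x/4$ vertices. For $x\approx d$ this is $\Theta(d)$, and for your proposed $x=dm^{40}$ it is still far below $|W|=dm^{50}$. Note that $W$ is larger than the entire unit, whose exterior has only $4dm^{29}$ vertices. Your bounds ``$N(u)$ minus used vertices has size at least $d/2$'' and ``the forbidden set has size at most $h_1\cdot 10m$'' both omit $W$, and $N(u)$ may lie entirely inside $W$. You give no argument that any $u$ has a ball in $G-W$ of size $\gg dm^{50}\log^2 m$ within radius $O(m)$.

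The standard remedy, as in the proof of Lemma~\ref{claim:unit}, is not to fix the core:
\begin{itemize}
\item Take $m^{52}$ candidate cores carrying $(d/m)$-stars and about $\sqrt d\,m^{60}$ targets carrying $2h_2$-stars.
\item Take a maximal family of internally disjoint candidate-to-target paths, each of length at most $m+2$, at most one per pair, avoiding $W$ and all centres internally.
\item Suppose no candidate had $10mh_1$ paths. Then the leaves of the candidate stars, minus path vertices, number at least $dm^{51}/2$, and so do the leaves of the unused target stars. The forbidden set has size $(1+o(1))dm^{50}\le\tfrac14\rho(dm^{51}/2)\,dm^{51}/2$, so Lemma~\ref{distance} would extend the family, a contradiction.
\end{itemize}
Your Step~3 pigeonhole then gives $h_1$ branches of equal length. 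These branches meet at most $h_1(m+3)\ll h_2$ leaves of the chosen target stars. So star leaves never need to be forbidden in advance, and your ``main obstacle'' disappears.

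Separately, the Step~1 threshold $2h_2+dm^{50}$ is too small. After deleting $W$, a heavy vertex may keep only $2h_2\ll\eps_2 d/2$ neighbours, below what Lemma~\ref{distance} requires. With the argument above, no degree dichotomy is needed at all.
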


  \begin{lemma}\label{cla:longpath}
    Let $C\geq 1$ be an integer and $0<\tfrac{1}{n},\tfrac{1}{d}\ll \tfrac{1}{K}\ll \eps_1,\eps_2<\tfrac{1}{5}$ with $\tfrac{1}{4}\log^{1000}n\leq d\leq \tfrac{n}{K}$. Suppose $G=(V,E)$ is an $n$-vertex $(\eps_1,\eps_2d)$-expander with $\delta(G)\geq \tfrac{d}{2}$. If $|L_G|\ge \tfrac{11}{50}d$, then, for any vertex set $X\subseteq V(G)$ of size at most $\sqrt{d}m$, the graph $G-X$ contains a path $Q$ through a set $V_0\subseteq L_G^1$ with $|V_0|=\tfrac{d}{10}+1$ such that the endvertices of $Q$ lie in $V_0$ and $Cd\leq\ell(Q)\leq5Cdm$. Furthermore, every subpath between two consecutive vertices of $V(Q)\cap V_0$ has length at most $49Cm$.
  \end{lemma}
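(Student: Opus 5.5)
We build $Q$ greedily as a chain of short connections between vertices of $L_G^1$. The large degree of these vertices supplies big stars, and the small-diameter lemma (Lemma~\ref{distance}) joins stars while avoiding everything already used. Each $v\in L_G^1$ has $d_G(v)\ge dm^{31}$. Throughout, the forbidden set (namely $X$, the current path, and the stars reserved for the remaining vertices) will have size at most $\sqrt d m+ 5Cdm+ \tfrac{d}{5}\cdot dm^{28}$. This is much smaller than $\rho(x)x/4$ for sets of size $x\approx dm^{29}$, since $\rho(x)\ge \eps_1/\log^2(15n/(\eps_2 d))\ge \eps_1 m^{-1/2}$ when $m\approx\log^4(n/d)$. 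So Lemma~\ref{distance} gives connecting paths of length at most $(2/\eps_1)\log^3(15n/(\eps_2d))\le m$ (up to constants) between any two stars of size about $dm^{29}$.

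\textbf{Step 1: reserve stars.} Choose $V^\ast\subseteq L_G^1\setminus X$ of size $d/10+1$; this is possible since $|L_G^1|=d/5$ and $|X|\le\sqrt d m$. Greedily pick pairwise disjoint stars $S_v$ centred at the $v\in V^\ast$, each with $dm^{29}$ leaves, avoiding $X\cup L_G^1$. This works because $|X\cup L_G^1|+\sum|S_v|\le d^2m^{29}\ll dm^{31}$.

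\textbf{Step 2: link the stars into a path.} Order $V^\ast=\{w_0,\dots,w_{d/10}\}$ and build $Q$ iteratively. Having a path ending at $w_j$, apply Lemma~\ref{distance} to connect a leaf set of $S_{w_j}$ to a leaf set of $S_{w_{j+1}}$. We remove from both the leaves used so far and forbid $X$, the current path, and the centres and leaves of all other stars. Prepending and appending the star edges gives a $w_j,w_{j+1}$-path of length at most $m+2$. Its endpoint segments lie inside the two stars, so it meets the current path only at $w_j$. This gives a path through $V^\ast$ in which consecutive vertices of $V^\ast$ are at distance at most $m+2\le 49Cm$, and whose total length is at most $(d/10)(m+2)$.

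\textbf{Step 3: make the path long enough.} The path from Step 2 may be shorter than $Cd$, so each link must be padded to length about $10C$ or more while staying at most $49Cm$. The first thing I would try is to route each connection through an intermediate detour: link $S_{w_j}$ first to a disjoint $(4\sqrt dm^6,\sqrt dm^{23},10m)$-unit supplied by Lemma~\ref{lem:lar-unit} (applied to $G-W$ with $W$ the used set, of size at most $dm^{50}$). Then traverse a path of length in $[10C,40Cm]$ inside the unit, using branches through the core and pendant stars, and then link to $S_{w_{j+1}}$. Alternatively, one can insert extra connections through vertices of $L_G^1\setminus V^\ast$. With $d/10$ links, each of length between $10C$ and $49Cm$, the total length $\ell(Q)$ then lies in $[Cd,5Cdm]$.

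The main obstacle is Step 3: guaranteeing the lower bound $\ell(Q)\ge Cd$ while keeping every link at most $49Cm$. This needs the units to be found repeatedly, avoiding a growing set that must stay of size at most $dm^{50}$. The size bookkeeping $\tfrac{d}{10}\cdot(\text{unit size }\approx 4\sqrt d m^6\cdot\sqrt dm^{23})\approx d^2m^{29}$ must be checked against the vertex budget and against the expansion bound $\rho(x)x/4$.
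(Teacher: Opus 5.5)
Your architecture (short links between vertices of $L_G^1$ built with Lemma~\ref{distance} and padded with units from Lemma~\ref{lem:lar-unit}) is the right one, but Step~3 has a genuine gap.

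\emph{The padding step.} A unit is a tree: a spider whose legs all have the same length $s\le 10m$, with a star hanging at each foot. Inside it, every path between exterior vertices of two different pendant stars has length exactly $2s+2$. Nothing bounds $s$ from below, so a single unit may add only $4$ to a link. A path of length in $[10C,40Cm]$ inside one unit is therefore not available.

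Your fallback through spare vertices of $L_G^1$ does not help either. Only about $d/10$ of them remain, and $|L_G|$ may be as small as $\tfrac{11}{50}d$. Each such detour has only $O(1)$ guaranteed length, whereas all $d/10$ links need roughly $10C$ extra each.

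The missing idea is to use $2C$ vertex-disjoint units per link and chain them. Join the exteriors of consecutive units by Lemma~\ref{distance}, and pass exterior--core inside each unit. Each core-to-core hop then has length in $[5,21m+2]$: at least $2$ inside each of the two units and at least $1$ for the connection. Then attach $v_j$ to the first unit and $v_{j+1}$ to the last through fresh neighbourhoods. Every link then has length in $[10C+3,\,42Cm+m+4C+2]\subseteq[10C,49Cm]$, so $Cd\le\ell(Q)\le 5Cdm$.

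\emph{The vertex budget.} The bookkeeping you flagged already fails in Steps~1--2.
\begin{itemize}
\item Since $d\ge m^{200}$, your inequality $d^2m^{29}\ll dm^{31}$ is backwards, so the greedy reservation of $dm^{29}$-stars is unjustified.
\item A forbidden set containing all reserved stars has size of order at least $d^2m^{28}$. This exceeds the sets of size $x\approx dm^{29}$ being connected, so it is far above $\rho(x)x/4$. It also violates the hypothesis $|W|\le dm^{50}$ of Lemma~\ref{lem:lar-unit}.
\item Keeping all used units would be equally fatal, since $\tfrac{d}{10}\cdot 4dm^{29}\gg dm^{50}$.
\end{itemize}

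The remedy is to reserve nothing. Take a maximal family $\mathcal P$ of internally disjoint $v_i,v_{i+1}$-links avoiding $V_0\cup X$ internally. For a missing link, find the $2C$ units afresh in $G-(V_0\cup X\cup V(\mathcal P))$, a forbidden set of size at most $dm^{50}/2$. Only at that moment take $dm^{31}/2$ unused neighbours of $v_j$ and of $v_{j+1}$. These exist because $d_G(v)\ge dm^{31}$ while $|V(\mathcal P)|\le 15Cdm$.

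Every application of Lemma~\ref{distance} then has a forbidden set of size $O(Cdm)$ against sets of size $\Omega(dm^{28})$. Only the $O(Cm)$ vertices actually lying on each link are carried forward.
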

  \begin{proof}
    Set $t=\tfrac{d}{10}+1$. As $|L_G^1|-|X|\ge \tfrac{d}{5}-\sqrt{d}m\ge \tfrac{d}{10}+1$,
    we can directly choose $V_0\subseteq L_G^1-X$ with $|V_0|= t$.
    Denote the vertices in $V_0$ by $v_1, \ldots, v_t$. 
    Let $\mathcal{P}$ be a maximal family of internally vertex-disjoint paths $P_i$ satisfying the following conditions.
    \stepcounter{propcounter}
    \begin{enumerate}[label = ({\bfseries \Alph{propcounter}\arabic{enumi}})]
        \rm
      \item\label{longpath1} Every path $P_i\in\mathcal{P}$ is a $v_i,v_{i+1}$-path of length in $[10C+3,42Cm+m+4C+3]$.
        \rm
      \item\label{longpath2} No $P_i$ contains a vertex of $V_0\cup X$ as an internal vertex.
    \end{enumerate}
    We shall prove $|\mathcal{P}|=t-1$. Otherwise, there exists $j\in[t-1]$ for which no such path $P_j$ exists. Note that $|N(v_j)|,|N(v_{j+1})|\geq dm^{31}$ and $|V(\mathcal{P})|\leq (t-1)(42Cm+m+4C+2)+t \leq 15Cdm$. %\wx{Why $|N(v_j)|=|N(v_{j+1})|$?}
    We begin with the following claims.
     \begin{figure}[H]
    \begin{center}
      \includegraphics[scale=9]{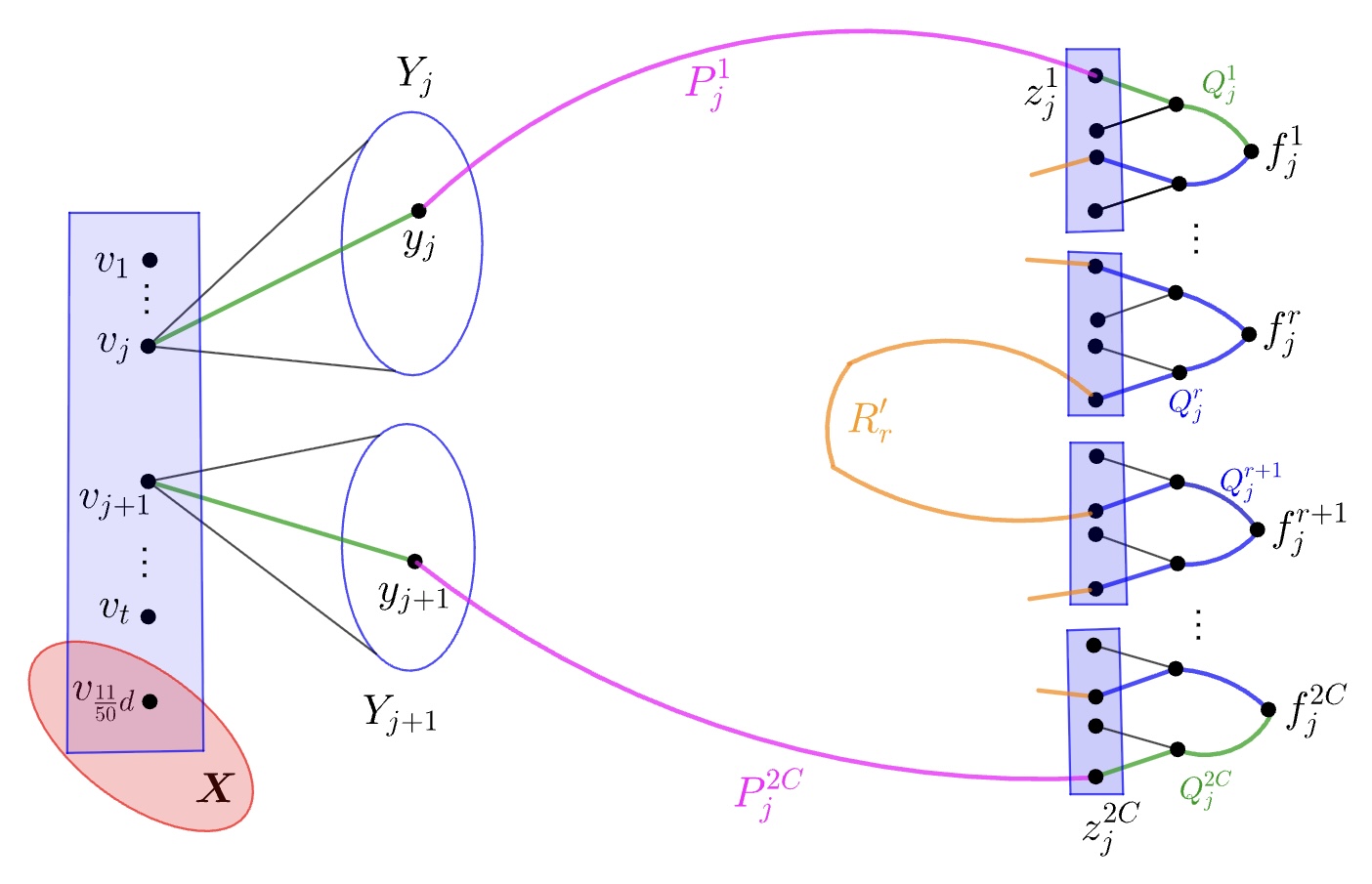}\\
      \caption{Process of finding a long path}
      \label{link}
    \end{center}
  \end{figure}
    \begin{claim}\label{cla:many-unit-lar}
      There are $2C$ vertex-disjoint $(2\sqrt{d}m^6,\tfrac{\sqrt{d}}{2}m^{23},10m)$-units $\mathcal{F}:=\{F_j^1,\ldots,F_j^{2C}\}$ in $G-(V_0\cup X\cup V(\mathcal{P}))$, with respective core vertices $f_j^1,\ldots,f_j^{2C}$.
    \end{claim}
    \begin{poc}
      Suppose that a maximal such family $\mathcal F$ has $|\mathcal{F}|<2C$. Then
      \begin{equation*}
        |V(\mathcal{F})|\leq 2C\cdot 2\sqrt{d}m^6\left(10m+\tfrac{\sqrt{d}m^{23}}{2}\right)\leq 4Cdm^{29}.
      \end{equation*}
      Note that $|V_0\cup X\cup V(\mathcal{P})\cup V(\mathcal{F})|\leq t+\sqrt{d}m+15Cdm+4Cdm^{29}\leq dm^{50}/2$. 
      Applying Lemma~\ref{lem:lar-unit} with $(W,d,\eps_2)=(V_0\cup X\cup V(\mathcal{P})\cup V(\mathcal{F}),\tfrac{d}{2},2\eps_2)$,
      there exists a $(2\sqrt{d}m^6,\tfrac{\sqrt{d}}{2}m^{23},10m)$-unit in $G-(V_0\cup X\cup V(\mathcal{P})\cup V(\mathcal{F}))$, contradicting the maximality of $\mathcal{F}$ as claimed.
    \end{poc}
    Let $\mathcal{R}$ be a maximal family of internally vertex-disjoint paths $R_k$, for $1\leq k\leq 2C-1$, satisfying the following conditions.
    \stepcounter{propcounter}
    \begin{enumerate}[label = ({\bfseries \Alph{propcounter}\arabic{enumi}})]
        \rm
      \item\label{pair-path1} $R_k$ is a $f_j^{k},f_j^{k+1}$-path of length in $[5,21m+2]$ that avoids $L_G^0\cup L_G^1\cup X\cup V(\mathcal{P})$ internally.
        \rm
      \item\label{pair-path2} No $R_k$ contains a vertex of $\bigcup_{s\in [2C]\setminus \{k,k+1\}}\mathsf{Int}(F_j^s)$ as an internal vertex.
    \end{enumerate}
    \begin{claim}\label{cla:units-link}
      $|\mathcal{R}|=2C-1$.
    \end{claim}
    \begin{poc}
      Suppose $|\mathcal{R}|<2C-1$. Then there exists some $r$ for which no $f_j^{r},f_j^{r+1}$-path satisfying the conditions exists. Note that
      \begin{equation*}
        |V(\mathcal{R})|\leq (2C-1)(21m+3)\leq 42Cm.
      \end{equation*}
      A star is called \emph{bad} if at least one leaf lies in $V(\mathcal R)$, and a branch is called \emph{bad} if its pendant star is bad.
      The paths in $\mathcal R$ use at most $42Cm$ vertices of each of $\mathsf{Ext}(F_j^r)$ and $\mathsf{Ext}(F_j^{r+1})$, and hence destroy at most $42Cm$ branches in each unit. Let $\overline{F_j^{r}}$ and $\overline{F_j^{r+1}}$ be the units obtained from $F_j^{r}$ and $F_j^{r+1}$, respectively, by removing the bad branches. Then
      \begin{equation*}
        |\mathsf{Ext}(\overline{F_j^{r}})|, |\mathsf{Ext}(\overline{F_j^{r+1}})|\geq(2\sqrt{d}m^6-42Cm)\cdot \tfrac{\sqrt{d}}{2}m^{22} \geq \sqrt{d}m^6\cdot \tfrac{\sqrt{d}}{2}m^{22}\geq \tfrac{d}{2}m^{28},
      \end{equation*}
      and we can find disjoint subsets $S\subseteq \mathsf{Ext}(\overline{F_j^{r}})$ and $S'\subseteq \mathsf{Ext}(\overline{F_j^{r+1}})$, each of size at least $\tfrac{d}{4}m^{28}$.
      Let $W_1=V(\mathcal{R})\cup L_G^0\cup L_G^1\cup X\cup V(\mathcal{P})\cup (\cup_{k=1}^{2C}\mathsf{Int}(F_j^k))$. Then
      \begin{equation*}
        |W_1|\leq 42Cm+\tfrac{11}{50}d+\sqrt{d}m+15Cdm+2C\cdot 2\sqrt{d}m^6\cdot 10m\leq 16Cdm.
      \end{equation*}
      Applying Lemma~\ref{distance} with $(X_1,X_2,W)=(S,S',W_1)$, we obtain distinct vertices $z_j^r\in \mathsf{Ext}(\overline{F_j^{r}})$ and $z_{j}^{r+1}\in \mathsf{Ext}(\overline{F_j^{r+1}})$ and a $z_j^r,z_j^{r+1}$-path $R_r'$ of length at most $m$ that avoids $W_1$. Let $Q_j^r$ be the $z_j^r,f_j^r$-path in $\overline{F_j^{r}}$, and let $Q_j^{r+1}$ be the $z_j^{r+1},f_j^{r+1}$-path in $\overline{F_j^{r+1}}$. Finally, set $R_r:=Q_j^rR_r'Q_j^{r+1}$. Since the lengths of $Q_j^r$ and $Q_j^{r+1}$ lie in $[2,10m+1]$ and that of $R_r'$ lies in $[1,m]$, the length of $R_r$ lies in $[5,21m+2]$, contradicting the maximality of $\mathcal{R}$.
    \end{poc}
    Let $W_0:=V(F_j^1)\cup V(F_j^{2C})\cup X\cup L_G^0\cup L_G^1\cup V(\mathcal{P})\cup V(\mathcal{R})$. Then
    \begin{equation*}
      |W_0|\leq 4dm^{29}+\sqrt{d}m+\tfrac{11}{50}d+15Cdm+42Cm\leq 5dm^{29}.
    \end{equation*}
    For each $k\in\{j,j+1\}$, there is a set $X_k\subseteq N(v_k)\setminus W_0$ with $|X_k|\geq\tfrac{dm^{31}}{2}$. %\wx{I see $v_k\in L_G^1$, so $|N(v_k)|\ge dm^{31}$. I do not know why we take it half.}
    Choose vertex-disjoint sets $Y_j\subseteq X_j$ and $Y_{j+1}\subseteq X_{j+1}$, each of size at least $\tfrac{dm^{31}}{4}$. Let $W':=V_0\cup V(\mathcal{P})\cup V(\mathcal{R})\cup X\cup \mathsf{Int}(F_j^1)\cup \mathsf{Int}(F_j^{2C})$. Then
    \begin{equation*}
      |W'|\leq t+15Cdm+42Cm+\sqrt{d}m+2\cdot 2\sqrt{d}m^6\cdot 10m\leq 16Cdm.
    \end{equation*}
    The paths in $\mathcal R$ use at most $42Cm$ vertices of $\mathsf{Ext}(F_j^1)$, and hence destroy at most $42Cm$ branches. Let $\widetilde{F_j^1}$ be the unit obtained from $F_j^1$ by removing these bad branches. Then
    \begin{equation*}
      |\mathsf{Ext}(\widetilde{F_j^1})|\geq(2\sqrt{d}m^6-42Cm)\cdot \tfrac{\sqrt{d}}{2}m^{22} \geq \sqrt{d}m^6\cdot \tfrac{\sqrt{d}}{2}m^{22}\geq \tfrac{d}{2}m^{28}.
    \end{equation*}
    Applying Lemma~\ref{distance} with $(X_1,X_2,W)=(Y_j,\mathsf{Ext}(\widetilde{F_j^1}),W')$, we obtain vertices $y_j\in Y_j$ and $z_j^1\in\mathsf{Ext}(\widetilde{F_j^1})$ and a $y_j,z_j^1$-path $P_j^1$ of length at most $m$. Let $Q_j^1$ be the $z_j^1,f_j^1$-path in $F_j^1$. The path $P_j^1$ uses at most $m$ vertices of $\mathsf{Ext}(F_j^{2C})$, and hence destroys at most $42Cm+m$ branches. Let $\widetilde{F_j^{2C}}$ be the unit obtained from $F_j^{2C}$ by removing these bad branches. Then
    \begin{equation*}
      |\mathsf{Ext}(\widetilde{F_j^{2C}})|\geq(2\sqrt{d}m^6-42Cm-m)\cdot \tfrac{\sqrt{d}}{2}m^{22} \geq \sqrt{d}m^6\cdot \tfrac{\sqrt{d}}{2}m^{22}\geq \tfrac{d}{2}m^{28}.
    \end{equation*}
    Let $W'':=V(P_j^1)\cup W'$. Then $|W''|\leq m+16Cdm\leq 17Cdm$. Applying Lemma~\ref{distance} again with $(X_1,X_2,W)=(Y_{j+1},\mathsf{Ext}(\widetilde{F_j^{2C}}),W'')$, we obtain vertices $y_{j+1}\in Y_{j+1}$ and $z_j^{2C}\in \mathsf{Ext}(\widetilde{F_j^{2C}})$ and a $y_{j+1},z_j^{2C}$-path $P_j^{2C}$ of length at most $m$. Let $Q_j^{2C}$ be the $z_j^{2C},f_j^{2C}$-path in $F_j^{2C}$. Let
    
    $P_j:=v_jy_jP_j^1Q_j^1R_1R_2\cdots R_{2C-1}Q_j^{2C}P_j^{2C}y_{j+1}v_{j+1}$.
    Then
    \begin{equation}\label{eq:pair-length}
      10C+3\leq \ell(P_j)\leq 42Cm+m+4C+2.
    \end{equation}
    Thus, $P_j$ satisfies~\ref{longpath1}--\ref{longpath2}, contradicting the maximality of $\mathcal{P}$. Hence, there is a path $Q$ through a subset of $\{v_1,\ldots,v_t\}$ of length in $[Cd+\tfrac{3d}{10},\tfrac{21}{5}Cdm+\tfrac{d}{10}m+\tfrac{2}{5}Cd+\tfrac{d}{5}]\subseteq [Cd,5Cdm]$, as claimed. By the preceding arguments and~\eqref{eq:pair-length}, every subpath between two consecutive vertices of $V(Q)\cap V_0$ has length at most $42Cm+m+4C+2\leq 49Cm$.
  \end{proof}

  \begin{proof}[Proof of Lemma~\ref{lem:lar-dense}]
    We divide the proof into two cases according to whether $G$ is bipartite.

    \textbf{Case 1: Suppose that $G$ contains an odd cycle}. By Lemma~\ref{lem:odd-sim-adj}, there exists a $(4m,1)$-odd simple adjuster $\mathcal{O}=(v_1,v_2,S_1,S_2,O,\{P_1,P_2\})$ such that $v_1,v_2\in L_G^0$ and $S_i$ is a $dm^{28}$-star for each $i\in[2]$. Note that $|p(\mathcal{O})|\leq 6m\leq m^{10}$. Applying Lemma~\ref{lem:lar-adjuster} with $W:=p(\mathcal{O})$, there exists a $(100m^3,m^2)$-adjuster $\mathcal{A}=(v_3,v_4,S_3,S_4,A,\mathcal{P})$ such that $v_3,v_4\in L_G^0$ and $S_i$ is a $dm^{28}$-star for each $i\in\{3,4\}$.
    Note that $|p(\mathcal{A})|\leq 210m^3$ and $|p(\mathcal{O})|+|p(\mathcal{A})|\leq 240m^3$. Applying Lemma~\ref{distance} with $(X_1,X_2,W)=(S_2,S_3,p(\mathcal{O})\cup p(\mathcal{A}))$, we obtain a path $Q_0$ of length at most $m$ between $S_2$ and $S_3$. Extend it to a $v_2,v_3$-path $Q_0'$ of length at most $m+2$. Since $|p(\mathcal{O})|+|p(\mathcal{A})|+|V(Q_0')|\leq 300m^3\leq \sqrt{d}m$, Lemma~\ref{cla:longpath} gives a path $Q$ in $G-p(\mathcal{O})-p(\mathcal{A})-V(Q_0')$ of length in $[Cd,5Cdm]$ whose endvertices lie in $V_0\subseteq L_G^1$. As the distance along $Q$ between consecutive vertices of $V(Q)\cap V_0$ is at most $49Cm<50Cm$, set
    \[
      s:=\left\lfloor \ell(Q)/(50Cm)\right\rfloor-3
      \qquad\text{and}\qquad
      L_i:=\ell(Q)-50Cmi \quad (0\le i\le s).
    \]
    Then $L_i\in[100Cm,\ell(Q)]$, and we can take a subpath $\widetilde{Q}_i$ of $Q$ whose length lies in $[L_i-50Cm,L_i]$ and whose endvertices $v^i_1,v^i_2$ lie in $V_0$. Recall that $\ell(Q)\in[Cd,5Cdm]$.
    Let $W'=p(\mathcal{A})\cup p(\mathcal{O})\cup V(\widetilde{Q}_i)\cup V(Q_0')$. Then
    \begin{equation*}
      |W'|\leq 200m^5+6m+ 5Cdm  \leq 6Cdm.
    \end{equation*}
    For each $j\in[2]$, let $N'(v^i_j):=N(v^i_j)\setminus (\bigcup_{k\in[4]}V(S_k)\cup W')$. Then
    \begin{equation*}
      |N'(v^i_{j})|\geq dm^{31}-4dm^{28}-6Cdm\geq \tfrac{dm^{31}}{2}.
    \end{equation*}
    Choose vertex-disjoint sets $N''(v^i_1)\subseteq N'(v^i_1)$ and $N''(v^i_2)\subseteq N'(v^i_2)$, each of size at least $\tfrac{dm^{31}}{4}$. Applying Lemma~\ref{distance} with $(X_1,X_2,W)=(N''(v^i_1),N(v_1),W')$, we obtain a path $Q^i_1$ of length at most $m$ from $N''(v^i_1)$ to $N(v_1)$. Extend it to a $v^i_1,v_1$-path $\widetilde Q^i_1$ of length at most $m+2$. Let $W'':=W'\cup V(\widetilde Q^i_1)$; then $|W''|\leq 6Cdm+(m+2)\leq 7Cdm$. Applying Lemma~\ref{distance} again with $(X_1,X_2,W)=(N''(v^i_2),N(v_4),W'')$, we obtain a path $Q^i_2$ of length at most $m$ from $N''(v^i_2)$ to $N(v_4)$. Extend it to a $v^i_2,v_4$-path $\widetilde Q^i_2$ of length at most $m+2$. Hence, $\widetilde Q^i_1\widetilde Q_i\widetilde Q^i_2$ is a $v_1,v_4$-path of length $\ell$ satisfying
    \begin{equation*}
      \ell \in [L_i-50Cm, L_i+30Cm].
    \end{equation*}
    This path avoids $O\cup V(Q_0')\cup A$. Let $\ell_0:=\ell(Q_0')\le m+2$. Since the two $v_1,v_2$-paths belonging to $\mathcal O$ have lengths at most $3m+7$ and differ in length by $1$, one of the paths $Q_0'P_j\widetilde Q^i_1\widetilde Q_i\widetilde Q^i_2$, $j\in[2]$, has the same parity as $\ell(\mathcal A)$. Denote its length by $\ell_i'$; then $\ell_i'\in[0,m+2]+[0,3m+7]+[L_i-50Cm,L_i+30Cm]\subseteq[L_i-50Cm,L_i+50Cm]$.
    By the definition of $\mathcal A$, there are $m^2+1$ paths with lengths
    $\ell(\mathcal{A}),\ell(\mathcal{A})-2,\ldots,\ell(\mathcal{A})-2m^2$.
    Thus there is a cycle of every length in the following interval:
    \begin{equation*}
      I_i:=\ell_i'+\{0,1\}+\{\ell(\mathcal{A})-2j:0\le j\le m^2\}
      =[\ell_i'+\ell(\mathcal{A})-2m^2,\ell_i'+\ell(\mathcal{A})+1].
    \end{equation*}
    Combining these intervals, there is a cycle of every length in
    $\bigcup_{i=0}^{s}I_i$. For every $j\in[s]$,
    \begin{equation*}
      |\ell_j'-\ell_{j-1}'|\le 150Cm<2m^2,
    \end{equation*}
    the consecutive intervals $I_{j-1}$ and $I_j$ overlap. Hence
    $\bigcup_{i=0}^{s}I_i$ is an interval of integers. Moreover,
    \begin{equation*}
      150Cm\leq L_s<200Cm,
    \end{equation*}
    and therefore $\ell_s'<250Cm$ and $\ell_0'\geq \ell(Q)-50Cm\geq Cd-50Cm$. Hence the lower endpoint of $I_s$ is at most $10m^3-2m^2+(250C+10)m$, while the upper endpoint of $I_0$ is at least $Cd+10m^3-50Cm$. In addition,
    $\ell_s'+\ell(\mathcal{A})\geq 10m^3+100Cm>10m^3-2m^2+(250C+10)m$.
    Consequently, $\bigcup_{i=0}^{s}I_i$ contains every integer in
    $[10m^3-2m^2+(250C+10)m,Cd+10m^3-50Cm]$.

    \textbf{Case 2: Suppose that $G$ contains no odd cycle}; equivalently, $G$ is bipartite. By arguments analogous to those above, but without the odd simple adjuster, we obtain the following three structures:
    \begin{itemize}
      \item[$(1)$] A $(100m^3,m^2)$-adjuster $\mathcal{A}=(u_1,u_2,S_1,S_2,A,\mathcal{P})$ in $G$.
    \item[$(2)$] A path $Q$ in $G-p(\mathcal{A})$ of length in $[Cd,5Cdm]$ whose endvertices lie in $V_0\subseteq L_G^1$.
      \item[$(3)$] For every integer $i$ with $0\le i\le\lfloor \ell(Q)/(50Cm)\rfloor-3$, setting $L_i:=\ell(Q)-50Cmi$, a subpath $\widetilde Q_i$ of length in $[L_i-50Cm,L_i]$ whose endvertices $v^i_1$ and $v^i_2$ lie in $V_0\cap V(Q)$.
    \end{itemize}
    By Lemma~\ref{distance}, we can find a $v^i_1,u_1$-path $\widetilde Q^i_3$ and a $v^i_2,u_2$-path $\widetilde Q^i_4$, each of length at most $m+2$, such that they are vertex-disjoint. Hence, $\widetilde Q^i_3\widetilde Q_i\widetilde Q^i_4$ is a path of length $\ell_i''$ in $[L_i-50Cm,L_i+50Cm]$. By the definition of $\mathcal{A}$, the following set consists of attainable even cycle lengths:
    \begin{equation*}
      I_i':=\ell_i''+\{\ell(\mathcal{A})-2j:0\le j\le m^2\}.
    \end{equation*}
    All $u_1,u_2$-paths in a bipartite graph have the same parity, so every element of $I_i'$ is even. Furthermore, for every integer $i$ with
    $1\le i\le\lfloor\ell(Q)/(50Cm)\rfloor-3$, the difference $\ell_i''-\ell_{i-1}''$ is even and
    \begin{equation*}
      |\ell_i''-\ell_{i-1}^{''}|\leq 150Cm<2m^2.
    \end{equation*}
    By the same analysis, $\bigcup_{0\le i\le \lfloor \ell(Q)/(50Cm)\rfloor-3} I_i'$ contains every even integer in $[10m^3-2m^2+(250C+10)m,Cd+10m^3-50Cm]$.
  \end{proof}

  \subsection[Few large-degree vertices]{Few large-degree vertices: $|L_G|<\tfrac{11}{50}d$}\label{subsec:53}
  
    In this subsection, we deal with the case where there are few large-degree vertices. Instead of using stars centered in $L_G$, we construct many ``tree-like'' structures, such as units and webs, to play the role of stars. Recall that $m$ is the smallest even integer larger than $\log^4\tfrac{n}{d}$. Hence $d\geq m^{200}$ and $n\geq dm^{200}$.

    \subsubsection{Constructing units and webs}\label{web-unit}

    In the following lemmas, we first show that small $|L_G|$ yields many pairwise vertex-disjoint $\Omega(d)$-stars. We then use these stars as building blocks to create many pairwise vertex-disjoint units with large exteriors. Finally, we repeat the same procedure with units in place of stars to construct many internally vertex-disjoint webs. Since the proof ideas are similar to those in~\cite{Liu-balanced-sub,Yang1}, we include the proofs of Lemmas~\ref{claim:unit} and~\ref{lem:web} in Appendix~\ref{secapp:webs} of the arXiv version.

    \begin{lemma}\label{claim:unit}
      Let $0<\tfrac{1}{n},\tfrac{1}{d}\ll \tfrac{1}{K}\ll \eps_1,\eps_2<\tfrac{1}{5}$ with $\tfrac{1}{4}\log^{1000}n\leq d\leq \tfrac{n}{K}$. Suppose that $G=(V,E)$ is an $n$-vertex $(\eps_1,\eps_2d)$-expander with $\delta(G)\geq \tfrac{d}{2}$. If $|L_G|< \tfrac{11}{50}d$, then, for every set $X'$ of size at most $dm^{110}$, the graph $G-X'$ contains a collection of $m^{109}$ pairwise vertex-disjoint $(2m^{28},\tfrac{d}{100},m+2)$-units.
    \end{lemma}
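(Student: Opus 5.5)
We build the units in two stages, first many disjoint stars and then units assembled from them, both by maximal-family arguments inside $G-X'$.

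\emph{Stage 1: stars.} We find many pairwise vertex-disjoint $\tfrac{d}{100}$-stars whose centres avoid $L_G$. Take a maximal family $\mathcal S$ of vertex-disjoint $\tfrac d{100}$-stars in $G-X'$ with centres outside $L_G$, and suppose $|\mathcal S|$ is smaller than some target $N$, say $N=m^{200}$. Then $Z:=X'\cup V(\mathcal S)\cup L_G$ has size at most $dm^{110}+N(d/100+1)+d$. Every vertex of $V\setminus Z$ must then have fewer than $d/100$ neighbours outside $Z$, so it sends more than $d/2-d/100$ edges into $Z$. If $V\setminus Z$ is large, this gives a set $Y\subseteq V\setminus Z$ with $|Y|\gg |Z|$ whose external neighbourhood lies in $Z$ up to a small loss. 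Apply robust expansion with $F$ the set of edges from $Y$ to $V\setminus Z$; this set has size at most $|Y|d/100\le d(G)\rho(|Y|)|Y|$ as long as $\rho(|Y|)\gtrsim 1/100$. That bound fails for large $|Y|$, so I would argue with the vertex count instead. The main tension is counting edges into $Z$ against degrees: vertices of $Z\setminus L_G$ have degree less than $dm^{31}$, and $L_G$ has fewer than $11d/50$ vertices. Hence
\[
e(Z,V\setminus Z)\le |Z|dm^{31}+\sum_{v\in L_G}d(v).
\]
The sum over $L_G$ is the danger. To handle it, I would apply expansion to a set $Y$ of size about $|Z|m^{40}$: its neighbourhood must have size at least $\rho(|Y|)|Y|\ge |Y|/m^{3}\gg |Z|$. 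Yet the neighbourhood of $Y$ is contained in $Z\cup N_{\text{out}}(Y)$, and each vertex of $Y$ has fewer than $d/100$ outside neighbours. Choosing $Y$ greedily so that these outside neighbourhoods overlap is the delicate point. I expect this star-finding step, and specifically controlling the contribution of high-degree vertices of $L_G$ to degrees, to be the main obstacle. The first thing I would try is to take $F$ to be the edges inside $V\setminus Z$ and to use $n\ge dm^{200}$ together with $\rho(x)\ge \eps_1/m^{1/2}$.

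\emph{Stage 2: units.} Given a reserve of $N$ disjoint stars, build units one at a time. Pick an unused vertex $u$ with $\delta\ge d/2$, which gives $d/2$ neighbours. By Lemma~\ref{distance}, connect the expanding ball around $u$ to the union of the leaf sets of the unused stars, avoiding the forbidden set. That forbidden set consists of $X'$, the interiors of previously built units, and the current partial unit, and it has size at most $dm^{110}+m^{109}\cdot 2m^{28}(m+2)\ll \rho(x)x/4$ for $x\ge dm^{150}$ — the union of the star exteriors is large enough. Each connection yields a path of length at most $m+1$ from $u$ to a star centre, through a leaf; we reroute so that the star centre is the endpoint $x_i$ and the pendant star loses at most a few leaves.

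To get $2m^{28}$ internally disjoint branches, all of the \emph{same} length $s\le m+2$, first collect $2m^{28}(m+2)$ branches, which the expansion of the ball $B^{r}(u)$ permits. Then apply the pigeonhole principle to lengths in $[1,m+2]$ to keep $2m^{28}$ branches of equal length. Pad lengths if necessary using the freedom in where paths enter. Internal disjointness is maintained by adding each branch's interior to the forbidden set; this is fine because the total number of forbidden vertices stays at most $dm^{140}$. Repeat until $m^{109}$ units are built, each time discarding the used stars. The required supply $N\ge m^{109}\cdot 2m^{28}(m+2)$ explains the choice $N=m^{200}$ in Stage 1.
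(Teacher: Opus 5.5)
There are two genuine gaps, one in each stage.

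\emph{Stage~1 is never finished.} You correctly note that each of the $n-|Z|$ vertices outside $Z$ sends more than $\tfrac{49}{100}d$ edges into $Z$, and that $e(Z,V\setminus Z)\le |Z|dm^{31}+\sum_{v\in L_G}d(v)$. You then treat the $L_G$-term as the obstacle and switch to a robust-expansion argument that is never carried out. The missing observation is simply $d(v)\le n$. It gives $\sum_{v\in L_G}d(v)\le |L_G|\,n<\tfrac{11}{50}dn$, which is exactly where the hypothesis on $|L_G|$ is used. The inequality
\[
\tfrac{49}{100}d\,(n-|Z|)<\tfrac{22}{100}dn+|Z|\,dm^{31}
\]
is then false once $|Z|m^{31}\ll n$, for instance with $N=m^{168}$ and $n\ge dm^{200}$. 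The paper's star lemma is this count in edge form. Since $e(G)\ge dn/4$, deleting $X\cup V(\mathcal S)$ destroys at most $|L_G|n+|X\cup V(\mathcal S)|dm^{31}$ edges, so the rest has average degree at least $d/50$. You should also take $\tfrac{d}{50}$-stars rather than $\tfrac{d}{100}$-stars. Pendant stars lose leaves to other branches and must still keep $\tfrac{d}{100}$.

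\emph{Stage~2 fails as written.} You fix the core $u$ in advance and apply Lemma~\ref{distance} between ``the expanding ball around $u$'' and the leaves of unused stars, avoiding $W\supseteq X'$ with $|X'|$ up to $dm^{110}$. Lemma~\ref{distance} needs \emph{both} sets to have size at least $x$ with $|W|\le\rho(x)x/4$, and your size check covers only the star side. On $u$'s side, $N(u)$ has about $d/2$ vertices, so it tolerates only $|W|=O(d)$. Nothing in your argument makes the ball of $u$ in $G-W$ large; nothing even prevents $N(u)\subseteq X'$. A single prescribed vertex cannot in general be connected around a forbidden set of size $dm^{110}$. Separately, pairwise disjoint units require the forbidden set to contain all of $V(F)$ for earlier units, not only their interiors.

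The missing idea is to let the core emerge rather than prescribe it. The paper takes a maximal family $\mathcal F$ of disjoint units in $G-X'$. If $|\mathcal F|<m^{109}$, it finds in $G-X'-V(\mathcal F)$ disjoint $\tfrac{d}{50}$-stars $S_1,\dots,S_{m^{138}}$ with centres $v_i$ and $T_1,\dots,T_{m^{168}/2}$ with centres $u_j$. It then takes a maximal family of internally disjoint $v_i,u_j$-paths of length at most $m+2$, at most one per pair, with no centre as an internal vertex.

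Suppose no $v_i$ had $4m^{29}$ partners. Then there are at most $4m^{167}$ paths, so the unused $S_i$-leaves and the leaves of the still-unused $T_j$ both have size at least $dm^{138}/100$. At that scale Lemma~\ref{distance} does avoid $X'\cup V(\mathcal F)$, all centres and all path interiors. Adding two star edges gives a new $v_i,u_j$-path, a contradiction.

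Hence some $v_i$ has $4m^{29}$ partners. Pigeonholing over the at most $m+2$ lengths gives $2m^{28}$ equal-length branches, using at most $\tfrac{d}{100}$ vertices in total. So the corresponding $T_j$ each keep $\tfrac{d}{100}$ leaves and form, with core $v_i$, a new unit disjoint from $\mathcal F$, contradicting maximality.
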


    \begin{lemma}\label{lem:web}
      Let $0<\tfrac{1}{n},\tfrac{1}{d}\ll \tfrac{1}{K}\ll \eps_1,\eps_2\ll\tfrac{1}{C}<\tfrac{1}{5}$ with $\tfrac{1}{4}\log^{1000}n\leq d\leq \tfrac{n}{K}$. Suppose that $G=(V,E)$ is an $n$-vertex $(\eps_1,\eps_2d)$-expander with $\delta(G)\geq \tfrac{d}{2}$. If $|L_G|< \tfrac{11}{50}d$, then $G$ contains $200Cdm^{70}$ internally vertex-disjoint $(m^3,m^{28},\tfrac{d}{200},4m)$-webs.
    \end{lemma}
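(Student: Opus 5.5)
We construct the webs greedily, in two stages, from the units supplied by Lemma~\ref{claim:unit}. The first stage builds one $(m^3,m^{28},\tfrac d{200},4m)$-web while avoiding a forbidden set $X$ with $|X|\le dm^{100}$, say. The second stage iterates this $200Cdm^{70}$ times. At each step the forbidden set is the union of the interiors of the webs already built. A web has interior of size at most $m^3\bigl(4m+2m^{28}(m+2)\bigr)\le m^{33}$, so the total forbidden interior is at most $200Cdm^{103}\le dm^{110}$. That is exactly the room allowed in Lemma~\ref{claim:unit}. Only interiors need to be avoided, since the webs are required to be internally vertex-disjoint; exteriors may be shared.

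To build a single web, start from a vertex set $X'$ of size at most $dm^{110}$ and apply Lemma~\ref{claim:unit} to $G-X'$. This gives $m^{109}$ vertex-disjoint $(2m^{28},\tfrac d{100},m+2)$-units $F_1,\dots,F_{m^{109}}$. Each has exterior of size about $m^{28}d/50$, and these exteriors are pairwise disjoint. Among the unit cores we want a vertex $v$ (a new core) joined by internally disjoint paths of a common length at most $4m$ to the cores of $m^3$ of the units.

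We plan to find $v$ by expansion from the exteriors of the units. Consider balls around the union of the unit exteriors. Each exterior has size at least $\tfrac d{100}\ge \eps_2 d$. Suppose we grow the balls of radius $m$ around the exteriors while avoiding $X'$ and the interiors of the units (a set of size $m^{109}\cdot O(m^{30})\ll \rho\cdot$size). Lemma~\ref{distance}, together with the standard argument that a sublinear expander is "almost connected" from many disjoint large sets, shows that some vertex $v$ receives short paths from $m^3$ distinct units. Concretely, we would take a vertex maximizing the number of units whose exteriors can reach it by paths of length at most $2m$. If no vertex receives $m^3$ units, then the balls $B^{2m}$ of the unit exteriors are nearly disjoint. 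Their total size then exceeds $n$, because each ball grows to size at least $n/2$ within radius $m$ by robust expansion.

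We then route the paths disjointly. Build them one at a time: connect $v$ (through a reserved neighbourhood star of $v$ of size $\gg m^3$, using $\delta(G)\ge d/2$) to the exterior of an unused unit $F_i$. Use Lemma~\ref{distance}, avoiding all previously used paths and interiors, which total at most $m^3\cdot 4m + m^{140}\ll \rho(x)x/4$ for $x\approx d m^{28}/100$. Then continue inside $F_i$ from the reached exterior leaf back to its core along a branch. Delete the unit branches hit by earlier paths. As in Claim~\ref{cla:units-link}, at most $O(m^4)$ of the $2m^{28}$ branches are destroyed, so a sub-unit with $m^{28}$ branches and pendant stars of size $\tfrac d{200}$ survives.

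The lengths of the $v,u_i$-paths lie in $[3,2m+4]$. The main obstacle is equalizing them to a common value $t\le 4m$, as the web definition demands. We would first pigeonhole over the at most $2m$ possible lengths, starting with $2m^4$ units, to get $m^3$ paths of the same length. The units must also be trimmed so that all second-level branches have a common length at most $4m$, which we get by pigeonholing within each unit. The pendant stars then inherit size $\tfrac d{200}$ after halving to make them disjoint from the new $Q$-paths. If pigeonholing requires more units than $m^{109}$ supplies, we will instead apply Lemma~\ref{claim:unit} with a larger target and accept a slightly weaker exponent. The final step is the parameter bookkeeping for the iteration described above.
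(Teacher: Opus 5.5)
There is a genuine gap in the step where you route the paths $Q_i$. You join the new core $v$ to the units by repeated applications of Lemma~\ref{distance}, with one side equal to what remains of $N(v)$. You bound the set to be avoided by $m^3\cdot 4m+m^{140}$. But that set must also contain $X'$, the interiors of all webs built so far: the $Q_i$ lie in the interior of the new web, which must be disjoint from the earlier interiors. By your own bound, $|X'|$ grows to about $200Cdm^{103}$.

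On the other side, $|N(v)|$ may be as small as $d/2$ (indeed $L_G$ may be empty). So Lemma~\ref{distance} tolerates only $\rho(x)x/4=\Theta_{\eps_1,\eps_2}(d)$ forbidden vertices, because the relevant $x$ is the size of the smaller set, not $dm^{28}/100$. Taking $v$ to be the core of a single unit, so that one side is its exterior, only raises the tolerance to order $\eps_1dm^{28}/\log^2 m$. Each web has interior of size about $m^{32}$, so your greedy iteration stalls after roughly $d/m^{32}$ webs (respectively $d/m^{4}$, up to logarithmic factors). That is far short of $200Cdm^{70}$, and you may not even find a $v$ with $N(v)$ largely outside $X'$.

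The first-phase ball counting does not repair this. A vertex lying in many balls $B^{2m}$ is close to many exteriors, but the corresponding short paths need not be internally disjoint. Moreover, with a forbidden set of size $dm^{103}$ you cannot assert that the balls expand at all.

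The missing idea is to connect only between unions of many structures, via a maximal path family and a counting argument. This is the mechanism in the paper's proof of Lemma~\ref{claim:unit} (stars to units). The paper's proof of Lemma~\ref{lem:web} rests on the same mechanism one level up: it defers to the web lemma of~\cite{Yang1}, replacing its density hypothesis by Lemma~\ref{claim:star} and adding a pigeonhole step so that the $Q_i$ have equal length. Concretely:
\begin{enumerate}
\item Take a maximal family of internally disjoint webs. If it has fewer than $200Cdm^{70}$ members, the union $X$ of their interiors has $|X|\le dm^{110}$, so Lemma~\ref{claim:unit} gives $m^{109}$ disjoint units in $G-X$.
\item Call, say, $m^{100}$ of these units candidates and the rest targets. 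Take a maximal family of internally disjoint paths of length at most $m$ from exteriors of candidates to exteriors of targets, avoiding $X$ and all unit interiors. Each path uses a fresh branch of its candidate, and each target is used at most once.
\item If no candidate reaches $4m^4$ distinct targets, the family has fewer than $4m^{104}$ paths. Then the unused parts of the candidate exteriors and of the target exteriors both have size of order at least $dm^{128}$. For such $x$ one has $\rho(x)x/4\gg dm^{110}$, so Lemma~\ref{distance} produces a further path, a contradiction.
\item Extending through the star edges and branches gives core-to-core paths of length at most $3m+6\le 4m$. Pigeonholing over the at most $4m$ lengths leaves $m^3$ of equal length.
\item Discarding the hit branches and shrinking the stars to $\tfrac d{200}$ gives an $(m^3,m^{28},\tfrac d{200},4m)$-web in $G-X$, contradicting maximality.
\end{enumerate}
No pigeonholing inside units is needed: units from Lemma~\ref{claim:unit} already have a common branch length, and the web definition does not require a common length across units. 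Falling back to weaker exponents is not available either, since they are fixed in the statement.
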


  \subsubsection{Constructing adjusters}
  We now use two units supplied by Lemma~\ref{claim:unit} to construct a $(6m,1)$-\emph{odd simple adjuster} that controls path parity.
  \begin{lemma}\label{lem:odd-sim-adj-less}
    Suppose $0<\tfrac{1}{n},\tfrac{1}{d}\ll \tfrac{1}{K}\ll \eps_1,\eps_2<\tfrac{1}{20}$ with $\tfrac{1}{4}\log^{1000}n\leq d\leq \tfrac{n}{K}$. Let $G=(V,E)$ be an $n$-vertex $(\eps_1,\eps_2d)$-expander with $\delta(G)\geq \tfrac{d}{2}$. If $G$ contains an odd cycle and $|L_G|<\tfrac{11}{50}d$, then there exists a $(6m,1)$-odd simple adjuster $\mathcal{O}=(v_1,v_2,F_1,F_2,O,\{P_1,P_2\})$ in $G$ such that $F_i$ is a $(m^{28},\tfrac{d}{100},m+2)$-unit for each $i\in[2]$.
  \end{lemma}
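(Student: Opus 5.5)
We mirror the proof of Lemma~\ref{lem:odd-sim-adj}, replacing the two high-degree star centers by the cores of two units supplied by Lemma~\ref{claim:unit}.

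\textbf{Step 1: a short odd cycle and a split into two arcs.} Since $\delta(G)\ge d/2\ge d/10$ and $G$ contains an odd cycle, Lemma~\ref{lem:oddcyclelengthinexpander} gives an odd cycle $C$ of length at most $2m+5$. Here we use that $m\ge\tfrac{2}{\eps_1}\log^3(\tfrac{15n}{\eps_2 d})$, which follows from the definition of $m$ and the hierarchy. Choose $u_1,u_2\in V(C)$ at distance $(|C|-1)/2$ on $C$. The two $u_1,u_2$-arcs $P_C^1,P_C^2$ of $C$ then have lengths differing by exactly $1$. As in Lemma~\ref{lem:odd-sim-adj}, take disjoint sets $N'(u_1)\subseteq N(u_1)\setminus V(C)$ and $N'(u_2)\subseteq N(u_2)\setminus V(C)$, each of size at least $d/5$.

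\textbf{Step 2: two units avoiding the cycle.} Apply Lemma~\ref{claim:unit} with $X':=V(C)\cup N'(u_1)\cup N'(u_2)$, which has size at most $d+2m+5\le dm^{110}$. This yields many pairwise vertex-disjoint $(2m^{28},\tfrac d{100},m+2)$-units in $G-X'$. Take two of them, $F_1',F_2'$, with cores $v_1,v_2$.

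\textbf{Step 3: linking each unit to the cycle by a short path.} Each unit has exterior of size $2m^{28}\cdot\tfrac d{100}\ge\eps_2 d$. Apply Lemma~\ref{distance} with
\[
(X_1,X_2,W)=\bigl(N'(u_1),\ \mathsf{Ext}(F_1'),\ V(C)\cup\mathsf{Int}(F_1')\cup\mathsf{Int}(F_2')\bigr).
\]
The forbidden set has size $O(m^{30})$, far below $\rho(x)x/4$ for $x\ge d/5$. This gives a path of length at most $m$. Extend it at one end by an edge to $u_1$, and at the other end through the branch of $F_1'$ from the hit exterior vertex to its core $v_1$. The branch has length at most $m+3$, so we obtain a $u_1,v_1$-path $Q_1'$ of length at most $2m+5$.

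The path $Q_1'$ meets at most $m$ exterior vertices of $F_1'$ and $F_2'$. We therefore delete every branch whose pendant star is hit, and also the branch of $F_1'$ that $Q_1'$ uses. At most $m+1$ of the $2m^{28}$ branches are destroyed, so at least $m^{28}$ branches survive in each unit. Repeat the linking for $u_2$ and $F_2'$, now also forbidding $V(Q_1')$, to get a $u_2,v_2$-path $Q_2'$ of length at most $2m+5$. Afterwards, prune the branches destroyed by $Q_2'$, again at most $m+1$ per unit. Keep exactly $m^{28}$ intact branches in each unit, which gives $(m^{28},\tfrac d{100},m+2)$-units $F_1,F_2$ with cores $v_1,v_2$.

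\textbf{Step 4: assembling the adjuster.} Set $P_j:=Q_1'P_C^jQ_2'$ for $j\in[2]$; these are $v_1,v_2$-paths whose lengths differ by exactly $1$. Set $O:=(V(Q_1')\cup V(Q_2')\cup V(C))\setminus\{v_1,v_2\}$, so that $|O|\le 2(2m+5)+2m+5\le 12m$. The shorter of $P_1,P_2$ has length at most $2(2m+5)+m+2\le 6m$. Properties~\ref{adj-oddadj-1}--\ref{adj-oddadj-3} must also hold: the retained branches and stars avoid $O$ and $V(C)$, and $\mathsf{Int}(F_1)\cap\mathsf{Int}(F_2)=\varnothing$ because the original units were vertex-disjoint. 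Hence $(v_1,v_2,F_1,F_2,O,\{P_1,P_2\})$ is the required $(6m,1)$-odd simple adjuster.

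\textbf{Main obstacle.} The delicate step is Step 3. The linking paths must stay disjoint from the cycle, from the other unit's interior and from each other. They also must not destroy too many branches, and must exit the unit through an intact branch so that the unit structure survives. This is exactly why Lemma~\ref{claim:unit} is applied with $2m^{28}$ branches while only $m^{28}$ are needed in the conclusion.
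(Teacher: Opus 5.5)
Your proposal is correct and follows the paper's route. The paper likewise takes a short odd cycle from Lemma~\ref{lem:oddcyclelengthinexpander}, finds two vertex-disjoint $(2m^{28},\tfrac d{100},m+2)$-units in $G-(V(C)\cup N'(u_1)\cup N'(u_2))$ via Lemma~\ref{claim:unit}, and links each $u_i$ to the core $v_i$ via Lemma~\ref{distance} and a branch. It then discards the $O(m)$ branches hit by $P_1\cup P_2$ to keep $(m^{28},\tfrac d{100},m+2)$-subunits. The only cosmetic differences are that you also forbid $\mathsf{Int}(F_1')$ in the first linking, which removes the need for loop-removal, and that you prune incrementally rather than once at the end.
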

  \begin{proof}
    By Lemma~\ref{lem:oddcyclelengthinexpander}, there is an odd cycle $\Gamma$ with $|V(\Gamma)|\leq {2m+5}$. Let $u_1,u_2$ be two vertices of $\Gamma$ such that the two $u_1,u_2$-paths on $\Gamma$ have lengths differing by $1$. Denote these two paths by $P_\Gamma^1$ and $P_\Gamma^2$, and assume that $P_\Gamma^1$ has odd length. Since $\delta (G)\geq \tfrac{d}{2}$ and $d\geq m^{200}$, we can take two vertex-disjoint sets
    $N'(u_1)\subseteq N(u_1)\backslash V(\Gamma)$ and $N'(u_2)\subseteq N(u_2)\backslash V(\Gamma)$ of size $\tfrac{d}{5}$. Note that
    $|V(\Gamma)|+|N'(u_1)|+|N'(u_2)|\leq 2m+5+\tfrac{2d}{5}\leq dm^{40}$.
    Then by Lemma~\ref{claim:unit}, there are two vertex-disjoint $(2m^{28},\tfrac{d}{100},m+2)$-units $F'_1$ and $F'_2$, with core vertices $v_1,v_2$, respectively, in $G-(V(\Gamma)\cup N'(u_1)\cup N'(u_2))$. For each $i\in[2]$, we have $|\mathsf{Ext}(F'_i)|\geq \tfrac{dm^{28}}{50}$. Applying Lemma~\ref{distance} with $(X_1,X_2,W)=(N'(u_1),\mathsf{Ext}(F'_1),V(\Gamma)\cup\mathsf{Int}(F'_2))$, there is a path $Q_1$ of length at most $m$ between $N'(u_1)$ and $\mathsf{Ext}(F'_1)$. Extending $Q_1$ through the edge to $u_1$ and through the corresponding branch of $F'_1$ to the core $v_1$, and then removing loops if necessary, we obtain a $u_1,v_1$-path $Q'_1$ of length at most $2m+4$. Similarly, applying Lemma~\ref{distance} with
    $$
      (X_1,X_2,W)=(N'(u_2)\backslash V(Q'_1),\mathsf{Ext}(F'_2)\backslash V(Q'_1),V(\Gamma)\cup V(Q'_1)),
    $$
    there is a path $Q_2$ which can be extended, after removing loops if necessary, to a $u_2,v_2$-path $Q'_2$ of length at most $2m+4$, and $Q'_2$ is internally disjoint from $V(\Gamma)\cup V(Q'_1)$.

    Set $P_1:=Q'_1 P_\Gamma^1 Q'_2$ and $P_2:=Q'_1P_\Gamma^2Q'_2$. Then $P_1$ and $P_2$ are two $v_1,v_2$-paths whose lengths differ by $1$. Moreover, $\max\{|P_1|,|P_2|\}\leq 6m$. Let
    $O:=(V(P_1)\cup V(P_2))\backslash\{v_1,v_2\}$.
    Since $|V(\Gamma)|\leq 2m+5$ and $\ell(Q'_1),\ell(Q'_2)\leq 2m+4$, we have
    $$
      |O|\leq |V(\Gamma)|+|V(Q'_1)|+|V(Q'_2)|< 6m+20\leq 12m=2\cdot 6m.
    $$

    A branch in $F'_1$ or $F'_2$ is called \emph{bad} if some vertex of this branch other than its core $v_i$, or some vertex of its pendant star, lies in $V(P_1)\cup V(P_2)$. Since $\max\{|P_1|,|P_2|\}\leq 6m$, each unit has at least $2m^{28}-12m-2\geq m^{28}$ branches which are not bad. For each $i\in[2]$, take a subunit $F_i$ of $F'_i$ using only these non-bad branches. Then $F_i$ is a $(m^{28},\tfrac{d}{100},m+2)$-unit, and $O$ is disjoint from the interiors of $F_1$ and $F_2$. Therefore, $(v_1,v_2,F_1,F_2,O,\{P_1,P_2\})$ is a $(6m,1)$-odd simple adjuster in $G$.
  \end{proof}

  The analogous Lemma~\ref{lem:adjuster}, proved in Appendix~\ref{proofadj-odd} of the arXiv version, controls even path lengths when the ends are units.

  \begin{lemma}\label{lem:adjuster}
    Suppose $0<\tfrac{1}{n},\tfrac{1}{d}\ll \tfrac{1}{K}\ll \eps_1,\eps_2<\tfrac{1}{5}$ with $\tfrac{1}{4}\log^{1000}n\leq d\leq \tfrac{n}{K}$. Let $G$ be an $n$-vertex $(\eps_1,\eps_2d)$-expander with $\delta(G)\geq \tfrac{d}{2}$. Let $W\subseteq V(G)$ with $|W|\leq m^{50}$. Then there exists a $(100m^3,m^2)$-adjuster $\mathcal{A}=(v_1,v_2,F_1,F_2,A,\mathcal{P})$ in $G-W$ such that $60m^3\leq \ell(\mathcal{A})\leq 60m^3+60m$, where $F_i$ is a $(m^{28},\tfrac{d}{100},m+2)$-unit for each $i\in[2]$.
  \end{lemma}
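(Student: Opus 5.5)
I would follow the Liu--Montgomery adjuster construction, in the version of Lemma~\ref{lem:lar-adjuster}, with the stars centred at vertices of $L_G$ replaced by units supplied by Lemma~\ref{claim:unit}. The units play two roles: their cores $v_1,v_2$ are the ends of the adjuster, and their large exteriors, of size at least $m^{28}\cdot\tfrac{d}{100}$, are the targets for connections through Lemma~\ref{distance}. All constructions take place in $G-W$. Since $|W|\le m^{50}\le dm^{110}$, Lemma~\ref{claim:unit} applied with $X'=W$ (together with previously used vertices) gives as many disjoint units as we need.

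\textbf{Step 1: short pieces of length flexibility two.} I would build a family of about $m^2$ vertex-disjoint ``simple'' gadgets, each a pair of paths between two vertices $a_j,b_j$ whose lengths differ by exactly $2$, with each path of length $O(m)$.
\begin{itemize}
\item If $G$ has an odd cycle, Lemma~\ref{lem:oddcyclelengthinexpander} gives a short odd cycle, but this only yields a difference of $1$.
\item Instead, take a shortest cycle through a suitable region, or a shortest even cycle. By the small-diameter lemma, this cycle has length at most $2m+O(1)$.
\item Choose two vertices on it whose two arcs have lengths differing by $2$. Such a pair exists for any cycle of length at least $4$: take vertices at distance $\lfloor |C|/2\rfloor-1$ along the cycle, and use even cycles, formed by combining two short paths with equal parity class via the bipartition trick of Proposition~\ref{fact:bipartition}.
\end{itemize}
Concretely, I would take a unit $F$. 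Inside its exterior I would join two leaves of distinct pendant stars, at equal distance from the core, by a short path of length at most $m$ avoiding $\mathsf{Int}(F)$. This yields a cycle through the core of length $O(m)$. To ensure the difference is exactly $2$, I would use two such linking paths whose lengths have the same parity and differ appropriately. If necessary, I would first pass to a bipartite spanning subgraph with $\delta\ge d/4$ via Proposition~\ref{fact:bipartition}, so that all cycles are even. Then any cycle of length $2k$ with two points at distance $k-1$ gives arcs of lengths $k-1$ and $k+1$.

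\textbf{Step 2: chaining.} I would fix units $F_1,F_2$ with cores $v_1,v_2$. I would then greedily link $v_1$, the gadget pairs $(a_1,b_1),\ldots,(a_{m^2},b_{m^2})$, and $v_2$ in sequence by paths of length at most $m$, using Lemma~\ref{distance}. At each stage I would avoid $W$, the interiors of $F_1$ and $F_2$, and everything built so far. This forbidden set has size $O(m^3)+m^{50}$, which is well below the $\rho(x)x/4$ threshold for sets of size $dm^{28}/100$. To reach $a_j$, I would attach to it a small star of its neighbours not yet used, of size $\ge d/5$ since $\delta\ge d/2$; for $v_1,v_2$ I would use their unit exteriors, going back to the core along a branch of length at most $m+2$. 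Choosing at each gadget either the short or the long path gives $v_1,v_2$-paths of lengths $\ell',\ell'+2,\ldots,\ell'+2m^2$. The set $A$ consists of the interior vertices used. Its size is $O(m^3)\le 2\cdot 100m^3$, and $\ell'\le 100m^3$. Finally, I would discard bad branches of $F_1,F_2$ hit by $A$, at most $O(m^3)$ of the $2m^{28}$, leaving $(m^{28},d/100,m+2)$-units disjoint from $A$.

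\textbf{Step 3: normalizing the length.} The condition $60m^3\le\ell(\mathcal A)\le 60m^3+60m$ requires padding. I would insert a connecting path of prescribed approximate length: for example, route through additional units, as in Lemma~\ref{cla:longpath}, each contributing between $5$ and $21m+2$. I would choose how many to use so that the total length lands in a window of width $O(m)$ around $60m^3$.

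\textbf{Main obstacle.} I expect the hardest part to be Step 1: guaranteeing a length difference of exactly $2$, not some other even number, with all pieces short and disjoint from each other and from $W$. The paper chose odd simple adjusters for difference $1$, so difference $2$ needs an even cycle, i.e.\ a bipartite structure. My first attempt would be to work in a max-cut bipartite subgraph. There, the expansion needed for Lemma~\ref{distance} must be re-derived or circumvented by connecting only in $G$ while building cycles inside the bipartite part, and this step I am least sure of.
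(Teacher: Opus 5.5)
Your overall plan is sound. It chains $m^2$ vertex-disjoint gadgets between the cores of two units, each gadget a pair of paths with lengths differing by exactly $2$, joins consecutive pieces with Lemma~\ref{distance}, and then pads into the window. Steps~2 and~3 are fine: since $\delta(G)\ge d/2$ and $d\ge m^{200}$, neighbourhoods of size $d/5$ already tolerate forbidden sets of size $O(m^{50})$ in Lemma~\ref{distance}, and greedy padding overshoots the target by only $O(m)$.

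The genuine gap is Step~1, which is the real content of the lemma. You never actually produce the $m^2$ short disjoint gadgets, and none of the arguments you list works as stated:
\begin{enumerate}
\item The two arcs of an \emph{odd} cycle between any two of its vertices differ by an odd number, so the claim for ``any cycle of length at least $4$'' is false.
\item The small-diameter argument of Lemma~\ref{lem:oddcyclelengthinexpander} bounds a shortest cycle or a shortest odd cycle, but not a shortest \emph{even} cycle. Joining two of its vertices by an external path of length at most $m+2$ gives a cycle whose parity is set by the uncontrolled parity of that path, so an odd outcome contradicts nothing.
\item In the max-cut subgraph you have no expansion, so Lemma~\ref{distance} is unavailable there.
\item ``Two linking paths whose lengths have the same parity and differ appropriately'' has no mechanism forcing a difference of exactly $2$, since paths from Lemma~\ref{distance} have lengths known only up to $m$.
\end{enumerate}

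The missing observation is that short even cycles need no expansion at all.
\begin{enumerate}
\item Let $G'$ be the bipartite subgraph from Proposition~\ref{fact:bipartition}, so $\delta(G')\ge d/4$.
\item Let $U$ consist of $W$, $\mathsf{Int}(F_1)\cup\mathsf{Int}(F_2)$, and the cycles already chosen. Then $|U|\le 2m^{50}$, so $\delta(G'-U)\ge d/5$.
\item The Moore bound gives a cycle in $G'-U$ of length at most $2\log_{d/5-1}n+2\le m$, and it is even because $G'$ is bipartite.
\item Greedily you get $m^2$ disjoint cycles $C_j$ of lengths $2k_j\le m$. Vertices $a_j,b_j$ at distance $k_j-1$ on $C_j$ give arcs of lengths $k_j\pm1$.
\end{enumerate}
Only these cycles live in $G'$; every connection is made in $G$, so your worry about expansion in the bipartite part disappears. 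The rest of your plan then gives $\ell'\le 2m^3$ before padding and $|A|\le 200m^3$ after it.

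The paper never looks for even cycles. Lemma~\ref{lem:simple-adj} works as follows:
\begin{enumerate}
\item It takes three units and finds $dm^{26}$ disjoint paths of length at most $m$ from $\mathsf{Ext}(F_1)$ to $\mathsf{Ext}(F_2)$.
\item It pigeonholes to $dm^{25}$ of these paths of \emph{equal} length.
\item It gets difference exactly $2$ from a vertex $w$ adjacent to two endpoints $w_1,w_2$, replacing the route through $P_{w_1}$ by $w_1ww_2P_{w_2}$. This works because all core-to-exterior paths in a unit have equal length.
\item Failing that, it uses a common neighbour of two endpoints inside $\mathsf{Int}(F_1)$, entering through the third unit.
\end{enumerate}
These simple adjusters carry units at both ends and are then iterated. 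Your repaired route is more elementary: it uses the minimum degree directly and needs units only at the two ends.
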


  \subsubsection{Putting things together: proof of Lemma~\ref{lem:dense}}\label{subsubsec:533}
  \begin{proof}[Proof of Lemma~\ref{lem:dense}]
    If $|L_G|\ge \tfrac{11}{50}d$, Lemma~\ref{lem:dense} follows directly
    from Lemma~\ref{lem:lar-dense}. So we only consider the case where
    $|L_G|< \tfrac{11}{50}d$.
    By Lemma~\ref{lem:web}, we can find
    $200Cdm^{70}$
    internally vertex-disjoint $(m^3,m^{28},\tfrac{d}{200},4m)$-webs.
    Choose $200Cdm^{30}$
    of them, denoted by $W_1, \ldots, W_{200Cdm^{30}}$, where $W_i$ has core vertex $v_i$, and let $V_1':=
    \{v_1,\ldots, v_{200Cdm^{30}}\}$.
    Since the exterior of each $W_i$ is fairly large, the vertices in
    $V_1'$ 
     behave like the large stars in Section~\ref{subsec:52}, yielding an analogue of Lemma~\ref{cla:longpath} for long paths, as stated in the following claim.

    \begin{claim}\label{lem_long_path_web}
      For any subset $W\subseteq V(G)$ with $|W|\le 50m^{50}$ and any
      integer $L\in [200m,Cdm^{30}]$, we can always find a path $P=P(L)$ in $G-W$ satisfying the following
      properties.
      \stepcounter{propcounter}
      \begin{enumerate}[label = ({\bfseries \Alph{propcounter}\arabic{enumi}})]
          \rm
        \item\label{num-518(1)}
          There exist $i\ne j\in 
          [200Cdm^{30}]$ such that $P$ is a $v_iv_j$-path.

          \rm
        \item\label{num-518(2)}
          The interiors of $W_i$ and $W_j$ each intersect $W\cup V(P)$ in at
          most $m^2$ vertices.

          \rm
        \item\label{num-518(3)}
          The length of $P$ is in the range $[L-200m, L-100m]$.
      \end{enumerate}
    \end{claim}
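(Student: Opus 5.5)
The plan is to mimic the proof of Lemma~\ref{cla:longpath}, with the web cores $v_i$ playing the role of the high-degree vertices of $L_G^1$ and the web exteriors playing the role of their large neighbourhoods. Fix $W$ with $|W|\le 50m^{50}$ and $L\in[200m,Cdm^{30}]$. Call a web $W_i$ \emph{usable} if $\mathsf{Int}(W_i)$ meets the current forbidden set in at most $m^2$ vertices. There are $200Cdm^{30}$ webs with internally disjoint interiors, and the forbidden set will have size at most roughly $50m^{50}+Cdm^{30}$. Since the interiors are pairwise disjoint apart from the core, at most a $o(1)$-fraction of the webs become unusable, so a large supply of usable webs always remains.

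The key connecting step is the following. Given two usable webs $W_i,W_j$ and a forbidden set $Y$ of size at most $O(Cdm^{30})$, first discard from each web every branch (first or second level) whose interior or pendant star meets $Y$. A web has $m^3$ first-level branches, each unit has $m^{28}$ branches, and each star has $d/200$ leaves. Hence at most $|Y|/(d/200)$ pendant stars are lost through their leaves, and only $m^2$ interior vertices are hit. So the surviving exterior still has size at least about $m^{31}d/400$, which is far larger than $|Y|\rho(|Y|)^{-1}$ requires.

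Lemma~\ref{distance}, applied to the two surviving exteriors with forbidden set $Y\cup\mathsf{Int}$ of all other webs used, gives a path of length at most $m$. Extending it through a star edge, a unit branch and a web branch at each end gives a $v_i,v_j$-path $R$ of length between $3$ and $m+2(4m+4m+1)\le 20m$. A vertex of $Y$ can lie on at most one web interior, since the interiors are disjoint, so every web other than $W_i,W_j$ loses interior vertices only where $R$ passes through it.

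The path is then built greedily as a chain. Start at some usable core $v_{i_1}$. Repeatedly connect the current end core to a fresh usable core using the connecting step, forbidding $W$, the chain built so far, and the interiors of all previously used webs except the two being connected. Continue until the total length first reaches at least $L-200m$. Each link has length at most $20m$, so the length at the stopping time lies in $[L-200m,L-180m]\subseteq[L-200m,L-100m]$, which gives~\ref{num-518(3)}.

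The number of links is at most $L/3\le Cdm^{30}$, so the forbidden set stays of size $O(Cdm^{30})$, matching the size bound above. The path ends at cores $v_{i_1}$ and $v_{i_k}$, which gives~\ref{num-518(1)}.

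For~\ref{num-518(2)} we must bound how much the path meets the interiors of the two end webs. Links are routed through exteriors and avoid the interiors of other webs. Each end web contributes only its own single connecting branch, of length at most about $9m\le m^2$. To avoid charging the budget twice, choose the first core $v_{i_1}$ to be one whose interior meets $W$ in nothing at all, which most webs satisfy by disjointness.

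I expect the main obstacle to be the bookkeeping in the connecting step. We must ensure that intermediate webs' interiors are genuinely avoided, since the connecting path from Lemma~\ref{distance} must be forbidden from them, while their exteriors, which may overlap other webs' exteriors, remain available. We must also check that the expansion bound in Lemma~\ref{distance} still holds with a forbidden set of size $Cdm^{30}$ against exteriors of size $\Omega(dm^{31})$. This works because $\rho(x)x$ is increasing and $m^{31}\gg m^{30}\log^2 n$.
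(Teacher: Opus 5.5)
\textbf{Verdict: genuine gap, easily repaired.} The connecting step fails because of what you put into the forbidden set. You forbid ``the interiors of all previously used webs except the two being connected''. But every $(m^3,m^{28},\tfrac{d}{200},4m)$-web has at least $m^{31}$ interior vertices; the centres of its pendant stars alone give that many. So once $k$ webs have been used, the forbidden set contains at least $(k-2)m^{31}$ vertices. You apply Lemma~\ref{distance} to subsets of exteriors, i.e.\ with $x\le|\mathsf{Ext}(W_i)|=dm^{31}/200$, so it tolerates at most $\tfrac14\rho(x)x<dm^{31}/800$ forbidden vertices. It therefore stops being applicable once $k$ exceeds roughly $d/800$. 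Your links have length at most $20m$, so reaching length $L-200m$ needs at least $(L-200m)/(20m)$ links. When $L$ is near $Cdm^{30}$ this is of order $Cdm^{29}\gg d$. Your bound ``the forbidden set stays of size $O(Cdm^{30})$'' counts only the chain's vertices and overlooks these interiors. As written, the construction cannot reach the upper part of the range of $L$.

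\textbf{Repair.} Intermediate webs need no protection: once the chain has passed through $v_{i_s}$ along two of its first-level branches, later links only have to avoid the chain itself. It is enough to forbid $W$, the current chain, $\mathsf{Int}(W_{i_1})$ (to keep property~\ref{num-518(2)} at the fixed end), and the interiors of the two webs being joined. This keeps the forbidden set at most $50m^{50}+Cdm^{30}+O(m^{32})$, and with this change your one-sided greedy chain is a valid variant of the paper's argument. The paper avoids the issue differently. It takes a longest path satisfying \ref{num-518(1)}, \ref{num-518(2)} and $\ell(P)\le L-100m$. If $\ell(P)<L-200m$, it extends $P$ at \emph{both} ends to two fresh webs whose interiors meet $W\cup V(P)\cup\mathsf{Int}(W_i)\cup\mathsf{Int}(W_j)$ in at most $m^2/2$ vertices, so no old web ever has to be shielded.

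\textbf{Smaller points to fix.}

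First, your usability threshold should be $m^2/2$ rather than $m^2$. Otherwise the last web may already meet $W\cup V(P)$ in $m^2$ interior vertices before its entry branch (up to $8m+1$ more) is added.

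Second, discarding every second-level branch whose pendant star meets $Y$ may discard all $m^{31}$ stars of a web, since $|Y|$ can be of order $Cdm^{30}\gg m^{31}$. Instead, just delete $Y$ from the exterior and discard only the branches whose interior is hit.

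Third, $\rho(dm^{31}/800)=\eps_1/\log^2\bigl(15m^{31}/(800\eps_2)\bigr)$ involves $\log m$, not $\log n$. The inequality you actually need is $m\gg C\log^2m/\eps_1$; your comparison $m^{31}\gg m^{30}\log^2n$ is false when $d$ is close to $n/K$.
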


    \begin{poc}
      Choose a longest path $P$ in $G-W$ satisfying~\ref{num-518(1)} and~\ref{num-518(2)} and having length at most $L-100m$. The existence of such a path follows from Lemma~\ref{distance}.
      We only need to prove that $\ell(P)$ is at least $L-200m$. Suppose not. Then $\ell(P)<L-200m$. Denote
      $W'=W\cup V(P)\cup \mathsf{Int}(W_i)\cup\mathsf{Int}(W_j)$.
      Then
      \begin{equation*}
        |W'|
        \le
        50m^{50}+Cdm^{30}+20m^{32}
        \le
        (C+1)dm^{30}.
      \end{equation*}
      By the pigeonhole principle, there are at least $200Cdm^{30}-(C+1)dm^{30}
          -\tfrac{2(C+1)dm^{30}}{m^2}
        \ge
        190Cdm^{30}$
      webs from
      $\{W_1,\ldots,W_{200Cdm^{30}}\}$
      whose cores do not lie in $W'$ and whose interiors intersect $W'$ in at most $m^2/2$ vertices.

      Choose two such webs, say $W_1$ and $W_2$. By our choice, they contain internally vertex-disjoint $(m^3-\tfrac{m^2}{2},m^{28},\tfrac{d}{200},4m)$-subwebs $W_1'$ and $W_2'$, respectively, whose interiors avoid $W'$ except at their cores. 
      Note that, for each $k\in[2]$, 
$        |\mathsf{Ext}(W_k')|\ge \tfrac{dm^{31}}{400}$.
      By~\ref{num-518(2)}, there are two internally vertex-disjoint
      $(m^3-m^2,m^{28},\tfrac{d}{200},4m)$-subwebs $W_i'$ and $W_j'$ of $W_i$ and $W_j$, respectively, whose interiors avoid $W\cup V(P)$ except at their cores. Moreover, $|\mathsf{Ext}(W_i')|,
      |\mathsf{Ext}(W_j')|
      \ge \tfrac{dm^{31}}{400}$. Let $W''=
      W\cup V(P)\cup V_1'
      \cup\mathsf{Int}(W_2')
      \cup\mathsf{Int}(W_j')$.
      Then
      \begin{equation*}
        |W''|
        \le
          50m^{50}+Cdm^{30}+200Cdm^{30}+20m^{32}
        \le
        210Cdm^{30}\le
        \tfrac14
        \rho\left(\tfrac{dm^{31}}{800}\right)
        \tfrac{dm^{31}}{800},
      \end{equation*}
      where the last inequality holds for sufficiently large $m$.
      Note that $|\mathsf{Ext}(W_1')\setminus W''|,|\mathsf{Ext}(W_2')\setminus W''|\geq \tfrac{dm^{31}}{800}$. By Lemma~\ref{distance}, there exists a path of length at most $2m$ between $\mathsf{Ext}(W_1')$ and $\mathsf{Ext}(W_i')$ avoiding $W''$. Extending it through the two webs gives a $v_1,v_i$-path $P_1$ of length at most $20m$ avoiding $W''$.

      By the same argument and the small-diameter lemma, there exists a path of length at most $2m$ between $\mathsf{Ext}(W_2')$ and $\mathsf{Ext}(W_j')$ whose internal vertices avoid $W\cup V(P)\cup V(P_1)$. Extending it through the webs gives a $v_2,v_j$-path $P_2$ of length at most $20m$ avoiding $W\cup V(P)\cup V(P_1)$. We now check that $P':=P_1PP_2$
      is a path satisfying both~\ref{num-518(1)} and~\ref{num-518(2)}, of length at most $L-100m$, but longer than $P$, contradicting the maximality of $P$.

      Property~\ref{num-518(1)} holds because $P'$ is a $v_1,v_2$-path. For~\ref{num-518(2)},
      \[
        |(W\cup V(P'))\cap\mathsf{Int}(W_1)|
        \le
        |(W\cup V(P))\cap\mathsf{Int}(W_1)|
        +|V(P_1)|+|V(P_2)|
        \le
        \tfrac{m^2}{2}+40m+2<m^2.
      \]
      The same argument applies to $W_2$. Finally,
      \[
        \ell(P') = \ell(P)+\ell(P_1)+\ell(P_2) \ge \ell(P)+2>\ell(P),
      \]
      while
      $\ell(P') < L-200m+40m < L-100m$.
      This contradicts the maximality of $P$.
    \end{poc}

    Next, we use Claim~\ref{lem_long_path_web} to prove the existence of cycles of consecutive even lengths from $\Theta(m^3)$ to $D:=Cdm^{27}$ in two cases, depending on whether $G$ contains an odd cycle.

    \textbf{Case 1: Suppose that $G$ contains an odd cycle}. By Lemma~\ref{lem:odd-sim-adj-less}, there exists in $G$ a $(6m,1)$-odd simple adjuster $\mathcal{O}=(v_1,v_2,F_1,F_2,O,\{P_1,P_2\})$ such that, for each $i\in[2]$, $F_i$ is a $(m^{28},\tfrac{d}{100},m+2)$-unit with core $v_i$. Note that $|O|\leq 12m$ and $|\mathsf{Int}(F_i)|\le m^{40}$. Hence the perimeter satisfies $|p(\mathcal{O})|\le |O|+|\mathsf{Int}(F_1)|+|\mathsf{Int}(F_2)|<m^{50}$.
    By Lemma~\ref{lem:adjuster} with $W=p(\mathcal{O})$, there exists in $G-p(\mathcal{O})$ a $(100m^3,m^2)$-adjuster $\mathcal A=(v_3,v_4,F_3,F_4,A,\mathcal P)$ such that $\ell(\mathcal A)\in[60m^3,60m^3+60m]$, where each $F_i$, for $i\in\{3,4\}$, is a $(m^{28},\tfrac{d}{100},m+2)$-unit with core $v_i$.
    Since $\mathsf{Ext}(F_2)$ and $\mathsf{Ext}(F_3)$ both have size at least $\tfrac{dm^{28}}{100}$, there are disjoint subsets $D_2\subseteq \mathsf{Ext}(F_2)$ and $D_3\subseteq \mathsf{Ext}(F_3)$, each of size at least $\tfrac{dm^{28}}{300}$ and disjoint from $p(\mathcal{O})\cup p(\mathcal{A})$.
     Note that $|A|\leq 210m^3$, so
    \[
      |p(\mathcal{O})|+|p(\mathcal{A})|\le |O|+|A|+\sum_{i\in [4]}|\mathsf{Int}(F_i)|\leq 240m^3+4m^{40}<m^{50}.
    \]
    Applying Lemma~\ref{distance} with
    $(X_1,X_2,W)=(D_2,D_3,p(\mathcal{O})\cup p(\mathcal{A}))$,
    we obtain a path $Q_0$ of length at most $m$ between $D_2$ and $D_3$. Extend it to a $v_2,v_3$-path $Q_0'$ of length at most $3m+6$.

    Recall that $D=Cdm^{27}\le Cdm^{30}$, so Claim~\ref{lem_long_path_web} applies throughout the range below. Since $|p(\mathcal{O})|+|p(\mathcal{A})|+|V(Q_0')|\leq 240m^3+5m^{40}<m^{50}$, for every integer $i$ with $0\le i\le\left\lfloor\tfrac{D}{100m}\right\rfloor-3$, setting $L_i:=D-100mi$, Claim~\ref{lem_long_path_web} gives a $v^i_5,v^i_6$-path $\widetilde Q_i$ of length in $[L_i-200m,L_i-100m]$ that avoids $p(\mathcal{O})\cup p(\mathcal{A})\cup V(Q_0')$. Moreover, the webs $W^i_5$ and $W^i_6$ with cores $v^i_5$ and $v^i_6$ contain $(m^3-m^2,m^{28},\tfrac{d}{200},4m)$-subwebs $\widetilde W^i_5$ and $\widetilde W^i_6$, respectively, whose interiors avoid $V(\widetilde Q_i)\cup p(\mathcal{O})\cup p(\mathcal{A})\cup V(Q_0')$ except at their cores. Let
      \[
        W':=p(\mathcal{A})\cup p(\mathcal{O})\cup V(Q_0')\cup V(\widetilde Q_i)\cup\mathsf{Int}(W^i_5)\cup\mathsf{Int}(W^i_6).
      \]
    Then
     $ |W'|\leq 50m^{50}+D+2m^{40}\leq 2D$.
    For each $j\in\{5,6\}$,
    \begin{equation*}
      |\mathsf{Ext}(\widetilde{W}^i_j)|\geq (m^3-m^2)m^{28}\cdot\tfrac{d}{200}\geq \tfrac{dm^{31}}{300}.
    \end{equation*}
    Hence, we can choose vertex-disjoint subsets $D_1\subseteq \mathsf{Ext}(F_1)$ and $D_4\subseteq \mathsf{Ext}(F_4)$, each of size at least $\tfrac{dm^{28}}{700}$ and disjoint from $W'$.
      Furthermore, we can choose vertex-disjoint subsets $D^i_5\subseteq \mathsf{Ext}(\widetilde W^i_5)$ and $D^i_6\subseteq \mathsf{Ext}(\widetilde W^i_6)$, each of size at least $\tfrac{dm^{31}}{700}$ and disjoint from $W'\cup D_1\cup D_4$. Moreover,
      \[
        |W'|\leq 2D=2Cdm^{27}
        \leq \tfrac14\rho\left(\tfrac{dm^{28}}{700}\right)\tfrac{dm^{28}}{700}.
      \]
      Indeed, set $\Lambda:=\log(3m^{28}/(140\eps_2))$. By the definition of $\rho$, after cancelling $dm^{27}$ it suffices to verify $2C\leq \eps_1m/(2800\Lambda^2)$, which holds for sufficiently large $m$. Applying Lemma~\ref{distance} with $(X_1,X_2,W)=(D_1,D^i_5,W')$, we obtain a path $Q^i_1$ of length at most $m$ between $D_1$ and $D^i_5$. Extend it to a $v_1,v^i_5$-path $\widetilde Q^i_1$ of length at most $(m+3)+m+(8m+1)=10m+4<11m$. Let $W'':=W'\cup V(\widetilde Q^i_1)$. Then
      \[
        |W''|\leq 2D+11m\leq 3D
        \leq \tfrac14\rho\left(\tfrac{dm^{28}}{700}\right)\tfrac{dm^{28}}{700}.
      \]
      Applying Lemma~\ref{distance} again with $(X_1,X_2,W)=(D_4,D^i_6,W'')$, we obtain a path $Q^i_2$ of length at most $m$ between $D_4$ and $D^i_6$. Extend it to a $v_4,v^i_6$-path $\widetilde Q^i_2$ of length less than $11m$. Hence, $\widetilde Q^i_1\widetilde Q_i\widetilde Q^i_2$ is a $v_1,v_4$-path of length
    \[
      \ell_i\in L_i+[-200m,-100m]+[2,22m]\subseteq L_i+[-200m,-78m].
    \]
    This path avoids $O\cup V(Q_0')\cup A$. Let $q_0:=\ell(Q_0')\le 3m+6$. Since the two $v_1,v_2$-paths belonging to $\mathcal O$ have lengths at most $6m$ and differ in length by $1$, we obtain two $v_3,v_4$-paths of lengths $\ell'_i$ and $\ell'_i+1$ that avoid $A$, where
   $$
     \ell'_i\in \ell_i+[0,q_0]+[0,6m]\subseteq \ell_i+[0,3m+6]+[0,6m]\subseteq L_i+[-200m,-68m].
   $$
    By the definition of $\mathcal A$, for each $j\in\{0,1,\ldots,m^2\}$, the graph $G[A\cup\{v_3,v_4\}]$ contains a $v_3,v_4$-path of length $\ell(\mathcal A)-2j$.
    Thus every integer in the following interval is an attainable cycle length:
    $$
      I_i:=\ell'_i+\{0,1\}+\{\ell(\mathcal A)-2j:0\le j\le m^2\}
      =[\ell'_i+\ell(\mathcal{A})-2m^2,\ell'_i+\ell(\mathcal{A})+1].
    $$
    For every $j\in\left[\left\lfloor D/(100m)\right\rfloor-3\right]$, $|\ell_j'-\ell_{j-1}'|\le (L_{j-1}-68m)-(L_j-200m)<400m<2m^2$. Thus, combining the intervals for all integers $i$ with $0\le i\le\left\lfloor D/(100m)\right\rfloor-3$, we obtain
    $$
      \bigcup_{0\le i\le \lfloor D/(100m)\rfloor-3}I_i\supseteq [\ell'_{ \lfloor D/(100m)\rfloor-3}+\ell(\mathcal{A})-2m^2,\ell_0'+\ell(\mathcal{A})].
    $$
      This interval has length at least $D+2m^2-600m>D$, and its left endpoint is at most $\ell'_{ \lfloor D/(100m)\rfloor-3}+\ell(\mathcal{A})-2m^2\le 800m-2m^2+\ell(\mathcal{A})=\Theta(m^3)$. Hence, $G$ contains cycles of consecutive even lengths from $\Theta(m^3)$ to $Cdm^{27}$.

    \textbf{Case 2: Suppose that $G$ contains no odd cycle}; equivalently, $G$ is bipartite. By the preceding arguments, we have the following two structures:
    \begin{itemize}
      \item[$(1)$] A $(100m^3,m^2)$-adjuster $\mathcal{A}=(v_1,v_2,F_1,F_2,A,\mathcal{P})$ such that $F_1$ and $F_2$ are internally vertex-disjoint $(m^{28},\tfrac{d}{100},m+2)$-units.
      \item[$(2)$] For every integer $i$ with $0\le i\le\lfloor D/(100m)\rfloor-3$, setting $L_i:=D-100mi$, a path $\widetilde Q_i$ in $G-p(\mathcal A)$ of length in $[L_i-200m,L_i-100m]$ whose endvertices $v^i_5$ and $v^i_6$ are equipped with the $(m^3-m^2,m^{28},\tfrac{d}{200},4m)$-webs $\widetilde W^i_5$ and $\widetilde W^i_6$, respectively; their interiors avoid $p(\mathcal A)\cup V(\widetilde Q_i)$ except at their cores.
    \end{itemize}
    By Lemma~\ref{distance}, we can find a $v_1,v^i_5$-path $\widetilde Q^i_1$ and a $v_2,v^i_6$-path $\widetilde Q^i_2$, each of length at most $11m$, such that they are vertex-disjoint. Hence, $\widetilde Q^i_1\widetilde Q_i\widetilde Q^i_2$ is a path of length $\ell''_i\in[L_i-200m,L_i-78m]$. By the definition of $\mathcal{A}$, the following set consists of attainable even cycle lengths:
    \[
      I_i':=\ell_i''+\{\ell(\mathcal{A})-2j:0\le j\le m^2\}.
    \]
    Considering all integers $i$ with $0\le i\le\lfloor D/(100m)\rfloor-3$, we obtain cycles of every even length in $\bigcup_{0\le i\le\lfloor D/(100m)\rfloor-3} I_i'$. By the same analysis, $G$ contains cycles of consecutive even lengths from $\Theta(m^3)$ to $Cdm^{27}$.
  \end{proof}

  \section{Sparse case}\label{section-spar}

  We divide the sparse argument according to whether a large clique subdivision is present.

 \subsection{The subdivision-free case}
  In this subsection, we prove the subdivision-free case.
  \begin{lemma}\label{lem:sparse}
    Let $0<\eps_0<1/20$. 
    Let
    \(
    0<1/n,1/d\ll\eps_1,\eps_2\ll\eps_0,1/C,1/5,
    \)
    where $d\le\log^{1000}n$ and $Cd\in2\mathbb N$. If $G$ is an $n$-vertex $TK_{(1-\eps_0)d/2}^{(1)\ast d^8}$-free $(\eps_1,\eps_2d)$-expander with $\delta(G)\ge d/2$, then $G$ contains $Cd/2$ consecutive even cycle lengths.
  \end{lemma}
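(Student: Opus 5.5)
We follow the adjuster scheme of Liu and Montgomery and reuse the architecture of the intermediate case (Lemma~\ref{lem:dense}). There are three changes. The connecting gadgets become small expanding balls. The adjusters are built using the $TK^{(1)\ast d^8}_{(1-\eps_0)d/2}$-freeness rather than high-degree vertices. The long path is produced by a robust adjuster/linking lemma. Throughout, let $m$ be of order $\log^4 n$, so that every pair of large sets can be linked by paths of length at most $m$ (Lemma~\ref{distance}). Since $d\le\log^{1000}n$, we have $n\ge \exp(d^{1/1000})$, so sets of size $\mathrm{poly}(dm)$ are negligible and the expansion function $\rho$ is essentially $\eps_1/\log^2 n$ at all relevant scales.

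\emph{Step 1: expanding structures.} In a $TK^{(1)\ast d^8}_{(1-\eps_0)d/2}$-free graph with $\delta\ge d/2$, we show the following. For every small vertex set $W$, every vertex $v$ outside a small exceptional set has its ball $B^r_{G-W}(v)$ growing to size $\ge d^{10}m^{C'}$ within radius $r=O(m)$. The argument: if many vertices had balls that failed to grow, their second neighbourhoods would be confined. Each confined vertex $v$ has at least $d/2$ neighbours, and almost every pair of these neighbours has at least $d^8$ common neighbours outside the structure. Greedily choosing $(1-\eps_0)d/2$ such branch vertices then builds the forbidden $TK^{(1)\ast d^8}$. This plays the role of ``large exteriors'' that the stars and webs played in Section~\ref{section-med}. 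We then build an even adjuster exactly as in Lemma~\ref{lem:adjuster}, i.e.\ a $(100m^3,m^2)$-adjuster: a chain of $m^2$ short even cycles, each with two antipodal attachment points joined consecutively by short paths. The cycles come from linking two vertices of an expanding ball twice by paths of different lengths, using bipartite-type parity. Using Lemma~\ref{lem:oddcyclelengthinexpander}, we also build an odd simple adjuster as in Lemma~\ref{lem:odd-sim-adj-less}, with an odd cycle of length at most $2m+5$, whenever $G$ is non-bipartite.

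\emph{Step 2: long paths of prescribed approximate length.} We prove the analogue of Claim~\ref{lem_long_path_web}. For every $L\in[200m, Cd\,m]$ and every small forbidden set of size at most $m^{50}$, there is a path of length in $[L-200m,L-100m]$ whose two ends have expanding balls disjoint from the forbidden set. As in that claim, we take a longest such path of length at most $L-100m$ and extend it at one end by a short connection to a fresh expanding vertex. Each extension adds between $2$ and $O(m)$ to the length, so the maximal path must be long enough. The required robustness (linking while avoiding all previously used vertices, of total size $O(Cdm)$) comes from the fact that $Cdm\ll \rho(x)x$ for $x\ge d^{10}m^{C'}$, because $d$ is only polylogarithmic in $n$.

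\emph{Step 3: assembly.} This proceeds verbatim as in Cases~1 and~2 of the proof of Lemma~\ref{lem:dense}. Set $L_i=D-100mi$ with $D\approx Cd+\Theta(m^3)$. Connect the two ends of each path $\widetilde Q_i$ to the adjuster terminals, passing through the odd simple adjuster in the non-bipartite case, to obtain intervals $I_i=[\ell_i'+\ell(\mathcal A)-2m^2,\ \ell_i'+\ell(\mathcal A)+1]$. Consecutive intervals overlap because $|\ell_i'-\ell_{i-1}'|\le 400m<2m^2$. In the bipartite case all lengths are automatically even. The union therefore covers every even length in an interval of length at least $Cd$, which gives $Cd/2$ consecutive even cycle lengths.

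The main obstacle is Step~1: showing that $TK^{(1)\ast d^8}$-freeness, together with minimum degree only $d/2$, forces robust ball growth. The delicate point is controlling vertices whose neighbourhoods are densely interconnected. I would first try the counting argument of Liu--Montgomery, namely that $d^8$-fold common neighbourhoods of many pairs yield the $1$-subdivision, and handle the remaining non-expanding vertices by deleting them as a small exceptional set via Lemma~\ref{lem:delete-few-vertices-expander}.
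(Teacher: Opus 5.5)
Your proposal has a genuine gap: Step~1, on which Steps~2 and~3 entirely depend, is not established, and the mechanism you sketch for it does not work. A confined ball $B^r_{G-W}(v)$ does not give pairs of neighbours of $v$ with $d^8$ common neighbours. That would need degrees of order $d^8$, whereas degrees here may be $\Theta(d)$. So no subdivision arises that way.

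What $TK^{(1)\ast d^8}_{(1-\eps_0)d/2}$-freeness actually gives, via Lemma~\ref{lem:densebipartite} with the branch vertices inside the blocking set, is weaker: fewer than $d^8|W|^2$ vertices have at least $(1-\eps_0)d/2$ neighbours in $W$. In other words, $G-W$ keeps average degree $\Omega(\eps_0 d)$. That is a global statement. It says nothing about ball growth around a \emph{prescribed} vertex, since its neighbours may themselves be absorbed, and so on. Yet this is exactly what your Step~2 needs at the endpoint of a path of length up to $Cdm$. It is also what your chain of $m^2$ simple adjusters needs while avoiding $\Theta(m^3)$ used vertices.

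In this regime $d$ may be far smaller than $m$, so these forbidden sets dwarf the minimum degree. This is the whole difficulty, not a technicality. For the same reason, the adjusters cannot be built ``as in'' Lemmas~\ref{lem:adjuster} and~\ref{lem:odd-sim-adj-less}. Both rest on the units of Lemma~\ref{claim:unit} and on estimates such as $2m^{28}(m+2)\le d/100$ and $m^{25}+60m^{50}+2m^{26}<d/4$ (Lemma~\ref{lem:simple-adj}). These hold only because $d\ge m^{200}$ there. Separately, in a non-bipartite graph two paths of different lengths between the same ends need not differ by exactly $2$.

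The paper avoids per-vertex ball growth altogether:
\begin{enumerate}
  \item After rescaling $d$ to $d/2$, it takes $D=\log^{100}n$ and a shortest odd cycle $\Gamma$.
  \item Lemma~\ref{lem:cycleadjuster}, built on Lemma~\ref{lem:manyexpansion} (which needs no forbidden set), gives a $(D,m/4,1)$-odd-adjuster with $(D,m)$-expansions $F_1,F_2$ at $v_1,v_2$.
  \item Lemma~\ref{lem:spa-max-deg} finds, in $G-(A\cup V(F_1)\cup V(F_2))$, a $(3D,m,22m)$-adjuster or odd-adjuster (Lemma~\ref{vesp:radj}). This is the only place freeness enters, through Lemma~\ref{vesp:1adj}. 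There it serves precisely to guarantee $e(G-W)\ge\eps_0dn/4$, so that the Liu--Montgomery construction runs in $G-W$.
  \item The Liu--Montgomery linkage lemma (Lemma~\ref{cor:3.15}) joins $\{v_1,v_2\}$ to the two cores with total length in $[\bar\ell,\bar\ell+22m]$ for any $\bar\ell\le n/\log^{12}n$. This replaces your greedy long-path step.
\end{enumerate}
Closing through the two odd-adjuster paths then yields every cycle length in $[\ell(\mathcal A)+\log^7n,\ \ell(\mathcal A)+n/\log^{12}n]$, far more than $Cd$. To repair your route you would need to import these ingredients — robust adjusters in $G-W$ and length-controlled linkage between pre-built expansions — rather than Step~1 as stated.
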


  A graph $F$ is a \emph{$(D,m)$-expansion centered at $v$} if $|F|=D$ and every vertex of $F$ is at distance at most $m$ from $v$.

  \begin{proposition}[\cite{Liu-Mon-JAMS}]\label{lem:smallexpansion}
    Let $D,m\in\mathbb{N}$ and let $1\le D' \le D$. Then any graph $F$ which is a
    $(D,m)$-expansion of $v$ contains a subgraph which is a $(D',m)$-expansion of $v$.
  \end{proposition}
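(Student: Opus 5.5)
The plan is to prove Proposition~\ref{lem:smallexpansion} by trimming a breadth-first-search tree from the leaves. Since $F$ is a $(D,m)$-expansion centered at $v$, it is connected, $|F|=D$, and every vertex lies within distance $m$ of $v$. Fix a BFS tree $T$ of $F$ rooted at $v$. Because BFS trees preserve distances from the root, $\operatorname{dist}_T(v,u)=\operatorname{dist}_F(v,u)\le m$ for every $u\in V(F)$.

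Next, I delete vertices one at a time until $D'$ remain. At each step I remove a vertex of maximum distance from $v$ among those still present. Such a vertex is a leaf of the current tree, since any child would be farther from $v$. It is also never $v$ itself as long as at least two vertices remain, because $v$ is the unique vertex at distance $0$. Removing a leaf keeps the tree connected and does not change the tree path from $v$ to any remaining vertex. So after every step the remaining vertex set $S$ induces a subtree $T[S]$ that contains $v$, and in which every vertex is at distance at most $m$ from $v$.

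I stop once $|S|=D'$, which is possible since $1\le D'\le D$ and each step removes exactly one vertex. I then take the subgraph $F[S]$, or just $T[S]$. It has exactly $D'$ vertices, and distances from $v$ in $F[S]$ are at most those in $T[S]$, hence at most $m$. Therefore $F[S]$ is a $(D',m)$-expansion of $v$, as required.

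No step is a real obstacle. The only point that needs checking is that deleting vertices does not increase the distance from $v$ of the survivors. This is exactly why I delete leaves of the BFS tree in decreasing order of depth, rather than arbitrary vertices of $F$: every surviving vertex keeps its full tree path to $v$.
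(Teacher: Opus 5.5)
Your proof is correct. You prune a BFS tree rooted at $v$ one deepest vertex at a time; such a vertex is always a leaf other than $v$, and the surviving set stays closed under taking parents. The result is a subtree on exactly $D'$ vertices in which every vertex keeps its tree path of length at most $m$ to $v$.

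The paper does not reprove this proposition; it only cites Liu--Montgomery. The standard argument for this fact is the same BFS-tree leaf-removal, so your approach essentially matches what the citation relies on.
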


  We use two types of adjusters: the ordinary version from~\cite{Liu-Mon-JAMS} varies length by $2$, and an odd version varies length by $1$ and changes parity.

  \begin{definition}[\cite{Liu-Mon-JAMS}]\label{defn:evenadjuster}
    A \emph{$(D,m,k)$-adjuster} $\mathcal{A}=(v_1,F_1,v_2,F_2,A)$ in a graph $G$ consists of vertices $v_1,v_2\in V(G)$, graphs $F_1,F_2\subseteq G$, and a set $A\subseteq V(G)$ such that the following conditions hold for some $\ell\in\mathbb{N}$.
    \stepcounter{propcounter}
    \begin{enumerate}[label = ({\bfseries \Alph{propcounter}\arabic{enumi}})]
        \rm
      \item\label{adjla1} $A$, $V(F_1)$ and $V(F_2)$ are pairwise disjoint.
        \rm
      \item\label{adjla2} For each $i\in[2]$, $F_i$ is a $(D,m)$-expansion centered at $v_i$.
        \rm
      \item\label{adjla3} $|A|\leq 10mk$.
        \rm
      \item\label{adjla4} For each $i\in\{0,1,\ldots,k\}$, there is a $v_1,v_2$-path in $G[A\cup\{v_1,v_2\}]$ of length $\ell+2i$.
    \end{enumerate}
    We call it a \emph{$(D,m,k)$-odd-adjuster} if it satisfies \ref{adjla1}-\ref{adjla3} and \ref{adjlaodd4} below.
    \stepcounter{propcounter}
    \begin{enumerate}[label = ({\bfseries \Alph{propcounter}\arabic{enumi}})]
        \rm
      \item\label{adjlaodd4} For each $i\in\{0,1,\ldots,k\}$, there is a $v_1,v_2$-path in $G[A\cup\{v_1,v_2\}]$ of length $\ell+i$.
    \end{enumerate}
  \end{definition}

  We denote by $\ell(\mathcal{A})$ the smallest integer $\ell$ for which condition~\ref{adjla4} (or condition~\ref{adjlaodd4}) holds. Note that $\ell(\mathcal{A})\leq |A|+1\leq 10mk+1$. We call $F_1,F_2$ the \emph{ends}, set $V(\mathcal{A})=V(F_1)\cup V(F_2)\cup A$, and call $B_{\mathcal{A}}:=G[A\cup \{v_1,v_2\}]$ the \emph{box}, with core vertices $v_1,v_2$.

The following lemma gives a simple odd-adjuster with prescribed core vertices, which will be used to handle the parity issue in the non-bipartite case.

\begin{lemma}\label{lem:cycleadjuster}
For any $0<\varepsilon_1<1$ and $0<\varepsilon_2<1/20$, there exists $d_0=d_0(\varepsilon_1,\varepsilon_2)$ such that the following holds for each $d_0\le d\le n$. Suppose that $G$ is an $n$-vertex $(\varepsilon_1,\varepsilon_2 d)$-expander with $\delta(G)\ge d-1$.
Let $m=\tfrac{800}{\varepsilon_1}\log^3 n$ and let $D\in\mathbb N$ satisfy $m^3D\le e^{5(\log\log n)^{200}}$. Let $\Gamma$ be a shortest odd cycle in $G$ if $G$ is non-bipartite and a shortest cycle when $G$ is bipartite, and let $x_1,x_2$ be distinct vertices in $V(G)\setminus V(\Gamma)$. If $G$ is non-bipartite, then $G$ contains a $(D,m/4,1)$-odd-adjuster $(v_1,F_1,v_2,F_2,A)$ with $v_1=x_1$, $v_2=x_2$, and
$V(\Gamma)\subseteq A$. Furthermore, if $G$ is bipartite, then $G$ contains a $(D,m/4,1)$-adjuster $(v_1,F_1,v_2,F_2,A)$ with $v_1=x_1$, $v_2=x_2$, and
$V(\Gamma)\subseteq A$.
\end{lemma}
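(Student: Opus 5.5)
The construction takes place in the non-bipartite case; the bipartite case is analogous. Let $\Gamma$ be a shortest odd cycle. By Lemma~\ref{lem:oddcyclelengthinexpander} (applicable since $\delta(G)\ge d-1\ge d/10$), we have $|\Gamma|\le 2m_0+5$, where $m_0=\tfrac{2}{\eps_1}\log^3(15n/(\eps_2 d))$. Hence $|\Gamma|\le m/100$ for our choice $m=\tfrac{800}{\eps_1}\log^3 n$. Pick two vertices $u_1,u_2\in V(\Gamma)$ at distance $(|\Gamma|-1)/2$ on $\Gamma$. The two $u_1,u_2$-arcs $P^1_\Gamma,P^2_\Gamma$ of $\Gamma$ then have lengths differing by exactly one. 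The plan is to join $x_1$ to $u_1$ and $x_2$ to $u_2$ by short vertex-disjoint paths $R_1,R_2$ that avoid $\Gamma$ internally. We then set $A:=V(\Gamma)\cup \mathsf{Int}(R_1)\cup\mathsf{Int}(R_2)$. The two $x_1,x_2$-paths $R_1P^i_\Gamma R_2$, $i\in[2]$, have lengths $\ell,\ell+1$, which gives condition~\ref{adjlaodd4} with $k=1$. We must also ensure that $|A|\le 10m/4$ (condition~\ref{adjla3}). This holds if $|R_1|,|R_2|\le m/2$, using $|\Gamma|\le m/100$.

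The ends are built next, in this order. First, we grow the expansions $F_1,F_2$ around $x_1,x_2$, disjoint from each other and from $\Gamma$. Second, we find $R_1,R_2$ avoiding $V(F_1)\cup V(F_2)$, apart from the single vertices $x_1,x_2$ themselves, which is where the paths attach. For the expansions, we do BFS in $G-V(\Gamma)$ from $x_1$. The small-diameter lemma, Lemma~\ref{distance}, and the expansion property show that the ball of radius $m/8$ around $x_1$ in $G-V(\Gamma)-\{x_2\}$ has at least $D$ vertices, since $D\le e^{5(\log\log n)^{200}}\ll n$ and the removed set is tiny. Using Proposition~\ref{lem:smallexpansion}, we trim this ball to a $(D,m/8)$-expansion $F_1'$. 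We then repeat for $x_2$ in $G-V(\Gamma)-V(F_1')$. One subtlety needs care. The expansions must be disjoint from $A$, but the connecting paths leave $x_i$ and must pass through vertices near $x_i$. I will handle this as in Liu--Montgomery. We build each $F_i$ as a $(D,m/4)$-expansion of $x_i$ that avoids a first neighbour chosen for $R_i$. Alternatively, and more simply, we choose $R_i$ first, leaving $x_i$ through a fixed neighbour $y_i$. Afterwards we grow $F_i$ in $G-(A\cup V(R_{3-i}))$ from $x_i$, using the remaining neighbours of $x_i$. This works because $\delta(G)\ge d-1$ and the deleted set has size $O(m)$, which is negligible against expansion of sets of size $\ge \eps_2 d/2$.

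Concretely, the order is as follows. Take disjoint sets $N'(x_1),N'(x_2),N'(u_1),N'(u_2)$, each of $\ge d/5$ neighbours outside $V(\Gamma)\cup\{x_1,x_2\}$. These exist because $\delta(G)\ge d-1$, and $\Gamma$ is a shortest odd cycle, so each outside vertex has few neighbours on it. Apply Lemma~\ref{distance} twice, forbidding $V(\Gamma)$ and the previous path, to get $R_1,R_2$ of length at most $m/100+2$. Then grow the expansions avoiding $A$ and the opposite path, with radius at most $m/4$, and truncate them to size exactly $D$. The bipartite case is identical, with $\Gamma$ a shortest (even) cycle and $u_1,u_2$ antipodal. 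The two arcs then differ in length by $0$, or by $2$ when we choose $u_1,u_2$ at distance $|\Gamma|/2-1$. This gives paths of lengths $\ell,\ell+2$.

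The main obstacle is verifying the size bound for the expansion. We need the ball of radius $m/4$ around $x_i$, with a set of size $O(m)$ deleted, to reach $D$ vertices. Starting from the $\ge d/2$ free neighbours, we iterate robust expansion: each step multiplies the size by $1+\rho(x)$ while the deleted set stays below $\rho(x)x/4$. Over $m/8$ steps this yields at least $\min\{n/2,\,D\}$ vertices. The computation is routine given $\rho(x)\ge \eps_1/\log^2(15x/\eps_2 d)$ and $m=\Theta(\eps_1^{-1}\log^3 n)$.
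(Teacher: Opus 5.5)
There is a genuine gap. Throughout, you treat every forbidden set of size $O(m)$, and later of size $D$, as negligible against expansion that starts from sets of size about $d$. The lemma, however, is claimed for all $d_0\le d\le n$, and it is used in Lemma~\ref{lem:sparse}, where only $d\le\log^{1000}n$ is known. So $d$ may be far smaller than $\log^3 n$.

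In that regime the argument fails at three points.
\begin{enumerate}
\item Nothing prevents $|V(\Gamma)|$ from being of order $\eps_1^{-1}\log^3(n/d)$; this is all that Lemma~\ref{lem:oddcyclelengthinexpander} gives. But Lemma~\ref{distance}, applied to $X_1=N'(x_1)$ and $X_2=N'(u_1)$ of size $d/5$, tolerates a forbidden set of only $\rho(d/5)\,d/20=\tfrac{\eps_1 d}{20\log^2(3/\eps_2)}<d$ vertices. So the existence of $R_1$ avoiding $\Gamma$ is not justified.
\item Growing $F_i$ after the paths are fixed breaks your own criterion (deleted set below $\rho(x)x/4$) at the initial scales $x\approx d$. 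Concretely, $R_{3-i}$ has length up to $m/100+2$, which may exceed $d$. Nothing stops it from running through every neighbour of $x_i$ outside $A$, and then $x_i$ is isolated in $G-(A\cup V(R_{3-i}))$.
\item Your concrete plan never secures $V(F_1)\cap V(F_2)=\varnothing$. Growing $F_2$ after $F_1$ means avoiding $D$ vertices, and $D$ may be much larger than $d$ (for example $D=\log^{100}n$ in Lemma~\ref{lem:sparse}). Then $F_1$ can surround $x_2$.
\end{enumerate}
As written, your argument works only when $d$ is large compared with both $\log^3 n$ and $D$.

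The missing ingredient is what the paper imports as Lemma~\ref{lem:manyexpansion}, the analogue of Liu--Montgomery's Lemma~3.11. It builds simultaneously, around $x_1,x_2$ and around two points $x_3,x_4\in V(\Gamma)$, expansions $F_{i,j}$ that avoid $V(\Gamma)$ and are pairwise disjoint apart from their centres. It exploits the fact that $\Gamma$ is a shortest (odd) cycle, so outside vertices have only $O(1)$ neighbours on it. A generic path or a generic expansion has no such structure.

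The paper takes the ends $F_1=F_{1,1}$ and $F_2=F_{2,1}$, each of size $D$, directly from this lemma. It then makes all connections between much larger auxiliary expansions.
\begin{enumerate}
\item $F_{1,2}$ and $F_{3,1}$, each of size $m^3D$, are joined by Lemma~\ref{lem:smalldiameterlemma2} while avoiding $V(\Gamma)$ and the other expansions, which total $O(m^2D)$ vertices.
\item $F_{2,2}$ and $F_{4,1}$, each of size $m^2D$, are joined while avoiding $O(m+D)$ vertices.
\end{enumerate}
In both steps the forbidden set satisfies $|W|\log^3 n\le 10x$, however small $d$ is. Because the ends exist before any path is chosen, no expansion ever has to be grown around a vertex whose surroundings have already been used up.
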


  We use Lemma \ref{vesp:radj} to robustly find an adjuster in a sparse expander. %{\color{blue}(XWEI: In the following lemma some $m$ is undefined. Please make sure if all $m$ can be chosen as in \ref{lem:cycleadjuster}. If so please write this out. Please also make sure what $m$ is in the statement of former lemma 6.10)}

  \begin{lemma}\label{vesp:radj}
    Let $0<\eps_0\le 1/20$.
    There exists $\eps_1>0$ such that, for any $0<\eps_2\ll\eps_0$, there exists $d_0=d_0(\eps_0,\eps_1,\eps_2)$ such that the following holds for each $d_0\le d\le\log^{1000}n$.
    Let  $G$ be an $n$-vertex $TK_{(1-\eps_0)d}^{(1)\ast d^{9}}$-free $(\eps_1,\eps_2d)$-expander with  $\delta(G)\geq d$.
    Let $m=\tfrac{800}{\eps_1}\log^3n$.
    Suppose $\log^{10}n\leq D\leq e^{(\log\log n)^{100}}$, $1\leq r\leq 22m$ and $W\subseteq V(G)$ with $|W|\leq D$. Then $G-W$ contains either a $(D,m,r)$-adjuster or a
$(D,m,r)$-odd-adjuster.
  \end{lemma}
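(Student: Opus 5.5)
The plan is to adapt the robust adjuster construction of Liu and Montgomery~\cite{Liu-Mon-JAMS}. The argument works in $G':=G-W$ and builds adjusters iteratively, starting from the simplest possible one. The first step is to note that, since $|W|\le D$ is tiny compared with the expansion scale, Lemma~\ref{lem:delete-few-vertices-expander} shows that $G'$ is still an $(\eps_1/2,\eps_2 d)$-expander. Minimum degree is only lost at vertices with many neighbours in $W$, and we handle those by passing to a subgraph, exactly as in~\cite{Liu-Mon-JAMS}; this is where the assumption $D\le e^{(\log\log n)^{100}}$ is used.

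The base case uses a shortest cycle $\Gamma$ in $G'$. If $G'$ contains an odd cycle, take $\Gamma$ to be a shortest odd cycle; it has length $O(m)$ by Lemma~\ref{lem:oddcyclelengthinexpander}. Choose two vertices $x_1,x_2\notin V(\Gamma)$ that have large disjoint neighbourhoods. Lemma~\ref{lem:cycleadjuster} then gives a $(D,m/4,1)$-odd-adjuster, or a $(D,m/4,1)$-adjuster if $G'$ is bipartite, with cores $x_1,x_2$ and box containing $V(\Gamma)$. The ends are $(D,m/4)$-expansions; these exist because every small ball in an expander of minimum degree $d$ grows, and Proposition~\ref{lem:smallexpansion} trims them to size exactly $D$.

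The main step is to boost the parameter $k$ from $1$ to $r\le 22m$ by concatenation. Suppose we have an adjuster $\mathcal A$ with parameter $k$. Find a second, vertex-disjoint simple adjuster $\mathcal A'$ with parameter $1$ in $G'-V(\mathcal A)$; this is possible because $|V(\mathcal A)|\le 2D+10mk$ is small, so we may add it to the forbidden set and reapply the base case. Then join an end of $\mathcal A$ to an end of $\mathcal A'$ by a path of length at most $m$ avoiding all other vertices, using Lemma~\ref{distance} between the two expansions, each of size $D$. Two even steps combine to give lengths $\ell+2i$ for $0\le i\le k+1$. Two odd steps combine to give lengths $\ell+i$ for $0\le i\le k+1$, and mixed steps also give a valid odd-adjuster. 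The remaining two ends are the free ends of $\mathcal A$ and $\mathcal A'$, and the box grows by at most $10m$ vertices, so $|A|\le 10m(k+1)$ is preserved. After $r-1$ iterations we reach parameter $r$.

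The hard part is the forbidden-set accounting during the iteration. The connecting path must avoid the boxes and the unused ends, which together have size up to $2D+10mr$. This is fine against Lemma~\ref{distance} only because the ends have size $D\ge\log^{10}n$, and $\rho(D)D/4$ dominates $O(D)+O(m^2)$ only if the connection is made to a large subset of the ends. A second obstacle is finding fresh expansions of size $D$ and radius $m$ that avoid the growing used set. I expect this is exactly where the $TK^{(1)\ast d^9}_{(1-\eps_0)d}$-freeness is needed: without a large clique $1$-subdivision, vertices of very high degree cannot concentrate the neighbourhoods, so balls around most vertices grow robustly even after deleting the used set, as in the $K_{d}^{(1)}$-free argument of~\cite{Liu-Mon-JAMS}. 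I would first try to port that lemma directly, with $d^9$ in place of their parameter, and with the used set bounded by $D^2$.
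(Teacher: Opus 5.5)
Your skeleton matches the paper's. The paper also obtains the statement by iterating a one-step lemma (Lemma~\ref{vesp:1adj}), which gives a $(D,m/2,1)$-adjuster or odd-adjuster avoiding any forbidden set of size at most $4D$. Concatenating with a fresh simple adjuster avoiding $W\cup V(\mathcal A_k)$ works: this set has size at most $D+2D+220m^2\le 4D$, and mixed even/odd concatenations do give odd-adjusters, as you say.

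\textbf{The gap is in your base case.} Lemma~\ref{lem:delete-few-vertices-expander} needs $2|W|\le\rho(\eps_2d/2)\,\eps_2d/2=\Theta(\eps_1\eps_2 d)$. Here $|W|$ can be as large as $D\ge\log^{10}n$, and $d$ may be a large constant while $D$ may be as large as $e^{(\log\log n)^{100}}\gg\log^{1000}n\ge d$. So $G-W$ need not be an expander, and it need not have minimum degree anywhere near $d$: vertices can lose almost all their neighbours to $W$. Consequently you cannot invoke Lemma~\ref{lem:oddcyclelengthinexpander} or Lemma~\ref{lem:cycleadjuster} inside $G-W$, nor inside $G-W-V(\mathcal A)$ in later rounds. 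Passing to a subgraph to restore degrees only helps if few vertices are damaged, and nothing in your argument shows that.

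\textbf{This is where $TK^{(1)\ast d^9}_{(1-\eps_0)d}$-freeness enters in the paper, not in growing fresh expansions.} Let $W_0$ be the set of vertices outside $W$ with at least $(1-\eps_0)d$ neighbours in $W$. Lemma~\ref{lem:densebipartite} with $c=d^9$ gives $|W_0|<d^9|W|^2=o(n)$, using the upper bound on $D$. Hence every vertex outside $W\cup W_0$ keeps more than $\eps_0 d$ neighbours outside $W$, and $e(G-W)\ge\eps_0dn/4$. This retained density is what lets the Liu--Montgomery simple-adjuster construction (their Lemma~4.3) run while avoiding $W$, with Lemma~\ref{lem:cycleadjuster} supplying odd adjusters in the non-bipartite case.

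\textbf{Your connecting step also fails quantitatively.} Lemma~\ref{distance} between two ends of size $D$ can avoid only $\rho(D)D/4<D$ vertices. It therefore cannot avoid $W$, the boxes and the two surviving ends, which total about $3D$ vertices. The connecting expansions must exceed the avoided set by a polylogarithmic factor, as in the choice $D_{1,2}=m^3D$ together with Lemma~\ref{lem:smalldiameterlemma2} in the proof of Lemma~\ref{lem:cycleadjuster}. The extra room in the one-step lemma ($|W|\le 4D$, scales up to $e^{2(\log\log n)^{100}}$) is what the iteration draws on.
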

Since the proofs of the above two lemmas follow arguments similar to those in~\cite{Liu-Mon-JAMS}, with an additional treatment of the non-bipartite case, we defer the proofs of Lemmas~\ref{lem:cycleadjuster} and \ref{vesp:radj} to Appendix \ref{appen:proofofdmradjuster}.
The following Lemma \ref{cor:3.15} is a refinement of Corollary~3.15 in \cite{Liu-Mon-JAMS}; inspection of its proof shows that the bipartiteness assumption on $G$ is unnecessary.

  \begin{lemma}[\cite{Liu-Mon-JAMS}]\label{cor:3.15}
    For any $0<\varepsilon_1,\varepsilon_2<1$,
    there exists $d_0=d_0(\varepsilon_1,\varepsilon_2)$ such that the following holds for each $n\geq d\geq d_0$.
    Suppose that $G$ is an $n$-vertex $(\varepsilon_1,\varepsilon_2 d)$-expander with $\delta(G)\geq d$.
    Let $\log^{10} n \leq D \leq n/\log^{10} n$, $\tfrac{100}{\varepsilon_1}\log^3 n \leq m \leq \log^4 n$ and $\ell \leq n/\log^{12} n$.
    Let $A\subseteq V(G)$ satisfy $|A|\leq D/\log^3 n$.
    Let $F_1,\ldots,F_4\subseteq G-A$ be vertex-disjoint subgraphs and let $v_1,\ldots,v_4$ be vertices such that, for each $i\in[4]$, $F_i$ is a $(D,m)$-expansion of $v_i$.
    Then $G-A$ contains vertex-disjoint paths $P$ and $Q$ with
    \(
      \ell \leq \ell(P)+\ell(Q)\leq \ell+22m
    \)
    such that $P$ and $Q$ form a linkage from $\{v_1,v_2\}$ to $\{v_3,v_4\}$.
  \end{lemma}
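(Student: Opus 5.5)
The approach is the Liu--Montgomery greedy linkage. One linkage path is kept short. The other is grown in steps of bounded length, each ending at a fresh expansion, and growth stops once the total length first reaches about $\ell$. The $22m$ window then absorbs the bounded overshoot of the last step together with the two final connections. No parity information is used anywhere: every path is built from Lemma~\ref{distance} and from radius-$m$ expansions, and neither depends on bipartiteness. This is why the bipartiteness assumption of~\cite{Liu-Mon-JAMS} can be dropped.

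\textbf{Step 1: the short path $Q$.} First shrink the expansions. By Proposition~\ref{lem:smallexpansion}, each $F_i$ contains a $(D',m)$-expansion $F_i'$ of $v_i$, where $D':=D/\log^{4}n$. Next, join $F_2$ to $F_4$ by applying Lemma~\ref{distance} with forbidden set $A\cup V(F_1')\cup V(F_3')$. This set has size at most $3D/\log^3 n\le\rho(D)D/4$, since $\rho(D)\ge\eps_1/\log^2(15n)$. The connecting path has length at most $(2/\eps_1)\log^3(15n/(\eps_2 d))\le m/40$. Extend it inside $F_2$ and $F_4$ to a $v_2,v_4$-path $Q$ of length at most $3m$, removing loops if necessary.

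\textbf{Step 2: growing the long path $P$.} Set $U:=A\cup V(Q)$ and maintain a path $P_j$ starting at $v_1$, together with a $(D',m)$-expansion $E_j$ of its terminal vertex whose vertex set is disjoint from $V(P_j)\setminus\{\text{end}\}$ and from $U$. Take $P_0=\{v_1\}$ and $E_0=F_1'$.

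At step $j$, pick any vertex $y$ outside the current used set $U\cup V(P_j)\cup V(F_3')$. This set has size at most $\ell+O(D)\le n/\log^{11}n$, so by expansion the ball around $y$ in $G$ minus the used set grows to size $D'$ within radius $m$. This gives a fresh expansion $E'$. Apply Lemma~\ref{distance} between $E_j$ and $E'$, avoiding the used set. For this to work, the used set must be small compared with $\rho(D')D'$. I would arrange this by keeping the used set of size at most $\log^{10}n\cdot|E|$ relative to the sets being joined; if that fails, I would use the robust form of expansion together with the fact that $\ell\le n/\log^{12}n$.

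Append the resulting connector, of length at most $2m+m/40$, to obtain $P_{j+1}$ and $E_{j+1}=E'$. Stop at the first $j$ with $\ell(P_j)+\ell(Q)\ge \ell-4m$. At that moment the total length lies in $[\ell-4m,\ell-4m+3m]$.

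\textbf{Step 3: closing up.} Connect $E_j$ to $F_3'$ by Lemma~\ref{distance} while avoiding $U\cup V(P_j)$; this adds at most $2m+m/40$. If the resulting total falls short of $\ell$, insert one or two further detours through fresh expansions before the final connection, each adding between $1$ and $3m$. Finally check that $\ell\le\ell(P)+\ell(Q)\le\ell+22m$.

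The main obstacle is Step~2: applying the small-diameter lemma while the forbidden set, which contains the whole growing path, may have size close to $\ell$ and hence far exceed $|E_j|=D'$. The first fix I would try is to take the two sets joined at each step to be very large. Instead of $E_j$ alone, I would use the union of the expansion with a large ball grown from it in $G$ minus the used set. Expansion guarantees that this ball reaches size about $\ell\log^{5}n$ within radius $m/2$ before being blocked, because a set of size $x$ has at least $\rho(x)x$ external neighbours and the forbidden set is of smaller order. After this, Lemma~\ref{distance} applies with room to spare. The extra radius must be counted in the length budget, and the $22m$ bound is exactly what leaves room for it.
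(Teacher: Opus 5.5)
Your point that nothing in the linkage argument uses parity is exactly why bipartiteness can be dropped. The paper does not reprove this lemma: it imports Corollary~3.15 of Liu--Montgomery and observes that its proof never uses bipartiteness. Your self-contained reconstruction, however, has a genuine gap in Step~2, and the step is unjustified already at its first connection.

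\textbf{The first connection.} Every join in Steps~2--3 is an application of Lemma~\ref{distance} to sets of size $D'=D/\log^4 n$. That lemma requires the forbidden set to have size at most $\rho(D')D'/4\le \eps_1 D'/16=\eps_1 D/(16\log^4 n)$, since $\rho(x)\le\eps_1/4$ for $x\ge k/2$. But your forbidden set always contains $A$, which may have size $D/\log^3 n$. Later it also contains $V(P_j)$, of size up to about $\ell$, which can exceed $D'$ by a factor $n^{1-o(1)}$ (e.g.\ $D=\log^{10}n$, $\ell=n/\log^{12}n$).

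Enlarging $D'$ does not rescue even the first step. Step~1 needs $2D'\le\rho(D)D/4$, while joining from $F_1'$ with $A$ forbidden needs $|A|\le\rho(D')D'/4$. Together these force $|A|\lesssim\rho(D)\rho(D')D$, which is of order $\eps_1^2D/\log^4 n$ when $d$ is small compared with $D$. That is below the permitted $|A|=D/\log^3 n$. So the reserve-cores-then-link architecture itself needs a new idea.

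\textbf{The proposed remedies.} Both remedies rest on the premise that the used set is ``of smaller order'' than the boundary of the set being grown, and that premise is backwards. Expansion guarantees a set $X$ only $\rho(|X|)|X|\approx\eps_1|X|/\log^2 n$ external neighbours. All of them may lie in the used set whenever $\rho(|X|)|X|$ is smaller than it; for example, the path may run through all of $N(E_j)$.
\begin{itemize}
\item Neither an arbitrary fresh vertex $y$ nor $E_j$ (of size $D'$) need grow at all in $G$ minus the used set.
\item The ball around $E_j$ is guaranteed to grow only once it already has size $\gtrsim|\text{used set}|\log^2 n/\eps_1$, which is precisely what you would have to prove.
\item The usual counting of ``bad'' vertices yields a good $y$ only when roughly $\ell^2\,\mathrm{polylog}(n)\le n$, not for $\ell$ near $n/\log^{12}n$.
\item Robust expansion does not help, because the number of edges between $E_j$ and the path is uncontrolled.
\end{itemize}
The missing ingredient is a construction in which the two sets joined at every step are large compared with everything already built, namely $A$ and the current path. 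A vertex-by-vertex greedy extension with expansions of size $D'$ cannot supply this when $\ell\gg D'$.

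\textbf{A smaller point.} In Step~3, ``one or two detours'' cannot guarantee reaching $\ell$, since each detour is only guaranteed to add $1$. Instead, continue Step~2 until the total is at least $\ell$ and then close; this stays well within $\ell+22m$.
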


Lemma \ref{lem:spa-max-deg} is inspired by Theorem~2.7 in \cite{Liu-Mon-JAMS}. We adapt its path-length adjustment argument to our setting by modifying the subdivision-free assumption and allowing a small forbidden set. Moreover, we remove the bipartiteness assumption, so that the path lengths form an arithmetic progression with common difference $\tau\in\{1,2\}$. In particular, in the bipartite case, we may take $\tau=2$.
\begin{lemma}\label{lem:spa-max-deg}%6.6. 6.3. 6.7.
    Let $0<\eps_0\le 1/20$.
    There exists $\eps_1>0$ such that, for any $0<\eps_2\ll\eps_0$, there exists $d_0=d_0(\eps_0,\eps_1,\eps_2)$ such that the following holds for each $d_0\leq d\leq \log^{1000}n$.
    Suppose that $G$ is an $n$-vertex $TK^{(1)\ast d^9}_{(1-\eps_0)d}$-free $(\eps_1,\eps_2d)$-expander with $\delta(G)\geq d$. 
    Let $m=\tfrac{800}{\eps_1}\log^3n$.
    Let $D$ be any integer such that $\log^{10}n\leq D\leq e^{(\log\log n)^{100}}/10$.
    Let $U\subseteq V(G)$ satisfy $|U|\le D/(2\log^3n)$, and let $F_1,F_2\subseteq G-U$ be two vertex-disjoint $(D,m)$-expansions centered at $v_1$ and $v_2$, respectively.
    \begin{itemize}
      \item[$(1)$]\label{spa-max-deg1} There exists $\tau\in\{1,2\}$ such that, for every integer $\ell\in[\log^7 n,n/\log^{12}n]$, there is an integer $\ell'$ with $\ell-22m\leq \ell'\leq\ell$ such that $G-U$ contains $v_1,v_2$-paths of lengths $\ell',\ell'+\tau,\ldots,\ell'+22m\tau$.
      \item[$(2)$]\label{spa-max-deg2} If $G-F_1-F_2-U$ is a bipartite graph, then we may take $\tau=2$ in~$(1)$.
    \end{itemize}
  \end{lemma}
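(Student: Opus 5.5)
We follow the adjuster-chaining scheme of Liu and Montgomery (Theorem~2.7 of \cite{Liu-Mon-JAMS}), with the bipartiteness assumption replaced by the dichotomy of Lemma~\ref{vesp:radj}.

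\textbf{Step 1: a chain of adjusters.} Fix $F_1,F_2$ and $U$ as in the statement. Repeatedly apply Lemma~\ref{vesp:radj} with $r=1$, each time forbidding $U$, $V(F_1)\cup V(F_2)$ and everything used so far. This produces $22m$ vertex-disjoint adjusters $\mathcal A_1,\dots,\mathcal A_{22m}$, each a $(D,m,1)$-adjuster or a $(D,m,1)$-odd-adjuster. The forbidden set has size at most $22m\cdot(10m+2D)+|U|$, which is $O(mD)$. Lemma~\ref{vesp:radj} only allows $|W|\le D$, so I would apply it with $D$ replaced by $D'=100mD$ and then trim each end to a $(D,m)$-expansion using Proposition~\ref{lem:smallexpansion}. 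The hypothesis $D\le e^{(\log\log n)^{100}}/10$ leaves room for this.

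Set $\tau=1$ if at least one adjuster is odd, and $\tau=2$ otherwise. Using pigeonhole, and producing $44m$ adjusters if necessary, I would arrange that all adjusters in the chain are of one type. An odd $(D,m,1)$-adjuster shifts length by $0$ or $1$, and an ordinary one by $0$ or $2$. So a chain of $22m$ such boxes realizes every total $\ell_0+j\tau$ for $0\le j\le 22m$, provided the boxes are joined in series.

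\textbf{Step 2: linking the chain.} Join $v_1$ to the first box, consecutive boxes to each other, and the last box to $v_2$. Each link uses Lemma~\ref{cor:3.15} on the four expansions involved (the ends of the boxes, plus $F_1,F_2$), avoiding the remaining used vertices. The expansion size $D$ dominates the forbidden set by a $\log^3 n$ factor once $D'$ is chosen appropriately. All links but one are made short, of length at most $22m$, via Lemma~\ref{cor:3.15} with small target length. One designated link is given target length $\ell-\Theta(m^2)$, so its actual length lies in a window of width $22m$.

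The total base length is then $\ell'$ with $\ell-22m\le\ell'\le\ell$; the rounding slack is absorbed by the choice of the long target and by $\ell\ge\log^7 n\gg m^2$. The chained path through all boxes gives the lengths $\ell',\ell'+\tau,\dots,\ell'+22m\tau$. The upper bound $\ell\le n/\log^{12}n$ matches the hypothesis of Lemma~\ref{cor:3.15}.

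\textbf{Step 3: part (2).} If $G-F_1-F_2-U$ is bipartite, every box lies in this graph. The shortest odd cycle $\Gamma$ used in Lemma~\ref{lem:cycleadjuster}-type constructions therefore cannot lie inside a box. Consequently every adjuster produced is an ordinary one, though this needs checking against the proof of Lemma~\ref{vesp:radj}. Hence $\tau=2$.

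\textbf{Main obstacle.} The delicate step is the linking bookkeeping in Step 2. Each application of Lemma~\ref{cor:3.15} requires the forbidden set to be at most $D/\log^3 n$, while the ends of later boxes must stay intact. I would first try reserving all expansions before any linking and linking greedily, shrinking expansions to exclude vertices used by earlier links. This follows the robust-expansion argument in \cite{Liu-Mon-JAMS}, which I would adapt line by line.
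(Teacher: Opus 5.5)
Your Steps 1--2 contain a genuine gap. You flag the chaining of $22m$ simple adjusters as the main obstacle but never resolve it, and the parameters you propose do not work.

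\begin{itemize}
\item Each short link between consecutive boxes must avoid $U$ and all boxes. It must also avoid $V(F_1)\cup V(F_2)$, because Lemma~\ref{cor:3.15} at the end needs $F_1,F_2$ to be intact expansions inside $G$ minus the forbidden set. Finally, it must not destroy the ends of the other boxes.
\item Lemma~\ref{lem:smalldiameterlemma2}, applied between two sets of size $x$, tolerates only $|W|\le 10x/\log^3 n$ forbidden vertices. But $V(F_1)\cup V(F_2)$ alone has $2D$ vertices, and the other box ends have $\Theta(mD)$ vertices. Your scale $D'=100mD$ is therefore short by roughly a $\log^3 n$ factor.
\item $D'=100mD$ is also not admissible in Lemma~\ref{vesp:radj} when $D$ is close to $e^{(\log\log n)^{100}}/10$. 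The hypothesis leaves only a constant factor $10$ of room.
\item ``Shrinking expansions'' after links pass through them does not help. A $(D,m)$-expansion is not robust under vertex deletion: removing one vertex near the centre can cut most of it off.
\item Lemma~\ref{cor:3.15} links two pairs without controlling which endpoint goes to which. Using it for interior links can close a box into a cycle instead of extending the chain.
\end{itemize}

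Making such a chain work needs a careful size hierarchy, and that is exactly what is packaged inside Lemma~\ref{vesp:radj}.

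None of this is needed, because Lemma~\ref{vesp:radj} already allows $r$ up to $22m$. The paper's proof runs as follows.

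\begin{itemize}
\item Apply Lemma~\ref{vesp:radj} once, with scale $3D$ and $W=U\cup V(F_1)\cup V(F_2)$. Then $|W|\le 3D$, and $3D\le e^{(\log\log n)^{100}}$ is precisely what the factor $1/10$ in the hypothesis buys.
\item This yields one $(3D,m,22m)$-adjuster or odd-adjuster $\mathcal A=(v_3,F_3,v_4,F_4,A)$ with $\ell(\mathcal A)\le |A|+1\le 400m^2$. Set $\tau=2$ or $\tau=1$ accordingly; $\tau$ is fixed before $\ell$ is chosen.
\item Trim $F_3,F_4$ to $(D,m)$-expansions $F_3',F_4'$ using Proposition~\ref{lem:smallexpansion}.
\item For each $\ell$, apply Lemma~\ref{cor:3.15} once to $F_1,F_2,F_3',F_4'$. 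Use forbidden set $A\cup U$, of size at most $400m^2+D/(2\log^3 n)\le D/\log^3 n$, and target $\bar\ell=\ell-22m-\ell(\mathcal A)\ge 0$.
\item The uncontrolled pairing in the linkage is harmless, since $v_3,v_4$ play symmetric roles. Then $\ell'=\ell(P)+\ell(Q)+\ell(\mathcal A)$ lies in $[\ell-22m,\ell]$, and the box supplies the $22m+1$ lengths.
\end{itemize}

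For part (2), you do not need to inspect the proof of Lemma~\ref{vesp:radj}. An odd-adjuster's box contains two $v_3,v_4$-paths whose lengths differ by one, hence an odd closed walk and therefore an odd cycle, all inside $G-U-V(F_1)-V(F_2)$. So when that graph is bipartite, the lemma must return an ordinary adjuster, giving $\tau=2$.
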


  \begin{proof}
    Let
    \(
      D_A=3D,
      ~
      W_0=U\cup V(F_1)\cup V(F_2).
    \)
    Then $|W_0|\le D_A$ and $D_A\le e^{(\log\log n)^{100}}$. Lemma~\ref{vesp:radj}, applied with scale $D_A$, gives a $(D_A,m,22m)$-adjuster or a $(D_A,m,22m)$-odd-adjuster $\mathcal{A}=(v_3,F_3,v_4,F_4,A)$ in $G-W_0$, with $\ell(\mathcal{A})\le |A|+1\le400m^2$. Let $\tau=2$ if $\mathcal A$ is an adjuster, and let $\tau=1$ if $\mathcal A$ is an odd-adjuster. By Proposition~\ref{lem:smallexpansion}, choose $(D,m)$-subexpansions $F'_3\subseteq F_3$ and $F'_4\subseteq F_4$ with the same centers.

    Fix an integer $\ell\in[\log^7 n,n/\log^{12}n]$ and let $\bar{\ell}=\ell-22m-\ell(\mathcal{A})$. For sufficiently large $n$, we have $0\le\bar{\ell}\le n/\log^{12}n$. As $|A\cup U|\leq400m^2+D/(2\log^3n)\leq D/\log^3n$, Lemma~\ref{cor:3.15}, applied to $F_1,F_2,F'_3,F'_4$, gives vertex-disjoint paths $P$ and $Q$ in $G-U-A$ connecting $\{v_1,v_2\}$ to $\{v_3,v_4\}$, respectively, such that $\bar{\ell}\leq\ell(P)+\ell(Q)\leq\bar{\ell}+22m$. Without loss of generality, we assume that $P$ is a $v_1,v_3$-path and $Q$ is a $v_2,v_4$-path.

    Let $\ell'=\ell(P)+\ell(Q)+\ell(\mathcal A)$. Then $\ell-22m\leq\ell'\leq\ell$. By the definition of an adjuster or odd-adjuster, for every $i\in\{0,1,\ldots,22m\}$, there is a $v_3,v_4$-path $R_i$ in $G[A\cup\{v_3,v_4\}]$ of length $\ell(\mathcal A)+\tau i$. Hence $P\cup R_i\cup Q$ is a $v_1,v_2$-path of length $\ell'+\tau i$ in $G-U$. This proves~$(1)$.

    Finally, if $G-W_0$ is bipartite, Lemma~\ref{vesp:radj} gives an adjuster. Thus $\tau=2$, proving~$(2)$.
  \end{proof}

We are now ready to prove~Lemma~\ref{lem:sparse}.
%6.5. 6.8. 
\begin{proof}[Proof of Lemma~\ref{lem:sparse}]
    For sufficiently large $d$, we have $(d/2)^9\ge d^8$, so $G$ is $TK_{(1-\eps_0)d/2}^{(1)\ast(d/2)^9}$-free. Thus we may apply Lemmas~\ref{lem:cycleadjuster} and~\ref{lem:spa-max-deg} with $d/2$ in place of $d$ and $2\eps_2$ in place of $\eps_2$.
    
    Here, we only consider the case that $G$ is non-bipartite. When $G$ is bipartite, the argument is almost the same. 
    Let $D=\log^{100}n$, and let $\Gamma$ be a shortest odd cycle in $G$. Choose distinct vertices $v_1,v_2\in V(G)\setminus V(\Gamma)$. By Lemma~\ref{lem:cycleadjuster}, there is a $(D,m/4,1)$-odd-adjuster $\mathcal A=(v_1,F_1,v_2,F_2,A)$. 
 By~\ref{adjla3}, $|A|\le10m\le D/(2\log^3n)$, and $F_1,F_2$ are also $(D,m)$-expansions. Lemma~\ref{lem:spa-max-deg}, applied with $U=A$, gives some $\tau\in\{1,2\}$ such that, for every integer $\ell\in[\log^7n,n/\log^{12}n]$, there is an integer $\ell'$ with $\ell-22m\le\ell'\le\ell$ for which $G-A$ contains $v_1,v_2$-paths of lengths $\ell',\ell'+\tau,\ldots,\ell'+22m\tau$.
    For each such $\ell$, fix the corresponding $\ell'_\ell$.  Let $Q_0,Q_1$ be the two $v_1,v_2$-paths in $\mathcal A$ of lengths $\ell(\mathcal{A})$ and $\ell(\mathcal{A})+1$, respectively. For a fixed $\ell$, closing the paths with $Q_0$ or $Q_1$ gives cycles of every length in
    \[
      [\ell(\mathcal{A})+\ell'_\ell,\ell(\mathcal{A})+\ell'_\ell+22m+1]\quad\text{if }\tau=1,
      \qquad
      [\ell(\mathcal{A})+\ell'_\ell,\ell(\mathcal{A})+\ell'_\ell+44m+1]\quad\text{if }\tau=2.
    \]
    In the second case, the two closing paths give the two parities since $Q_0$ and $Q_1$ have different parities.  
    %Since $\ell-22m\le\ell'_\ell\le\ell$, we have $|\ell'_{\ell+1}-\ell'_\ell|\le22m+1$. Thus the intervals corresponding to $\ell,~\ell+1$ have nonempty intersection. Also,
    Notice that for any $\ell\in [\log^7n, n/\log^{12}n]$, $\ell-22m\le\ell'_\ell\le\ell$. So   $\ell(\mathcal{A})+\ell$ lies in the interval corresponding to $\ell$. Hence every integer in $[\ell(\mathcal{A})+\log^7n,\ell(\mathcal{A})+n/\log^{12}n]$ is a cycle length of $G$.

    Since $\ell(\mathcal{A})\le10m+1$ and $d\le\log^{1000}n$, the interval $[2\log^7n,2\log^7n+Cd]$ is contained in $[\ell(\mathcal{A})+\log^7n,\ell(\mathcal{A})+n/\log^{12}n]$ for sufficiently large $n$. It follows that $G$ contains at least $Cd/2$ consecutive even cycle lengths.
\end{proof}

 \subsection{The subdivision-present case}\label{sec:sparse-expander}
In this case, we first show that a large subdivision yields either cycles of consecutive even lengths or a dense subgraph.
\begin{lemma}\label{lem:new}
  Let \(0<1/n,1/d\ll \eps_1,\eps_2\ll \eps\ll 1/5\) with \(d\leq \log^{1000} n\). Let \(G\) be an \(n\)-vertex \((\eps_1,\eps_2d)\)-expander with \(\delta(G)\geq (1-\eps)d/2\) and \(d(G)\geq (1-\eps)d\). Suppose that \(G\) contains a copy \(T\) of \(TK_t^{(1)\ast d^7}\), where \(t=\lfloor (1-2\eps)d/2\rfloor\). Then at least one of the following holds:
  \begin{enumerate}[label=\textup{(\roman*)}]
    \item\label{sub-pre-lem1} \(G\) contains \(d/2\) consecutive even cycle lengths,
    \item\label{sub-pre-lem2} there exists a subgraph \(H\subseteq G-\mathsf{Br}(T)\) with average degree at least \(8\eps d\).
  \end{enumerate}
\end{lemma}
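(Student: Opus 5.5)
We argue by contrapositive. Assume \ref{sub-pre-lem2} fails, so every subgraph of $G-\mathsf{Br}(T)$ has average degree less than $8\eps d$. We then build $d/2$ consecutive even cycle lengths from $T$, imitating the augmentation idea of the dense case. Write $B:=\mathsf{Br}(T)$ with $|B|=t=\lfloor(1-2\eps)d/2\rfloor$.

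\emph{The interval inside $T$.} For distinct $b,b'\in B$ and any $k\le t$, choose $k$ distinct branch vertices and subdividing vertices greedily; the $d^7$ parallel paths per pair leave plenty of room. This gives cycles of every even length $2k$ with $2\le k\le t$. The lengths $4,6,\ldots,2t$, together with the $2$-cycle convention, form an interval of $t\approx(1-2\eps)d/2$ terms. We still need about $\eps d$ further terms, namely lengths $2t+2,\ldots,d$.

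\emph{Degree accounting.} Let $R:=V(G)\setminus B$. Since $G[R]$ has average degree below $8\eps d$ and $d(G)\ge(1-\eps)d$, almost all edges of $G$ meet $B$. Concretely,
\[
  e_G(B,R)\ge e(G)-\tbinom{t}{2}-4\eps d|R| \ge (1/2-6\eps)d|R|-O(d^2).
\]
Because $|B|\le d/2$ and $n\gg d$, this forces most vertices of $R$ to send at least $(1/2-7\eps)d$ edges into $B$, that is, to be adjacent to almost all of $B$. Let $A\subseteq R$ be the set of these vertices. Apart from $O(\eps n)$ vertices, $A$ is all of $R$, and $|A|\ge n/2\gg d^{8}$.

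\emph{Extending the interval.} A vertex $a\in A$ adjacent to $b,b'\in B$ can replace one subdivided edge $b\,s\,b'$ of a cycle in $T$ by the path $b\,a\,b'$. This keeps the length the same, so it is useless on its own. We need to lengthen cycles by exactly $2$ using vertices outside $B$, so we do the following.

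\textbf{Step 1.} Find many vertex-disjoint paths $a\,x\,a'$ with $a,a'\in A$, that is, length-$2$ paths in $G[R]$ whose ends lie in $A$.

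\textbf{Step 2.} Replace a segment $b\,s\,b'$ of a cycle by $b\,a\,x\,a'\,b'$, which gains exactly $2$. This works provided $a$ is adjacent to $b$ and $a'$ is adjacent to $b'$, and this holds for almost all choices because each vertex of $A$ sees almost all of $B$.

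\textbf{Step 3.} Performing $j$ disjoint such replacements on a Hamiltonian-type cycle through all of $B$ of length $2t$ gives lengths $2t+2j$ for $j\le\eps d+1$. This reaches $d$ and yields \ref{sub-pre-lem1}.

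To get Step 1, note that $\delta(G)\ge(1-\eps)d/2$ while $|B|\le(1-2\eps)d/2$, so every vertex of $R$ has at least $\eps d/2$ neighbours in $R$. Hence $G[R]$ has minimum degree $\Omega(\eps d)$, which is compatible with average degree below $8\eps d$. Greedily we then obtain $\eps d$ disjoint cherries $a\,x\,a'$ in $G[R]$, using $|R|\gg d$ and the fact that most vertices lie in $A$.

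\emph{Main obstacle.} The hard part is the assertion that a typical vertex of $R$ is adjacent to \emph{almost all} of $B$. The averaging above only gives this on average. Vertices with smaller degree into $B$ may carry the missing edges, and the cherries need endpoints whose $B$-neighbourhoods contain the prescribed branch vertices $b,b'$. To handle this I would run a cleaning procedure like Claim~\ref{claim:C-small}. I expect this, rather than the construction itself, to be the delicate step. The procedure works as follows.

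Call $a\in R$ \emph{rich} if $d_B(a)\ge(1/3+20\eps)d$. Using the cyclic observation from the proof of Claim~\ref{claim:C-small}, applied to the current cycle through $B$, any two rich vertices joined by a path of length at most $2$ in $R$ have neighbours at distance $1$ or $2$ on the cycle, and this gives a $+2$ augmentation. If no augmentation exists, the rich vertices form a set with at most one internal edge, and the non-rich vertices have at least $d/7$ neighbours inside $R$. A positive proportion of non-rich vertices would then contradict the $8\eps d$ average-degree bound, via a Mader-type count (Lemma~\ref{lem:Mader-connected}). Otherwise the edge count of Claim~\ref{claim:A-B-bound} bounds $e(G)$ below $(1-\eps)dn/2$, which is a contradiction.
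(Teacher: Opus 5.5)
There is a genuine gap. You never use that $G$ is an $(\eps_1,\eps_2d)$-expander, and without expansion the statement is false. Let $s=\lceil(1-\eps)d/2\rceil+1$ and $G=K_{s,n-s}$, with $\mathsf{Br}(T)$ in the $s$-side and all subdividing vertices in the other side. Then $\delta(G)\ge(1-\eps)d/2$ and $d(G)\ge(1-\eps)d$. Every subgraph of $G-\mathsf{Br}(T)=K_{s-t,n-s}$ has average degree at most $2(s-t)\le\eps d+6<8\eps d$, and every cycle has length at most $2s<d$. So neither \textup{(i)} nor \textup{(ii)} holds.

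In this graph your set $A$ is the whole large side, which is independent. Every cherry $a\,x\,a'$ with $a,a'\in A$ therefore has its centre among the $s-t\le\eps d/2+3$ non-branch vertices of the small side. Step~1 yields only about $\eps d/2$ disjoint cherries, and Step~3 stops near length $(1-\eps)d$ rather than $d$. The sentence ``greedily we then obtain $\eps d$ disjoint cherries'' is exactly where the argument breaks: minimum degree $\eps d/2$ in $G[R]$ together with $|R|\gg d$ does not imply it.

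Your fallback cleaning procedure fails for the same reason. Claim~\ref{claim:A-B-bound} worked only because a minimal counterexample satisfies the exact bound $e(G)\ge\tfrac{(d-1)(n-1)}{2}$. Here the hypothesis $d(G)\ge(1-\eps)d$ leaves slack of order $\eps dn$. The example above meets it, admits no augmentation once the cycle has length $2s$, and has only about $\eps d/2$ non-rich vertices. So neither of your concluding contradictions materialises.

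The missing idea is to use expansion to produce about $\eps d$ disjoint short augmenting paths outside $\mathsf{Br}(T)$. The paper proceeds as follows.
\begin{enumerate}
\item Set $A=\{v\notin\mathsf{Br}(T):d_{\mathsf{Br}(T)}(v)\ge16\eps d\}$, and let $B_1$ be the remaining non-branch vertices with at least $16\eps d$ neighbours in $A$. An edge count gives $|A|\ge(1-\sqrt\eps)n$.
\item If $|B_1|\ge3\eps d$, one builds $3\eps d$ disjoint paths $x\,a\,v\,b\,y$ directly.
\item Otherwise every vertex of $C_0:=V(G)\setminus(\mathsf{Br}(T)\cup A\cup B_1)$ has at least $d/3$ neighbours in $C_0$. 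The failure of \textup{(ii)} then forces $C_0=\varnothing$.
\item Now expansion enters. If a maximal matching $M$ in $G[A]$ had fewer than $8\eps d$ edges, then a set $A_0\subseteq A\setminus V(M)$ of size $n/5$ would be independent with $N_G(A_0)\subseteq\mathsf{Br}(T)\cup B_1\cup V(M)$. That set has at most $d$ vertices, contradicting $|N_G(A_0)|\ge\rho(n/5)\,n/5$.
\item The matching edges $a_ib_i$, extended to paths $x_ia_ib_iy_i$ with $x_i,y_i\in\mathsf{Br}(T)$ and used two at a time, supply the missing $\eps d$ terms.
\end{enumerate}

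Finally, the obstacle you flagged is not the real one. Since $\mathsf{Br}(T)$ spans a subdivided $K_t$, and every matching of $K_t$ lies in a Hamilton cycle, you may choose the Hamilton cycle after choosing the augmenting paths. Their ends then need only $16\eps d$ neighbours in $\mathsf{Br}(T)$, enough to pick distinct unused attachment vertices greedily, not almost all of it.
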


\begin{proof}
  Suppose, for contradiction, that neither~\ref{sub-pre-lem1} nor~\ref{sub-pre-lem2} holds.
  For simplicity, let $R=\mathsf{Br}(T)$ and $\Gamma=G-\mathsf{Br}(T)$.
  Since condition~\ref{sub-pre-lem2} does not hold, we have in particular
  \begin{equation}\label{ine:newl}
    e(\Gamma)<4\eps d |V(\Gamma)|\leq 4\eps dn.
  \end{equation}
  Let
  \(
    A=\{v\in V(\Gamma):d_R(v)\geq16\eps d\}\) and 
\(    B_1=\{v\in V(\Gamma)\setminus A:d_A(v)\geq16\eps d\}.
  \)
  We first show that \(A\) contains almost all vertices of $\Gamma$.

  \begin{claim}\label{claim:A-large}
    \(|A|\geq (1-\sqrt{\eps})n\).
  \end{claim}

  \begin{poc}
    Suppose, to the contrary, that \(|A|<(1-\sqrt{\eps})n\). By the definition of $A$, every vertex in \(V(\Gamma)\setminus A\) has fewer than \(16\eps d\) neighbors in \(R\). Then
    \(
      e_G(R,V(\Gamma)\setminus A)\leq16\eps d|V(\Gamma)\setminus A|\leq16\eps dn.
    \)
    In addition,
    \(e_G(R,A)\leq t|A|\leq\tfrac{(1-2\eps)d}{2}|A|\) and \(e(G[R])\leq d^2\).
    Therefore
    \[
      \begin{aligned}
        e(\Gamma)
        &=e(G)-e_G(R,V(\Gamma))-e(G[R])\geq \tfrac{(1-\eps)d}{2}n-\tfrac{(1-2\eps)d}{2}|A|-16\eps dn-d^2\\
        &> \tfrac{(1-\eps)d}{2}n-\tfrac{(1-2\eps)d}{2}(1-\sqrt{\eps})n-16\eps dn-d^2\geq \left(\tfrac{\sqrt{\eps}}{3}-16\eps\right)dn-d^2
        \geq4\eps dn,
      \end{aligned}
    \]
    where the last inequality holds since $d\leq\log^{1000}n$ and $\eps\ll1/5$. This contradicts~\eqref{ine:newl}. Hence \(|A|\geq(1-\sqrt{\eps})n\).
  \end{poc}

  We proceed with the following observation. Let \(T_0\subseteq T\) be a copy of \(TK_t^{(1)}\) with branch vertex set $\mathsf{Br}(T)$. Since \(K_t\) contains cycles of all lengths \(3,4,\ldots,t\), the graph \(T_0\) contains cycles of all even lengths \(6,8,\ldots,2t\). Moreover, suppose that there are \(m\) internally vertex-disjoint paths \(P_1,\ldots,P_m\), denoted by $\mathcal P$, satisfying the following properties:
  \begin{enumerate}[label=\textup{(\arabic*)}]
    \item\label{path-sec6-1} each $P_i\in\mathcal P$ has length $4$,
    \item\label{path-sec6-2} the endvertices of all $P_i\in\mathcal P$ lie in \(R\),
    \item\label{path-sec6-3} for every $i\in[m]$, $\mathsf{Int}(P_i)\cap V(T_0)=\varnothing$,
    \item\label{path-sec6-4} the edges of the paths in $\mathcal P$ incident with \(R\) form a matching.
  \end{enumerate}
  Then \(T_0\cup P_1\cup\cdots\cup P_m\) contains cycles of all even lengths
  \(
    6,8,\ldots,2t+2m.
  \)
  Similarly, if there are \(m\) internally vertex-disjoint paths \(P_1,\ldots,P_m\) of length \(3\) satisfying conditions~\ref{path-sec6-2}--\ref{path-sec6-4}, then we obtain cycles of all even lengths
  \(
    6,8,\ldots,2t+m.
  \)
  Indeed, by condition~\ref{path-sec6-4}, the pairs of endvertices of these paths form a matching in \(K_t\), and every matching in \(K_t\) is contained in a Hamilton cycle. For each \(1\leq q\leq m\), choose \(q\) of the paths of length four and a Hamilton cycle containing their corresponding pairs of endvertices. Replacing the corresponding two-edge paths in \(T_0\) by these paths gives a cycle of length \(2t+2q\). For paths of length three, use \(2q\) of them for each \(1\leq q\leq m/2\); the same replacement gives a cycle of length \(2t+2q\). Together with the cycles of lengths \(6,8,\ldots,2t\) in \(T_0\), this proves the observation.

  Based on the observation, we next show that \(B_1\) is small.

  \begin{claim}\label{claim:B1-small}
    \(|B_1|<3\eps d\).
  \end{claim}
  \begin{poc}
    Suppose that \(|B_1|\geq3\eps d\), and choose \(m=3\eps d\) distinct vertices \(v_1,\ldots,v_m\in B_1\). Let $\mathcal P$ be a maximal family of internally vertex-disjoint paths, each of the form
    \(
      P_i=x_i a_i v_i b_i y_i
    \)
    for some \(i\in[m]\), such that \(x_i,y_i\in R\), \(a_i,b_i\in A\), and all vertices \(x_i,y_i\) are distinct. We first show that $|\mathcal P|=m$. Indeed, suppose to the contrary that $|\mathcal P|<m$, and choose \(j\in[m]\) such that no path in $\mathcal P$ contains \(v_j\). By the definition of $B_1$, \(d_A(v_j)\geq16\eps d\). Since $|V(\mathcal P)\cap A|<2m\leq7\eps d$, it is possible to choose two distinct vertices
    \(a_j,b_j\in N_A(v_j)\setminus V(\mathcal P)\).
    Similarly, since \(d_R(a_j),d_R(b_j)\geq16\eps d\) and
    $|V(\mathcal P)\cap R|<2m\leq7\eps d$, we may greedily choose distinct vertices
    \(x_j\in N_R(a_j)\setminus V(\mathcal P)\) and
    \(y_j\in N_R(b_j)\setminus(V(\mathcal P)\cup\{x_j\})\).
    Therefore, we find another desired path
    \(P_j=x_ja_jv_jb_jy_j\), contradicting the maximality of $\mathcal P$. Hence $|\mathcal P|=m$.

    Recall that \(T=TK_t^{(1)\ast d^7}\) has \(d^7\) subdividing vertices for every pair of branch vertices. Since the total number of internal vertices used by the paths in $\mathcal P$ is \(O(\eps d)\), we may choose a copy \(T_0\subseteq T\) of \(TK_t^{(1)}\) with branch set \(R\) whose subdividing vertices avoid all internal vertices of the paths in $\mathcal P$. By the observation above, \(T_0\cup P_1\cup\cdots\cup P_m\) contains at least
    \[
      t+m-2\geq\tfrac{(1-2\eps)d}{2}+3\eps d-3\geq\tfrac d2
    \]
    consecutive even cycle lengths, provided \(d\) is sufficiently large. This contradicts the assumption that condition~\ref{sub-pre-lem1} does not hold. Hence \(|B_1|<3\eps d\).
  \end{poc}

  Let
  \(
    C_0=V(\Gamma)\setminus(A\cup B_1).
  \)
  We claim that \(C_0=\varnothing\). Otherwise, for every vertex \(v\in C_0\), since \(v\notin A\cup B_1\), we have \(d_R(v)<16\eps d\) and \(d_A(v)<16\eps d\). Together with \(|B_1|<3\eps d\), this gives
  \[
    d_{C_0}(v)\geq\delta(G)-d_R(v)-d_A(v)-|B_1|
    \geq\tfrac{(1-\eps)d}{2}-16\eps d-16\eps d-3\eps d
    \geq\tfrac d3,
  \]
  where the last inequality holds since \(\eps\ll1/5\). Thus \(\Gamma[C_0]\) has average degree at least \(d/3\), which is larger than \(8\eps d\). This contradicts the assumption that condition~\ref{sub-pre-lem2} does not hold. Hence
  \(
    V(\Gamma)=A\cup B_1.
  \)

  We next prove that \(G[A]\) contains a large matching.

  \begin{claim}\label{claim:A-matching}
    There exists a matching of size at least \(8\eps d\) in \(G[A]\).
  \end{claim}

  \begin{poc}
    Let \(M\) be a maximal matching in \(G[A]\). Suppose that $|M|<8\eps d$. Then \(|V(M)|<16\eps d\), and \(A\setminus V(M)\) is an independent set. Let $W=R\cup B_1\cup V(M)$. Then
    \begin{equation*}
      |W|\leq|R|+|B_1|+|V(M)|
      \leq t+3\eps d+16\eps d\leq d.
    \end{equation*}
    By Claim~\ref{claim:A-large}, \(|A|\geq(1-\sqrt{\eps})n\). Since \(d\leq\log^{1000}n\), it is possible to choose a set \(A_0\subseteq A\setminus V(M)\) of size \(n/5\). As \(A\setminus V(M)\) is independent and \(V(\Gamma)=A\cup B_1\), every neighbor of \(A_0\) in \(G\) lies in \(W\). Hence
    $|N_G(A_0)|\leq|W|\leq d$.
    On the other hand, for sufficiently large \(n\), we have
    \(\eps_2d/2\leq|A_0|=n/5\leq n/2\), and hence the expansion property gives $|N_G(A_0)|\geq\rho(n/5)\cdot n/5\gg d$, a contradiction. Therefore \(G[A]\) contains a matching of size at least \(8\eps d\).
  \end{poc}

  By Claim~\ref{claim:A-matching}, choose \(r=5\eps d\) disjoint edges \(e_i=a_ib_i\), \(1\leq i\leq r\), from the matching above. It is possible to greedily choose distinct vertices \(x_i,y_i\in R\) such that
  \(
    x_ia_i,\ b_iy_i\in E(G)
  \)
  for every \(i\in[r]\). Indeed, each \(a_i,b_i\in A\) has at least \(16\eps d\) neighbors in \(R\), while fewer than \(2r\leq11\eps d\) vertices have been used. Thus we obtain internally vertex-disjoint paths
  \(
    Q_i=x_ia_ib_iy_i
  \)
  of length \(3\), whose edges incident with the vertices in
  \(\bigcup_{i\in[r]}V(Q_i)\cap R\) form a matching. As above, choose a copy \(T_0\subseteq T\) of \(TK_t^{(1)}\) with \(\mathsf{Br}(T_0)=R\) whose subdividing vertices avoid all internal vertices of the paths \(Q_1,\ldots,Q_r\).

  Using an even number of the paths \(Q_i\), the observation above gives cycles of all even lengths
  \(
    6,8,\ldots,2t+r.
  \)
  Therefore, the number of consecutive even cycle lengths obtained is at least
  \[
    t+r/2-2
    \geq\tfrac{(1-2\eps)d}{2}+\tfrac{5\eps d}{2}-5
    \geq\tfrac d2
  \]
  for \(d\) sufficiently large. This gives \(d/2\) consecutive even cycle lengths, contradicting the assumption that condition~\ref{sub-pre-lem1} does not hold. Thus at least one of conditions~\ref{sub-pre-lem1} and~\ref{sub-pre-lem2} holds.
\end{proof}
\subsection{Putting things together: proof of Lemma~\ref{lem:den-exp-reduced}}

\begin{proof}[Proof of Lemma~\ref{lem:den-exp-reduced}]
We divide the proof according to whether $H$ contains a sufficiently large clique subdivision. Let
\[
d_0:=(1-\eps)d,\qquad
\eps_0:=2\eps,\qquad
t:=\tfrac{(1-\eps_0)d}{2}.
\] 
We first consider the subdivision-free case. Let
\[
\widetilde{\eps}_0
:=
\tfrac{\eps_0-\eps}{1-\eps}
=
\tfrac{\eps}{1-\eps},
\qquad
\widetilde{\eps}_2
:=
\tfrac{\eps_2}{1-\eps},
\qquad
\widetilde C
:=
\tfrac{3}{1-\eps}.
\]
Then $(1-\widetilde{\eps}_0)d_0 = (1-\eps_0)d = 2t$, $\widetilde{\eps}_2d_0=\eps_2d$, and $\widetilde C d_0=3d$.
Suppose that $H$ is $TK_t^{(1)\ast d_0^8}$-free. Since $H$ is an
$(\eps_1,\widetilde{\eps}_2d_0)$-expander and
$\delta(H)\ge \tfrac{d_0}{2}$,
Lemma~\ref{lem:sparse}, applied with 
$(G,d,\eps_0,C,\eps_1,\eps_2)
=
(H,d_0,\widetilde{\eps}_0,\widetilde C,
 \eps_1,\widetilde{\eps}_2)$, shows that $H$ contains at least $\tfrac{\widetilde C d_0}{2}=\tfrac{3}{2}d>\tfrac d2$ consecutive even cycle lengths.

Therefore, from now on, we need only consider the case in which $H$ contains a copy $T$ of $TK_t^{(1)\ast d_0^8}$.
Since $d_0=(1-\eps)d$, we have $d_0^8\ge d^7$ for sufficiently large $d$. Thus $T$ contains a copy of $TK_t^{(1)\ast d^7}$.
By Lemma~\ref{lem:new}, $H$ contains $d/2$ consecutive even cycle lengths unless there exists a subgraph $G'\subseteq H-\mathsf{Br}(T)$ with average degree at least $8\eps d$.
Denote $d_2:=8\eps(1-\eps)d$ and $c':=\tfrac{\eps_2d_2}{d(G')}$.  Note that $0<c'\leq \eps_2(1-\eps)<1/2$. Applying Lemma~\ref{lem:sublinear} to $G'$ with parameter $c'$, we get that $G'$ contains an $h'$-vertex $(\eps_1,c'd(G')=\eps_2d_2)$-expander $H'$ satisfying  $d(H')\ge (1-\eps)d(G')\geq (1-\eps)8\eps d=d_2$ and 
$\delta(H')\ge 4\eps(1-\eps)d=\tfrac{1}{2}d_2$.
Moreover, $H'$ is also $\nu'd_2$-connected for some $\nu'=\nu'(\eps_1,\eps_2,\eps)>0$.

Choose a constant $K$ such that $\tfrac1d\ll\tfrac1K\ll\nu, \nu'$.
Next, we divide the proof into  sparse, intermediate, and dense cases, respectively:
\[
d_2\le\log^{1000}h',
\qquad
\log^{1000}h'\le d_2\le\tfrac{h'}K,
\qquad
d_2>\tfrac{h'}K.
\]

\medskip

\noindent\textbf{Sparse Case.}
Suppose that $d_2\le\log^{1000}h'$. If $H'$ is $TK_{(1-\eps_0)d_2/2}^{(1)\ast d_2^8}$-free, 
then Lemma~\ref{lem:sparse}, applied with
$(G,d,\eps_0,C,\eps_1,\eps_2)
=
\left(
H',d_2,\eps_0,\tfrac{8d}{d_2}=\tfrac{1}{\eps(1-\eps)},\eps_1,\eps_2
\right)$,
shows that $H'$ contains $\tfrac{Cd_2}{2} = 4d$
consecutive even cycle lengths. In particular, $G$ contains
$d/2$ consecutive even cycle lengths.

We may therefore suppose that $H'$ contains a copy $T'$ of $TK_{(1-\eps_0)d_2/2}^{(1)\ast d_2^8}$. Choose a copy $T_1'$ of $TK_{(1-\eps_0)d_2/2}^{(1)}$ as a subgraph of $T'$. As $|V(T_1')| \le \tfrac{(1-\eps_0)d_2}{2}+d_2^2 < d_0^3$,
and every pair of branch vertices of $T$ has $d_0^8$ available subdividing vertices, there exists a copy $T_1\cong TK_t^{(1)}$ as a subgraph of $T$ such that $V(T_1)\cap V(T_1')=\varnothing$. Let
$B:=\mathsf{Br}(T_1)$ and $B':=\mathsf{Br}(T_1')$.
As $\min\{|V(T_1)|,|V(T_1')|\}>\nu d$ and $H$ is $\nu d$-connected, Menger's theorem gives at least $18$ vertex-disjoint paths between $V(T_1)$ and $V(T_1')$.
Classify each of these paths according to the parity of its length, whether its endvertex in $T_1$ lies in $B$, and whether its endvertex in $T_1'$ lies in $B'$. There are at most eight classes. Hence, by the pigeonhole principle, there exist two vertex-disjoint paths $P_1, P_2$ such that, after writing $P_i$ as a $u_i,v_i$-path and setting $\ell_i:=\ell(P_i)$, the following holds.
\begin{itemize}
    \item[(1)] For every $i\in\{1,2\}$, $u_i\in V(T_1)$ and $v_i\in V(T_1')$; moreover, $\ell_1\equiv\ell_2\pmod 2$.
    \item[(2)] $u_1,u_2\in B$ or $u_1,u_2\in V(T_1)\setminus B$.
    \item[(3)]  $v_1,v_2\in B'$ or $v_1,v_2\in V(T_1')\setminus B'$.
\end{itemize}

If $u_1,u_2\in B$, then $T_1$ contains $u_1u_2$-paths of length $2,4,\ldots,2(t-1)$.
If $u_1,u_2\in V(T_1)\setminus B$, then $T_1$ contains
$u_1,u_2$-paths of lengths 
$4,6,\ldots,2t$. 
Consequently, for every even integer 
$\ell\in[4,2t-2]$, 
there exists a $u_1,u_2$-path $P$ of length $\ell$ in $T_1$.

By the same argument, for every even integer  $\ell' \in [4,(1-\eps_0)d_2-2]$, there exists a $v_1,v_2$-path $P'$ of length $\ell'$ in $T_1'$.
Therefore, $P\,P_1\,P'\,P_2$ forms a cycle in $H$. Varying $P$ and $P'$ gives cycles of every even length in
$\left[
\ell_1+\ell_2+8,\,
\ell_1+\ell_2+2t+(1-\eps_0)d_2-4
\right]$.
The number of consecutive even cycle lengths in this interval is
$t+\tfrac{(1-\eps_0)d_2}{2}-5 =
\tfrac{(1-2\eps)d}{2}+4\eps(1-\eps)(1-2\eps)d-5
>\tfrac d2$
for sufficiently large $d$. Hence $G$ contains $d/2$ consecutive even cycle lengths.

\medskip

\noindent\textbf{Intermediate Case.}
Suppose that 
$\log^{1000}h'\le d_2\le\tfrac{h'}K$. 
By Lemma~\ref{lem:dense}, applied with 
$(G,d,C)
=
\left(
H',d_2,\tfrac{1}{\eps(1-\eps)}
\right)$, 
the graph $H'$ contains
$\tfrac{Cd_2}{2}
=
\tfrac{1}{2\eps(1-\eps)}
\cdot 8\eps(1-\eps)d
=
4d$
consecutive even cycle lengths. Thus $G$ contains $d/2$
consecutive even cycle lengths.

\medskip

\noindent\textbf{Dense Case.}
In this case, $d_2>h'/K$. Apply Lemma~\ref{lem:verydense-path} with a fixed constant $\alpha\le 1/22$ and  $(G,d,K,\nu)=(H',d_2,K,\nu')$. There exist a partition  $V(H')=X_0\mathbin{\dot\cup}X_1$ and a constant $M=M(K,\nu')$ with the following property: For distinct vertices $u,v\in V(H')$, there exists an integer $s=s(u,v)\in[1,M/10]$ satisfying
$s\equiv\pi(u,v)\pmod 2$
such that, for every integer $0\le i\le\tfrac5{11}d_2$, the graph $H'$ contains a $u,v$-path of length $s+2i$.

Since $d_2>h'/K$ and $d$ is large, we have
$|H'|=h'<Kd_2<d_0^3$. Notice that each pair of branch vertices of $T$ has $d_0^8$ available subdividing vertices. Hence we can find a copy $T_1$ of $TK_t^{(1)}$ inside $T$ such that $V(T_1)\cap V(H')=\varnothing$.
As $\min\{|V(T_1)|, |V(H')|\}\ge \nu d$ and $H$ is $\nu d$-connected, Menger's theorem gives at least
$18$ vertex-disjoint paths between $V(T_1)$ and $V(H')$ whose internal vertices lie outside $V(T_1)\cup V(H')$.
After classifying each such path according to the parity of its length, whether its endvertex in $T_1$ is a branch vertex, and whether
its endvertex in $H'$ lies in $X_0$ or $X_1$, we obtain at least two paths, say a $u_0,u$-path
$P_1$ and a $v_0,v$-path $P_2$, for which the following conditions hold.
\begin{itemize}
    \item[(1)] $\pi(u,v)=0$ and $\ell_1\equiv\ell_2\pmod 2$. Here $\ell_1:=\ell(P_1)$ and $\ell_2:=\ell(P_2)$.
    \item[(2)] Either $u_0,v_0\in\mathsf{Br}(T_1)$ 
or $u_0,v_0\in V(T_1)\setminus\mathsf{Br}(T_1)$.
\end{itemize}

Recall that the graph $H'$ contains a $u,v$-path $Q_i$ of length $s+2i$ for every $0\le i\le\tfrac5{11}d_2$, with $s\equiv \pi(u,v)\equiv0\pmod{2}$, and $T_1$ contains a $u_0,v_0$-path $Q_j'$ of length $2j$ for every integer $j$ with $2\le j\le t-1$.
Thus $Q_i\, P_1\, Q_j'\, P_2$ forms a cycle in $H$ of length $s+\ell_1+\ell_2+2i+2j$. 
As both $s$ and $\ell_1+\ell_2$ are even, varying $i$ and $j$ gives at least
$t+\tfrac5{11}d_2-3>\tfrac{d}{2}$
consecutive even cycle lengths.
\end{proof}

\section{Concluding remarks}\label{sec:con}

Theorem~\ref{thm:main} and its modular applications point to a more general
question about forbidden cycle lengths. For a set
$\mathcal L\subseteq\{3,4,\ldots\}$, let $\operatorname{ex}(n,\mathcal L)$
denote the maximum number of edges in an $n$-vertex graph containing no cycle
whose length belongs to $\mathcal L$. Define
\[
  \ell_{\rm odd}:=\min\bigl(\mathcal L\cap(2\mathbb N+1)\bigr),\qquad
  \ell_{\rm even}:=\min\bigl(\mathcal L\cap2\mathbb N\bigr),\qquad
  \ell:=\min\{\ell_{\rm odd},\ell_{\rm even}\},
\]
where the minimum of the empty set is taken to be infinity. We assume that $\ell_{\rm even}<\infty$.

Two familiar constructions give the natural lower bounds. For the first, take
a connected graph whose blocks are complete graphs of order 
$\ell-1$, thus having $\tfrac{\ell-1}{2}n-O_{\mathcal L}(1)$ edges. Since every cycle is contained in a single block, the resulting
graph is $\mathcal L$-free. For the second, consider the
complete bipartite graph
\(
  K_{\ell_{\rm even}/2-1,\,n-\ell_{\rm even}/2+1}
\).
It has no odd cycles, while every even cycle has length at most
$\ell_{\rm even}-2$, and hence it is also $\mathcal L$-free. Its number of
edges is $\tfrac{\ell_{\rm even}-2}{2}n-O_{\mathcal L}(1)$. Consequently,
$\operatorname{ex}(n,\mathcal L)\geq
\max\{(\ell-1)/2,(\ell_{\rm even}-2)/2\}n-O_{\mathcal L}(1)$.

The relative strengths of the two constructions are determined by the first
odd and even members of $\mathcal L$. If $\ell=\ell_{\rm even}$, the
clique-block construction has the larger density. If
$\ell=\ell_{\rm odd}$, the complete bipartite construction has the larger
density, except when $\ell_{\rm even}=\ell_{\rm odd}+1$, in which case the
two leading coefficients agree.

\begin{problem}\label{prob:forbidden-cycle-spectrum}
Determine natural arithmetic conditions on $\mathcal L$ under which
\[
  \operatorname{ex}(n,\mathcal L)
  =
  \left(
    \max\left\{
      \tfrac{\ell-1}{2},
      \tfrac{\ell_{\rm even}-2}{2}
    \right\}
    +o(1)
  \right)n,
\]
and characterize the corresponding extremal and near-extremal graphs. In
particular, identify those $\mathcal L$ for which the least odd and least even
members determine whether the extremal graphs have clique-block or complete
bipartite structure.
\end{problem}

Both alternatives already occur in divisible-cycle problems. If
$\mathcal L=\{k,2k,3k,\ldots\}$ and $k$ is odd, then
$\ell_{\rm odd}=k$ and $\ell_{\rm even}=2k$, and the complete bipartite
construction has the larger density. When $k$ is even, the first forbidden
length is even and the clique-block construction is extremal. The same
construction is extremal for cycles of length $2\pmod k$ with $k$ even,
where the first possible forbidden cycle length is $k+2$.

Some restriction on the size or arithmetic structure of $\mathcal L$ is
necessary, since the extremal number need not be linear for an arbitrary
set of forbidden lengths. Thus the least odd and least even members of
$\mathcal L$ alone do not determine the answer, and further assumptions
are needed to rule out other extremal constructions.
Theorem~\ref{thm:main} concerns a related but different question.
Rather than forbidding cycles of prescribed lengths, we forbid any
$t$ consecutive even cycle lengths, without fixing the first length.
This amounts to forbidding every even translate of
$\{0,2,\ldots,2(t-1)\}$ in the set of cycle lengths.
It would be interesting to extend
Problem~\ref{prob:forbidden-cycle-spectrum} to include such patterns,
and to determine when the clique-block and complete bipartite
constructions remain extremal.

\vspace{0.3cm}
\noindent
\textbf{Acknowledgments.} This project was initiated at the ``Communications in Combinatorics'' workshop in Shanghai, China, in December 2025. The second author thanks Yandong Bai for bringing this problem to his attention during the workshop.

All mathematical ideas, problem formulations, proof strategies, and
arguments in this paper were developed by the authors. ChatGPT was used
solely as a checking tool: to identify typographical and linguistic errors
and to flag possible gaps or inconsistencies in the written proofs.

    \begin{appendix}
      \small
\section*{Appendix}
    We include here the missing proofs of lemmas used in Sections~\ref{section-med} and~\ref{section-spar}. For some of them, we provide only proof sketches as they are straightforward adaptions of previous ones. For proofs with full details, we refer the readers to the appendix of arXiv version 1.

      \section{Proofs of Lemmas~\ref{lem:lar-adjuster} and~\ref{lem:adjuster}}\label{proofadj-odd}

It suffices to prove Lemmas~\ref{lem:lar-simple-adj} and~\ref{lem:simple-adj}. 
Lemmas~\ref{lem:lar-adjuster} and~\ref{lem:adjuster} can be obtained by iterating Lemmas~\ref{lem:lar-simple-adj} and~\ref{lem:simple-adj}, respectively; see, e.g.,~\cite{Liu-balanced-sub}.
Recall that $m$ is the smallest even integer larger than $\log^4(n/d)$, $L_G:=\{v\in V(G):d_G(v)\ge dm^{31}\}$, and, in the large-degree case, $L_G^0,L_G^1\subseteq L_G$ are the fixed disjoint sets from Section~\ref{subsec:52} of sizes $d/50$ and $d/5$, respectively.

 \begin{lemma}\label{lem:lar-simple-adj}
    Suppose $0<\tfrac{1}{n},\tfrac{1}{d}\ll \tfrac{1}{K}\ll \eps_1,\eps_2<\tfrac{1}{5}$ with $\tfrac{1}{4}\log^{1000}n\leq d\leq \tfrac{n}{K}$. Let $G=(V,E)$ be an $n$-vertex $(\eps_1,\eps_2d)$-expander with $\delta(G)\geq \tfrac{d}{2}$. Let $W\subseteq V(G)$ be a set of size at most $m^{10}$. If $|L_G|\ge \tfrac{11}{50}d$, then there exists a $(2m,1)$-adjuster $\mathcal{A}=(v_1,v_2,S_1,S_2,A,\mathcal{P})$ in $G-L_G^1-W$ such that $v_1,v_2\in L_G^0$, where $S_i$ is a $dm^{28}$-star for each $i\in[2]$. Moreover, $|A|\leq 2\ell(\mathcal{A})$.
  \end{lemma}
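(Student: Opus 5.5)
We need a $(2m,1)$-adjuster: two vertices $v_1,v_2\in L_G^0$ with disjoint $dm^{28}$-stars $S_1,S_2$, a set $A$ of size at most $2\ell(\mathcal A)\le 4m$ avoiding the stars, and two $v_1,v_2$-paths in $G[A\cup\{v_1,v_2\}]$ of lengths $\ell'$ and $\ell'+2$ with $\ell'\le 2m$. The natural source of two paths differing by exactly two is a short even cycle split at two antipodal vertices. In a bipartite-type setting, such a cycle comes from two short paths joining the same pair of vertices, since they have equal parity.

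\textbf{Step 1: a short cycle with a controlled length split.} Work in $G':=G-L_G^1-W$. Since $|L_G^1|+|W|\le d/5+m^{10}$, and this is far below $\rho(x)x/4$ for $x\ge \eps_2 d$, Lemma~\ref{distance} still gives paths of length at most $m$ in $G'$ between any two sets of size at least $\eps_2 d$. Fix a vertex $u$ and two disjoint subsets $N_1,N_2\subseteq N_{G'}(u)$, each of size at least $d/5$; these exist because $\delta(G)\ge d/2$ and $|L_G^1\cup W|$ is small compared with $d$. For the second endpoint, pick $v_1\in L_G^0$ and reserve a large set $Y\subseteq N(v_1)$ of size at least $dm^{31}/3$. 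Lemma~\ref{distance} then gives a shortest path from $N_1$ to $Y$, and, avoiding it, a path from $N_2$ to $Y$. This produces two internally disjoint $u,v_1$-paths of lengths at most $m+2$. They close a cycle $\Gamma$ of length at most $2m+4$ through $u$ and $v_1$.

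\textbf{Step 2: forcing a difference of exactly two.} This is the main obstacle. Two arbitrary paths differ by an uncontrolled amount. The plan is to use $\Gamma$ only as a source: fix two vertices $a,b$ on $\Gamma$ whose two arcs have lengths differing by $0$ or $2$ if $|\Gamma|$ is even, or by $1$ if $|\Gamma|$ is odd. (The exact difference depends on the choice of vertices; any two arcs differ by an amount with the same parity as $|\Gamma|$, and antipodal vertices minimise it.)

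\begin{itemize}
\item If $|\Gamma|$ is even with antipodal arcs of equal length, shift one endpoint along the cycle by one step. The two arcs then differ by exactly $2$.
\item If $|\Gamma|$ is odd, first take instead a shortest odd cycle, via Lemma~\ref{lem:oddcyclelengthinexpander}. Then combine it with a second short path so that the resulting even closed walk yields arcs differing by $2$, following the construction in~\cite{Liu-balanced-sub}.
\end{itemize}

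The fallback is to follow~\cite{Liu-balanced-sub} directly. There, a shortest even cycle (or a shortest cycle in a bipartite expander) is taken, and antipodal points are used; minimality rules out chords, so the two arcs are paths.

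\textbf{Step 3: attaching the stars.} Connect $a$ and $b$ to $v_1,v_2\in L_G^0$ respectively. Use Lemma~\ref{distance} between $d/5$-subsets of $N(a)$, $N(b)$ and $dm^{31}/3$-subsets of $N(v_1)$, $N(v_2)$, avoiding $\Gamma$ and previously built paths. This gives connecting paths of length at most $m+2$ each. Since the connecting paths and $\Gamma$ total at most $2(m+2)+2m+4$ vertices, the two $v_1,v_2$-paths have lengths $\ell'$ and $\ell'+2$ with $\ell'\le 2m$ (after choosing $\Gamma$ of length at most roughly $m$-scale and tightening constants). Set $A$ to be their union minus $\{v_1,v_2\}$; then $|A|\le \ell'+2+\ell'\le 2\ell(\mathcal A)$ as required. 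Finally, since $d(v_i)\ge dm^{31}$ and $|A|\le 4m$, greedily choose disjoint $dm^{28}$-stars $S_1,S_2$ at $v_1,v_2$ avoiding $A$, $L_G^1$ and $W$. Everything lies in $G-L_G^1-W$, completing the adjuster.
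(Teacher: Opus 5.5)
There is a genuine gap, and it affects every linking step. You assert that $|L_G^1\cup W|\le d/5+m^{10}$ is far below $\rho(x)x/4$ for $x\ge\eps_2 d$. This is false in the range you use. With $k=\eps_2 d$ and $x=d/5$ one gets $\rho(x)x/4=\eps_1 d/(20\log^2(3/\eps_2))<d/100<|L_G^1|$, and it is smaller still for $x=\eps_2 d$. So Lemma~\ref{distance} cannot route around $L_G^1$ whenever one of the two sets has size only $\Theta(d)$, which is exactly the situation for $N_1,N_2\subseteq N(u)$ in Step~1 and for the neighbourhoods of $a,b$ in Step~3. It can avoid $L_G^1$ only when both sets have size $\gg d$, such as the $dm^{31}$-sized neighbourhoods of vertices of $L_G^0$.

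This is not a matter of tightening constants. Since $\rho(x)x<d/5$ for $x\le d$, the expansion condition does not stop a set of $\Theta(d)$ vertices from having all its outside neighbours in $L_G^1$. So the near-antipodal vertices $a,b$ of $\Gamma$, over which you have no control, may admit no connection to $L_G^0$ inside $G-L_G^1-W$ at all.

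Step~2 is also not an argument when $|\Gamma|$ is odd. An ``even closed walk'' does not give two paths. The fallback presupposes a short even cycle in a possibly non-bipartite graph, which you never produce. An odd cycle plus an ear would give one, but again with uncontrolled attachment vertices, so the first problem returns.

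There are also smaller issues. The bound $d_{G-L_G^1-W}(u)\ge d/2-d/5-m^{10}$ is only about $3d/10$, so two disjoint $d/5$-subsets need not exist. In Step~3 you link $a$ to $v_1$ while avoiding $\Gamma$, although $v_1\in V(\Gamma)$ from Step~1.

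The missing idea is to create the exact difference $2$ by a common-neighbour detour between paths of equal length. Then no cycle is needed and every link runs between huge sets. The paper proceeds as follows.
\begin{enumerate}
\item Pick $v_1,v_2\in L_G^0\setminus W$ and sets $N_1,N_2$ of size $dm^{31}/10$ in their neighbourhoods, avoiding $L_G^0\cup L_G^1\cup W$.
\item Greedily find $dm^{26}$ disjoint $N_1$--$N_2$ paths of length at most $m/2$ avoiding the forbidden set $W'$; this is legitimate because both sets are huge.
\item Keep $dm^{25}$ of these paths with equal length, and let $B\subseteq N_1$ be their ends.
\item If some $w\notin W'$ has two neighbours $w_1,w_2\in B$, then $v_1w_1P_{w_1}z_1v_2$ and $v_1w_1ww_2P_{w_2}z_2v_2$ differ in length by exactly two.
\item Otherwise every outside vertex sees at most one vertex of $B$. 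Then $N_G(B)$ minus the forbidden vertices has size $\Omega(d^2m^{25})$ and can be linked to $N(v_3)$ for a third vertex $v_3\in L_G^0$.
\item In that case $v_1$ itself is a common neighbour of $w_1,w_2$. This gives the $v_3,v_2$-paths $v_3yPuw_1P_{w_1}z_1v_2$ and $v_3yPuw_1v_1w_2P_{w_2}z_2v_2$.
\end{enumerate}
This requires no parity or bipartiteness considerations.
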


      \begin{proof}
        By Proposition~\ref{fact:bipartition}, we can take a partition $V(G)=V_1\mathbin{\dot\cup}V_2$ such that, for every $i\in[2]$ and every $v\in V_i$, we have $d_{V_{3-i}}(v)\geq \tfrac{d_G(v)}{2}$. By the pigeonhole principle, either $|V_1\cap L_G^0|\geq \tfrac{d}{100}$ or $|V_2\cap L_G^0|\geq\tfrac{d}{100}$. Without loss of generality, assume $|V_1\cap L_G^0|\geq \tfrac{d}{100}$ and set $\widetilde{L_G^0}:=V_1\cap L_G^0$.
        As $|\widetilde{L_G^0}|-|W|\ge \tfrac{d}{100}- m^{10}>m$,
        we can choose two distinct vertices $v_1,v_2\in \widetilde{L_G^0}\backslash W$.
        As $N_G(v_1)\cap V_2$ and $N_G(v_2)\cap V_2$ each have  size at least $\tfrac{dm^{31}}{2}$, while $|L_G^0\cup L_G^1\cup W|\le \tfrac{11}{50}d+m^{10}$,
        we can choose disjoint subsets $N_1\subseteq (N_G(v_1)\cap V_2)\setminus (L_G^0\cup L_G^1\cup W)$ and $N_2\subseteq (N_G(v_2)\cap V_2)\setminus (L_G^0\cup L_G^1\cup W)$, each of size at least $\tfrac{dm^{31}}{10}$. By the same argument, choose another vertex $v_3\in \widetilde{L_G^0}\setminus(W\cup \{v_1,v_2\})$ and a set $N_3\subseteq (N_G(v_3)\cap V_2)\setminus (L_G^0\cup L_G^1\cup W\cup N_1\cup N_2)$ of size at least $\tfrac{dm^{31}}{10}$. Observe that $N_1,N_2,N_3\subseteq V_2$. Let $W':=W\cup \{v_1,v_2,v_3\}\cup L_G^0\cup L_G^1$. Then $|W'|\leq m^{10}+3+\tfrac{11d}{50}\leq \tfrac{3d}{10}$.

        First, we claim that there is a family $\mathcal{P}_0$ of $dm^{26}$ pairwise vertex-disjoint paths of length at most $m/2$ that connect $N_1$ and $N_2$ and avoid $W'$. Consider a maximal family $\mathcal{P}_0$ of such paths. If $|\mathcal{P}_0|<dm^{26}$, then $|V(\mathcal{P}_0)|<dm^{27}$. For each $i\in[2]$, we have
        \begin{equation*}
          |N_i-V(\mathcal{P}_0)-W'|\geq \tfrac{dm^{31}}{10}-dm^{27}-\tfrac{3d}{10}\geq \tfrac{dm^{31}}{20}.
        \end{equation*}
        %\wx{$\tfrac{dm^{31}}{10}-dm^{27}-m^{10}-3-11d/50$?}
        Applying Lemma~\ref{distance} with $(X_1,X_2,W)=(N_1\setminus(V(\mathcal{P}_0)\cup W'),N_2\setminus(V(\mathcal{P}_0)\cup W'),V(\mathcal{P}_0)\cup W')$, we obtain a path of length at most $m/2$ between $N_1$ and $N_2$ that avoids $V(\mathcal{P}_0)\cup W'$, contradicting the maximality of $\mathcal{P}_0$.
        Thus $|\mathcal{P}_0|\geq dm^{26}$, as claimed. By the pigeonhole principle, there exists a subfamily $\mathcal{P}\subseteq \mathcal{P}_0$ of $dm^{25}$ paths of equal length. Let $B:=V(\mathcal{P})\cap N_1$; then $|B|=dm^{25}$. For every $v\in B$, let $P_v\in \mathcal{P}$ be the path with endvertex $v$. There are two cases.

        \textbf{Case $1$}: There is a vertex $w\notin W'$ with two neighbors $w_1,w_2\in B$.
        Let $P_{w_1},P_{w_2}\in \mathcal{P}$ be the paths containing $w_1$ and $w_2$, respectively, and let $V(P_{w_i})\cap N_2=\{z_i\}$ for each $i\in[2]$.
        Without loss of generality, assume that $w\notin V(P_{w_2})$; otherwise, swap the labels of $w_i$, $P_{w_i}$, and $z_i$ for $i=1,2$.
        Set $R_1:=v_1w_1P_{w_1}z_1v_2$ and $R_2:=v_1w_1ww_2P_{w_2}z_2v_2$.
        These are two $v_1,v_2$-paths whose lengths differ by $2$, and the shorter has length at most $m/2+2\leq2m$.
        For each $i\in[2]$, choose a $dm^{28}$-star $S_i$ with core $v_i$ and leaves in $N_i\setminus(V(R_1)\cup V(R_2))$. Let $\mathcal P':=\{R_1,R_2\}$ and $A:=\bigl(V(R_1)\cup V(R_2)\bigr)\setminus\{v_1,v_2\}$.
        Then $\mathcal{A}:=(v_1,v_2,S_1,S_2,A,\mathcal P')$ is a $(2m,1)$-adjuster in $G-L_G^1-W$. Moreover, $|A|\leq2\ell(\mathcal A)$.

        \textbf{Case $2$}: No vertex $w\notin W'$ has two neighbors in $B$.
    Note that, for each $v\in B$, $|N_G(v)\cap W'|\leq m^{10}+\tfrac{11}{50}d+3$. Since every vertex outside $W'$ has at most one neighbor in $B$, the set $N_G(B)\setminus W'$ has size at least $|B|(\tfrac{d}{4}-m^{10}-\tfrac{11}{50}d-3)$. Hence
        \[
          |N_G(B)\backslash (W'\cup V(\mathcal P))|\ge |B|(\tfrac{d}{4}-m^{10}-\tfrac{11}{50}d-3)-|B|m \ge \tfrac{d|B|}{50} \ge \tfrac{d^2m^{25}}{50}.
        \]
        Applying Lemma~\ref{distance} with $(X_1,X_2,W)=(N_G(B)\setminus (W'\cup V(\mathcal P)),N_3,W'\cup V(\mathcal P))$, we obtain a path $P$ of length at most $m/2$ joining a vertex $u\in N_G(B)\setminus (W'\cup V(\mathcal P))$ to a vertex $y\in N_3$ and avoiding $W'\cup V(\mathcal P)$ internally. Choose $w_1\in B$ such that $u\in N_G(w_1)$.
        Choose $w_2\in B\setminus\{w_1\}$, and let $V(P_{w_i})\cap N_2=\{z_i\}$ for each $i\in[2]$. Set $R_1:=v_3yPuw_1P_{w_1}z_1v_2$ and $R_2:=v_3yPuw_1v_1w_2P_{w_2}z_2v_2$. These are two $v_3,v_2$-paths whose lengths differ by $2$, and the shorter has length at most $\ell(P)+\ell(P_{w_1})+3\leq m+3\leq2m$. Let $\mathcal P^*:=\{R_1,R_2\}$ and $A^*:=\bigl(V(R_1)\cup V(R_2)\bigr)\setminus\{v_2,v_3\}$.
        Choose a $dm^{28}$-star $S_3'$ with core $v_3$ and leaves in $N_3\setminus(V(R_1)\cup V(R_2))$, and a $dm^{28}$-star $S_2'$ with core $v_2$ and leaves in $N_2\setminus(V(R_1)\cup V(R_2))$. Then
        $\mathcal{A}^{*}:=(v_3,v_2,S_3',S_2',A^{*},\mathcal P^{*})$
        is a $(2m,1)$-adjuster in $G-L_G^1-W$. Moreover, $|A^{*}|\leq 2\ell(\mathcal{A}^{*})$, as desired.
      \end{proof}

\begin{lemma}\label{lem:simple-adj}
    Suppose $0<\tfrac{1}{n},\tfrac{1}{d}\ll \tfrac{1}{K}\ll \eps_1,\eps_2<\tfrac{1}{5}$ with $\tfrac{1}{4}\log^{1000}n\leq d\leq \tfrac{n}{K}$. Let $G$ be an $n$-vertex $(\eps_1,\eps_2d)$-expander with $\delta(G)\geq \tfrac{d}{2}$. Let $W\subseteq V(G)$ be a set with $|W|\leq 50m^{50}$. There exists a $(5m,1)$-adjuster $\mathcal{A}=(v_1,v_2,F_1,F_2,A,\mathcal{P})$ in $G-W$, where $F_i$ is a $(\tfrac{3}{2}m^{28},\tfrac{d}{100}, m+2)$-unit for each $i\in[2]$. Moreover, $|A|\leq 2\ell(\mathcal{A})$.
  \end{lemma}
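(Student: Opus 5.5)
I will mirror the proof of Lemma~\ref{lem:lar-simple-adj}, replacing the high-degree stars at $v_1,v_2,v_3$ by units from Lemma~\ref{claim:unit}. The units are chosen so that their exteriors lie in one side of a fixed bipartition, and the required length-difference-$2$ pair of paths will come from parity in that bipartition.

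\textbf{Step 1: set-up.} Take the bipartition $V(G)=V_1\mathbin{\dot\cup}V_2$ of Proposition~\ref{fact:bipartition}. Every vertex then has at least $d/4$ neighbours across.

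\textbf{Step 2: three units.} Apply Lemma~\ref{claim:unit} to $G-W$, which is allowed since $|W|\le 50m^{50}\le dm^{110}$. This gives three vertex-disjoint $(2m^{28},\tfrac d{100},m+2)$-units $F'_1,F'_2,F'_3$ with cores $v_1,v_2,v_3$. I want all exteriors on one side. Each pendant star has $d/100$ leaves, and its centre $x$ has at least half of its degree on the other side. So I would re-choose the stars so that, in each unit, a majority of the branch-ends $x$ lie in one part, and the leaves lie on the opposite part. This is done by pigeonholing over branches: keep $\tfrac32 m^{28}$ branches whose ends lie in a common part, then re-pick leaves among the cross neighbours, at the cost of at most a constant factor in star size. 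Restricting the pendant stars to these leaves gives exterior sets $N_1,N_2,N_3$, each of size $\Omega(dm^{28})$, all contained in (say) $V_2$. Pigeonhole over units if necessary so that the three sides agree.

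\textbf{Step 3: many equal-length linking paths.} Exactly as in Lemma~\ref{lem:lar-simple-adj}, a maximal family of vertex-disjoint $N_1$--$N_2$ paths of length at most $m/2$ avoiding $W'=W\cup\bigcup_i\mathsf{Int}(F'_i)$ has size at least $dm^{26}$, by Lemma~\ref{distance}. Pigeonholing on length gives $dm^{25}$ such paths of equal length, with endpoint set $B\subseteq N_1$. Each $b\in B$ is a leaf of a star in $F'_1$, so $v_1$ reaches $b$ by a path of length exactly $s+1$, where $s\le m+2$ is the branch length. The two cases then go as before.
\begin{itemize}
\item \emph{Case 1.} Some $w\notin W'$ is adjacent to two points $w_1,w_2\in B$ whose routes in $F'_1$ to $v_1$ are compatible. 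Then the routes $v_1\to w_1\to P_{w_1}\to v_2$ and $v_1\to w_1 w w_2\to P_{w_2}\to v_2$ differ by exactly $2$.
\item \emph{Case 2.} Otherwise $B$ expands injectively. Link $N_G(B)$ to $N_3$ with a short path, and use the detour $w_1\to v_1\to w_2$ through the unit $F'_1$. This is the analogue of $uw_1v_1w_2$ in Lemma~\ref{lem:lar-simple-adj}.
\end{itemize}
In either case the shorter path has length at most $O(m)\le 5m$.

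\textbf{Step 4: repairing the units.} Discard the branches of $F'_1,F'_2,F'_3$ that are hit by the $O(m)$ vertices of $A$. Fewer than $m$ branches are lost, so $\tfrac32m^{28}$ branches remain, and the ends become $(\tfrac32m^{28},\tfrac d{100},m+2)$-units whose interiors are disjoint from $A$. Then $|A|\le 2\ell(\mathcal A)$ because $A$ is the union of the two paths.

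\textbf{Main obstacle.} The hard step is Case~2. The detour passes through the unit $F'_1$ along two branches of \emph{different} indices. These have equal length $s$, so the detour $w_1\to v_1\to w_2$ has length $2s+2$ rather than $2$. The length difference is then $2s+2$, not $2$. I would try first to shortcut inside a single pendant star instead: take $w_1,w_2$ to be leaves of the same star, both in $B$, pigeonholing $B$ by star centre. That restores a difference of exactly $2$ via $w_1xw_2$, and keeps the Case~1/Case~2 structure. The unit's core then acts only as the terminal $v_1$.
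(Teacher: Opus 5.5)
Your argument, once you adopt the same-star shortcut from your last paragraph, is essentially the paper's proof. The paper also takes three units from Lemma~\ref{claim:unit}, with no bipartition, and links $\mathsf{Ext}(F_1)$ to $\mathsf{Ext}(F_2)$ by $dm^{25}$ equal-length paths. In Case~1 it compares $v_1Q_1P_{w_1}R_1v_2$ with $v_1Q_1w_1ww_2P_{w_2}R_2v_2$. Both paths use the same route $Q_1$ from $v_1$ to $w_1$, so no ``compatibility'' condition is needed; you only relabel so that $w\notin V(P_{w_2})$. In Case~2 it uses exactly your fix: a common neighbour $x\in\mathsf{Int}(F_1)$ of $w_1,w_2\in B$, such as their star centre. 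This gives $v_3\cdots u\,w_1P_{w_1}\cdots v_2$ versus $v_3\cdots u\,w_1xw_2P_{w_2}\cdots v_2$, and $v_1$ is not used at all.

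Two parts of your write-up should change.

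\emph{Delete Step~2 entirely.} The bipartition is never used. The difference $2$ comes from $\ell(P_{w_1})=\ell(P_{w_2})$, the equal branch lengths in the end unit, and the length-two detour, not from parity. As described, Step~2 would also break the conclusion:
\begin{itemize}
\item Pigeonholing $2m^{28}$ branches into two parts only guarantees $m^{28}$ of them, below the required $\tfrac32m^{28}$.
\item Shrinking pendant stars by a constant factor violates the required star size $d/100$.
\item Re-picking leaves among cross-neighbours is unjustified, since those vertices may already be used by the units.
\end{itemize}

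\emph{In Case~2, pigeonhole before expanding.} Let $B'\subseteq B$ be the vertices whose star centre is shared with another vertex of $B$, so $|B\setminus B'|\le 2m^{28}$. Expand $B'$ rather than $B$, so that the vertex $w_1$ adjacent to the landing point $u$ automatically has a partner $w_2$. Under the Case~2 hypothesis the sets $N(b)\setminus W'$, $b\in B'$, are pairwise disjoint. Hence $|N(B')\setminus(W'\cup B\cup V(\mathcal P))|\ge |B'|d/4-|B|-|V(\mathcal P)|$ is huge. Lemma~\ref{distance} then links this set to $\mathsf{Ext}(F_3)$ while avoiding $W'\cup B\cup V(\mathcal P)$; this forbidden set has size $O(dm^{26})$, well below $\rho(x)x/4$ for $x=\Theta(dm^{28})$. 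The paper instead passes to a subset $B_1$ of size about $m^{25}$ before the degree count, but either version works.
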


      \begin{proof}
        Since $|W|\leq 50m^{50}$, Lemma~\ref{claim:unit} gives three vertex-disjoint units $F_1,F_2,F_3$ in $G-W$, each with parameters $(2m^{28},d/100,m+2)$ and with core vertices $v_1,v_2,v_3$, respectively. Set $W':=W\cup \mathsf{Int}(F_1)\cup \mathsf{Int}(F_2)\cup \mathsf{Int}(F_3)$. Then $|W'|\leq 50m^{50}+3\cdot 2m^{28}(m+2)\le 60m^{50}$.

        Next, we claim that there is a family $\mathcal{P}_0$ of $dm^{26}$ pairwise vertex-disjoint paths in $G-W'$ joining $\mathsf{Ext}(F_1)$ to $\mathsf{Ext}(F_2)$, each of length at most $m$. Consider a maximal family $\mathcal{P}_0$ of such paths. If $|\mathcal{P}_0|<dm^{26}$, then $|V(\mathcal{P}_0)|<dm^{27}$. Let $W'':=W'\cup V(\mathcal{P}_0)$; then $|W''|\leq 60m^{50}+dm^{27}\leq 2dm^{27}$. For each $i\in[2]$, we have $|\mathsf{Ext}(F_i)\setminus V(\mathcal{P}_0)|\geq \tfrac{dm^{28}}{50}-2dm^{27}\geq \tfrac{dm^{28}}{100}$. Applying Lemma~\ref{distance} with $\mathsf{Ext}(F_1)\setminus V(\mathcal{P}_0)$, $\mathsf{Ext}(F_2)\setminus V(\mathcal{P}_0)$, and $W''$ playing the roles of $X_1,X_2,W$, respectively, we obtain another path of length at most $m$, contradicting the maximality of $\mathcal{P}_0$. Thus $|\mathcal{P}_0|\geq dm^{26}$, as claimed. By averaging, there exists a subfamily $\mathcal{P}\subseteq \mathcal{P}_0$ of $dm^{25}$ paths of equal length. Let $B:=V(\mathcal{P})\cap \mathsf{Ext}(F_1)$; then $|B|=dm^{25}$.
        For any $v\in B$, denote by $P_v\in \mathcal{P}$ the path with $v$ as an endvertex.
        We now consider two cases.

        \textbf{Case $1$}: There is a vertex $w\notin W'$ with two neighbors $w_1,w_2\in B$.
        Let $Q_1$ and $Q_2$ be the paths in $F_1$ joining $v_1$ to $w_1$ and $w_2$, respectively. Let $R_1$ and $R_2$ be the paths in $F_2$ joining the respective endvertices of $P_{w_1}$ and $P_{w_2}$ in $\mathsf{Ext}(F_2)$ to $v_2$. Without loss of generality, assume that $w\notin V(P_{w_2})$. Set $T_1:=v_1Q_1P_{w_1}R_1v_2$ and $T_2:=v_1Q_1w_1ww_2P_{w_2}R_2v_2$. These are two $v_1,v_2$-paths whose lengths differ by $2$, and the shorter has length at most $(m+3)+m+(m+3)=3m+6\leq5m$. Let $F_1'$ be the subgraph obtained from $F_1$ by removing the paths $Q_1,Q_2$ and the leaves of their pendant stars, and define $F_2'$ analogously using $R_1,R_2$. For each $i\in[2]$, the graph $F_i'$ contains a $(\tfrac{3}{2}m^{28},\tfrac{d}{100},m+2)$-subunit $F_i''$ with core $v_i$. Let $\mathcal P':=\{T_1,T_2\}$ and $A':=\bigl(V(T_1)\cup V(T_2)\bigr)\setminus\{v_1,v_2\}$. Therefore, $\mathcal A':=(v_1,v_2,F_1'',F_2'',A',\mathcal P')$
        is a $(5m,1)$-adjuster in $G-W$. Moreover, $|A'|\leq2\ell(\mathcal A')$.

        \textbf{Case $2$}: No vertex $w\notin W'$ has two neighbors in $B$.
 Let $B'\subseteq B$ consist of the vertices $v$ for which there
  exist $v'\in B\setminus\{v\}$ and $x\in\mathsf{Int}(F_1)$ such
  that $xv,xv'\in E(G)$. 
  Since every vertex of $B\setminus B'$ has a distinct neighbor
  in $\mathsf{Int}(F_1)$, we have
  \[
    |B'|
    \geq |B|-|\mathsf{Int}(F_1)|
    \geq dm^{25}-m^{50}
    \geq \tfrac{dm^{25}}{2}.
  \]

  We claim that there is a set $B_1\subseteq B'$ satisfying
  $m^{25}\leq |B_1|\leq m^{25}+1$
  such that every vertex of $B_1$ has a partner in $B_1$ with
  which it has a common neighbor in $\mathsf{Int}(F_1)$. To see this, define an auxiliary graph $H$ on $B'$ by joining
  two distinct vertices precisely when they have a common
  neighbor in $\mathsf{Int}(F_1)$. By the definition of $B'$,
  the graph $H$ has no isolated vertices. Consider the connected
  components of $H$ in an arbitrary order and add whole components
  until their union first has at least $m^{25}$ vertices. If the
  last component causes the union to have more than $m^{25}+1$
  vertices, take a connected subset of the required size from that
  component. Such a subset can be obtained by repeatedly deleting
  leaves from a spanning tree. In this way, we obtain a set $B_1$
  of size $m^{25}$ or $m^{25}+1$ such that $H[B_1]$ has no isolated
  vertices.

  Let $\mathcal P_1$ be the collection of paths in $\mathcal P$
  whose endvertices in $\mathsf{Ext}(F_1)$ belong to $B_1$. We
  count the edges between $B_1$ and $V(G)\setminus
    \bigl(B_1\cup W'\cup V(\mathcal P_1)\bigr)$. 
  Since $\delta(G)\geq d/2$, for each $v\in B_1$, there are at most
  \[
    |B_1|+|W'|+|V(\mathcal P_1)|
    \leq m^{25}+1+60m^{50}+2m^{26}
    <\tfrac d4
  \]
  edges incident with $v$ having their other endpoint in
  $B_1\cup W'\cup V(\mathcal P_1)$. Therefore, there are at least $\tfrac d4|B_1|
    \geq \tfrac{dm^{25}}4$ edges from $B_1$ to $V(G)\setminus
    \bigl(B_1\cup W'\cup V(\mathcal P_1)\bigr)$. 
  By the assumption of Case 2, every vertex outside
  $B_1\cup W'\cup V(\mathcal P_1)$ has at most one neighbor in
  $B_1$. Hence the set $C:=
    N_G(B_1)\setminus
    \bigl(B_1\cup W'\cup V(\mathcal P_1)\bigr)$
  satisfies $|C|\geq \tfrac{dm^{25}}4$.
  
  Let $\widetilde W:=B_1\cup W'\cup V(\mathcal P_1)$. 
  Then $|\widetilde W|
    \leq m^{25}+1+60m^{50}+2m^{26}
    \leq 70m^{50}$.
  Applying Lemma~\ref{distance}, with
  $C,\mathsf{Ext}(F_3),\widetilde W$ playing the roles of
  $X_1,X_2,W$, respectively, we obtain a path $P$ of length at
  most $m$ connecting some $u\in C$ to some
  $y\in\mathsf{Ext}(F_3)$ while avoiding $\widetilde W$.
  Choose $w_1\in N_G(u)\cap B_1$. By the defining property of
  $B_1$, there exist $w_2\in B_1\setminus\{w_1\}$ and
  $x\in\mathsf{Int}(F_1)$ such that $xw_1,xw_2\in E(G)$.
  Let $Q_3$ be the $v_3,y$-path in $F_3$, and let $Q_4,Q_5$ be
  the paths in $F_2$ joining the respective endpoints of
  $P_{w_1},P_{w_2}$ in $\mathsf{Ext}(F_2)$ to $v_2$. Set
  \begin{equation*}
      R_1:=v_3Q_3yPu w_1P_{w_1}Q_4v_2, \ \ \ \ R_2:=v_3Q_3yPu w_1xw_2P_{w_2}Q_5v_2.
  \end{equation*}
  These are two $v_3,v_2$-paths whose lengths differ by $2$, and the shorter has length at most $\ell(Q_3)+\ell(P)+3+\ell(P_{w_1})+\ell(Q_4)\leq4m+9\leq5m$.

  To keep the end units disjoint from these paths, delete from $F_3$ every branch meeting $V(P_{w_1})\cup V(P_{w_2})\cup V(Q_3)$, and delete from $F_2$ every branch meeting $V(Q_4)\cup V(Q_5)$. At most $O(m)$ branches are deleted. Since each original unit has $2m^{28}$ branches, the remaining graphs contain subunits $F_3''$ and $F_2''$, with cores $v_3$ and $v_2$, respectively, both having parameters $(\tfrac32m^{28},\tfrac{d}{100},m+2)$.
  Let
  \[
    A^*
    :=\left(
      V(P)\cup V(P_{w_1})\cup\{x\}\cup V(P_{w_2})
      \cup V(Q_3)\cup V(Q_4)\cup V(Q_5)
    \right)\setminus\{v_2,v_3\}.
  \]
  Then
  $|A^*|\leq 8m\leq 2\cdot 5m$.
  Let $\mathcal P^*:=\{R_1,R_2\}$.
  Thus, $\mathcal A^*
    :=(v_3,v_2,F_3'',F_2'',A^*,\mathcal P^*)$
  is a $(5m,1)$-adjuster in $G-W$. Moreover, $|A^*|\leq 2\ell(\mathcal A^*)$,
  as desired.
\end{proof}

\section{Proofs of Lemmas~\ref{claim:unit} and~\ref{lem:web}}\label{secapp:webs}

\begin{lemma}\label{claim:star}
      Let $0<\tfrac{1}{n},\tfrac{1}{d}\ll \tfrac{1}{K}\ll \eps_1,\eps_2<\tfrac{1}{5}$ with $\tfrac{1}{4}\log^{1000}n\leq d\leq \tfrac{n}{K}$. Suppose that $G=(V,E)$ is an $n$-vertex $(\eps_1,\eps_2d)$-expander with $\delta(G)\geq \tfrac{d}{2}$. If $|L_G|< \tfrac{11}{50}d$, then for every set $X$ of size at most $dm^{150}$, the graph $G-X$ contains a collection of $m^{168}$ pairwise vertex-disjoint $\tfrac{d}{50}$-stars.
    \end{lemma}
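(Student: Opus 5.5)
We aim to build a large family greedily and argue by maximality. Let $\mathcal S$ be a maximal collection of pairwise vertex-disjoint $\tfrac{d}{50}$-stars in $G-X$, and suppose for contradiction that $|\mathcal S|<m^{168}$. Set $Z:=X\cup V(\mathcal S)\cup L_G$. Then
\[
|Z|\le dm^{150}+m^{168}\bigl(\tfrac{d}{50}+1\bigr)+\tfrac{11}{50}d\le dm^{169}.
\]
By maximality, every vertex $v\notin Z$ has fewer than $\tfrac{d}{50}$ neighbours outside $Z$; otherwise $v$ together with $\tfrac d{50}$ such neighbours would be a new star disjoint from $\mathcal S$ and $X$. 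Since $\delta(G)\ge d/2$, each $v\in V(G)\setminus Z$ therefore sends at least $\tfrac d2-\tfrac d{50}\ge \tfrac{2d}{5}$ edges into $Z$.

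We then count edges between $Z$ and its complement $Y:=V(G)\setminus Z$. Since $n\ge dm^{200}$, we have $|Y|\ge n/2$, so
\[
e_G(Y,Z)\ge \tfrac{2d}{5}\cdot\tfrac n2=\tfrac{dn}{5}.
\]
For the upper bound, $Z\setminus L_G$ consists of vertices of degree less than $dm^{31}$, so $e_G(Y,Z\setminus L_G)\le dm^{169}\cdot dm^{31}=d^2m^{200}$. It remains to bound $e_G(Y,L_G)$, and this is the main obstacle, because a few high-degree vertices could absorb many edges. We handle it with the minimum-degree count itself. Each $v\in Y$ has at least $\tfrac{2d}{5}$ neighbours in $Z$, and $|L_G|<\tfrac{11}{50}d$, so $v$ has at least $\tfrac{2d}{5}-\tfrac{11d}{50}=\tfrac{9d}{50}$ neighbours in $Z\setminus L_G$. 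This gives
\[
e_G(Y,Z\setminus L_G)\ge \tfrac{9d}{50}\cdot \tfrac n2 .
\]
Comparing with the upper bound yields $\tfrac{9dn}{100}\le d^2m^{200}$, that is, $n\le \tfrac{100}{9}dm^{200}$.

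This is not quite contradicted by $n\ge dm^{200}$ as stated, so we must check the exponents. Since $m$ is roughly $\log^4(n/d)$, we have $m^{200}\approx \log^{800}(n/d)$. Thus $n/d$ is superpolylogarithmic in $m$, and we will verify that $n\ge dm^{250}$, say. Indeed, $n/d\ge K$ and $d\le n/K$, and $n/d\ge m^{C}$ for any fixed $C$ once $n/d$ is large, since $m=\Theta(\log^4(n/d))$; this uses $1/K\ll$ and $n/d\ge K$ large. With this bound the inequality above is a contradiction.

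If that margin turns out to be tight, a fallback is to use expansion on the set $Y'\subseteq Y$ of vertices of $Y$. We would take any $Y''\subseteq Y$ of size $n/2$ and note that its neighbourhood outside $Z$ is small on average, contradicting $|N(Y'')|\ge\rho(n/2)n/2$. The direct counting route, however, seems to suffice. The conclusion is that $|\mathcal S|\ge m^{168}$, as required.
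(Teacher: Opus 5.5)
Your argument is correct and is essentially the paper's proof. The paper takes a maximal star family and lower-bounds the average degree of $G-X-V(\mathcal S)$ by $\bigl(n\delta(G)-2(|L_G|n+|X\cup V(\mathcal S)|dm^{31})\bigr)/n\ge d/50$, which is the global form of your per-vertex count: at most $|L_G|<\tfrac{11}{50}d$ edges go into $L_G$, and every other removed vertex has degree below $dm^{31}$.

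Your worry about the margin is unnecessary. Bounding $|X\cup V(\mathcal S)|\le dm^{150}+m^{168}(\tfrac d{50}+1)=O(dm^{168})$ instead of $dm^{169}$ already makes $n\ge dm^{200}$ suffice. Your alternative of deriving $n\ge dm^{250}$ from $n/d\ge K$ with $K$ large is also valid.
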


\begin{proof}[Proof of Lemma~\ref{claim:star}]
      Consider a maximal family $\mathcal{S}$ of vertex-disjoint $\tfrac{d}{50}$-stars in $G-X$. Suppose that $|\mathcal{S}|<m^{168}$. Then $|V(\mathcal{S})|\leq \tfrac{dm^{168}}{4}$.
      Since $\delta(G)\geq \tfrac{d}{2}$ and every vertex outside $L_G$ has degree less than $dm^{31}$, a simple computation shows that 
$        d(G-X-V(\mathcal{S}))
        \geq
        \tfrac{n\delta(G)-2(|L_G|n+|X\cup V(\mathcal{S})|dm^{31})}{n}\geq \tfrac{d}{50}$.
      Thus $G-X-V(\mathcal{S})$ contains a $\tfrac{d}{50}$-star, contradicting the maximality of $\mathcal{S}$.
    \end{proof}
    
 \begin{proof}[Proof of Lemma~\ref{claim:unit}]
      Consider a maximal family $\mathcal{F}$ of vertex-disjoint $(2m^{28},\tfrac{d}{100},m+2)$-units in $G-X'$. Suppose that $|\mathcal{F}|<m^{109}$. Then $|V(\mathcal{F})|\leq \tfrac{dm^{137}}{25}$. Since $m^{138}+\tfrac{m^{168}}{2}<m^{168}$ for sufficiently large $m$, Lemma~\ref{claim:star} yields pairwise vertex-disjoint stars $S_1,\dots,S_{m^{138}}$ and
        $T_1,\dots,T_{ m^{168}/2}$
      in $G-X'-V(\mathcal{F})$, with respective centers $v_1,\dots,v_{m^{138}},u_1,\dots,u_{ m^{168}/2}$.
      Indeed,
$        |X'\cup V(\mathcal{F})|
        \leq dm^{110}+\tfrac{dm^{137}}{25}
        <dm^{150}$.
      Let $Z:=\{v_1, \ldots, v_{m^{138}}\}\cup\{u_1, \ldots, u_{ m^{168}/2}\}$
      and let $\mathcal P$ be a maximal family of internally vertex-disjoint paths $P_{ij}$ in
        $G-X'-V(\mathcal F)$
      satisfying the following conditions.
      \stepcounter{propcounter}
      \begin{enumerate}[label = ({\bfseries \Alph{propcounter}\arabic{enumi}})]
          \rm
        \item\label{num-cla1} For every pair $(i,j)$, the collection $\mathcal{P}$ contains at most one $v_i,u_j$-path, and every such path has length at most $m+2$.
          \rm
        \item\label{num-cla2} No path in $\mathcal{P}$ contains a vertex of $Z$ as an internal vertex.
      \end{enumerate}

      We claim that some center $v_i$ is connected by paths in $\mathcal{P}$ to at least $4m^{29}$ distinct centers $u_j$. Suppose otherwise. Then $|\mathcal{P}|\leq 4m^{29}\cdot m^{138}=4m^{167}$
      and $|V(\mathcal{P})|\leq 4m^{167}(m+2)\leq 5m^{168}$.
      Let
      \[
        V_0:=
        \left(\cup_{i\in[m^{138}]}(V(S_i)\setminus\{v_i\})\right)
        \setminus V(\mathcal{P}),
      \]
      and let $U_0$ be the set of leaves of those stars $T_j$ whose centers are not endpoints of paths in $\mathcal{P}$. Then
      $|V_0|\geq \tfrac{d}{50}m^{138}-5m^{168}
        >\tfrac{dm^{138}}{100}$
      and $
        |U_0|
        \geq \tfrac{d}{50}
        \left(m^{168}/2-4m^{167}\right)
        >\tfrac{dm^{138}}{100}.
      $
      Moreover,
      \begin{equation*}         |X'|+|V(\mathcal{F})|+|Z|+|\mathsf{Int}(\mathcal{P})|
        \leq
dm^{110}+\tfrac{dm^{137}}{25}+m^{138}+\tfrac{m^{168}}{2}+5m^{168}\leq
\tfrac14\rho\left(\tfrac{dm^{138}}{100}\right)
        \tfrac{dm^{138}}{100},
      \end{equation*}
      where the last inequality holds for sufficiently large $n$. Applying Lemma~\ref{distance} with $(X_1,X_2,W)=(V_0,U_0,X'\cup V(\mathcal{F})\cup Z\cup\mathsf{Int}(\mathcal{P}))$,
      we obtain a path of length at most $m$ joining some vertex of $V_0$ to some vertex of $U_0$ while avoiding $X'\cup V(\mathcal{F})\cup Z\cup\mathsf{Int}(\mathcal{P})$. Extending this path by the corresponding two star edges gives a new admissible $v_i,u_j$-path, contradicting the maximality of $\mathcal{P}$.

      Therefore, some center $v_i$ is connected by paths in $\mathcal{P}$ to at least $4m^{29}$ distinct centers $u_j$. By the pigeonhole principle, we can choose $2m^{28}$ distinct centers, say $u_1,\ldots,u_{2m^{28}}$, such that the corresponding paths $P_{i,j}$ all have the same length. The total number of vertices used by these paths is at most
        $2m^{28}(m+2)\leq \tfrac{d}{100}$.
      Hence every corresponding star $T_j$ retains at least $\tfrac{d}{100}$ leaves not used by these paths. These stars, together with the corresponding paths to $v_i$, form a $(2m^{28},\tfrac{d}{100},m+2)$-unit in $G-X'-V(\mathcal{F})$, contradicting the maximality of $\mathcal{F}$.
    \end{proof}

     \begin{proof}[Proof sketch of Lemma~\ref{lem:web}]
     This is a special case of Lemma~4.6 in~\cite{Yang1}. We highlight only the differences. First, the definition of a web in~\cite{Yang1} does not require the internal paths to have the same length, whereas ours does. This additional requirement can be met by starting with a larger collection of paths and applying the pigeonhole principle. Second, Lemma~4.6 in~\cite{Yang1} assumes a local density condition that robustly guarantees the existence of stars. Here we use Lemma~\ref{claim:star} instead.
\end{proof}

      \section{Proofs of Lemmas~\ref{lem:cycleadjuster} and~\ref{vesp:radj}}\label{appen:proofofdmradjuster}
      Before proving Lemmas~\ref{lem:cycleadjuster} and~\ref{vesp:radj}, we introduce some necessary tools. Here we replace the subdivision-free condition with the condition of being $TK_{d}^{(1)\ast c}$-free, so we invoke Lemma~\ref{lem:densebipartite}. The proof of Lemma~\ref{lem:manyexpansion} below follows that of Lemma~3.11 in~\cite{Liu-Mon-JAMS}; another difference is that, if its shortest cycle is odd, we invoke Lemma~\ref{lem:oddcyclelengthinexpander}.

\begin{lemma}\label{lem:densebipartite}
    Let $d,c\in\mathbb{N}$, and let $G$ be a graph containing vertex-disjoint sets $U$ and $W$ such that $|U|\ge c|W|^2$ and every vertex of $U$ has at least $d$ neighbors in $W$. Then $G$ contains a $TK_d^{(1)\ast c}$.
\end{lemma}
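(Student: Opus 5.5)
The plan is a direct greedy construction of the subdivision. Since every vertex of $U$ needs its neighbours in $W$, I would first fix for each $u\in U$ a set $S_u\subseteq N_W(u)$ with exactly $d$ elements. A vertex $u$ is then a candidate subdividing vertex for each of the $\binom d2$ pairs inside $S_u$. Rather than hope for a single $d$-set common to many vertices, the idea is to choose the branch set $B\subseteq W$ by pigeonhole over $d$-subsets of $W$. This alone is too lossy: there are $\binom{|W|}{d}$ possible sets, while $|U|$ is only about $c|W|^2$. So the right approach is a counting and greedy argument on pairs, not on whole $d$-sets.

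Concretely, I will build a set $B$ of $d$ branch vertices one at a time inside $W$, maintaining a subset $U_k\subseteq U$ of vertices $u$ with $B_k\subseteq N_W(u)$, where $B_k$ is the current partial branch set. The pigeonhole step picks the next branch vertex $w$ that lies in the largest number of the sets $N_W(u)\setminus B_k$ for $u\in U_k$. Averaging only loses a factor of about $|W|$ per step, which gives $|U_d|\ge |U|/|W|^d$. This is still far too small, so greedy choice of the full branch set fails when $|U|$ is only quadratic in $|W|$.

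Instead, I will exploit that each pair needs only $c$ paths of length two, and each $u$ with $d$ neighbours serves $\binom d2$ pairs. The total number of (vertex, pair) incidences is at least $|U|\binom d2\ge c|W|^2\binom d2$, while there are at most $\binom{|W|}2$ pairs. So the average number of common neighbours in $U$ per pair of $W$ is at least about $2c\binom d2$. The main obstacle is to extract $d$ vertices of $W$ such that every one of the $\binom d2$ pairs has $c$ common $U$-neighbours, with all these subdividing vertices distinct. Distinctness is the real issue, since each $u$ can be used only once.

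My first attempt would be a dependent-random-choice type argument. Pick a random $u\in U$ and take $B=S_u$, a set of $d$ vertices with all pairs covered at least by $u$. The expected number of pairs in $S_u$ with fewer than $c\binom d2$ common neighbours, call these bad pairs, is small: a bad pair has at most $c\binom d2$ vertices $u$ containing it, so it is counted by at most a $c\binom d2/|U|\le \binom d2/|W|^2$ fraction of choices of $u$. The expected number of bad pairs in $S_u$ is then at most $\binom{|W|}2\binom d2/|W|^2<1$ once the constants are adjusted appropriately, for instance by using $c\binom d2$ as the threshold and reading the hypothesis with some slack. So some $S_u$ has no bad pair. Every pair in $B$ then has at least $c\binom d2$ common neighbours in $U$. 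Finally I assign subdividing vertices greedily, pair by pair: each pair needs $c$ fresh vertices, and at most $c\binom d2$ vertices are used in total. With the threshold set slightly larger, this greedy assignment always succeeds, which yields the $TK_d^{(1)\ast c}$. The delicate part is getting the constants to match the hypothesis exactly as stated, $|U|\ge c|W|^2$. If they do not match, I would redo the threshold calculation, using that $S_u$ ranges only over pairs actually covered.
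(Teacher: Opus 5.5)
The random-choice step at the end does not work under the stated hypothesis. You also cannot ``adjust the constants'', because $|U|\ge c|W|^2$ is fixed. With bad pairs defined by codegree below $c\binom d2$, the expected number of bad pairs inside $S_u$ is only bounded by
\[
\binom{|W|}{2}\cdot\frac{c\binom d2}{|U|}\le\frac{|W|-1}{2|W|}\binom d2<\tfrac12\binom d2,
\]
not by something less than $1$. For $d\ge3$ (hence $|W|\ge3$) this bound is already at least $1$, so no $u$ with a bad-pair-free $S_u$ is guaranteed.

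The factor $\binom d2$ is exactly the cost of your greedy assignment of subdividing vertices, which needs codegree about $c\binom d2$ on every pair. If you lower the threshold to about $c$, the expectation becomes less than $1$. But then codegree at least $c$ on each pair no longer guarantees $c\binom d2$ distinct subdividing vertices. Restricting to pairs that are actually covered does not help either, since bad pairs can have codegree anywhere between $1$ and $c\binom d2-1$. As written, your argument proves the lemma only under the stronger hypothesis $|U|\ge c\binom d2|W|^2$. That would still suffice for the paper's one application in the appendix, but it is not the statement.

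The missing idea is to build distinctness into the construction from the start rather than recover it afterwards. Take a maximal labelled multigraph on $W$ with two rules:
\begin{itemize}
\item each $u\in U$ labels at most one pair $xy$ with $x,y\in N_W(u)$;
\item each pair of $W$ receives at most $c$ labels.
\end{itemize}
If every pair is saturated, any $d$ vertices of $W$ form a branch set (note $|W|\ge d$ as soon as $U\ne\varnothing$). Otherwise fewer than $c\binom{|W|}{2}\le|U|$ labels are used, so some $u\in U$ is unused. By maximality, every pair inside $N_W(u)$ already carries $c$ labels. These labels are automatically distinct across pairs, because each vertex of $U$ labels only one pair. Any $d$-subset of $N_W(u)$ is then the branch set of a $TK_d^{(1)\ast c}$.

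Your instinct to take the branch set inside a single neighbourhood $N_W(u)$ is exactly right. The point is that $u$ must be chosen as an unused label, not uniformly at random.
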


\begin{proof}
    Construct a maximal multigraph $H$ on $W$ whose edges have distinct labels in $U$, where an edge $xy$ labelled by $u$ requires $x,y\in N_W(u)$ and every pair has multiplicity at most $c$. If $H$ is the $c$-fold complete multigraph, any $d$ vertices of $W$ give the required subdivision. Otherwise,
    \(
      e(H)<c\binom{|W|}{2}\le |U|,
    \)
    so some $u\in U$ is unused. By maximality, every pair in $\binom{N_W(u)}2$ has multiplicity $c$. Choose $S\subseteq N_W(u)$ with $|S|=d$; the $c$ distinct labels on each pair of $S$ are common neighbors of that pair, and hence form a copy of $TK_d^{(1)\ast c}$ with branch set $S$.
\end{proof}   
We also need the following version of connecting lemma. 
      \begin{lemma}[\cite{Liu-Mon-JAMS}]\label{lem:smalldiameterlemma2}%3.4
        For each $0<\varepsilon_1,\varepsilon_2<1$, there exists
        $d_0=d_0(\varepsilon_1,\varepsilon_2)$ such that the following holds for each $d_0\le d\le n$ and $x\ge1$.
        Let $G$ be an $n$-vertex $(\varepsilon_1,\varepsilon_2 d)$-expander with $\delta(G)\ge d-1$.
        Let $A,B\subseteq V(G)$ with
        $|A|,|B|\ge x$,
        and let
        $W\subseteq V(G)\setminus (A\cup B)$
        satisfy
        $|W|\log^3 n \le 10x$.
        Then there is a path from $A$ to $B$ in $G-W$ of length at most
        $\tfrac{40}{\varepsilon_1}\log^3 n$.
      \end{lemma}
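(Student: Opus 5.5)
We prove the statement directly from Lemma~\ref{distance}, by a breadth-first expansion argument in the spirit of the original Koml\'os--Szemer\'edi proof of the small-diameter lemma. The only differences are that here the expander hypothesis carries the minimum-degree condition $\delta(G)\ge d-1$, and the size of the forbidden set is measured as $|W|\log^3 n\le 10x$ rather than through $\rho$.

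\emph{Small starting sets.} If $x<\varepsilon_2 d/2$, we first enlarge $A$ and $B$. Each vertex has at least $d-1$ neighbours. Also $|W|\le 10x/\log^3 n$, which is tiny compared with $d$. Hence we may replace a vertex $a\in A$ by $N(a)\setminus W$, and likewise for some $b\in B$. If $N(a)$ meets $N(b)\cup B$, we already have a path of length at most $3$. Otherwise we obtain disjoint sets $A',B'$ of size at least $d/2\ge\varepsilon_2 d/2$ and avoiding $W$, and the cost is at most two extra edges. So we may assume $x\ge \varepsilon_2 d/2$.

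\emph{Growing balls.} Grow the balls $B^r_{G-W}(A)$ and $B^r_{G-W}(B)$. While $x\le|B^r_{G-W}(A)|\le n/2$, the expansion property applied to $X=B^r_{G-W}(A)$ (with $F=\varnothing$) gives
\[
|N_G(X)|\ge\rho(|X|)|X|\ge \tfrac{\varepsilon_1}{\log^2(15n/(\varepsilon_2 d))}|X|\ge \tfrac{\varepsilon_1}{\log^2 n}|X|.
\]
Since $|W|\le 10x/\log^3 n\le 10|X|/\log^3 n$, at least half of these neighbours lie outside $W$, provided $\log n$ is large compared with $1/\varepsilon_1$. So the ball grows by a factor of at least $1+\varepsilon_1/(2\log^2 n)$ in each step.

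The number of steps needed to pass from size $x$ to size $n/2$ is therefore at most
\[
\log(n)\cdot \tfrac{2\log^2 n}{\varepsilon_1}\cdot(1+o(1))\le \tfrac{3}{\varepsilon_1}\log^3 n.
\]
The same holds for the ball around $B$. Once both balls have more than $n/2$ vertices they intersect, and this yields an $A$--$B$ path in $G-W$ of length at most $\tfrac{6}{\varepsilon_1}\log^3 n+3\le \tfrac{40}{\varepsilon_1}\log^3 n$. If one ball reaches size $n/2$ but the next step would overshoot, we can still stop at size exactly above $n/2$, since the neighbourhood count only needs $|X|\le n/2$ at the previous step.

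\emph{Main obstacle.} The only delicate point is the uniformity of the bound $|W|\le\tfrac12\rho(|X|)|X|$ along the whole process. It must hold at every step, including the first, where $|X|=x$. It is guaranteed by the monotonicity of $y\rho(y)$ for $y\ge \varepsilon_2 d/2$. We also need the crude inequality $\rho(y)\ge\varepsilon_1/\log^2 n$, which holds because $15y/(\varepsilon_2 d)\le n$ once $d\ge d_0$. For the first step this comparison is $10x/\log^3 n\le \tfrac12\cdot\varepsilon_1 x/\log^2 n$, which is true for $n$ large. Thus choosing $d_0$ large in terms of $\varepsilon_1,\varepsilon_2$ (so that $n\ge d$ is large) suffices.
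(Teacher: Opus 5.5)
Your proof is correct. The minimum-degree step reduces to $x\ge \varepsilon_2 d/2$, with one trivial fix: the short-path test there should be $(N(a)\setminus W)\cap\bigl((N(b)\setminus W)\cup B\bigr)\neq\varnothing$, since a common neighbour lying in $W$ is not allowed. After that, $\rho(y)\ge\varepsilon_1/\log^2 n$ together with $|W|\le 10x/\log^3 n\le\tfrac12\rho(|X|)|X|$ makes the balls in $G-W$ grow by a factor $1+\varepsilon_1/(2\log^2 n)$ per step, which gives radius at most $\tfrac{3}{\varepsilon_1}\log^3 n$ on each side.

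The paper does not reprove this lemma; it cites it from Liu--Montgomery. Your argument is the standard Koml\'os--Szemer\'edi ball-growing proof. Equivalently, after the reduction you could apply Lemma~\ref{distance} verbatim, since $\rho(x)x/4\ge 10x/\log^3 n$ and $\log^3(15n/(\varepsilon_2 d))\le\log^3 n$ once $d\ge d_0$.
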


The proof of the lemma below follows that of Lemma 3.11 in~\cite{Liu-Mon-JAMS}; the only difference is that if the shortest cycle in Lemma~\ref{lem:manyexpansion} is an \emph{odd} cycle, then we invoke Lemma \ref{lem:oddcyclelengthinexpander}.
\begin{lemma}\label{lem:manyexpansion}%3.11.  
For each $k\in\mathbb{N}$ and any $0<\varepsilon_1,\varepsilon_2<1/20$, there exists $d_0=d_0(\varepsilon_1,\varepsilon_2,k)$ such that the following holds for each $d_0\le d \le n$. Suppose that $G$ is an $n$-vertex $(\varepsilon_1,\varepsilon_2 d)$-expander with $\delta(G)\ge d-1$. Let $m=\tfrac{800}{\varepsilon_1}\log^3 n$ and $m'=\tfrac{m}{20}=\tfrac{40}{\varepsilon_1}\log^3 n$.
Let $C$ be a shortest odd cycle in $G$ if one exists, and a shortest cycle in $G$ otherwise. Let $x_1,\dots,x_k$ be distinct vertices of $G$.
For each $i,j\in [k]$, let $D_{i,j}\in [1,e^{5(\log\log n)^{200}}]$.
Then there are graphs $F_{i,j}\subseteq G$, $i,j\in [k]$, such that the following hold.
\begin{itemize}
\item For each $i,j\in [k]$, $F_{i,j}$ is a $(D_{i,j},5m')$-expansion centered at $x_i$ that contains no vertex other than $x_i$ in $V(C)\cup \{x_1,\dots,x_k\}$.
\item The sets $V(F_{i,j})\setminus \{x_i\}$, $i,j\in [k]$, are pairwise disjoint.
\end{itemize}
\end{lemma}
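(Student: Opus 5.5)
The plan is to build the expansions one at a time by a greedy growth argument in the expander, following the proof of Lemma~3.11 in~\cite{Liu-Mon-JAMS}. The one change is in the choice of $C$: if $G$ is non-bipartite we take $C$ to be a shortest odd cycle, and Lemma~\ref{lem:oddcyclelengthinexpander} bounds $|C|\le 2m'+5$. If $G$ is bipartite, we take a shortest cycle, whose length is $O(\log n)$ by the small-diameter lemma (Lemma~\ref{distance}). In both cases the forbidden set $Z:=V(C)\cup\{x_1,\dots,x_k\}$ has size $O(\log^3 n)$. This is polylogarithmic and therefore negligible compared with the expansion available at every scale of interest.

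First, order the pairs $(i,j)\in[k]^2$ and keep a running set $U$. This set is the union of $Z$ and the vertices $V(F_{i',j'})\setminus\{x_{i'}\}$ of the expansions built so far. Since every $D_{i,j}\le e^{5(\log\log n)^{200}}$, we have $|U|\le k^2e^{5(\log\log n)^{200}}+O(\log^3n)$, which is $n^{o(1)}$.

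Second, for the current pair $(i,j)$ we grow a ball around $x_i$ in $G-(U\setminus\{x_i\})$. Since $\delta(G)\ge d-1$ and $x_i$ has at most $|Z|\ll d$ neighbours in $Z$, one step already gives about $d/2$ vertices when $d\ge\log^{4}n$. In general $d$ may be small, so we instead follow the standard Liu--Montgomery argument: the edge-robust expansion (taking $F$ to be the edges into a bad set) lets the ball $B^r$ in $G-(U\setminus\{x_i\})$ grow by a factor of at least $1+\rho(|B^r|)/2$ per step, as long as $|U|$ is smaller than $\rho(|B^r|)|B^r|/4$. Making this hold from the very start is the main obstacle. When the ball is tiny, of size below $\eps_2 d$, the removed set $U$ can be much larger than the ball, so plain vertex expansion fails.

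To get past this, I would first reserve, for each $x_i$, a private small neighbourhood avoiding $Z$, using $\delta(G)\ge d-1$ and $|Z|\ll d$ for $d\ge d_0$. I would then apply the growth argument to sets of size at least $\eps_2 d$. Here the estimate $\sum_{r}\log(1/(1-\rho))$ shows that the ball reaches size $\max_{i,j}D_{i,j}\cdot k^2\log^3 n$ within $5m'$ steps, since $\log^3$-type expansion doubles the size every $O(\log^2 n)$ steps. Throughout, the bound $|U|\le n^{o(1)}$ remains below $\rho(x)x/4$ once $x\ge |U|\log^3 n$.

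Third, once the ball has at least $D_{i,j}$ vertices at distance at most $5m'$ from $x_i$, Proposition~\ref{lem:smallexpansion} lets us trim it (via a BFS tree) to exactly $D_{i,j}$ vertices. The result is a $(D_{i,j},5m')$-expansion $F_{i,j}$ centred at $x_i$ whose other vertices avoid $U$, and in particular avoid $Z$. We add $V(F_{i,j})\setminus\{x_i\}$ to $U$ and continue. The pairwise disjointness of the sets $V(F_{i,j})\setminus\{x_i\}$ is then automatic from the construction.
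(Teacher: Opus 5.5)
Your top-level plan matches the paper's proof, which simply runs the proof of Lemma~3.11 of \cite{Liu-Mon-JAMS} and uses Lemma~\ref{lem:oddcyclelengthinexpander} to bound $|C|$. However, the self-contained growth argument you give in its place has a genuine gap, exactly at the step you flag as the main obstacle.

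Your growth step needs $|U|\le\rho(x)x/4$ for the current ball size $x$, and you only have this once $x\gtrsim |U|\log^3 n$. The reserved private neighbourhood supplies a seed of size only $O(d)$. Meanwhile $|U|$ (the earlier expansions) can be as large as $k^2e^{5(\log\log n)^{200}}$, which exceeds every power of $\log n$, so this happens even when $d\ge\log^4 n$. Nothing in your sketch covers the range $d\lesssim x\lesssim |U|\log^3 n$, and the sequential scheme as described actually fails there.

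\begin{itemize}
\item If $F_{i,1}$ is cut out of a BFS ball around $x_i$, it contains every available neighbour of $x_i$ once $D_{i,1}\ge d$. The ball for $(i,2)$ in $G-(U\setminus\{x_i\})$ is then just $\{x_i\}$.
\item Reserving disjoint neighbour sets $N_{i,j}$ for every pair, and forbidding them to the other expansions, does not help either. The neighbourhood of $x_i$ may look like a $K_{d-1,d-1}$: every vertex of $N(x_i)$ is adjacent to the same set $S$ of $d-2$ vertices, and all outward expansion leaves through $S$. Nothing in the expander hypothesis rules this out. A BFS-grown $F_{i,1}$ then absorbs all of $S$ and isolates $N_{i,2}$.
\end{itemize}

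What is missing is a mechanism that builds the $k^2$ expansions so that the ones constructed first do not use up the room needed by the later ones. Independent greedy balls cannot do this. This is precisely the part the paper imports from \cite{Liu-Mon-JAMS}, and your proposal does not supply a substitute.

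A secondary point: the claim $|Z|\ll d$ for $d\ge d_0$ is not justified. Here $d_0$ depends only on $\eps_1,\eps_2,k$, while $|C|$ may grow with $n$ (Lemma~\ref{lem:oddcyclelengthinexpander} only gives $|C|=O(\log^3 n)$). So $V(C)$ is not negligible at scale $d$ when $d$ is small.

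What you can use instead is that a shortest odd cycle (resp.\ shortest cycle) is induced and even isometric. Hence every vertex has at most three neighbours on $C$, and $|V(C)\cap B^r_G(v)|\le 4r+1$ for all $v$ and $r$. Some such use of minimality is unavoidable. For an arbitrary cycle of length $\Theta(\log^3 n)$, which may contain all of $N(x_i)$ when $d$ is small, the statement is false.
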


We are now ready to prove Lemma~\ref{lem:cycleadjuster}.

\begin{proof}[Proof of Lemma~\ref{lem:cycleadjuster}]
We prove only the case in which an odd cycle exists. When all cycles are even, the argument is similar.
Let $|V(\Gamma)|=2\ell_0+1$.
Since $\delta(G)\ge d-1$, Lemma~\ref{lem:oddcyclelengthinexpander} gives $2\ell_0+1\le m/20$, provided $d\ge d_0(\varepsilon_1,\varepsilon_2)$ is sufficiently large. Pick vertices $x_3,x_4\in V(\Gamma)$ that are at distance $\ell_0$ from each other on $\Gamma$, and let the two $x_3,x_4$-paths on $\Gamma$ be $R_1$ and $R_2$, where $R_1$ is the shorter path. Let $D_{1,1}=D_{2,1}=D$, $D_{1,2}=D_{3,1}=m^3D$, and $D_{2,2}=D_{4,1}=m^2D$.
For all $(i,j)\in[4]\times[4]$ not specified above, set $D_{i,j}=1$.
Applying Lemma~\ref{lem:manyexpansion} with $k=4$, $C=\Gamma$, and the vertices
$x_1,x_2,x_3,x_4$, we obtain graphs $F_{i,j}$, $i,j\in[4]$.
In particular, retaining only $F_{1,1},F_{1,2},F_{2,1},F_{2,2},F_{3,1},F_{4,1}$, we have the following properties:
\begin{itemize}
    \item for $(i,j)\in[2]\times[2]$ or $(i,j)\in\{3,4\}\times\{1\}$,
    the graph $F_{i,j}$ is a $(D_{i,j},m/4)$-expansion centered at $x_i$
    in $G$ which contains no vertices other than $x_i$ in
    $\{x_1,x_2,x_3,x_4\}\cup V(\Gamma)$;
    \item the sets $V(F_{i,j})\setminus\{x_i\}$,
    for $(i,j)\in[2]\times[2]$ or $(i,j)\in\{3,4\}\times\{1\}$, are pairwise disjoint.
\end{itemize}

Now, \[|V(\Gamma)\cup V(F_{1,1}\cup F_{2,1}\cup F_{2,2}\cup F_{4,1})|\le m+2D+2m^2D\le \tfrac{10m^3D}{\log^3 n}.\]
Since $|F_{1,2}|=|F_{3,1}|=m^3D$, Lemma~\ref{lem:smalldiameterlemma2} yields a path $P$ of length at most $m/20$ from $V(F_{1,2})$ to $V(F_{3,1})$ with no vertices in $\bigl(V(\Gamma)\cup V(F_{1,1}\cup F_{2,1}\cup F_{2,2}\cup F_{4,1})\bigr)\setminus\{x_1,x_3\}$. As $F_{1,2}$ is a $(D_{1,2},m/4)$-expansion of $x_1$ and $F_{3,1}$ is a $(D_{3,1},m/4)$-expansion of $x_3$, we can extend $P$ using vertices from $V(F_{1,2}\cup F_{3,1})$ to obtain an $x_1,x_3$-path, say $P'$, of length at most $11m/20$, with no vertices in $\bigl(V(\Gamma)\cup V(F_{1,1}\cup F_{2,1}\cup F_{2,2}\cup F_{4,1})\bigr)\setminus\{x_1,x_3\}$.

Next, observe that \[|V(\Gamma)\cup V(P')\cup V(F_{1,1})\cup V(F_{2,1})|\le m+3m+1+2D\le \tfrac{10m^2D}{\log^3 n}. \]
Since $|F_{2,2}|=|F_{4,1}|=m^2D$, Lemma~\ref{lem:smalldiameterlemma2} gives a path $Q$ of length at most $m/20$ from $V(F_{2,2})$ to $V(F_{4,1})$ with no vertices in $\bigl(V(\Gamma)\cup V(P')\cup V(F_{1,1})\cup V(F_{2,1})\bigr)\setminus\{x_2,x_4\}$. As $F_{2,2}$ is a $(D_{2,2},m/4)$-expansion of $x_2$ and $F_{4,1}$ is a $(D_{4,1},m/4)$-expansion of $x_4$, we can extend $Q$ using vertices from $V(F_{2,2}\cup F_{4,1})$ to obtain an $x_2,x_4$-path, say $Q'$, of length at most $11m/20$, with no vertices in $\bigl(V(\Gamma)\cup V(P')\cup V(F_{1,1})\cup V(F_{2,1})\bigr)\setminus\{x_2,x_4\}$.

Now set $v_1=x_1, v_2=x_2, F_1=F_{1,1}, F_2=F_{2,1}$, and $A=V(P'\cup Q'\cup R_1\cup R_2)\setminus\{v_1,v_2\}$. Then $|A|\le 2(11m/20+1)+2\ell_0+1 \le 5m/2$, and $A$ is disjoint from $V(F_1)\cup V(F_2)$. Let $\ell = \ell(P'\cup R_1\cup Q')$. Then $P'\cup R_1\cup Q'$ and $P'\cup R_2\cup Q'$ are $v_1,v_2$-paths in $G[A\cup\{v_1,v_2\}]$ of lengths $\ell$ and $\ell+1$, respectively. Thus, $(v_1,F_1,v_2,F_2,A)$ is a $(D,m/4,1)$-odd-adjuster, as required.
\end{proof}

              Next, we show the existence of adjusters. (See Definition~\ref{defn:evenadjuster} for the definition of even and odd adjusters.)

\begin{lemma}\label{vesp:1adj}
Let $0<\eps_0\le 1/20$. There exists $\eps_1>0$ such that, for any $0<\eps_2\ll\eps_0$, there exists $d_0=d_0(\eps_0,\eps_1,\eps_2)$ such that the following holds for each $d_0\le d\le\log^{1000}n$. Suppose that $G$ is an $n$-vertex $TK_{(1-\eps_0)d}^{(1)\ast d^9}$-free $(\eps_1,\eps_2d)$-expander with $\delta(G)\ge d$, and let $m=\tfrac{800}{\eps_1}\log^3n$. If $\log^{10}n\le D\le e^{2(\log\log n)^{100}}$ and $W\subseteq V(G)$ satisfies $|W|\le4D$, then $G-W$ contains a $(D,m/2,1)$-adjuster or a $(D,m/2,1)$-odd-adjuster.
\end{lemma}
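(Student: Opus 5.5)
The plan is to follow the proof of the corresponding simple-adjuster lemma in Liu--Montgomery and of Lemma~\ref{lem:simple-adj} above, with expansions in place of units and with the clique-subdivision-free hypothesis in place of the bipartite/no-$K_{d}$ local-density hypotheses. First I would apply Lemma~\ref{lem:manyexpansion} in $G-W$ to three vertices $x_1,x_2,x_3$, obtaining pairwise disjoint expansions. These are a $(D,m/4)$-expansion $F_i$ and a much larger $(m^3D,m/4)$-expansion $F_i'$ centred at each $x_i$; the lemma is stated for $G$, but the forbidden set $W$ has size $4D$, which is negligible against $m^3D$, so its proof runs in $G-W$. The small expansions $F_1,F_2$ (or $F_3$) will serve as the ends of the final adjuster. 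The large ones are used for connecting. By Lemma~\ref{lem:smalldiameterlemma2}, a maximal family of vertex-disjoint short paths of length at most $m/20$ from $V(F_1')$ to $V(F_2')$, avoiding $W$ and the small ends, has size at least $m^2D$. Each such path extends inside $F_1'\cup F_2'$ to an $x_1,x_2$-path of length at most $m/2$. By pigeonhole we keep a subfamily $\mathcal P$ of size $m D/\log n$, say, all of equal length, and let $B$ be the set of their endpoints in $F_1'$.

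Then split into two cases, exactly as in Lemma~\ref{lem:simple-adj}. \emph{Case 1:} some vertex $w$ outside the used set has two neighbours $w_1,w_2\in B$, or more generally two paths of $\mathcal P$ have endpoints joined by a short detour. Then routing through $w_1$ directly, versus through $w_1ww_2$ and then along $P_{w_2}$, gives two $x_1,x_2$-paths whose lengths differ by $2$ (using that the tree paths inside $F_1'$ to $w_1,w_2$ have equal length, which I would arrange by taking $B$ at a fixed distance level from $x_1$). This yields a $(D,m/2,1)$-adjuster. If instead the detour has odd length, for instance an edge $w_1w_2$ inside $B$, the lengths differ by $1$ and we get a $(D,m/2,1)$-odd-adjuster. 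This dichotomy is exactly why the statement allows either object.

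\emph{Case 2:} every vertex outside the small used set $W'$ has at most one neighbour in $B$. Here the subdivision-free hypothesis enters. Vertices of $B$ have degree at least $d$. Call a vertex of $B$ \emph{heavy} if it has at least $(1-\eps_0)d$ neighbours inside $W'$. If at least $d^9|W'|^2$ vertices of $B$ are heavy, Lemma~\ref{lem:densebipartite} gives a $TK_{(1-\eps_0)d}^{(1)\ast d^9}$, a contradiction. This is where I expect the main obstacle: it requires $|B|\gg d^9|W'|^2$, which is the reason for choosing the sizes $D\ge\log^{10}n$, $d\le\log^{1000}n$ and $m^3D$ carefully, and I would first try to shrink $W'$ to the interiors actually touched, of size $O(m)$. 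Otherwise most vertices of $B$ send at least $\eps_0 d$ edges to distinct private vertices outside $W'$, so $N(B)\setminus W'$ has size at least $\eps_0 d|B|/2$. Then Lemma~\ref{lem:smalldiameterlemma2} connects this neighbourhood to $F_3'$ avoiding everything used. Given $u\in N(w_1)$, with $w_1\in B$ and $w_2$ another vertex of $B$ whose tree path in $F_1'$ from $x_1$ meets $w_1$'s at a common ancestor, we form two $x_3,x_2$-paths: one goes $x_3\to u\to w_1\to P_{w_1}\to x_2$, and the other goes $x_3\to u\to w_1\to$ (up the tree to the branching point and back down) $\to w_2\to P_{w_2}\to x_2$. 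Their lengths differ by an even amount at most $m/2$. To obtain a difference of exactly $2$, I would pick $w_1,w_2$ as siblings in a BFS tree of $F_1'$, which the pigeonhole over levels allows. Finally I would trim the ends to $(D,m/2)$-subexpansions by Proposition~\ref{lem:smallexpansion}, disjoint from $A$, and check $|A|\le 10m$.
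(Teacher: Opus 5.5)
Your Case~2 fails at exactly the step you flagged, and the proposed repair cannot save it. The set $W'$ must contain the given forbidden set $W$, since the adjuster has to live in $G-W$. So $|W'|$ may be as large as $4D$, and Lemma~\ref{lem:densebipartite} only excludes $d^9|W'|^2\ge 16d^9D^2$ heavy vertices. But $B\subseteq V(F_1')$ has at most $m^3D$ vertices, and $m^3D\ll d^9D^2$ because $m^3=O(\log^9 n)$ while $D\ge\log^{10}n$. Used locally at $B$, the subdivision-free hypothesis therefore gives nothing. Your counting cannot rule out that every vertex of $B$ sends all but $\eps_0 d$ of its roughly $d$ edges into $W$, and then $N(B)\setminus W'$ need not be large enough to connect to $F_3'$. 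Shrinking $W'$ to the touched interiors does not help, because $W$ itself can never be dropped.

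The same issue undermines your first step. Lemma~\ref{lem:manyexpansion} does not carry over to $G-W$ just because $|W|\ll m^3D$. A ball $X$ grown from a single vertex is only guaranteed $\rho(|X|)|X|<|X|$ new neighbours, which is useless against $W$ while $|X|\le|W|$. Moreover $|W|$ may exceed the size $D$ of the ends you need, and $x_i$ may have almost all its neighbours in $W$.

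The missing idea is to use the subdivision-free hypothesis globally, as the paper does. Let $W_0$ be the set of vertices outside $W$ with at least $(1-\eps_0)d$ neighbours in $W$. By Lemma~\ref{lem:densebipartite}, $|W_0|<d^9|W|^2\le e^{O((\log\log n)^{100})}\ll n$. Every vertex outside $W\cup W_0$ keeps more than $\eps_0 d$ neighbours in $G-W$, so $e(G-W)\ge\eps_0dn/4$. With average degree $\Omega(d)$ secured in $G-W$, the paper runs the argument of Liu--Montgomery's Lemma~4.3 there. Lemma~\ref{lem:cycleadjuster} (a shortest odd cycle, or a shortest cycle in the bipartite case, plus expansions) supplies the odd-adjuster or adjuster. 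The odd alternative comes from non-bipartiteness, not from an odd detour.

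Independently, your Case~2 needs $w_1,w_2\in B$ whose detour has length exactly $2$, and ``siblings in a BFS tree'' need not exist in $B$. In Lemma~\ref{lem:simple-adj} this comes from the star structure of units: all but $|\mathsf{Int}(F_1)|\ll|B|$ vertices of $B$ share a neighbour in $\mathsf{Int}(F_1)$ with another vertex of $B$. In an expansion, the vertices of $B$ may have pairwise distinct parents. A common ancestor $j$ levels up then only gives a length difference $2j$, which is not a $(D,m/2,1)$-adjuster.
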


\begin{proof}[Proof sketch]
We follow the proof of Lemma~4.3 in~\cite{Liu-Mon-JAMS}. There are two differences. First, we remove the bipartiteness condition on $G$; consequently, we may obtain odd adjusters supplied by Lemma~\ref{lem:cycleadjuster}. Second, to ensure that the average degree remains $\Omega(d)$ after removing $W$, we require only the weaker hypothesis of $TK_{(1-\eps_0)d}^{(1)\ast d^9}$-freeness. Indeed, let $W_0=\{v\in V(G)\setminus W:d_G(v,W)\ge (1-\eps_0)d\}$. Note that $|W_0|\leq d^9|W|^2\le d^9e^{10(\log\log n)^{200}}\le e^{20(\log\log n)^{200}}$. Otherwise, Lemma~\ref{lem:densebipartite}, applied with $U=W_0$ and $W=W$, implies that $G$ contains a $TK_{(1-\eps_0)d}^{(1)\ast d^9}$, a contradiction.
Therefore, we know that the number of edges in $G-W$ is at least \[(n-|W|-|W_0|)\cdot\eps_0d\cdot\tfrac{1}{2}\ge \bigl(n-e^{(\log\log n)^{200}}-e^{20(\log\log n)^{200}}\bigr)\cdot\eps_0 d\cdot\tfrac{1}{2}\ge \tfrac{\eps_0 dn}{4},\]
as desired. Apart from these two differences, the rest of the proof of Lemma 4.3 in~\cite{Liu-Mon-JAMS} applies.
\end{proof}

Iterative application of Lemma~\ref{vesp:1adj} then yields Lemma~\ref{vesp:radj}.

            \end{appendix}

            \end{document}